\newtheorem{thm}{Theorem}[section]
\newtheorem{corollary}[thm]{Corollary}
\newtheorem{lemma}[thm]{Lemma}
\newtheorem{prop}[thm]{Proposition}
\newtheorem{thm-def}[thm]{Theorem-Definition}
\newtheorem{prop-def}[thm]{Proposition-Definition}
\newtheorem{conj}[thm]{Conjecture}
\newtheorem{remark}[thm]{Remark}
\newtheorem{definition}[thm]{Definition}
\newtheorem{example}[thm]{Example}
\newtheorem{convention}[thm]{Convention}
\numberwithin{equation}{section}
\newcommand{\nc}{\newcommand}
\newcommand{\cD}{\mathcal{D}}
\newcommand{\cO}{\mathcal{O}}
\newcommand{\cA}{\mathcal{A}}
\newcommand{\cH}{\mathcal{H}}
\newcommand{\cP}{\mathcal{P}}
\newcommand{\cR}{\mathcal{R}}
\newcommand{\cL}{\mathcal{L}}
\newcommand{\cZ}{\mathcal{Z}}
\newcommand{\cS}{\mathcal{S}}
\newcommand{\cM}{\mathcal{M}}
\newcommand{\cG}{\mathcal{G}}
\newcommand{\cX}{\mathcal{X}}
\newcommand{\cK}{\mathcal{K}}
\newcommand{\bC}{{\mathbb C}}
\newcommand{\bZ}{{\mathbb Z}}
\newcommand{\bR}{{\mathbb R}}
\newcommand{\bG}{{\mathbb G}}
\newcommand{\bi}{{\mathbf i}}
\newcommand{\La}{{\mathfrak{a}}}
\newcommand{\Lg}{{\mathfrak g}}
\newcommand{\Lp}{{\mathfrak{p}}}
\newcommand{\Lm}{{\mathfrak{m}}}
\newcommand{\Lk}{{\mathfrak{K}}}
\newcommand{\fF}{{\mathfrak{F}}}
\newcommand{\rD}{\mathrm{D}}
\newcommand{\inv}{{\mathbin{/\mkern-4mu/}}}
\nc{\lp}{{\left(}}
\nc{\rp}{{\right)}}
\nc{\ot}{\otimes}
\nc{\on}{\operatorname}
\nc{\oh}{{\operatorname{H}}}
\nc{\gr}{{\operatorname{gr}}}
\nc{\rk}{{\operatorname{rank}}}
\nc{\codim}{{\operatorname{codim}}}
\nc{\img}{{\operatorname{Im}}}
\nc{\IC}{{\operatorname{IC}}}
\newcommand{\p}{\perp}
\newcommand{\beqn}{\begin{equation*}}
\newcommand{\eeqn}{\end{equation*}}
\newcommand{\beq}{\begin{equation}}
\newcommand{\eeq}{\end{equation}}
\newcommand{\bern}{\begin{eqnarray*}}
\newcommand{\eern}{\end{eqnarray*}}
\begin{document}

\newcommand{\WquotMap}{g}
\newcommand{\Cartan}{C}
\newcommand{\lra}{\longrightarrow}
\newcommand{\notalpha}{{! \alpha}}
\newcommand{\degalpha}{{e_\alpha}}
\newcommand{\barw}{{\overline w}}
\newcommand{\barone}{{\overline 1}}
\newcommand{\Gaf}{{G_\alpha^{\rm f}}}
\newcommand{\Kaf}{{K_\alpha^{\rm f}}}
\newcommand{\FT}{\fF}
\newcommand{\InvTheta}{\vartheta}
\newcommand{\cOne}{{c_1}}
\newcommand{\cTwo}{{c_2}}
\newcommand{\PL}{{P\!L}}
\newcommand{\barValpha}{{\bar V_\alpha}}
\newcommand{\pTau}{{\tau}}

\title{Nearby Cycle Sheaves for Stable Polar Representations}

\author{Mikhail Grinberg}
\email{misha@mishagrinberg.net}

\author{Kari Vilonen}
\address{School of Mathematics and Statistics, University of Melbourne,
VIC 3010, Australia, also Department of Mathematics and Statistics,
University of Helsinki, Helsinki, Finland}
\email{kari.vilonen@unimelb.edu.au, kari.vilonen@helsinki.fi}
\thanks{KV was supported in part by the ARC grants DP150103525,
DP180101445, FL200100141 and the Academy of Finland.
}

\author{Ting Xue}
\address{School of Mathematics and Statistics, University of Melbourne,
VIC 3010, Australia, also Department of Mathematics and Statistics,
University of Helsinki, Helsinki, Finland} 
\email{ting.xue@unimelb.edu.au}
\thanks{TX  was supported in part by the ARC grant DP150103525.}

\date{September 16, 2025}

\begin{abstract}
Let $G | V$, $G$ connected, reductive over $\bC$, be a stable polar
representation in the sense of \cite{DK}, satisfying some mild additional
hypotheses.  Given a $G$-equivariant rank one local system $\cL_\chi$ on the
general fiber of the quotient map $f : V \to V \inv G$, we compute the Fourier
transform of the corresponding nearby cycle sheaf $P_\chi$ on the zero-fiber
$X_0 = f^{-1} (0)$.  This provides a partial generalization of the results of 
\cite{Gr1} and \cite{GVX1}.  Our main intended application is to the theory of
character sheaves for graded Lie algebras over $\bC$.
\end{abstract}

\maketitle

\tableofcontents

\section{Introduction}

In this paper, we perform a detailed study of a nearby cycle construction which
produces character sheaves. Our work provides a partial generalization
of the results of \cite{Gr1} and \cite{GVX1}, and can be seen as a generalization
of the classical Springer construction of Weyl group representations.  As is well
known, this latter construction and its generalization by Lusztig play an important
role in understanding characters of finite groups of Lie type, via Lusztig's
theory of character sheaves.

Let $G$ be a complex reductive group acting on a vector space $V$.  We
will write $\on{Perv}_G (V)^{\text{nil}}$ for the category of $G$-equivariant
$\bC^*$-conic perverse sheaves on $V$ whose singular support is nilpotent;
and we will refer to simple objects of $\on{Perv}_G (V)^{\text{nil}}$ as
character sheaves.  This definition is consistent with Lusztig's use of
the term by \cite{MV}.  In a number of contexts, objects of
$\on{Perv}_G (V)^{\text{nil}}$ arise from systems of invariant differential
equations.  Lie algebra characters of real reductive groups and relative 
characters of such groups, relative to a maximal compact, are solutions to
such systems of differential equations.  In the first case, we have $V = \Lg$
with the adjoint action of $G$, and the second case arises from an involution
$\theta$ of a larger group $\widetilde G$.  Both of these cases fall under the
situations considered in this paper. 

In this paper, we study a rather general set-up of a stable polar representation
$G | V$, satisfying some mild additional hypotheses.  The key feature of this
set-up is the existence of a Cartan subspace $\Cartan \subset V$ and a complex
reflection group $W \subset \on{Aut} (\Cartan)$, such that the invariant theory
has a particularly nice form: $V \inv G = \Cartan / W$. 

The nearby cycle construction we study is a special case of the following
general process.  Let us write $f: V \to V \inv G$ for the quotient map.  We start
with a $G$-equivariant perverse sheaf $F$ on one of the fibers of $f$, take
nearby cycles to obtain a $\bC^*$-conic perverse sheaf on the null-cone in $V$,
and then take the Fourier transform.  By construction, we obtain an object
$\check F \in \on{Perv}_G (V^*)^{\text{nil}}$, whose simple constituents are
character sheaves on $V^*$.  The main content of this paper is in carrying
out this construction in the prototypical example of a stable polar representation
as above and a $G$-equivariant rank one local system on the general fiber of
$f$.

Our primary motivation comes from the closely related class of representations
arising from graded Lie algebras.  This is the case where $\theta$ is a finite
order automorphism of a reductive group $\widetilde G$, and we have
$G = \widetilde G^{\theta, 0}$, the identity component of the fixed points of
$\theta$, and $V = \widetilde \Lg_i$, an eigenspace of the automorphism
$\theta$ acting on $\widetilde \Lg$.  In this setting, the nearby cycle construction
is particularly efficient at generating character sheaves (as demonstrated in
\cite{CVX}, \cite{VX1}, \cite{VX2}, \cite{VX3}).  Two remarkable features of this
construction (in this setting) are as follows.  First, it can be shown that the
process described above takes simple IC-sheaves on the fibers of $f$ to
(not necessarily simple) IC-sheaves on $V^*$ (\cite{GVX2}).  And second,
it is shown in \cite{LTVX} that all cuspidal character sheaves arise from the
nearby cycle construction, starting with a very small set of character sheaves
with nilpotent support.  Thus, in the graded case, the nearby cycle construction
provides us with a new kind of induction.

\subsection{The main result}
\label{subsec-main-result}

We now describe our setting more precisely.  The class of polar representations
$G | V$, for $G$ connected, reductive over $\bC$, was introduced by Dadok
and Kac in \cite{DK}, as a class of linear actions whose invariant theory works
by analogy with the class of adjoint representations.  In fact, Dadok and Kac
defined polar representations for general reductive $G$, but the polarity
condition is most natural when $G$ is connected.  For every polar $G | V$
with $G$ connected, there exists a linear subspace $\Cartan \subset V$,
which is defined uniquely up to the action of $G$, with the property that:
\beq\label{eqn-quot}
V \inv G = \Cartan / W,
\eeq
where $W = N_G (\Cartan) / Z_G (\Cartan)$.  The subspace $\Cartan$ is called
a {\em Cartan subspace} of $V$, and the group $W$ is called the {\em Weyl group}
of $G | V$.  The group $W$ is a finite complex reflection group acting on $\Cartan$.
By a general result on complex reflection groups (see \cite[Theorem 4.1]{Br}), the
quotient $\Cartan / W$ is isomorphic to an affine space of the same dimension as
$\Cartan$.  Polar representations include many of the classical invariant problems
of linear algebra, as well as all representations arising from graded Lie algebras,
studied by Vinberg in \cite{Vi} (see Section \ref{subsec-motivation} below).

In this paper, we consider a polar representation $G | V$ of a connected reductive
group $G$, which is {\em stable} in the sense of \cite{DK}.  This stability condition
means that:
\beq\label{eqn-stability}
V = \Cartan \oplus \Lg \cdot \Cartan,
\eeq
where $\Lg = \on{Lie} (G)$.  We are interested in the quotient map:
\beqn
f : V \to Q \coloneqq \Cartan / W,
\eeqn
arising from the identification \eqref{eqn-quot}.  We assume that:
\beq\label{eqn-rank}
\on{rank} (G | V) \coloneqq \dim \Cartan - \dim \Cartan^G \geq 1.
\eeq
Let:
\beqn
\Cartan^{reg} = \{ c \in \Cartan \; | \; Z_W (c) = \{ 1 \} \} \;\; \text{and} \;\;
Q^{reg} = f (C^{reg}).
\eeqn
Pick a basepoint $c_0 \in \Cartan^{reg}$ and define:
\beq\label{eqn-X-zero}
\bar c_0 = f (c_0), \;\;
X_0 = f^{-1} (0), \;\;
X_{\bar c_0} = f^{-1} (\bar c_0),
\eeq
where $0 \in \Cartan / W$ is the image of $0 \in \Cartan$.  The stability condition
\eqref{eqn-stability} implies that $X_{\bar c_0} = G \cdot c_0$.

We consider a nearby cycle functor:
\beqn
\psi_f = \psi_f [\bar c_0]: \on{Perv}_G (X_{\bar c_0}) \to
\on{Perv}_G (X_0)_{\bC^*\text{-conic}},
\eeqn
from (shifted) $G$-equivariant local systems on $X_{\bar c_0}$ to $G$-equivariant
$\bC^*$-conic perverse sheaves on $X_0$; both considered with coefficients in
$\bC$.  This functor is defined as a specialization to the asymptotic cone, as in
\cite{Gr2}.  However, in order to justify the notation $\psi_f$, we wish to ensure
that the functor $\psi_f [\bar c_0]$ ``varies in a local system'' over the regular locus
$Q^{reg} \ni \bar c_0$.  To accomplish this, we impose the following further
condition on the representation $G | V$.  We assume that either $G | V$ is
{\em visible}, meaning that the zero-fiber $X_0$ consists of finitely many
$G$-orbits, or we have $\on{rank} (G | V) = 1$ (see equation \eqref{eqn-rank}).
Among other things, this ensures that the pure braid group:
\beqn
P\!B_W \coloneqq \pi_1 (\Cartan^{reg}, c_0),
\eeqn
acts on the functor $\psi_f [\bar c_0]$.

Our main result, Theorem \ref{thm-main}, computes the functor $\psi_f$ for
local systems of rank one, subject to two additional technical assumptions (see
\eqref{eqn-locality-assumption} and \eqref{eqn-tilde-r}).  Specifically, given a
character:
\beq\label{eqn-chi-I}
\chi : I \coloneqq \pi_1^G (X_{\bar c_0}, c_0) \to \bG_m \, ,
\eeq
we write $\cL_\chi$ for the corresponding rank one $G$-equivariant local system
on $X_{\bar c_0}$, and we let:
\beqn
P_\chi = \psi_f (\cL_\chi [-]),
\eeqn
where $[-]$ denotes the appropriate shift.  The statement of Theorem
\ref{thm-main} takes the following form:
\beq\label{eqn-main}
\FT P_\chi \cong \on{IC} ((V^*)^{rs}, \cM_\chi).
\eeq
Here, $\FT : \on{Perv}_G (V)_{\bC^*\text{-conic}} \to
\on{Perv}_G (V^*)_{\bC^*\text{-conic}}$ is the topological Fourier
transform functor, $(V^*)^{rs}$ is the union of all closed $G$-orbits of maximal
dimension in $V^*$, and the RHS is the IC-extension of a certain local system
$\cM_\chi$ on $(V^*)^{rs}$, with $\on{rank} (\cM_\chi) = |W|$.  The stability
condition \eqref{eqn-stability} implies that $(V^*)^{rs} \subset V^*$ is an open
subset, and the local system $\cM_\chi$ is specified by equation \eqref{eqn-M-chi}.

The fact that the sheaf $P_\chi$ admits a description of the form \eqref{eqn-main} 
follows readily from the results of \cite{Gr2} and \cite{Gr1}.  Thus, the main content
of Theorem \ref{thm-main} is in describing the local system $\cM_\chi$.  This 
description proceeds by reduction to the case of rank one.  We now give a
partial sketch of the construction of $\cM_\chi$.

Let $V^{rs} = f^{-1} (Q^{reg})$.  By the stability condition \eqref{eqn-stability}
and the additional assumption \eqref{eqn-locality-assumption},
the subset $V^{rs} \subset V$ is the union of all closed $G$-orbits of maximal
dimension.  We use a suitable Hermitian inner product on $V$ to pick a basepoint
$l_0 \in (V^*)^{rs}$, corresponding to the basepoint $c_0 \in V^{rs}$, and to identify
the equivariant fundamental group $\pi_1^G ((V^*)^{rs}, l_0)$ with $\widetilde B_W 
\coloneqq \pi_1^G (V^{rs}, c_0)$.  Thus, we can view $\cM_\chi$ as a
representation of the group $\widetilde B_W$.  This group fits into an exact
sequence:
\beq\label{eqn-widetilde-B-W-ES}
1 \lra I \lra \widetilde B_W \mathop{\lra}^{\tilde q} B_W \lra 1,
\eeq
where $B_W \coloneqq \pi_1 (Q^{reg}, \bar c_0)$ is the braid group of $W$.

Next, we define a subgroup $W_\chi^0 \subset W $ as follows.  The Weyl 
group $W$ acts naturally on the characters $\hat I \coloneqq \on{Hom}
(I, \bG_m)$.  Let $W_\chi = \on{Stab}_W (\chi)$.  Let
$\{ \Cartan_\alpha \}_{\alpha \in A}$ ($A$ a finite index set) be the set of all
reflection hyperplanes for $W$.  For each $\alpha \in A$, let $W_\alpha =
\on{Stab}_W (\Cartan_\alpha)$ and let $W_{\alpha, \chi} = W_\alpha \cap 
W_\chi$.  Then $W_\chi^0 \subset W$ is the subgroup generated by all
the $\{ W_{\alpha, \chi} \}_{\alpha \in A}$.  In other words, $W_\chi^0$ is the
largest complex reflection group contained in $W_\chi$.

Let $p : B_W \to W$ be the natural map, and let:
\beq\label{eqn-B-chi-zero}
B_W^{\chi, 0} = p^{-1} (W_\chi^0), \;\;\;\;
\widetilde B_W^{\chi, 0} = \tilde q^{-1} (B_W^{\chi, 0}).
\eeq
The $\widetilde B_W$-representation $\cM_\chi$ is obtained by inducing from
the subgroup $\widetilde B_W^{\chi, 0} \subset \widetilde B_W$, i.e., we have:
\beq\label{eqn-M-chi-induced}
\cM_\chi = \bC [\widetilde B_W] \otimes_{\bC [\widetilde B_W^{\chi, 0}]}
\cM_\chi^0 \, ,
\eeq
for some $\widetilde B_W^{\chi, 0}$-representation $\cM_\chi^0$, with
$\dim \cM_\chi^0 = |W_\chi^0|$.

To describe the representation $\cM_\chi^0$, let $A_\chi^0 \subset A$ be the set
of all reflection hyperplanes for $W_\chi^0$, i.e., the set of all $\alpha \in A$ with
$W_{\alpha, \chi} \neq \{ 1 \}$.  To each $\alpha \in A$ we associate a monic
polynomial $\bar R_{\chi, \alpha} \in \bC [z]$ of degree $|W_{\alpha, \chi}|$.
Let $B_{W_\chi^0}$ be the braid group associated to the complex reflection
group $W_\chi^0$.  Using the polynomials $\bar R_{\chi, \alpha}$ for $\alpha \in
A_\chi^0$, we define a Hecke algebra $\cH_{W_\chi^0}$ as a quotient of the
group algebra $\bC [B_{W_\chi^0}]$.  The algebra $\cH_{W_\chi^0}$ is naturally
a $\widetilde B_W^{\chi, 0}$-representation, and we have:
\beq\label{eqn-M-chi-zero}
\cM_\chi^0 \cong \cH_{W_\chi^0} \otimes
\bC_{\hat\chi \,\cdot\, \tilde \rho \,\cdot\, \tau} \, ,
\eeq
where $\hat\chi \cdot \tilde \rho \cdot \tau : \widetilde B_W^{\chi, 0} \to \bG_m$
is a certain character, obtained as a product of three ingredients, introduced
in \eqref{eqn-hat-chi}, \eqref{eqn-tilde-rho}, and \eqref{eqn-char-tau}, respectively.
In particular, the character $\tilde \rho$ encodes the polynomials
$\bar R_{\chi, \alpha}$ for $\alpha \in A - A_\chi^0$.

In conclusion, we briefly indicate the origin on the polynomials
$\bar R_{\chi, \alpha}$.  For each $\alpha \in A$, there is an associated
stable polar representation $G_\alpha | V_\alpha$ of rank one.  Namely,
$G_\alpha \subset G$ is the connected subgroup corresponding to the
stabilizer $\Lg_\alpha \coloneqq Z_\Lg (\Cartan_\alpha)$, and $V_\alpha =
\Cartan \oplus \Lg_\alpha \cdot \Cartan \subset V$.  Let $f_\alpha : V_\alpha
\to Q_\alpha \coloneqq V_\alpha \inv G_\alpha$ be the quotient map, and let:
\beqn
X_{0, \alpha} = f_\alpha^{-1} (0), \;\;
\breve c_0 = f_\alpha (c_0), \;\;
X_{\breve c_0, \alpha} = f_\alpha^{-1} (\breve c_0), \;\;
I_\alpha = \pi_1^{G_\alpha} (X_{\breve c_0, \alpha}, c_0).
\eeqn
The character $\chi \in \hat I$ restricts to a character $\chi_\alpha \in \hat I_\alpha$.
We can thus apply the construction of the nearby cycle sheaf $P_\chi$ to the data
$(G_\alpha | V_\alpha, \chi_\alpha)$, to obtain a sheaf $P_{\chi^{}_\alpha} \in
\on{Perv}_{G_\alpha} (X_{0, \alpha})$.  The analog of the isomorphism
\eqref{eqn-main} for the sheaf $P_{\chi^{}_\alpha}$ defines a local system
$\cM_{\chi^{}_\alpha}$ on $(V_\alpha^*)^{rs} \subset V_\alpha^*$.  The basepoint
$l_0 \in (V^*)^{rs}$ restricts to a basepoint $l_{0, \alpha} \in (V_\alpha^*)^{rs}$,
and the polynomial $\bar R_{\chi, \alpha}$ is defined as the minimal polynomial of
the holonomy of $\cM_{\chi^{}_\alpha}$ along a suitable element of
$\pi_1^{G_\alpha} ((V_\alpha^*)^{rs}, l_{0, \alpha})$.

Thus, we see that Theorem \ref{thm-main} stops short of giving an explicit
description of the local system $\cM_\chi$.  Rather, it describes $\cM_\chi$
in terms of the much simpler local systems $\cM_{\chi^{}_\alpha}$ for
$\alpha \in A_\chi^0$.  However, in many examples, the geometry of the
rank one representations $G_\alpha | V_\alpha$ is tractable, leading to an
explicit description of $\cM_\chi$.  See, for example, the discussion of
\cite{VX2} and \cite{VX3} in Section \ref{subsec-motivation} below.

\subsection{Motivation and relationship to prior work}
\label{subsec-motivation}

The motivating example for this work is the class of representations arising from
graded Lie algebras, studied in \cite{Vi}, and the motivating problem is to
understand character sheaves in this setting.  More precisely, let $\widetilde G$
be a reductive group over $\bC$, and let $\theta$ be an
automorphism:
\beq\label{eqn-theta}
\theta : \widetilde G \to \widetilde G,
\eeq
of finite order $m > 1$.  We have an eigenspace decomposition:
\beq\label{eqn-grading}
\widetilde \Lg \coloneqq \on{Lie} (\widetilde G) = \bigoplus_{i = 0}^{m-1} \,
\widetilde \Lg_i \, ,
\eeq
where $\theta |_{\widetilde \Lg_i} = \exp (2 \pi \bi \, i / m)$ and $\bi = \sqrt{-1}$.
Decomposition \eqref{eqn-grading} defines a grading on the Lie algebra
$\widetilde \Lg$.  Let $\widetilde G^\theta \subset \widetilde G$ be the group
of fixed points of $\theta$, and let $\widetilde G^{\theta, 0}$ be the identity
component of $\widetilde G^\theta$.  Note that we have $\on{Lie}
(\widetilde G^\theta) = \widetilde \Lg_0$.  The adjoint action of $\widetilde G$
restricts to an action of $\widetilde G^{\theta, 0}$ on each of the
$\widetilde \Lg_i$.  The case of a general $i \in \{1, \dots, m-1 \}$ is readily
reduced to the case $i = 1$.  We will focus on the representation $G | V$,
with $G = \widetilde G^{\theta, 0}$ and $V = \widetilde \Lg_1$.  This
representation is polar and visible, but not necessarily stable in the sense of
\eqref{eqn-stability}.  However, in many important cases, the representation
$G | V$ is stable, and then it satisfies the hypotheses of Theorem \ref{thm-main}.
Our results can be extended to some nonstable situations (see the discussion
of \cite{VX3} below), but we do not explore this direction in the present paper.

By a character sheaf on $V^*$ we mean a simple perverse sheaf $P \in
\on{Perv}_G (V^*)_{\bC^*\text{-conic}}$ with nilpotent singular support:
\beqn
SS (P) \subset V^* \times X_0 \subset V^* \times V,
\eeqn
where $X_0 \subset V$ is the null-cone, defined as in \eqref{eqn-X-zero}.
In situations where the representation $G | V$ is stable, equation \eqref{eqn-main}
is a rich source of character sheaves with full support on $V^*$.  Indeed, for
every character $\chi$ and every simple subquotient $\cK$ of $\cM_\chi$, the
IC-extension $\on{IC} ((V^*)^{rs}, \cK)$ is a character sheaf.  We should note
that, in all of the examples of this form that we have studied, the fundamental
group $I$ of equation \eqref{eqn-chi-I} is abelian (see Remark \ref{rmk-I-abelian}).
This partly explains why, in this paper, we restrict attention to local systems
$\cL_\chi$ of rank one.

In \cite{GVX1}, we considered the special case of the above situation arising from
an involutive automorphism $\theta$ as in \eqref{eqn-theta}, i.e., the case $m = 2$.
In fact, in that paper, we studied the sheaf $P_\chi$ equivariantly with respect to
the larger group $\widetilde G^\theta \supset \widetilde G^{\theta, 0}$.  The main
result of \cite{GVX1} (\cite[Theorem 3.6]{GVX1}) is essentially equivalent to the
corresponding claim of Theorem \ref{thm-main} (see Remark \ref{rmk-GVX}).
The papers \cite{CVX} and \cite{VX1} use the results of \cite{GVX1} to provide
a classification of character sheaves for classical symmetric pairs.  The papers
\cite{VX2} and \cite{VX3} use the results of the present paper to study character
sheaves arising from higher order automorphisms, yielding a classification of
cuspidal character sheaves for Vinberg's type I classical graded Lie algebras.

The paper \cite{VX2} studies character sheaves arising from the so-called GIT
stable gradings.  A grading is GIT stable if the corresponding representation
space $V$ contains a $G$-stable vector in the sense of geometric invariant
theory, that is, a vector $v \in V$ such that the orbit $G \cdot v$ is closed and
the centralizer $G_v$ is finite.  GIT stability implies the stability of $G | V$ in
the sense of \eqref{eqn-stability}, but not vice versa.  In this setting, subject to
a finite number of exceptions, the geometry of each rank one representation
$G_\alpha | V_\alpha$ is captured by a normal crossings stratification of
$V_\alpha$, and the polynomials $\bar R_{\chi, \alpha}$ can be computed
using the well-known quiver description of perverse sheaves on this stratification.

The paper \cite{VX3} studies character sheaves arising from a class of gradings
that afford cuspidal character sheaves.  This class includes all GIT
stable gradings, as well as some gradings for which the representation $G | V$
is not stable in the sense of \eqref{eqn-stability}, but possesses certain rather
special properties.  With some significant additional argument, the methods of
the present paper can be extended to these nonstable gradings.  In this setting,
the polynomials $\bar R_{\chi, \alpha}$ and their nonstable analogs can be
computed using $\cD$-module techniques, by being related to known b-function
calculations for certain prehomogeneous vector spaces.

The paper \cite{Gr1} considers the case of nearby cycles with constant
coefficients for an arbitrary (not necessarily stable) polar representation
$G | V$, which is visible or of rank one.  In the stable case, the main result
of \cite{Gr1} (\cite[Theorem 3.1]{Gr1}) is quite close, but not identical, to the 
$\chi = 1$ case of Theorem \ref{thm-main} (see Remark \ref{rmk-Gr1}).
In the present paper, we freely borrow the results of \cite{Gr1}, so we can
mostly focus on the aspects of the problem related to the twisting of the local
system $\cL_\chi$.  We should note that the proof of \cite[Theorem 3.1]{Gr1}
contained a gap, which was fixed in \cite{Gr3}, and the argument presented
here depends on this fix (see \cite[Section 3]{Gr3} and Proposition
\ref{prop-cyclic-vector} below).

We now comment on the main difference between the arguments in
\cite{GVX1} and in the present paper.  In both cases, the proof proceeds by
analyzing the Picard-Lefschetz theory of the restriction $l_0 |_{X_{\bar c_0}}$,
for a suitable choice of $l_0 \in (V^*)^{rs}$.  The critical points of this restriction 
are contained in the Cartan subspace $C \subset V$, and are indexed by the
Weyl group $W$.  In the case considered in \cite{GVX1}, the group $W$ is a
Coxeter group.  This enabled us to use a real structure on $\Cartan$, to place
all the critical values of $l_0 |_{X_{c_0}}$ on the real line $\bR \subset \bC$.
This, in turn, enabled us to choose a special system of ``cuts'' (lying in the
upper half-plane), and to define a distinguished Picard-Lefschetz basis up to
sign for the stalk $(\FT P_\chi)^{}_{l_0}$.  The proof of \cite[Theorem 3.6]{GVX1}
proceeded as a calculation in terms of this distinguished basis.  In the present
paper, we consider a situation where $W$ need not be a Coxeter group.  For
a general complex reflection group, we do not know how to arrange the critical
values of $l_0 |_{X_{\bar c_0}}$ in any special way, to produce a distinguished
basis.  Thus, to prove Theorem \ref{thm-main}, we had to find a ``more invariant''
argument.  In particular, the argument presented here differs substantially from
the argument in \cite{GVX1}, even in the case considered in that paper.

We also comment on the decision to write this paper in the generality of a
stable polar representation, as opposed to focusing on the corresponding
class of graded Lie algebras of \cite{Vi}.  We feel that the class of polar
representations neatly abstracts the key geometric features of the quotient
map $f : V \to Q$ which are needed for our arguments.  Thus, we hope that
working with polar representations makes the arguments clearer than if we
worked specifically with graded Lie algebras.  In addition, the class of stable
polar representations includes many other examples of potential future interest.
The downside of working with polar representations generally is that they are
rather less well known or well understood than graded Lie algebras.  As a
result, the statement of Theorem \ref{thm-main} contains some features
which are trivial in all of the examples studied by the authors.  See
Remarks \ref{rmk-DK-conj} and \ref{rmk-rho}.  It is possible that these
features are ``phantoms'', which can be ruled out through further study
of polar representations.  Regardless, we hope that they do not complicate
the application of our results in specific examples.

\subsection{Contents of this paper}
The paper is organized as follows.  Section \ref{sec-statement} introduces the
ingredients of our main Theorem \ref{thm-main}, culminating in the statement
of the theorem in Section \ref{subsec-statement}.  At this stage, our definition
of the polynomial $\bar R_{\chi, \alpha}$, for $\alpha \in A$, remains contingent
on some assertions about the representation $G_\alpha | V_\alpha$, whose
proofs are deferred to Section \ref{sec-carousel}.  See Propositions
\ref{prop-monic} and \ref{prop-R-alpha-vanishing}.  Our definition of
$\bar R_{\chi, \alpha}$ does not lead to a general explicit formula.  However,
Proposition \ref{prop-R-chi-alpha-at-zero}, which is also proved in Section
\ref{sec-carousel}, gives an explicit formula for the constant term
$\bar R_{\chi, \alpha} (0)$.  This formula leads to a number of other
explicit formulas later in the paper, such as equation \eqref{eqn-rho-explicit}
for the character $\rho$, underlying the ingredient $\tilde \rho$ of equation
\eqref{eqn-M-chi-zero}.

Section \ref{sec-prelim} furnishes the proofs of several preliminary results
stated in Section \ref{sec-statement}, needed to formulate Theorem
\ref{thm-main}, and providing context for this theorem.  In particular,
in Section \ref{subsec-hessians}, we discuss the Hessian $\cH [v, l]$
of the restriction $l |_{G \cdot v}$, for $v \in \Cartan$ and
$l \in (\Lg \cdot \Cartan)^\perp \subset V^*$.  Such Hessians play
a key role in the proof of Theorem \ref{thm-main}, which is given in
Sections \ref{sec-fourier}-\ref{sec-proof}.

In Section \ref{sec-fourier}, we invoke the results of \cite{Gr1} and \cite{Gr2}
to show that the Fourier transform $\FT P_\chi$ is an IC-sheaf with full
support on $V^*$.  More precisely, we establish isomorphism \eqref{eqn-main}
with $\cM_\chi = M (P_\chi)$, the Morse local system of $P_\chi$ at the origin
(see Proposition \ref{prop-fourier}).  We then begin the study of the local
system $M (P_\chi)$ on $(V^*)^{rs}$, using the methods of Picard-Lefschetz
theory.  Continuing with the basepoint $l_0 \in (V^*)^{rs}$, we interpret the
stalk $M_{l_0} (P_\chi)$ as a relative homology group of the general fiber
$X_{\bar c_0}$ with coefficients in $\cL_\chi$, and recall a standard
construction of Picard-Lefschetz classes in this group.  Using this
interpretation, we obtain our first three results about the local system
$M (P_\chi)$.  First, we show that $\on{rank} M (P_\chi) = |W|$.  Second,
we invoke the results of \cite{Gr1} and \cite{Gr3} to show that there exists
a cyclic vector $u_0 \in M_{l_0} (P_\chi)$ for the microlocal monodromy
(i.e., the holonomy) action of $\widetilde B_W \cong \pi_1^G ((V^*)^{rs}, l_0)$
(see Proposition \ref{prop-cyclic-vector}).  This existence of a cyclic vector is a
key step in identifying the microlocal monodromy as an induced representation,
as in \eqref{eqn-M-chi-induced}.  Finally, we compute the microlocal monodromy
action of the subgroup $I \subset \widetilde B_W$ (see exact sequence
\eqref{eqn-widetilde-B-W-ES}) on Picard-Lefschetz classes in $M_{l_0} (P_\chi)$
(see Proposition \ref{prop-microlocal-inertia}).

In Section \ref{sec-monodromy-construct}, we introduce the monodromy in
the family representation:
\beq\label{eqn-introduce-monodromy}
\mu : B_W^\chi \coloneqq p^{-1} (W_\chi) \to \on{Aut} (P_\chi).
\eeq
This structure arises out of the dependence of the sheaf $P_\chi$ on the choice
of the basepoint $c_0 \in \Cartan^{reg}$, and it plays a key role in the rest of the
argument.  In Section \ref{sec-monodromy}, we investigate the monodromy
action of a braid generator in $B_W^{\chi, 0} \subset B_W^\chi$ (see
\eqref{eqn-B-chi-zero}).  More precisely, for $\alpha \in A$, we let
$\degalpha = |W_\alpha| / |W_{\alpha, \chi}|$.  If $\sigma_\alpha \in B_W$ is
a braid generator corresponding to the hyperplane $C_\alpha \subset C$
(as defined in Section \ref{subsec-generators}), then we have
$\sigma_\alpha^\degalpha \in B_W^{\chi, 0}$, and we consider the operator
$\mu (\sigma_\alpha^\degalpha) \in \on{Aut} (P_\chi)$.  We interpret the minimal
polynomial $\bar R^\mu_{\chi, \alpha}$ of this operator in terms of the rank one
representation $G_\alpha | V_\alpha$ (see Proposition \ref{prop-family-min-poly}),
and we use the geometry of the full representation $G | V$ to establish the degree
bound:
\beq\label{eqn-mu-deg-bound}
\deg \bar R^\mu_{\chi, \alpha} \leq |W_{\alpha, \chi}|,
\eeq
(see Proposition \ref{prop-degree-bound}).

In Section \ref{sec-carousel}, we make a detailed study of the rank one
representation $G_\alpha | V_\alpha$, $\alpha \in A$.  Picard-Lefschetz theory in
this setting can be understood quite explicitly, in terms of the so-called carousel
technique of singularity theory (see Remark \ref{rmk-carousel}).  This enables us
to relate the microlocal  monodromy and the monodromy in the family actions on
the Morse group $M_{l_{0, \alpha}} (P_{\chi^{}_\alpha})$, where $l_{0, \alpha} =
l_0 |_{V_\alpha} \in (V_\alpha^*)^{rs}$ (see Proposition \ref{prop-carousel}).
We then use this relationship, together with the degree bound
\eqref{eqn-mu-deg-bound}, to complete the definition of the polynomial
$\bar R_{\chi, \alpha}$, and to relate it to the polynomial
$\bar R^\mu_{\chi, \alpha}$ (see equation \eqref{eqn-apply-theta} and
Remark \ref{rmk-apply-theta}).

In Section \ref{sec-microlocal-generator}, we partially extend the relationship
between the microlocal monodromy and the monodromy in the family,
established in Section \ref{sec-carousel}, to the setting of the full representation
$G | V$ (see Proposition \ref{prop-microlocal-generator}).  A key input here is
the discussion of the Hessians in Section \ref{subsec-hessians}.  In Section
\ref{sec-proof}, we assemble the geometric inputs of Sections
\ref{sec-fourier}-\ref{sec-microlocal-generator} to complete the proof of Theorem
\ref{thm-main}.  The argument here is based on the interplay between two
commuting actions on the Morse group $M_{l_0} (P_\chi)$: the microlocal
monodromy action of $\widetilde B_W$ and the monodromy in the family
action of $B_W^{\chi, 0}$.  Finally, in Section \ref{sec-conj}, we formulate a
conjecture regarding the vanishing of certain intersection numbers, which
provides some context for the statement and the proof of Theorem \ref{thm-main},
form the point of view of the classical Picard-Lefschetz formula (see Conjecture
\ref{conj-vanishing}).

A number of arguments in this paper are closely parallel, in whole or in part, to
corresponding arguments in \cite{GVX1}.  In those cases, we refer to \cite{GVX1}
and indicate the necessary adaptations, rather than repeating the entire
argument.  See, for example, the proofs of Propositions \ref{prop-regular-exists},
\ref{prop-A-F2}, and \ref{prop-family-min-poly}.

{\bf Acknowledgements.} We thank Michel Brou\'e, David Massey, and Jean
Michel for helpful pointers to the literature.

\section{Statement of the main theorem}
\label{sec-statement}

\subsection{The polar representation \texorpdfstring{$G | V$}{Lg}}
\label{subsec-polar-rep}

Let $G | V$ be a polar representation of a connected reductive algebraic group
over $\bC$ (see \cite{DK}).  Let $\Cartan \subset V$ be a Cartan subspace, and
let $W = N_G (\Cartan) / Z_G (\Cartan)$ be the associated Weyl group; it is a
finite complex reflection group acting on $\Cartan$.  We assume that:
\beqn
\on{rank} (G | V) \coloneqq \dim \Cartan - \dim \Cartan^G \geq 1.
\eeqn
Let $Q = \Cartan / W = V \inv G$, and let $f : V \to Q$ be the quotient map.
The main properties of $G | V$ related to the geometry $f$ are summarized in
\cite[Section 2.3]{Gr1}.

The representation $G | V$ is called visible if the zero-fiber $X_0 = f^{-1} (0)$
consists of finitely many $G$-orbits.  We assume that:
\beq\label{eqn-visibility}
\text{either $G | V$ is visible or $\on{rank} (G | V) = 1$.}
\eeq
We also assume that $G | V$ is stable in the sense of \cite[p. 506]{DK}, i.e.,
that for every $c \in \Cartan$ with $Z_W (c) = \{ 1 \}$ and every $v \in V$, we
have $\dim G \cdot c \geq \dim G \cdot v$.  Equivalently, this condition can be
expressed by equation \eqref{eqn-stability} (see \cite[Corollary 2.5]{DK}).

We now introduce some notation related to the complex reflection group $W$.
Let $\{ C_\alpha \}_{\alpha \in A}$ be the set of all reflection hyperplanes for
$W$ (where $A$ is a finite index set).  We write:
\beq\label{eqn-C-sing}
\Cartan^{sing} = \bigcup_{\alpha \in A} \Cartan_\alpha \, .
\eeq
Let $\Lg = \on{Lie} (G)$.  For every $\alpha \in A$, let $\Lg_\alpha =
Z_\Lg (\Cartan_\alpha)$, let $G_\alpha \subset G$ be the connected
subgroup corresponding to $\Lg_\alpha \subset \Lg$, and let:
\beq\label{eqn-V-alpha}
V_\alpha = \Cartan \oplus \Lg_\alpha \cdot \Cartan \subset V.
\eeq
The subspace $V_\alpha$ is preserved by the action of $G_\alpha$, and the
representation $G_\alpha | V_\alpha$ is also polar and stable, with Cartan
subspace $\Cartan$ (see \cite[Theorem 2.12]{DK}).  Let:
\beq\label{eqn-W-alpha}
W_\alpha = N_{G_\alpha} (\Cartan) / Z_{G_\alpha} (\Cartan),
\eeq
be the Weyl group associated to $G_\alpha | V_\alpha$.  Note that we have
$\on{rank} (G_\alpha | V_\alpha) \leq 1$, and therefore, the group $W_\alpha$
is cyclic, with $W_\alpha = \{ 1 \}$ iff $\on{rank} (G_\alpha | V_\alpha) = 0$ (in
which case, we have $\Lg_\alpha = Z_\Lg (\Cartan)$ and $V_\alpha = \Cartan$).
We make the following further ``locality'' assumption:
\beq\label{eqn-locality-assumption}
W_\alpha = Z_W (\Cartan_\alpha) \text{ for every } \alpha \in A.
\eeq
This assumption guarantees that:
\beq\label{eqn-rank-V-alpha}
\on{rank} (G_\alpha | V_\alpha) = 1,
\eeq
for every $\alpha \in A$.  It also ensures that our definition of $\Cartan^{sing}$
in equation \eqref{eqn-C-sing} is consistent with the definition of $c_{sing}
\subset c$ in \cite{DK} (see top of \cite[p. 515]{DK}).

\begin{remark}\label{rmk-DK-conj}{\em
Assumption \eqref{eqn-locality-assumption} holds in all examples known to
the authors.  Moreover, in \cite[p. 521, Conjecture 2]{DK}, Dadok and Kac
conjecture that, for every polar representation $G | V$ of a connected $G$,
the Weyl group $W$ is generated by the subgroups
$\{ W_\alpha \}_{\alpha \in A}$, defined as in \eqref{eqn-W-alpha}.
This property of $G | V$ is equivalent to our assumption
\eqref{eqn-locality-assumption}, as can be seen using
\cite[Corollary 2.36]{BCM}.}
\end{remark}

\subsection{The fundamental group \texorpdfstring{$\pi_1^G (V^{rs}, c_0)$}{Lg}}
\label{subsec-pi-one}

Let:
\beq\label{eqn-C-reg}
\Cartan^{reg} = \Cartan - \Cartan^{sing}, \;\;
Q^{reg} = f (\Cartan^{reg}) = \Cartan^{reg} / W, \;\;
V^{rs} = f^{-1} (Q^{reg}).
\eeq
By a theorem of Steinberg (\cite[Theorem 1.5]{St}; see also \cite{Leh}), we have:
\beqn
\Cartan^{reg} = \{ c \in \Cartan \; | \; Z_W (c) = \{ 1 \} \}.
\eeqn
The stability assumption \eqref{eqn-stability} implies that:
\beq\label{eqn-V-rs}
V^{rs} = G \cdot \Cartan^{reg}.
\eeq
Assumption \eqref{eqn-locality-assumption} further implies that $V^{rs}$ is the
union of all closed $G$-orbits of maximal dimension in $V$.  We can therefore
refer to $V^{rs} \subset V$ as the regular semisimple locus, and be consistent
with the terminology of \cite{DK}.  In this subsection, we describe the
$G$-equivariant fundamental group of $V^{rs}$.

Pick a basepoint $c_0 \in \Cartan^{reg}$, and let $\bar c_0 = f (c_0) \in Q^{reg}$.
We define:
\beqn
P\!B_W \coloneqq \pi_1 (\Cartan^{reg}, c_0), \;\;
B_W \coloneqq \pi_1 (Q^{reg}, \bar c_0), \;\;
\widetilde B_W \coloneqq \pi_1^G (V^{rs}, c_0).
\eeqn
Here, $B_W$ is the braid group associated to the complex reflection group $W$, 
$P\!B_W \subset B_W$ is the subgroup of pure braids, and we use the same 
conventions on equivariant fundamental groups as in \cite[Section 2.3]{GVX1}.
Namely, given an algebraic group $\cG$, we write $\cG^0 \subset \cG$ for the 
identity component, and we have $\pi_1^\cG (pt) = \cG / \cG^0$.  Further, if $\cG$
acts on a connected variety  $\cX$ with a basepoint $x_0 \in \cX$, we identify
$\cG$-equivariant local systems on $\cX$ with left representations of $\pi_1^{\cG}
(\cX, x_0)$.  In other words, we multiply loops in $\pi_1^{\cG} (\cX, x_0)$ by
tracing out the second loop first.  Let $X_{\bar c_0} = f^{-1} (\bar c_0)$ and
define:
\beq\label{eqn-I}
I \coloneqq \pi_1^G (X_{\bar c_0}, c_0) = Z_G (\Cartan) / Z_G (\Cartan)^0,
\eeq
(where $Z_G (\Cartan)^0 \subset Z_G (\Cartan)$ is the identity component).
Note that $I$ is a finite group.

\begin{remark}\label{rmk-I-abelian}{\em
In all of the applications of the results of this paper currently envisioned by
the authors, the group $I$ happens to be abelian.  However, we do not know
how generally this is true.  The proof of Theorem \ref{thm-main} would not
be simplified significantly if we assumed that $I$ is abelian.}
\end{remark}

Define:
\beq\label{eqn-tilde-W}
\widetilde W \coloneqq N_G (\Cartan) / Z_G (\Cartan)^0.
\eeq
As in \cite[Section 2.3]{GVX1}, we have a commutative diagram:
\beq
\label{eqn-main-diagram}
\begin{CD}
1 @>>> I @>>> \widetilde B_W @>{\tilde q}>> B_W @>>> 1 \;\,
\\
@. @| @VV{\tilde p}V @VV{p}V @.
\\
1 @>>> I @>>> \widetilde W @>{q}>> W @>>> 1 \, ,
\end{CD}
\eeq
the right square of which is Cartesian, i.e., we have:
\beq
\label{eqn-concrete}
\widetilde B_W \cong \{ (\widetilde w, b) \in \widetilde W \times B_W \; | \;
q (\widetilde w) = p (b) \}.
\eeq
To define the maps in this diagram, note that we have:
\begin{align*}
B_W & =
\pi_1 (\Cartan^{reg} / W, \bar c_0) \cong \pi_1^W (\Cartan^{reg}, c_0), \\
\widetilde B_W & =
\pi_1^G (V^{rs}, c_0) \cong \pi_1^{N_G (\Cartan)}
(\Cartan^{reg}, c_0) \cong \pi_1^{\widetilde W} (\Cartan^{reg}, c_0).
\end{align*}
The maps $p$ and $\tilde p$ are given by mapping $\Cartan^{reg}$ to
a point, the map $q$ is the natural quotient map, and the map $\tilde q$ is
induced by $q$.  Note that the maps $p$ and $\tilde p$ are surjective,
and we have:
\beq\label{eqn-PBW}
\ker (p) = \ker (\tilde p) = P\!B_W.
\eeq

It is helpful to be able to think about the map $p$ and the isomorphism 
\eqref{eqn-concrete} geometrically, in terms of loops.  Let $\gamma : [0,1] \to
Q^{reg}$ be a continuous path with $\gamma (0) = \gamma (1) = \bar c_0$.
Then $\gamma$ represents an element of $B_W$.  Let $\widetilde\gamma : 
[0, 1] \to \Cartan^{reg}$ be the unique lift of $\gamma$ with
$\widetilde\gamma (0) = c_0$.  Then the image $p (\gamma) \in W$ is defined 
by the equation:
\beq\label{eqn-map-p}
\widetilde\gamma (1) = p (\gamma)^{-1} \, c_0 \, .
\eeq
See \cite[Section 2.2]{Gr3} for a detailed discussion of the appearance of
the inverse in the RHS of equation \eqref{eqn-map-p} (in the notation of 
\cite{Gr3}, the map $p$ would be called $\eta_{c_0}^\flat$).

Suppose now $\widetilde w \in \widetilde W$ is an element with 
$q (\widetilde w) = p (\gamma)$.  Pick a representative $g_1 \in N_G (\Cartan)$
of $\widetilde w$ and a path $g : [0, 1] \to G$ with $g (0) = 1$ and $g (1) = g_1$.
The pair $(g, \gamma)$ defines a path $\Gamma : [0, 1] \to V^{rs}$ as follows:
\beq\label{eqn-concrete-loop}
\Gamma (t) = 
\left\{ \begin{array}{ll}
\widetilde\gamma (2 t) \in \Cartan^{reg} & \text{ for } t \in [0, 1/2] \\
g (2 t - 1) \, \widetilde\gamma (1) \in X_{\bar c_0} & \text{ for } t \in [1/2, 1].
\end{array}\right.
\eeq
By equation \eqref{eqn-map-p}, we have $\Gamma (1) = q (\widetilde w) \,
p (\gamma)^{-1} \, c_0 = p (\gamma) \, p (\gamma)^{-1} \, c_0 = c_0 =
\Gamma (0)$.  Thus, the path $\Gamma$ is a closed loop, representing an
element of $\widetilde B_W$.  It is not hard to check that  $\tilde p (\Gamma) =
\widetilde w$ and $\tilde q (\Gamma) = \gamma$.  Thus, the element
$\Gamma \in \widetilde B_W$ corresponds to the pair $(\widetilde w, \gamma)
\in \widetilde W \times B_W$ under the isomorphism \eqref{eqn-concrete}.

Let $\hat I \coloneqq \on{Hom} (I, \bG_m)$ be the set of characters of $I$.
It follows from diagram~\eqref{eqn-main-diagram} that the conjugation action of
$\widetilde B_W$ on itself gives rise to an action of $B_W$ on $\hat I$, and that 
this action factors through the map $p : B_W \to W$.  We will denote these
actions of $B_W$ and $W$ on $\hat I$ by ``$\; \cdot \;$'', so that we have:
\beq\label{eqn-factor-action}
b \cdot \chi = p (b) \cdot \chi \in \hat I \;\; \text{for all} \;\;
b \in B_W \;\; \text{and} \;\; \chi \in \hat I.
\eeq
In the event the group $I$ is abelian (see Remark \ref{rmk-I-abelian}), we also
obtain actions of $B_W$ and $W$ on $I$, which are compatible with the actions
on $\hat I$.

\subsection{Braid generators}
\label{subsec-generators}

We will need the following construction of elements of $B_W$.
Pick an $\alpha \in A$.  Let:
\beq\label{eqn-not-alpha}
A_\notalpha = A - \{ \alpha \}, \;\;\;\;
\Cartan_\notalpha \; = \; \bigcup_{\beta \in A_\notalpha} \Cartan_\beta \, , \;\;\;\;
\Cartan_\alpha^{reg} \; = \; \Cartan_\alpha - \Cartan_\notalpha \, .
\eeq
Let $s_\alpha \in W_\alpha$ be the counter-clockwise primitive generator, and let:
\beq\label{eqn-n-alpha}
n_\alpha = |W_\alpha|.
\eeq
Pick a point $c_\alpha \in C_\alpha^{reg}$ and a nearby point
$c_{\alpha, 1} \in \Cartan^{reg}$, such that:
\beq\label{eqn-c-alpha}
c_\alpha = \frac{1}{n_\alpha} \, \cdot \,
\sum_{w \in W_\alpha} w \, c_{\alpha, 1} \, .
\eeq
Let $\Gamma_\alpha [c_{\alpha, 1}] : [0, 1] \to \Cartan^{reg}$ be the continuous
path given by:
\beq\label{eqn-gamma-alpha}
\Gamma_\alpha [c_{\alpha, 1}] \, (t) = c_\alpha +
\exp (2 \pi \bi \, t / n_\alpha) (c_{\alpha, 1} - c_\alpha).
\eeq
Note that $\Gamma_\alpha [c_{\alpha, 1}] \, (0) = c_{\alpha, 1}$ and
$\Gamma_\alpha [c_{\alpha, 1}] \, (1) = s_\alpha \, c_{\alpha, 1}$.  Thus, the
composition $f \circ \Gamma_\alpha [c_{\alpha, 1}] : [0, 1] \to Q^{reg}$ is a
closed loop.  Pick a continuous path $\Gamma : [0, 1] \to \Cartan^{reg}$ with
$\Gamma (0) = c_0$ and $\Gamma (1) = c_{\alpha, 1}$.  Define:
\beq\label{eqn-sigma-alpha}
\sigma_\alpha [\Gamma] = (f \circ \Gamma^{-1}) \star
(f \circ \Gamma_\alpha [c_{\alpha, 1}]) \star (f \circ \Gamma) \in B_W \, ,
\eeq
where ``$\; \star \;$'' denotes the composition of paths.  By equation
\eqref{eqn-map-p}, we have:
\beq\label{eqn-p-sigma-alpha}
p (\sigma_\alpha [\Gamma]) = s_\alpha^{-1} \, .
\eeq
We will refer to the element $\sigma_\alpha [\Gamma] \in B_W$ as a
counter-clockwise braid generator for $\alpha$, and we will write $B_W [\alpha]
\subset B_W$ for the set of all such counter-clockwise braid generators for
$\alpha$.  Note that any two elements $\sigma_1, \sigma_2 \in B_W [\alpha]$
are conjugate to each other by an element of $P\!B_W$, i.e.:
\beq\label{eqn-conjugate}
\forall \, \sigma_1, \sigma_2 \in B_W [\alpha], \;\;
\exists \, b \in P\!B_W \; : \; \sigma_2 = b \, \sigma_1 \, b^{-1}.
\eeq
Note also that, for every $\alpha_1 \in A$, $\sigma_1 \in B_W [\alpha_1]$, and
$b \in B_W$, we have:
\beq\label{eqn-conjugate-two}
b \, \sigma_1 \, b^{-1} \in B_W [\alpha_2],
\eeq
where $\alpha_2 = p (b) \, \alpha_1 \in A$.

\subsection{The concept of a regular splitting}
\label{subsec-reg-split}

To state our main result, we will need another piece of structure related to 
diagram \eqref{eqn-main-diagram}.  Namely, a splitting of the top row of
\eqref{eqn-main-diagram} satisfying a certain ``locality'' condition with respect
to the braid generators in $B_W$.

Recall the subgroup $G_\alpha \subset G$, $\alpha \in A$, introduced following
\eqref{eqn-C-sing}.  For each $\alpha \in A$, define:
\beq\label{eqn-I-alpha}
I_\alpha \coloneqq Z_{G_\alpha} (\Cartan) / Z_{G_\alpha} (\Cartan)^0, \;\;
\widetilde W_\alpha \coloneqq N_{G_\alpha} (\Cartan) /
Z_{G_\alpha} (\Cartan)^0,
\eeq
(cf. \eqref{eqn-I} and \eqref{eqn-tilde-W}).
We have a natural projection $q_\alpha : \widetilde W_\alpha \to W_\alpha$,
and a short exact sequence:
\beq\label{eqn-seq-alpha}
1 \lra I_\alpha \lra \widetilde W_\alpha \xrightarrow{\; q_\alpha \;}
W_\alpha \lra 1 \, .
\eeq
Note that we have $Z_{G_\alpha} (\Cartan)^0 = Z_G (\Cartan)^0$.  Therefore,
we have inclusions $\widetilde W_\alpha \hookrightarrow \widetilde W$ and
$I_\alpha \hookrightarrow I$.  We will use these inclusions to view $\widetilde 
W_\alpha$ and $I_\alpha$ as subgroups of $\widetilde W$ and $I$, respectively.
Note that, by construction:
\beq\label{eqn-I-invariance}
\text{for every $x \in I$ and $\alpha \in A$, we have
$x \, \widetilde W_\alpha \, x^{-1} = \widetilde W_\alpha$ and
$x \, I_\alpha \, x^{-1} = I_\alpha \,$.}
\eeq

\begin{remark}{\em
In the special case where $G | V$ comes from an involutive automorphism
$\theta$, as in Section \ref{subsec-motivation}, it is known that the group $I$
is generated by the subgroups $\{ I_\alpha \}_{\alpha \in A}$.  See
\cite[Theorem 7.55]{K} and \cite[Remark 2.4]{GVX1}.  We do not know
whether this is the case for a general $G | V$ as in this paper.}
\end{remark}

\pagebreak

\begin{definition}\label{defn-reg-split}
Let $\tilde r : B_W \to \widetilde B_W$ be a homomorphism which splits the
top row of diagram \eqref{eqn-main-diagram}, i.e., we have $\tilde q \circ
\tilde r = \on{Id}_{B_W}$.  We say that $\tilde r$ is a regular splitting if the
composition $r = \tilde p \circ\tilde r : B_W \to \widetilde W$ satisfies:
\beqn
r (\sigma_\alpha) \in \widetilde W_\alpha \subset \widetilde W,
\eeqn
for every $\alpha \in A$ and every $\sigma_\alpha \in B_W [\alpha]$.
\end{definition}

\begin{remark}\label{rmk-reg-split}{\em
Let $\tilde r : B_W \to \widetilde B_W$ be a homomorphism which splits the top
row of \eqref{eqn-main-diagram}, and let $\alpha \in A$.  Then, by
\eqref{eqn-conjugate} and \eqref{eqn-I-invariance}, we have:
\beqn
r (\sigma_\alpha) \in \widetilde W_\alpha \text{ for some }
\sigma_\alpha \in B_W [\alpha] \iff
r (\sigma_\alpha) \in \widetilde W_\alpha \text{ for every }
\sigma_\alpha \in B_W [\alpha].
\eeqn}
\end{remark}

\begin{prop}\label{prop-regular-exists}
If there exists a nilpotent $x \in X_0$ which is regular for $f$, i.e., we have
$\on{rank} d_x f = \dim Q$, then there exists a regular splitting
$\tilde r : B_W \to \widetilde B_W$.
\end{prop}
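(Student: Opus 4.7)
The plan is to construct the regular splitting $\tilde r$ by using the regular nilpotent $x$ to produce local holomorphic sections of $f$, and to lift counter-clockwise braid generators via these sections together with the $\bC^*$-equivariance of $f$. By \eqref{eqn-concrete}, it will be enough to construct a group homomorphism $r : B_W \to \widetilde W$ with $q \circ r = p$ satisfying $r(\sigma_\alpha) \in \widetilde W_\alpha$ for every $\alpha \in A$ and some (hence every, by Remark \ref{rmk-reg-split}) representative $\sigma_\alpha \in B_W[\alpha]$; the splitting $\tilde r$ is then recovered by $\tilde r(b) = (r(b), b)$.

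The first step will invoke the hypothesis $\on{rank}\, d_x f = \dim Q$: the implicit function theorem yields a holomorphic section $s_0 : \Omega_Q \to V$ of $f$ on an open neighborhood $\Omega_Q$ of $0 \in Q$, with $s_0(0) = x$. Next I would exploit the $\bC^*$-equivariance of $f$, where $\bC^*$ acts on $V$ by scalar multiplication and on $Q$ through the positive-weight action induced from the grading of $\bC[V]^G$: for each $\lambda \in \bC^*$, the formula $s_\lambda(q) = \lambda \cdot s_0(\lambda^{-1} \bullet q)$ defines a holomorphic section of $f$ on $\lambda \bullet \Omega_Q$, passing through $\lambda \cdot x$, and since $\bullet$ has positive weights these domains exhaust $Q$ as $|\lambda| \to \infty$. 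I would then pick $\lambda$ large enough that, for every $\alpha \in A$, the chosen loop $\gamma_\alpha = f \circ \Gamma_\alpha[c_{\alpha,1}]$ and the basepoint $\bar c_0$ both lie in $\lambda \bullet \Omega_Q$, and fix a path $\xi : [0,1] \to V^{rs}$ from $c_0$ to $s_\lambda(\bar c_0)$ (which exists because $X_{\bar c_0} = G \cdot c_0$ is path-connected). For each $\alpha$, the definition $\tilde r(\sigma_\alpha) \coloneqq [\xi^{-1} \star (s_\lambda \circ \gamma_\alpha) \star \xi] \in \widetilde B_W$ would then give the required lift, with $r(\sigma_\alpha) = \tilde p(\tilde r(\sigma_\alpha)) \in \widetilde W$.

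Verifying the braid relations would proceed by a homotopy argument: each defining relation of $B_W$ is witnessed by a contracting homotopy of loops in $Q^{reg}$, which can be lifted via $s_\lambda$ (for $\lambda$ large enough that the homotopy disk lies in $\lambda \bullet \Omega_Q$) to a homotopy of paths in $V^{rs}$, yielding after conjugation by $\xi$ a homotopy of loops in $V^{rs}$ at $c_0$. Regularity of the splitting would follow by localization: $\gamma_\alpha$ may be taken to lie in an arbitrarily small tubular neighborhood of $f(\Cartan_\alpha) \subset Q$, so the lift $s_\lambda \circ \gamma_\alpha$ is supported near $\lambda \cdot V_\alpha$, and the monodromy contribution is captured by the rank-one subrepresentation $G_\alpha | V_\alpha$, placing $r(\sigma_\alpha)$ in $\widetilde W_\alpha$. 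The main obstacle will be verifying the braid relations independently of the auxiliary choices of $\lambda$ and $\xi$; here I would follow the pattern of the analogous argument in \cite{GVX}, which tracks how the lifts vary under perturbation of these choices and verifies that the resulting ambiguities lie in $I \subset \widetilde B_W$ and cancel under $\tilde q$.
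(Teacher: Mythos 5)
Your construction of the splitting itself is essentially equivalent to the paper's, just rephrased: the paper takes a normal slice $Y$ through $x$, which gives a local holomorphic section $f_\epsilon^{-1}$ of $f$ near $0$, and then rescales the basepoint $\bar c_0 \rightsquigarrow k\,\bar c_0$ to bring it into the domain; you instead rescale the section by $\lambda$. By the $\bC^*$-equivariance these are the same operation. Your ``verify the braid relations'' step is also more roundabout than needed: since $\lambda\bullet\Omega_Q\cap Q^{reg}\hookrightarrow Q^{reg}$ is a homotopy equivalence (by the conic structure), $\tilde r$ is automatically a homomorphism on all of $B_W$ at once, and there is nothing to verify relation-by-relation. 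A small point of hygiene: the path $\xi$ should be taken inside the \emph{fiber} $X_{\bar c_0}$, not merely in $V^{rs}$, or else $\tilde q\circ\tilde r$ would be conjugation by $[f\circ\xi]$ rather than the identity.

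The genuine gap is in the regularity argument, which you dispose of in one sentence. The claim that ``the lift $s_\lambda\circ\gamma_\alpha$ is supported near $\lambda\cdot V_\alpha$'' is false: the section $s_\lambda$ passes through the point $\lambda\cdot x$, and a regular nilpotent $x$ has no relation to the subspace $V_\alpha$, so there is no reason the image of $s_\lambda$ near $\bar c_\alpha$ should lie anywhere near $V_\alpha$. This is exactly where the paper's proof of Proposition \ref{prop-reg-split} does real work, and the argument is substantially more delicate than a localization: one must first show that, after projecting to $\widetilde W/I$, the construction is independent of the connecting path inside the fiber $X_{\bar c_{\alpha,1}}$; then observe that the fixed normal slice $Y$ can be replaced by an arbitrary normal slice $N$ through any regular point $z_\alpha\in X_{\bar c_\alpha}^{reg}$, and that the resulting class $\widetilde w(z_\alpha,N)\in\widetilde W/I$ depends only on the connected component of $z_\alpha$; then invoke the geometric fact that every connected component of $X_{\bar c_\alpha}^{reg}$ meets $V_\alpha$, so one may choose $z_\alpha\in V_\alpha$ and $N\subset V_\alpha$; and finally use the locality assumption \eqref{eqn-locality-assumption} to see that the construction, performed entirely inside $V_\alpha^{rs}$, lands in $\widetilde W_\alpha/I$. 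None of these steps is present in your sketch, and the first (invariance in $\widetilde W/I$ under change of fiber path) is what makes the slice-moving argument possible at all. As written, your regularity step is closer to a restatement of the claim than to a proof.
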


A proof of Proposition \ref{prop-regular-exists} will be given in Section
\ref{subsec-reg-split-proof}.  It uses a normal slice to $X_0$ through the smooth
point $x \in X_0$ to lift loops in $Q^{reg}$ to loops in $V^{rs}$.

\begin{remark}{\em
A regular nilpotent does not always exist for a stable polar representation $G | V$
of a connected $G$.  For example, if $G = \bG_m$ acts on $V = \bC^2$ by
$t : (x, y) \mapsto (t^2 \, x, t^{-3} \, y)$, then there is no regular nilpotent for $G | V$.
However, regular nilpotents exist in all of the applications envisioned by the authors.}
\end{remark}

For the rest of this paper, we assume that:
\beq\label{eqn-tilde-r}
\text{a regular splitting homomorphism $\tilde r : B_W \to \widetilde B_W$
exists and has been fixed,}
\eeq
and we write $r = \tilde p \circ \tilde r : B_W \to \widetilde W$.

\begin{remark}{\em
The splitting homomorphism $\tilde r$ will not typically respect the identification
\eqref{eqn-PBW}, and the map $r$ will not typically factor through $p$.}
\end{remark}

\subsection{The nearby cycle sheaf \texorpdfstring{$P_\chi$}{Lg}}
\label{subsec-char-chi}

Throughout this paper, we work with local systems and sheaves of
$\bC$-vector spaces.  Fix a character:
\beqn
\chi \in \hat I = \on{Hom} (I, \bG_m).
\eeqn
Note that, since $I$ is a finite group, all values of $\chi$ are roots of unity
in $\bG_m = \bC^*$.  The character $\chi$ gives rise to a rank one
$G$-equivariant local system $\cL_\chi$ on $X_{\bar c_0}$, with
$(\cL_\chi)_{c_0} = \bC$.  As in \cite[Section 3.1]{GVX1}, we associate
to the pair $(X_{\bar c_0}, \cL_\chi)$ a nearby cycle sheaf:
\beqn
P_\chi \in \on{Perv}_G (X_0)_{\bC^*\text{-conic}} \, .
\eeqn
The construction is as follows.  We apply base change to the quotient map
$f: V \to Q$, to form a family:
\beq\label{eqn-family-Z}
\cZ_{\bar c_0} = \{ (x, k) \in V \times \bC \; | \; f (x) = k \, \bar c_0 \} \to \bC,
\eeq
where the multiplicative action of $\bC$ on $Q = \Cartan / W$ is induced by
the action on $\Cartan$, so that $f (k \, c) = k \, f (c)$ for all $k \in \bC$ and
$c \in \Cartan$.  Let $\cZ_{\bar c_0}^{rs} = \{ (x, k) \in \cZ_{\bar c_0} \; | \;
k \neq 0 \}$, and let $p_{\bar c_0} : \cZ_{\bar c_0}^{rs} \to X_{\bar c_0}$ be
the map $p_{\bar c_0} : (x, k) \mapsto k^{-1} \, x$.  By construction, the
family \eqref{eqn-family-Z} is $G$-equivariant.  Let us write:
\beqn
\psi_{\bar c_0} : \on{Perv}_G (\cZ_{\bar c_0}^{rs}) \to \on{Perv}_G (X_0),
\eeqn
for the nearby cycle functor with respect to this family.  We define:
\beq\label{eqn-P-chi}
\psi_f [\bar c_0] = \psi_{\bar c_0} \circ p_{\bar c_0}^* :
\on{Perv}_G (X_{\bar c_0}) \to \on{Perv}_G (X_0) \;\; \text{and} \;\;
P_\chi = \psi_f [\bar c_0] \, (\cL_\chi [-]).
\eeq
Here and henceforth, $[-]$ denotes an appropriate cohomological shift, so
that the resulting sheaf is perverse.

By construction, the sheaf $P_\chi$ is $\bC^*$-conic, and our main result,
Theorem \ref{thm-main}, describes the topological Fourier transform
$\FT P_\chi$ as an intersection homology sheaf on the dual space $V^*$.  
To fix the conventions, we will be using \cite[Definition 3.7.8]{KS} to define
the equivalence:
\beqn
\FT : \on{Perv}_G (V)_{\bC^*\text{-conic}} \to
\on{Perv}_G (V^*)_{\bC^*\text{-conic}} \,
\eeqn
up to a shift.  In Section \ref{sec-monodromy-construct}, we will introduce a
monodromy in the family structure, which reflects the dependence of the
sheaf $P_\chi$ on the basepoint $c_0 \in \Cartan^{reg}$.

\subsection{The dual representation \texorpdfstring{$G | V^*$}{Lg}}
\label{subsec-dual-rep}

The dual representation $G | V^*$ is also polar and stable, with Cartan
subspace:
\beq\label{eqn-C-star}
\Cartan^* \coloneqq (\Lg \cdot \Cartan)^\p \subset V^*,
\eeq
and the same Weyl group $W$ (see \cite[Proposition 2.13 (i)]{Gr1}).
We define $(\Cartan^*)^{reg} \subset \Cartan^*$ by analogy with equation
\eqref{eqn-C-reg}, and we let $(V^*)^{rs} = G \cdot (\Cartan^*)^{reg} \subset
V^*$ (cf. equation \eqref{eqn-V-rs}).  One can check that assumption
\eqref{eqn-locality-assumption} implies the analogous assertion about the
representation $G | V^*$, and therefore, the set $(V^*)^{rs}$ is the union of
all closed $G$-orbits of maximal dimension in $V^*$ (cf. discussion following
\eqref{eqn-V-rs}).

\begin{prop}\label{prop-A-f-stratification}
$\;$
\begin{enumerate}[topsep=-1.5ex]
\item[(i)]    There exists a $G$-invariant $\bC^*$-conic (algebraic, Whitney)
stratification $\cS_0$ of $X_0$, such that Thom's $A_f$ condition holds for
the pair $(V^{rs}, S)$, for every $S \in \cS_0$.

\item[(ii)]   For every $\cS_0$ as in part (i) and every $l \in (V^*)^{rs}$, the
pair $(0, l) \in T^* V = V \times V^*$ is a generic covector at the origin for
$\cS_0$, in the sense of stratified Morse theory.

\item[(iii)]  For every $\cS_0$ as in part (i), we have:
\beqn
P_\chi \in \on{Perv}_G (X_0, \cS_0),
\eeqn
i.e., the sheaf $P_\chi$ is constructible with respect to $\cS_0$.
\end{enumerate}
\end{prop}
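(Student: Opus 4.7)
The plan is to handle the three parts in sequence, drawing on standard stratification theory and on the specific geometry of stable polar representations.

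For (i), I would invoke the Hironaka--Verdier existence theorem for Whitney stratifications satisfying Thom's $A_f$ condition, applied to the algebraic map $f : V \to Q$ with the closed subvariety $X_0 \subset V$ as a distinguished subset. This yields an algebraic Whitney stratification of $V$ with $X_0$ a union of strata and with $A_f$ holding for $(V - X_0, S)$ for every stratum $S \subset X_0$; restricting to the open subset $V^{rs} \subset V - X_0$ gives the desired $A_f$ condition for $(V^{rs}, S)$. To enforce $G$-invariance and $\bC^*$-conicity, I would take the common refinement with the $G$-orbit-type stratification and with the $\bC^*$-orbit stratification, both algebraic and compatible with $f$ (the $G$-action preserves $f$, while the $\bC^*$-action rescales it). The refinement remains Whitney and still satisfies $A_f$, and restriction to $X_0$ defines $\cS_0$.

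For (ii), $\bC^*$-conicity (together with a further refinement if necessary) ensures that $\{0\}$ is a zero-dimensional stratum of $\cS_0$. The set of non-generic covectors at $0$ is then
\beqn
\Lambda_0 \;\coloneqq\; \bigcup_{S_j \in \cS_0,\, S_j \neq \{0\}} \pi_2\bigl( \overline{T^*_{S_j} V} \cap (\{0\} \times V^*) \bigr) \;\subset\; V^*,
\eeqn
a closed $G$-invariant conic subset. Each stratum $S_j$ is $G$-invariant and contained in $X_0$, so at smooth points $v \in S_j$ the conormal covectors annihilate $\Lg \cdot v \subset T_v S_j$. Using the polar-representation structure summarized in \cite[Section 2.3]{Gr2}, together with a moment-map argument for the dual polar representation $G | V^*$, I would show that $\Lambda_0$ lies inside the null cone of $V^*$, i.e., the analog for $G | V^*$ of $X_0 \subset V$. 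Since $(V^*)^{rs}$ maps to the regular locus of $V^* \inv G$ under the dual quotient and is therefore disjoint from this null cone, every $l \in (V^*)^{rs}$ is a generic covector at $0$.

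For (iii), the claim is a consequence of the standard constructibility theorem for nearby cycles under Thom's $A_f$ condition. I would extend $\cS_0$ to a Whitney stratification $\cS'$ of the family $\cZ_{\bar c_0}$ compatible with the projection $\cZ_{\bar c_0} \to \bC$, having $\cZ_{\bar c_0}^{rs}$ as an open stratum. The $A_f$ condition from (i), transferred via the base change $k \mapsto k \bar c_0$, supplies the Thom condition for the family map restricted to $\cZ_{\bar c_0}^{rs}$. Since $p_{\bar c_0}^* \cL_\chi$ is a local system on $\cZ_{\bar c_0}^{rs}$ (hence $\cS'$-constructible), the nearby cycle functor $\psi_{\bar c_0}$ produces a sheaf on $X_0$ constructible with respect to $\cS_0$, namely $P_\chi$.

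The main obstacle I anticipate is the containment of $\Lambda_0$ inside the dual null cone in (ii): this is the step where the specific geometry of stable polar representations enters, and I expect it to require either a direct moment-map argument adapted to the polar setting, or a reduction using the polar duality of \cite[Section 2.3]{Gr2} together with the locality assumption \eqref{eqn-locality-assumption}.
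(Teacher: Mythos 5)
Your part (i) has a genuine gap. You invoke a ``Hironaka--Verdier existence theorem for Whitney stratifications satisfying Thom's $A_f$ condition, applied to the algebraic map $f : V \to Q$.'' No such general theorem exists: Hironaka's result (the one the paper cites, \cite[p.~248, Corollary~1]{Hi}) guarantees the existence of $A_f$ stratifications only for \emph{functions}, i.e.\ when $\dim Q = 1$. For maps to a higher-dimensional target, Thom stratifications can fail to exist (there are well-known counterexamples due to Sabbah). This is precisely why the paper imposes assumption \eqref{eqn-visibility} (visible or rank one), and it is a warning sign that your argument never uses that hypothesis. The paper's actual proof of (i) is a dichotomy: in the visible case one takes the $G$-orbit stratification and shows directly (following \cite[Section~3]{Gr2}) that it satisfies $A_f$; in the rank one case one reduces to $\dim Q = 1$ and then applies Hironaka legitimately. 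Your subsequent refinement argument (intersect with the orbit-type and $\bC^*$-orbit stratifications to get invariance and conicity, noting that refining strata inside $X_0$ preserves $A_f$) is fine as far as it goes, but the starting stratification it refines is not known to exist.

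For part (ii) you take a different and substantially more complicated route than the paper, and you acknowledge you have not carried out the key step. The paper's argument is short and direct: since $\cS_0$ is $\bC^*$-conic, genericity of the covector $(0,l)$ reduces to the claim that $0$ is the only stratified critical point of $l|_{X_0}$ with respect to $\cS_0$. One may assume $l \in (\Cartan^*)^{reg}$ because $G | V^*$ is polar with Cartan subspace $\Cartan^*$ (\cite[Proposition~2.13(i)]{Gr2}), and then \cite[Proposition~2.13(ii)]{Gr2} gives the explicit identity $\{ v \in V : l|_{\Lg \cdot v} = 0 \} = \Cartan$; combined with $G$-invariance of $\cS_0$ and $X_0 \cap \Cartan = \{0\}$, the claim follows. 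Your proposal to show the degenerate covector locus $\Lambda_0$ lies in the dual null cone via a moment-map argument would, if completed, also do the job, but it is exactly the hard part and you leave it unproved, whereas the paper replaces it with the elementary conic-stratification criterion plus the polar duality facts above. Part (iii) of your proposal matches the paper's approach (cite the constructibility of nearby cycles under the Thom condition, \cite[Theorem~5.5]{Gi}).
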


The reader is referred to \cite[Section 11]{Mat} for a definition of the $A_f$
condition (see also \cite[Section 5.5]{GVX1}).  Proposition
\ref{prop-A-f-stratification} will be proved in Section \ref{subsec-geom-prelim}.
If $G | V$ is visible, we can take $\cS_0$ in part (i) to be the $G$-orbit
stratification.  For the rest of this paper, we fix a stratification $\cS_0$ as in
Proposition \ref{prop-A-f-stratification} (i), and for the sake of concreteness,
we assume that, if $G | V$ is visible, then $\cS_0$ is the $G$-orbit
stratification.  Proposition \ref{prop-A-f-stratification} implies that the Fourier
transform $\FT P_\chi$ restricts to a (shifted) local system on $(V^*)^{rs}$.

Thus, to state our main result, we will need to specify a $G$-equivariant local
system on $(V^*)^{rs}$.  For this, it will be convenient to identify the
$G$-equivariant fundamental groups of $V^{rs}$ and $(V^*)^{rs}$ as follows.
Pick a compact form $K \subset G$ and a $K$-invariant Hermitian inner
product $\langle \; , \, \rangle$ on $V$.  We follow the convention that
$\langle \; , \, \rangle$ is linear in the first argument, and antilinear in the
second.  An element $v \in V$ is called semisimple if the orbit $G \cdot v
\subset V$ is closed.  We say that a semisimple $v \in V$ is of minimal length
if it is of minimal length in its $G$-orbit, with respect to $\langle \; , \, \rangle$.
Write $V^{ms} \subset V$ for the set of all semisimple elements of minimal
length, and let $V^{mrs} = V^{ms} \cap V^{rs}$.  By \cite{KN} (see also
\cite[Theorem 1.1]{DK}), the set $V^{ms}$ can be characterized as follows:
\beq\label{eqn-kempf-ness}
V^{ms} = \{ v \in V \; | \; \langle \Lg \cdot v, v \rangle = 0 \}.
\eeq
By the proof of \cite[Lemma 2.1]{DK}, for every $v \in V^{mrs}$, we have
$\Cartan_v \subset V^{ms}$, where $\Cartan_v$ is the (unique) Cartan
subspace containing $v$.  Therefore, without loss of generality, we can
assume that:
\beq\label{eqn-min-cartan}
\Cartan \subset V^{ms},
\eeq
and we proceed with this assumption.  By \cite{KN} and
\cite[Proposition 2.2]{DK}, we then have:
\beq\label{eqn-core}
V^{ms} = K \cdot \Cartan \;\; \text{and} \;\;
V^{mrs} = K \cdot \Cartan^{reg}.
\eeq
Also, by \eqref{eqn-kempf-ness} and the definition of a Cartan subspace
(see \cite[p. 504]{DK}), we have:
\beq\label{eqn-orthogonal}
\Cartan \perp \Lg \cdot \Cartan
\;\; \text{with respect to} \;\;
\langle \; , \, \rangle,
\eeq
(cf. equation \eqref{eqn-stability}).

\begin{prop}\label{prop-core}
$\;$
\begin{enumerate}[topsep=-1.5ex]
\item[(i)]    The inclusion map $j^{}_V : V^{mrs} \hookrightarrow V^{rs}$ is a
                 homotopy equivalence.

\item[(ii)]   The map $j^{}_V$ induces an isomorphism:
\beqn
(j^{}_V)_* : \pi_1^K (V^{mrs}, c_0) \to \pi_1^G (V^{rs}, c_0).
\eeqn
\item[(iii)]  The inclusion $K \hookrightarrow G$ induces an isomorphism
$N_K (\Cartan) / Z_K (\Cartan)^0 \cong \widetilde W$.   
\end{enumerate}
\end{prop}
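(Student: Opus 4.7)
The plan is to derive part (i) from the Kempf--Ness theorem, deduce part (ii) formally from (i) via the Borel construction, and prove part (iii) by a direct comparison of component groups that again rests on Kempf--Ness and the uniqueness of Cartan subspaces through a regular semisimple element.

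For part (i), I would invoke the Kempf--Ness/Ness deformation retraction. Every $v \in V^{rs}$ lies on a closed $G$-orbit, since regular semisimple orbits are closed; hence by \cite{KN}, the restriction of $\|\cdot\|^2$ to $G \cdot v$ attains its minimum on a unique $K$-orbit, which lies in $V^{ms}$ by \eqref{eqn-kempf-ness}. The negative gradient flow of $\|\cdot\|^2$ along orbits (equivalently, the moment-map gradient flow of Kirwan and Ness) assembles into a continuous, $K$-equivariant strong deformation retraction of $V^{rs}$ onto $V^{rs} \cap V^{ms} = V^{mrs}$. Regularity is preserved automatically, as the flow stays inside single $G$-orbits, and $V^{rs}$ is $G$-invariant.

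For part (ii), I would run a Borel construction / five lemma argument. The inclusion $(K, V^{mrs}) \hookrightarrow (G, V^{rs})$ yields a commutative ladder of fibrations
\[
\begin{CD}
V^{mrs} @>>> EK \times_K V^{mrs} @>>> BK \\
@VVV @VVV @VVV \\
V^{rs} @>>> EG \times_G V^{rs} @>>> BG.
\end{CD}
\]
The polar decomposition $G \simeq K \times \bR^N$ shows that $K \hookrightarrow G$, and hence $BK \to BG$, is a homotopy equivalence; combined with part (i), the five lemma applied to the long exact sequences of homotopy groups of the two fibrations yields the desired isomorphism on $\pi_1$ of the total spaces, which is precisely $(j^{}_V)_*$.

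For part (iii), the crux is surjectivity of $N_K(\Cartan)/Z_K(\Cartan) \to W$. Given $w \in W$, pick $c \in \Cartan^{reg}$; then $c$ and $w \cdot c$ both lie in $(G \cdot c) \cap V^{ms}$, which by Kempf--Ness is a single $K$-orbit, so there is $k \in K$ with $k \cdot c = w \cdot c$. Since $c$ is regular semisimple and the Cartan subspace through a regular semisimple element is unique (\cite[Proposition 2.2]{DK}), $k \cdot \Cartan = \Cartan$, so $k \in N_K(\Cartan)$ and $k$ represents $w$; injectivity is trivial. Next, $Z_K(\Cartan) = K \cap Z_G(\Cartan)$ is a maximal compact subgroup of the reductive group $Z_G(\Cartan)$, and a maximal compact inclusion is a homotopy equivalence, so $Z_K(\Cartan)/Z_K(\Cartan)^0 \cong Z_G(\Cartan)/Z_G(\Cartan)^0 = I$. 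Comparing the short exact sequences
\[
1 \to Z_K(\Cartan)/Z_K(\Cartan)^0 \to N_K(\Cartan)/Z_K(\Cartan)^0 \to W \to 1
\]
and $1 \to I \to \widetilde W \to W \to 1$ via the five lemma completes (iii). The main obstacle, in my view, is the surjectivity step in (iii): it is there that the hypothesis $\Cartan \subset V^{ms}$ of \eqref{eqn-min-cartan} is essential, as otherwise the Kempf--Ness minimum locus on $G \cdot c$ would not align with $\Cartan$ and we could not extract a compact representative of $w$.
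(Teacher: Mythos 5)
Your proof is correct, and it takes a genuinely different route for part (i). The paper regards $j_V : V^{mrs} \hookrightarrow V^{rs}$ as a map of fiber bundles over $Q^{reg}$: using \cite[Proposition 1.3]{DK} it records that $Z_G(v) = Z_K(\Cartan) \cdot \exp(\bi\, Z_{\Lk}(\Cartan))$ for $v \in \Cartan^{reg}$, so that $Z_K(\Cartan)$ is a maximal compact subgroup of $Z_G(v)$, hence the fiber inclusion $K/Z_K(\Cartan) \hookrightarrow G/Z_G(\Cartan)$ is a homotopy equivalence; the total-space inclusion is then a fiber homotopy equivalence. You instead invoke the Kempf--Ness/Kirwan moment-map gradient flow to produce a $K$-equivariant strong deformation retraction of $V^{rs}$ onto $V^{mrs}$ — a stronger conclusion, which the paper notes in passing it will not need. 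Your approach is more portable (it works for any saturated open subset of closed orbits) but leans on heavier machinery (continuity of the Kempf--Ness retraction, \`a la Neeman--Schwarz), whereas the paper's fiber bundle observation is elementary once one has the stabilizer formula, and that same formula then does double duty in (iii). Your (ii) via the Borel construction ladder and the five lemma is a spelled-out version of the paper's ``follows from (i) and the definition of equivariant $\pi_1$''; your (iii) unpacks exactly what the paper delegates to \cite[Lemma 2.7]{DK} (surjectivity of $N_K(\Cartan) \to W$ via uniqueness of the Kempf--Ness minimal $K$-orbit and of the Cartan subspace through a regular semisimple point) together with the stabilizer formula to match component groups. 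You correctly isolate the role of the normalization $\Cartan \subset V^{ms}$. No gaps.
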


In fact, the set $V^{mrs}$ is a strong deformation retract of $V^{rs}$, but we will
not be using this fact.  Proposition \ref{prop-core} will be proved in Section
\ref{subsec-geom-prelim}.  Let $\nu = \nu_{\langle \; , \, \rangle} : V \to V^*$ be
the antilinear map given by:
\beq\label{eqn-nu}
\nu (v_1) : v_2 \mapsto \langle v_2, v_1 \rangle, \;\; v_1, v_2 \in V,
\eeq
and let:
\beq\label{eqn-l-zero}
l_0 = \nu (c_0) \in V^*.
\eeq
By \eqref{eqn-orthogonal}, we have $l_0 \in \Cartan^*$, and by Proposition
\ref{prop-core} (iii), we have:
\beq\label{eqn-dual-cartan-reg}
l_0 \in (\Cartan^*)^{reg} \subset (V^*)^{rs}.
\eeq
We use the inner product $\langle \; , \, \rangle$ to define subsets
$(V^*)^{mrs} \subset (V^*)^{ms} \subset V^*$ by analogy with
$V^{mrs} \subset V^{ms} \subset V$, and we note that assumption
\eqref{eqn-min-cartan} implies that $\Cartan^* \subset (V^*)^{ms}$.
By analogy with Proposition \ref{prop-core}, we have a homotopy
equivalence:
\beqn 
j^{}_{V^*} : (V^*)^{mrs} \to (V^*)^{rs},
\eeqn
and an isomorphism:
\beqn
(j^{}_{V^*})_* : \pi_1^K ((V^*)^{mrs}, l_0) \to \pi_1^G ((V^*)^{rs}, l_0).
\eeqn
Using equations \eqref{eqn-kempf-ness}-\eqref{eqn-core} (see also
\cite[Proposition 2.13 (i)]{Gr1}), one can check that the map $\nu$
restricts to a $K$-equivariant bijection:
\beq\label{eqn-nu-mrs}
\nu^{mrs} : V^{mrs} \to (V^*)^{mrs}.
\eeq
Thus, we obtain an isomorphism:
\beq\label{eqn-identify-pi-one}
\nu^{mrs}_* : \widetilde B_W = \pi_1^G (V^{rs}, c_0) \cong
\pi_1^K (V^{mrs}, c_0) \to \pi_1^K ((V^*)^{mrs}, l_0) \cong
\pi_1^G ((V^*)^{rs}, l_0).
\eeq
We will use this isomorphism to identify the fundamental group
$\pi_1^G ((V^*)^{rs}, l_0)$ with $\widetilde B_W$.

\begin{remark}\label{rmk-reversal}{\em
As a point of comparison, in \cite{GVX1} we used a $G$-invariant symmetric 
bilinear form on $V$ to relate the geometry of $V^*$ to that of $V$, but such a
form need not exist in the present context.  Note that, by the antilinear property
of the map $\nu : V \to V^*$, the isomorphism $\nu^{mrs}_*$ reverses the
direction of braid generators.  More precisely, let $\alpha \in A$ and let
$\tilde\sigma_\alpha \in \widetilde B_W$ be an element satisfying
$\tilde q (\tilde\sigma_\alpha) \in B_W [\alpha]$.  Write $\tilde q^* :
\pi_1^G ((V^*)^{rs}, l_0) \to \pi_1 ((\Cartan^*)^{reg}, l_0)$ for the analog of
the map $\tilde q$ of diagram \eqref{eqn-main-diagram}.  Then the image
$\tilde q^* \circ \nu^{mrs}_* \, (\tilde\sigma_\alpha)$ is the inverse of a
counter-clockwise braid generator (and can be called a clockwise braid
generator).}
\end{remark}

\subsection{The minimal polynomials \texorpdfstring{$R_{\chi, \alpha}$}{Lg}}
\label{subsec-min-poly}

The essential content of our main result is that the Fourier transform
$\FT P_\chi$ can be described in terms of data derived from the rank one
representations $\{ G_\alpha | V_\alpha \}_{\alpha \in A}$ introduced in Section
\ref{subsec-polar-rep} (see equation \eqref{eqn-rank-V-alpha}).  A key such
piece of data is a monic polynomial $R_{\chi, \alpha} \in \bC [z]$ of degree
$n_\alpha = |W_\alpha|$, associated to each $\alpha \in A$, which we now
proceed to define.

Fix an $\alpha \in A$ and a $\sigma_\alpha \in B_W [\alpha]$.  All of the
structures described above for the representation $G | V$ descend to
corresponding structures for the rank one representation $G_\alpha | V_\alpha$.
Note that assumptions \eqref{eqn-visibility} and \eqref{eqn-locality-assumption}
hold automatically for $G_\alpha | V_\alpha$.  The character $\chi$ restricts to
a character $\chi_\alpha$ of the subgroup $I_\alpha \subset I$ (see
\eqref{eqn-I-alpha} and the discussion following \eqref{eqn-seq-alpha}).  Let:
\beqn
f_\alpha : V_\alpha \to Q_\alpha \coloneqq \Cartan / W_\alpha =
V_\alpha \inv G_\alpha \, ,
\eeqn
be the quotient map, and let:
\beq\label{eqn-X-Q-alpha}
X_{0, \alpha} = f_\alpha ^{-1} (0), \;\;
Q_\alpha^{reg} = (\Cartan - \Cartan_\alpha) / W_\alpha \, , \;\;
\breve c_0 = f_\alpha (c_0) \in Q_\alpha^{reg}, \;\;
X_{\breve c_0, \alpha} = f_\alpha^{-1} (\breve c_0).
\eeq
As in Section \ref{subsec-char-chi}, we have a rank one $G_\alpha$-equivariant
local system $\cL_{\chi^{}_\alpha}$ on $X_{\breve c_0, \alpha}$, with
$(\cL_{\chi^{}_\alpha})_{c_0} = \bC$, and a nearby cycle sheaf 
$P_{\chi^{}_\alpha} \in \on{Perv}_{G_\alpha} (X_{0, \alpha})$.

\pagebreak

Next, we define:
\beqn
V^{rs}_\alpha \coloneqq f_\alpha^{-1} (Q_\alpha^{reg}) = G_\alpha \cdot
(\Cartan - \Cartan_\alpha),
\eeqn
\beqn
B_{W_\alpha} \coloneqq \pi_1 (Q_\alpha^{reg}, \breve c_0) \cong \bZ,  \;\;
\widetilde B_{W_\alpha} \coloneqq \pi_1^{G_\alpha} (V_\alpha^{rs}, c_0).
\eeqn
Diagram \eqref{eqn-main-diagram} has a direct counterpart for the
representation $G_\alpha | V_\alpha$:
\beq
\label{eqn-main-diagram-alpha}
\begin{CD}
1 @>>> I_\alpha @>>> \widetilde B_{W_\alpha} @>{\tilde q_\alpha}>>
B_{W_\alpha} @>>> 1 \;\,
\\
@. @| @VV{\tilde p_\alpha}V @VV{p_\alpha}V @.
\\
1 @>>> I_\alpha @>>> \widetilde W_\alpha @>{q_\alpha}>>
W_\alpha @>>> 1 \, ,
\end{CD}
\eeq
where $p_\alpha$, $q_\alpha$, $\tilde p_\alpha$, $\tilde q_\alpha$ are the
analogs of the maps $p$, $q$, $\tilde p$, $\tilde q$.  Just as for
\eqref{eqn-main-diagram}, the right square of \eqref{eqn-main-diagram-alpha}
is Cartesian (cf. equation \eqref{eqn-concrete}).  Note that the bottom row of
\eqref{eqn-main-diagram-alpha} naturally injects into the bottom row of
\eqref{eqn-main-diagram}, but the top row does not.

Let:
\beqn
\sigma \in B_{W_\alpha} \cong \bZ,
\eeqn
be the counter-clockwise generator.  Note that we have:
\beq\label{eqn-p-alpha-sigma}
p_\alpha (\sigma) = s_\alpha^{-1} \in W_\alpha \, ,
\eeq
(see equation \eqref{eqn-p-sigma-alpha}).
The regular splitting $\tilde r : B_W \to \widetilde B_W$, which was fixed in
\eqref{eqn-tilde-r}, defines a splitting homomorphism:
\beq\label{eqn-tilde-r-sigma-alpha}
\tilde r [\sigma_\alpha] : B_{W_\alpha} \to \widetilde B_{W_\alpha} \, ,
\eeq
such that the composition $r [\sigma_\alpha] \coloneqq \tilde p_\alpha \circ
\tilde r [\sigma_\alpha]$ is given by:
\beq\label{eqn-r-sigma-alpha}
r [\sigma_\alpha] \, (\sigma) = r (\sigma_\alpha) \in \widetilde W_\alpha
\subset \widetilde W.
\eeq
Note that the splitting homomorphism $\tilde r [\sigma_\alpha]$ depends
on the choice of the braid generator $\sigma_\alpha \in B_W [\alpha]$.
Note also that $\tilde r [\sigma_\alpha]$ is automatically a regular splitting
in the sense of Definition \ref{defn-reg-split}.  Thus, we can say that,
given a choice of $\sigma_\alpha \in B_W [\alpha]$, all of the assumptions
\eqref{eqn-visibility}, \eqref{eqn-locality-assumption}, \eqref{eqn-tilde-r}
hold for the representation $G_\alpha | V_\alpha$.

Next, we consider the dual representation $G_\alpha | V^*_\alpha$.
Let $\Cartan^*_\alpha = \nu (\Cartan_\alpha) = (\Cartan^*)^{W_\alpha}
\subset \Cartan^*$ (see equation \eqref{eqn-nu} and Proposition
\ref{prop-core} (iii)), let $(V^*_\alpha)^{rs} =
G_\alpha \cdot (\Cartan^* - \Cartan^*_\alpha)$, let:
\beqn
l_{0, \alpha} = l_0 |_{V_\alpha} \in (V_\alpha^*)^{rs},
\eeqn
(see equation \eqref{eqn-l-zero}), and let $K_\alpha = K \cap G_\alpha$.
Using assumption \eqref{eqn-min-cartan} and arguing as in the proof of
\cite[Proposition~1.3]{DK}, one can show that:
\beq\label{eqn-K-alpha}
\text{$K_\alpha$ is a compact form of $G_\alpha$.}
\eeq
As in Section \ref{subsec-dual-rep} (see equation
\eqref{eqn-identify-pi-one}), the compact form $K_\alpha \subset G_\alpha$
and the inner product $\langle \; , \, \rangle$ give rise to an identification:
\beq\label{eqn-identify-rank-one}
\pi_1^{G_\alpha} ((V_\alpha^*)^{rs}, l_{0, \alpha}) \cong
\widetilde B_{W_\alpha} \, .
\eeq
Applying Proposition \ref{prop-A-f-stratification} to the representation
$G_\alpha | V_\alpha$, we obtain a Morse local system $M (P_{\chi^{}_\alpha})$
on $(V_\alpha^*)^{rs}$, of the sheaf $P_{\chi^{}_\alpha}$ at the origin.
In other words, for each $l_\alpha \in (V_\alpha^*)^{rs}$, we have:
\beq\label{eqn-M-l-alpha}
M_{l_\alpha} (P_{\chi^{}_\alpha}) = H^0((\phi_{-l_\alpha} (P_{\chi^{}_\alpha}))^{}_0),
\eeq
where the LHS is the stalk of $M (P_{\chi^{}_\alpha})$ at $l_\alpha$, and the RHS
is the stalk cohomology at the origin of the vanishing cycles of $-l_\alpha$ applied
to $P_{\chi^{}_\alpha}$.  We use isomorphism \eqref{eqn-identify-rank-one} to write:
\beq\label{eqn-lambda-grp-rank-one}
\lambda_{l_0, \alpha} : \widetilde B_{W_\alpha} \to
\on{Aut} (M_{l_{0, \alpha}} (P_{\chi^{}_\alpha})),
\eeq
for the holonomy of this local system, and we refer to $\lambda_{l_0, \alpha}$
as the microlocal monodromy for the sheaf $P_{\chi^{}_\alpha}$.

Let:
\beq\label{eqn-space-R}
\cR = \{ R \in \bC [z] \; | \; \deg R \geq 1, \, R (0) \neq 0, \,
\text{$R$ is monic} \}.
\eeq
We think of $\cR$ as the space of all possible minimal polynomials for an 
invertible element $a \in \cA$ of an associative $\bC$-algebra $\cA$ with unit.
Let:
\beq\label{eqn-W-chi}
W_\chi \coloneqq \on{Stab}_W (\chi),
\eeq
(see equation \eqref{eqn-factor-action}).

\begin{prop-def}\label{prop-def-R-chi-alpha}
For each $\alpha \in A$, we define $R_{\chi, \alpha} \in \cR$ to be the minimal
polynomial of the holonomy operator:
\beq\label{eqn-R-chi-alpha}
\lambda_{l_0, \alpha} \circ \, \tilde r [\sigma_\alpha] \, (\sigma) \in
\on{End} (M_{l_{0, \alpha}} (P_{\chi^{}_\alpha})),
\eeq
for some $\sigma_\alpha \in B_W [\alpha]$.  We claim that:
\begin{enumerate}[topsep=-1.5ex]
\item[(i)]    The polynomial $R_{\chi, \alpha}$ is independent of the choice of
$\sigma_\alpha \in B_W [\alpha]$.

\item[(ii)]   For every $\alpha_1, \alpha_2 \in A$ and $w \in W_\chi$, with
$\alpha_2 = w \, \alpha_1$, we have $R_{\chi, \alpha_1} = 
R_{\chi, \alpha_2} \,$.
\end{enumerate}
\end{prop-def}

A proof of Proposition-Definition \ref{prop-def-R-chi-alpha} will be given in
Section \ref{subsec-full}.

\begin{prop}\label{prop-monic}
For each $\alpha \in A$, we have:
\beqn
\deg R_{\chi, \alpha} = \dim M_{l_{0, \alpha}} (P_{\chi^{}_\alpha}) =
n_\alpha \, .
\eeqn
\end{prop}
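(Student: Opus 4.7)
The plan is to prove the two equalities separately. For the dimension claim $\dim M_{l_{0,\alpha}}(P_{\chi^{}_\alpha}) = n_\alpha$, I would simply apply the general identity $\operatorname{rank} M(P_\chi) = |W|$ (which Section \ref{sec-fourier} establishes for any stable polar representation satisfying the running hypotheses, and which the introduction flags as one of the first outputs of the Picard-Lefschetz setup) to the rank one representation $G_\alpha | V_\alpha$. Its Weyl group is the cyclic group $W_\alpha$ of order $n_\alpha$, so the formula yields exactly $n_\alpha$. One needs to check that the hypotheses of Theorem \ref{thm-main} transfer to $G_\alpha | V_\alpha$, but rank one is forced to be the alternative in \eqref{eqn-visibility}, and the locality assumption \eqref{eqn-locality-assumption} is trivial for a single reflection hyperplane.

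For the degree equality, the inequality $\deg R_{\chi,\alpha} \leq n_\alpha$ is automatic since $R_{\chi,\alpha}$ is the minimal polynomial of an operator on an $n_\alpha$-dimensional space; the content is the opposite inequality. I would work in the Picard-Lefschetz basis $\{u_w\}_{w \in W_\alpha}$ of $M_{l_{0,\alpha}}(P_{\chi^{}_\alpha})$ set up in Section \ref{sec-fourier}, indexed by the $W_\alpha$-orbit of critical points of $l_{0,\alpha}|_{X_{\breve c_0, \alpha}}$. The operator $T := \lambda_{l_0,\alpha} \circ \tilde r[\sigma_\alpha](\sigma)$ covers $\sigma \in B_{W_\alpha}$ and its Weyl image is $s_\alpha^{-1}$. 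Since $\sigma$ is a counter-clockwise braid generator, parallel transport along it takes the $w$-th critical point to the $(s_\alpha^{-1} w)$-th critical point, so I expect a formula
\beqn
T (u_w) = \zeta_w \cdot u_{s_\alpha^{-1} w}, \qquad \zeta_w \in \bC^*,
\eeqn
with the scalars $\zeta_w$ recording the character $\chi$ and the choice of splitting $\tilde r$. Because $s_\alpha^{-1}$ acts on $W_\alpha$ as a single $n_\alpha$-cycle, the matrix of $T$ in this ordered basis is a twisted cyclic permutation, conjugate to the companion matrix of $z^{n_\alpha} - \prod_w \zeta_w$; this polynomial then is the minimal polynomial of $T$ and has degree $n_\alpha$.

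The main obstacle is verifying both the permutation structure and, crucially, the non-vanishing of every scalar $\zeta_w$. The permutation structure should follow from the rank one Picard-Lefschetz picture together with the identification of the braid generator action on vanishing cycle classes. The non-vanishing is more delicate: an a priori $\zeta_w$ could vanish if the equivariant twist by $\chi$ forced a cancellation, and ruling this out is exactly where one needs the explicit carousel description of the rank one nearby cycle sheaf. For this reason, I would organize the write-up so that the non-vanishing is extracted from the carousel analysis of Section \ref{sec-carousel}, matching the paper's decision to defer the proof to that section; the rest of the argument is then essentially linear algebra on the Picard-Lefschetz basis.
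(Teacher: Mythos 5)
Your proposal is correct and follows essentially the same route as the paper: the carousel analysis of Section \ref{sec-carousel} (Lemma \ref{lemma-carousel}, parts (i) and (iii)) produces exactly the twisted cyclic-permutation structure on the basis $\{u_{j,\alpha}\}_{j}$ that you anticipate, with scalars in $\{\pm 1\}$, and the paper then reads off the degree from the resulting cyclicity of $u_{0,\alpha}$, which is the same linear algebra as your companion-matrix argument. One minor point: once the permutation formula $T(u_w) = \zeta_w\, u_{s_\alpha^{-1}w}$ is in place, the non-vanishing of each $\zeta_w$ is automatic, since $T$ is the holonomy of the local system $M(P_{\chi^{}_\alpha})$ and hence invertible --- the genuinely delicate part is establishing the shape of that formula (i.e.\ that no other critical values are crossed, so that each class is carried to a single class rather than a combination), not the non-vanishing of the resulting scalar.
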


A proof of Proposition \ref{prop-monic} will be given in Section \ref{sec-carousel}.

\subsection{The determinant character \texorpdfstring{$\tau \in \hat I$}{Lg}}

Let $\on{det} : G \to \bG_m$ be the determinant character of the
representation $G | V$.

\begin{prop}\label{prop-char-tau}
For every $g \in Z_G (\Cartan)$, we have $\on{det} (g) = \pm 1$.
\end{prop}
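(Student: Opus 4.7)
The plan is to compare the $Z_G (\Cartan)$-module structure of $V$ with that of its dual $V^*$, and then invoke the identity ${\on{det}}_{V^*} = {\on{det}}_V^{-1}$.

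First I would establish the $Z_G (\Cartan)$-stable direct sum decomposition
\beqn
V \;=\; \Cartan \,\oplus\, (\Lg \cdot \Cartan).
\eeqn
The inclusion $\Cartan + \Lg \cdot \Cartan \subseteq V$ is an equality because the differential of the action map $G \times \Cartan \to V$ at any $(e, c_0)$ with $c_0 \in \Cartan^{reg}$ is surjective; the stability identity \eqref{eqn-stability} then forces $\Cartan \cap \Lg \cdot \Cartan = 0$. The group $Z_G (\Cartan)$ acts trivially on $\Cartan$ by definition and preserves $\Lg \cdot \Cartan$ via $g \cdot (X \cdot c) = (\on{Ad}(g) X) \cdot c$. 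Evaluation $X \mapsto X \cdot c_0$ at any fixed $c_0 \in \Cartan^{reg}$ gives a $Z_G (\Cartan)$-equivariant isomorphism
\beqn
\Lg \,/\, Z_\Lg (\Cartan) \;\xrightarrow{\;\sim\;}\; \Lg \cdot \Cartan,
\eeqn
since $\Lg \cdot c_0 = \Lg \cdot \Cartan$ (by a dimension count from \eqref{eqn-stability}) and $Z_\Lg (c_0) = Z_\Lg (\Cartan)$ for $c_0 \in \Cartan^{reg}$ (any $X \in Z_\Lg (c_0)$ integrates to a one-parameter subgroup of $N_G (\Cartan)$, which acts trivially on $\Cartan$ as $W$ is finite).

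The same discussion, applied to the polar stable representation $G \mid V^*$ with Cartan subspace $\Cartan^* = (\Lg \cdot \Cartan)^\p$ (see \cite[Proposition 2.13]{Gr2}), yields a $Z_G (\Cartan)$-stable decomposition $V^* = \Cartan^* \oplus (\Lg \cdot \Cartan^*)$ with trivial action on $\Cartan^*$ (a direct check from the annihilator definition), together with an equivariant isomorphism $\Lg / Z_\Lg (\Cartan^*) \is \Lg \cdot \Cartan^*$. The key auxiliary identity $Z_\Lg (\Cartan) = Z_\Lg (\Cartan^*)$ I would verify by a short computation: for $X \in Z_\Lg (\Cartan)$ and $l \in \Cartan^*$, the form $X \cdot l \in V^*$ vanishes on $\Cartan$ (since $X \cdot \Cartan = 0$) and on $\Lg \cdot \Cartan$ (since $X \cdot (Y \cdot c) = [X, Y] \cdot c \in \Lg \cdot \Cartan$), hence $X \cdot l = 0$.

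Combining these ingredients, for every $g \in Z_G (\Cartan)$ the chain
\beqn
{\on{det}}_V (g) \;=\; \on{det} (g |_{\Lg \cdot \Cartan}) \;=\; \on{det} (\on{Ad} (g) |_{\Lg / Z_\Lg (\Cartan)}) \;=\; \on{det} (g |_{\Lg \cdot \Cartan^*}) \;=\; {\on{det}}_{V^*} (g)
\eeqn
holds. Since ${\on{det}}_{V^*} = {\on{det}}_V^{-1}$ by the duality between $V$ and $V^*$, this forces ${\on{det}}_V (g)^2 = 1$, i.e.\ $\on{det} (g) = \pm 1$. The only mildly delicate step is the identity $Z_\Lg (\Cartan) = Z_\Lg (\Cartan^*)$, which is precisely what makes the two quantities $\on{det} (g |_{\Lg \cdot \Cartan})$ and $\on{det} (g |_{\Lg \cdot \Cartan^*})$ literally equal (rather than merely dual); the remainder of the argument is formal.
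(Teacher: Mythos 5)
Your proof is correct, but it takes a genuinely different route from the paper's. The paper's argument is much shorter: since $g \in Z_G(\Cartan)$ fixes both $v \in \Cartan^{reg}$ and $l \in (\Cartan^*)^{reg}$, it preserves the Hessian $\cH[v,l]$ on $\Lg \cdot v = \Lg \cdot \Cartan$; by \eqref{eqn-morse} this Hessian is a non-degenerate quadratic form, and any linear map preserving a non-degenerate form has determinant $\pm 1$ — done. You instead avoid the Hessian entirely and run a self-duality argument: identify $\Lg \cdot \Cartan \cong \Lg / Z_\Lg(\Cartan)$ and $\Lg \cdot \Cartan^* \cong \Lg / Z_\Lg(\Cartan^*)$ as $Z_G(\Cartan)$-modules, prove $Z_\Lg(\Cartan) = Z_\Lg(\Cartan^*)$ by the little Jacobi computation, and conclude ${\on{det}}_V(g) = {\on{det}}_{V^*}(g) = {\on{det}}_V(g)^{-1}$. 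Note that the key auxiliary fact — that $Z_G(\Cartan)$ acts \emph{trivially} on $\Cartan^*$, not merely preserving it — is needed in both proofs (the paper uses it silently when asserting $g$ preserves $\cH[v,l]$; you make the trivial action on $\Cartan^*$ explicit, which is a small clarity gain). What the paper's route buys is economy: the Hessian $\cH[v,l]$ and the non-degeneracy statement \eqref{eqn-morse} are already set up for the Picard--Lefschetz analysis, so invoking them costs nothing. What your route buys is independence from the Morse-theoretic machinery: it uses only the $Z_G(\Cartan)$-module structure of the root-space decomposition and the formal identity ${\on{det}}_{V^*} = {\on{det}}_V^{-1}$, so it would survive in a context where one did not want to assume the generic covector is Morse. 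One small point worth tightening: your parenthetical justification of $Z_\Lg(c_0) = Z_\Lg(\Cartan)$ (via "integrates into $N_G(\Cartan)$") implicitly uses uniqueness of the Cartan subspace through a regular point; it is cleaner to just cite \cite[Proposition 1.3]{DK}, as the paper does in \eqref{eqn-stabilizer-core}, which gives $Z_G(c_0) = Z_G(\Cartan)$ directly.
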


Proposition \ref{prop-char-tau} will be proved in Section \ref{subsec-geom-prelim}.
It implies that $\on{det} : G \to \bG_m$ descends to to a character:
\beq\label{eqn-char-tau-I}
\tau : I \to \{ \pm 1 \},
\eeq
(see equation \eqref{eqn-I}).  Moreover, the character $\tau \in \hat I$ is fixed by
the action of $W$ on $\hat I$.  Therefore, we can use the splitting homomorphism
$\tilde r : B_W \to \widetilde B_W$ (see assumption \eqref{eqn-tilde-r}) to extend
$\tau$ to a character:
\beq\label{eqn-char-tau}
\tau : \widetilde B_W \to \{ \pm 1 \},
\eeq
such that $\tau \circ \tilde r \, (b) = 1$ for every $b \in B_W$.

\begin{example}\label{ex-tau-not-one}
{\em
The following example illustrates that the character $\tau$ of equation
\eqref{eqn-char-tau-I} can be non-trivial.
Take $V = \on{Hom} (\bC^2, \bC) \oplus \on{Hom} (\bC, \bC^3)$, and let
$G = SO (2) \times GL (1) \times SO (3)$ act on $V$ by:
\beqn
(g_1, a, g_2) . (x_1, x_2) = (a \, x_1 \, {g_1}^{-1}, g_2 \, x_2 \, a^{-1}).
\eeqn
The representation $G | V$ is polar and stable, of rank one.  For this
representation, we have: $W \cong \bZ / 4$, $I \cong \bZ / 2$,
$\widetilde W \cong W \times I$, and $\tau : I \to \{ \pm 1 \}$ is the non-trivial
character.  Note that  the extension $\tau : \widetilde B_W \to \{ \pm 1 \}$
of equation \eqref{eqn-char-tau}, in this example, will depend on the choice
of the splitting homomorphism $\tilde r$.}
\end{example}

\begin{remark}
{\em
The character $\tau$ of equation \eqref{eqn-char-tau-I} is analogous to the
character $\tau : I \to \{ \pm 1 \}$ of \cite[Section 3.4]{GVX1}.  However, in
the situation (and the notation) of \cite{GVX1}, the character $\tau$ is only
non-trivial because we consider the action of the full fixed point group
$K = G^\theta$, rather than the identity component $K^0 \subset K$
(cf. Remark \ref{rmk-GVX}).}
\end{remark}

For each $\alpha \in A$, we can repeat the construction of the character
$\tau \in \hat I$ for the representation $G_\alpha | V_\alpha$, to obtain
a character:
\beqn
\tau_\alpha : I_\alpha \to \{ \pm 1\}.
\eeqn
Recall that we have $I_\alpha \subset I$, as discussed in Section
\ref{subsec-reg-split}, following \eqref{eqn-seq-alpha}.

\begin{prop}\label{prop-restrict-tau}
For each $\alpha \in A$, we have $\tau_\alpha = \tau |_{I_\alpha}$.
\end{prop}

A proof of Proposition \ref{prop-restrict-tau} will be given in Section
\ref{subsec-hessians}.

The character $\tau_\alpha \in \hat I_\alpha$ turns out to be related to the
value of the minimal polynomial $R_{\chi, \alpha}$ at zero.  Let:
\beq\label{eqn-d-alpha}
d_\alpha = \dim X_{\breve c_0, \alpha},
\eeq
(see \eqref{eqn-X-Q-alpha}).
Recall that we write $n_\alpha = |W_\alpha|$ (see \eqref{eqn-n-alpha}),
and recall the map $r = \tilde p \circ \tilde r : B_W \to \widetilde W$ defined
following assumption \eqref{eqn-tilde-r}.

\begin{prop}\label{prop-R-chi-alpha-at-zero}
Let $\alpha \in A$, $\sigma_\alpha \in B_W [\alpha]$.  Let
$x = r (\sigma_\alpha^{n_\alpha}) \in I_\alpha \subset I \subset \widetilde W$
(see diagram \eqref{eqn-main-diagram} and Defintion \ref{defn-reg-split}).
We then have:
\beqn
R_{\chi, \alpha} (0) = (-1)^{d_\alpha+1} \cdot \chi (x) \cdot \tau_\alpha (x).
\eeqn
\end{prop}

A proof of Proposition \ref{prop-R-chi-alpha-at-zero} will be given in Section
\ref{sec-carousel}.

\subsection{The group \texorpdfstring{$W_\chi^0$}{Lg} and the Hecke
algebra \texorpdfstring{$\cH_{W_\chi^0}$}{Lg}}
\label{subsec-W-zero}

For each $\alpha \in A$, let:
\beq\label{eqn-W-alpha-chi}
W_{\alpha, \chi} = W_\alpha \cap W_\chi
\;\; \text{and} \;\;
\degalpha = n_\alpha / |W_{\alpha, \chi}| \in \bZ,
\;\; \text{so that} \;\;
W_{\alpha, \chi} = \langle s_\alpha^\degalpha \rangle.
\eeq

\begin{prop}\label{prop-R-alpha-vanishing}
For each $\alpha \in A$, there exists a polynomial $\bar R_{\chi, \alpha} \in \cR$,
with $\deg \bar R_{\chi, \alpha} = n_\alpha / \degalpha$, such that:
\beqn
R_{\chi, \alpha} (z) = \bar R_{\chi, \alpha} (z^{\degalpha}).
\eeqn
\end{prop}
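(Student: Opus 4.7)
The plan is to combine the carousel-based analysis of Section \ref{sec-carousel} with the degree bound on monodromy in the family. First, I would apply Proposition \ref{prop-carousel} (to be established in Section \ref{sec-carousel} via the carousel technique of singularity theory) to the rank-one representation $G_\alpha \,|\, V_\alpha$. This should yield a precise comparison between the microlocal monodromy operator
\[
T \;:=\; \lambda_{l_0, \alpha} \bigl( \tilde r [\sigma_\alpha] (\sigma) \bigr) \;\in\; \on{Aut} \bigl( M_{l_{0, \alpha}} (P_{\chi^{}_\alpha}) \bigr)
\]
and the monodromy-in-family operator $\mu (\sigma_\alpha^{\degalpha})$ acting on the same Morse group. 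The carousel exploits the rotational ($\bC^*$-conic) symmetry intrinsic to $f_\alpha$ to produce, via an identification of the form referenced in equation \eqref{eqn-apply-theta}, a relation expressing $T^{\degalpha}$ as a scalar multiple of $\mu(\sigma_\alpha^{\degalpha})$.

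Granting such a relation, I would then invoke the degree bound $\deg \bar R^\mu_{\chi, \alpha} \leq |W_{\alpha, \chi}|$ from Proposition \ref{prop-degree-bound}, applied to the rank-one representation $G_\alpha \,|\, V_\alpha$, to conclude that the minimal polynomial $Q \in \bC[z]$ of $T^{\degalpha}$ satisfies $\deg Q \leq n_\alpha / \degalpha$. Since $Q(T^{\degalpha}) = 0$, the polynomial $Q(z^{\degalpha})$ annihilates $T$, and so $R_{\chi, \alpha}$ divides $Q(z^{\degalpha})$. Combining this with $\deg R_{\chi, \alpha} = n_\alpha$ from Proposition \ref{prop-monic} yields the chain
\[
n_\alpha \;=\; \deg R_{\chi, \alpha} \;\leq\; \deg Q(z^{\degalpha}) \;=\; \degalpha \cdot \deg Q \;\leq\; n_\alpha,
\]
which forces equality throughout. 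Therefore $R_{\chi, \alpha}(z) = Q(z^{\degalpha})$ and $\deg Q = n_\alpha / \degalpha$, so I define $\bar R_{\chi, \alpha} := Q$.

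Finally, I would verify $\bar R_{\chi, \alpha} \in \cR$: monicity of $\bar R_{\chi, \alpha}$ follows from monicity of $R_{\chi, \alpha}$, since leading coefficients are preserved under the substitution $z \mapsto z^{\degalpha}$; the nonvanishing $\bar R_{\chi, \alpha}(0) = R_{\chi, \alpha}(0) \neq 0$ holds because $T$, being a holonomy operator, is invertible; and $\deg \bar R_{\chi, \alpha} = |W_{\alpha, \chi}| \geq 1$, since $1 \in W_{\alpha, \chi}$.

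The main obstacle in this plan is the carousel analysis itself (Proposition \ref{prop-carousel}), which is the geometric heart of Section \ref{sec-carousel} and whose role is to bridge the microlocal and family monodromy structures on $M_{l_{0, \alpha}} (P_{\chi^{}_\alpha})$. Once that identification is in hand, the remainder of the argument is a purely formal comparison of minimal polynomial degrees, essentially using the pigeonhole that $\deg R_{\chi,\alpha} = n_\alpha$ and the min poly of $T^{\degalpha}$ has degree $\leq n_\alpha/\degalpha$ forces $R_{\chi,\alpha}$ to be a polynomial in $z^{\degalpha}$.
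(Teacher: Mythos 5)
Your proposal takes essentially the same route as the paper: the paper also defines $\bar R_{\chi, \alpha}$ as the minimal polynomial of $\lambda_{l_0, \alpha} \circ \tilde r [\sigma_\alpha] (\sigma^\degalpha)$, invokes Proposition \ref{prop-carousel} together with the degree-preserving involution $\InvTheta$ to transfer the upper bound of Proposition \ref{prop-degree-bound} from $\bar R^\mu_{\chi, \alpha}$ to $\bar R_{\chi, \alpha}$, and then uses $\deg R_{\chi, \alpha} = n_\alpha$ (Proposition \ref{prop-monic}) together with the divisibility $R_{\chi, \alpha} \mid \bar R_{\chi, \alpha}(z^{\degalpha})$ to force equality of degrees (the paper bundles this with the simultaneous proof of Proposition \ref{prop-R-mu-degree}). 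Two small inaccuracies in your write-up are worth correcting, though they do not affect the validity of the argument. First, Proposition \ref{prop-carousel} gives $\mu_{l_{0,\alpha}}(\sigma^{\degalpha}) = k_\alpha \cdot T^{-\degalpha}$, not $T^{\degalpha}$ as a scalar multiple of $\mu$; what you actually use is that the two minimal polynomials correspond under the degree-preserving involution $\InvTheta$ plus a rescaling (this is exactly equation \eqref{eqn-apply-theta}, which you do cite). Second, you describe the bound $\deg \bar R^\mu_{\chi, \alpha} \leq |W_{\alpha, \chi}|$ as ``Proposition \ref{prop-degree-bound}, applied to the rank-one representation $G_\alpha \,|\, V_\alpha$.'' This phrasing is misleading: the quantity $\bar R^\mu_{\chi, \alpha}$ does live in the rank-one world, but the proof of the bound crucially invokes the geometry of the full representation $G \,|\, V$, via Proposition \ref{prop-family-min-poly} and the decomposition of $M_{l_0}(P_\chi)$ by cosets of $W_\chi$. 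In fact, if one literally applied the statement of Proposition \ref{prop-degree-bound} with $G_\alpha \,|\, V_\alpha$ in place of $G \,|\, V$ and $\chi_\alpha$ in place of $\chi$, the relevant exponent would be $|W_\alpha|/|W_{\alpha, \chi_\alpha}|$ rather than $\degalpha = |W_\alpha|/|W_{\alpha, \chi}|$, and since $W_{\alpha, \chi} \subset W_{\alpha, \chi_\alpha}$ can be a strict inclusion (Remark \ref{rmk-W-alpha-chi-one}), the resulting bound would be potentially weaker. One should simply cite Proposition \ref{prop-degree-bound} as stated, without any such ``applied to'' qualification.
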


A proof of Proposition \ref{prop-R-alpha-vanishing} will be given in Section
\ref{sec-carousel}.  Clearly, the polynomial $\bar R_{\chi, \alpha} \in \cR$ is
unique; we can think of it as the minimal polynomial of the holonomy operator:
\beq\label{eqn-interpret-bar-R-chi-alpha}
\lambda_{l_0, \alpha} \circ \, \tilde r [\sigma_\alpha] \, (\sigma^\degalpha) \in
\on{End} (M_{l_{0, \alpha}} (P_{\chi^{}_\alpha})),
\eeq
for some $\sigma_\alpha \in B_W [\alpha]$.  Note that:
\beq\label{eqn-assignment-degalpha}
\text{the assignment $\alpha \mapsto \degalpha$ is invariant under the action
of $W_\chi$ on $A$.}
\eeq
Therefore, by Proposition-Definition \ref{prop-def-R-chi-alpha} (ii), we have:
\beq\label{eqn-invariance-bar}
\text{for every $\alpha_1, \alpha_2 \in A$ and $w \in W_\chi$, with
$\alpha_2 = w \, \alpha_1$, we have $\bar R_{\chi, \alpha_1} =
\bar R_{\chi, \alpha_2} \,$.}
\eeq

Let $W_\chi^0 \subset W_\chi$ be the subgroup generated by all the
$\{ W_{\alpha, \chi} \}_{\alpha \in A}$.  Note that $W_\chi^0$ is a complex
reflection group acting on $\Cartan$, and that we have:
\beq\label{eqn-W-zero-alpha}
W_\chi^0 \cap W_\alpha = W_{\alpha, \chi} \;\; \text{for every} \;\;
\alpha \in A.
\eeq

\begin{remark}\label{rmk-W-alpha-chi-one}{\em
The assignment $(G | V, \chi) \mapsto \{ W_\chi^0 \subset W_\chi \subset W \}$
can be applied to the pair $(G_\alpha | V_\alpha, \chi_\alpha)$, to obtain
subgroups $W_{\alpha, \chi^{}_\alpha}^0 \subset W_{\alpha, \chi^{}_\alpha}
\subset W_\alpha$.  By construction, we then have:
\beqn
W_{\alpha, \chi} \subset W_{\alpha, \chi^{}_\alpha}^0 =
W_{\alpha, \chi^{}_\alpha} \, .
\eeqn}
\end{remark}

Let:
\beq\label{eqn-A-zero-one}
A_\chi^0 = \{ \alpha \in A \; | \; \degalpha < n_\alpha \}, \;\;\;\;
A_\chi^1 = \{ \alpha \in A \; | \; \degalpha = n_\alpha \},
\eeq
so that $A = A_\chi^0 \cup A_\chi^1$.
Let:
\beqn
\Cartan_\chi^{reg} = \Cartan - \bigcup_{\alpha \in A_\chi^0} \Cartan_\alpha \, ,
\eeqn
and define:
\beqn
B_{W_\chi^0} \coloneqq \pi_1 (\Cartan_\chi^{reg} / W_\chi^0, c_0), \;\;\;\;
B_W^{\chi, 0} \coloneqq \pi_1 (\Cartan^{reg} / W_\chi^0, c_0) =
p^{-1} (W_\chi^0) \subset B_W \, ,
\eeqn
where the basepoint $c_0 \in \Cartan^{reg}$ naturally determines basepoints
in $\Cartan_\chi^{reg} / W_\chi^0$ and $\Cartan^{reg} / W_\chi^0$.
Note that $B_{W_\chi^0}$ is the braid group associated to the complex
reflection group $W_\chi^0$.

Let:
\beq\label{eqn-varphi}
\varphi : B_W^{\chi, 0} \to B_{W_\chi^0} \, ,
\eeq
be the map induced by the inclusion $\Cartan^{reg} \to \Cartan_\chi^{reg}$.
Note that, for every $\alpha \in A_\chi^0$ and $\sigma_\alpha \in B_W [\alpha]$,
we have $\sigma_\alpha^\degalpha \in B_W^{\chi, 0}$.  Moreover, by
\eqref{eqn-W-zero-alpha}, the image $\varphi (\sigma_\alpha^{\degalpha}) \in
B_{W_\chi^0}$ is a braid generator for the group $B_{W_\chi^0}$.  We define a
Hecke algebra $\cH_{W_\chi^0}$ as the quotient of the group algebra
$\bC [B_{W_\chi^0}]$ by all relations of the form:
\beqn
\bar R_{\chi, \alpha} (\varphi (\sigma_\alpha^{\degalpha})) = 0,
\eeqn
for $\alpha \in A_\chi^0$ and $\sigma_\alpha \in B_W [\alpha]$, and we write:
\beq\label{eqn-eta-chi}
\eta_\chi : \bC [B_{W_\chi^0}] \to \cH_{W_\chi^0} \, ,
\eeq
for the quotient map.  By assertion \eqref{eqn-invariance-bar} and
\cite[Theorem 1.3]{Et}, we have:
\beq\label{eqn-dim-H}
\dim \cH_{W_\chi^0} = | W_\chi^0 |.
\eeq

The Hecke algebra $\cH_{W_\chi^0}$ encodes the polynomials
$\bar R_{\chi, \alpha}$ for $\alpha \in A_\chi^0$.  To encode the
polynomials $\bar R_{\chi, \alpha}$ for $\alpha \in A_\chi^1$, we define
a character:
\beq\label{eqn-rho}
\rho = \rho_\chi : B_W^{\chi, 0} \to \bG_m \, ,
\eeq
by requiring that:
\begin{align}
\rho (\sigma_\alpha^\degalpha) & = 1 \;\;
\text{for all} \;\; \alpha \in A_\chi^0 \;\; \text{and} \;\;
\sigma_\alpha \in B_W [\alpha], \notag \\
\bar R_{\chi, \alpha} (\rho (\sigma_\alpha^{n_\alpha})) & = 0 \;\;
\text{for all} \;\; \alpha \in A_\chi^1 \;\; \text{and} \;\;
\sigma_\alpha \in B_W [\alpha].
\label{eqn-define-rho}
\end{align}
Note that, for $\alpha \in A_\chi^1$, we have $\degalpha = n_\alpha$ and 
$\deg \bar R_{\chi, \alpha} = 1$.  The character $\rho$ is well-defined by
assertion \eqref{eqn-invariance-bar}.  Moreover, by Proposition
\ref{prop-R-chi-alpha-at-zero}, we can rewrite condition \eqref{eqn-define-rho}
as follows:
\beq\label{eqn-rho-explicit}
\rho (\sigma_\alpha^{n_\alpha}) =
(-1)^{d_\alpha} \cdot \chi (x) \cdot \tau_\alpha (x),
\eeq
for all $\alpha \in A_\chi^1$ and $\sigma_\alpha \in B_W [\alpha]$, and for
$x = r (\sigma_\alpha^{n_\alpha}) \in I_\alpha$.

\begin{remark}\label{rmk-rho}{\em
In all of the examples computed by the authors, the character $\rho$ is trivial.
But we can not rule out the possibility that it is non-trivial in some examples.}
\end{remark}

\subsection{Statement of the theorem}
\label{subsec-statement}

We begin by constructing a $\bC [\widetilde B_W]$-module $\cM_\chi$
associated to the character $\chi \in \hat I$.  Let:
\beq\label{eqn-tilde-B-chi}
B_W^\chi \coloneqq \on{Stab}_{B_W} (\chi) = p^{-1} (W_\chi), \;\;
\widetilde B_W^\chi \coloneqq \tilde q^{-1} (B_W^\chi), \;\;
\widetilde B_W^{\chi, 0} \coloneqq \tilde q^{-1} (B_W^{\chi, 0}),
\eeq
(see equation \eqref{eqn-factor-action}).  Note that we have
$\widetilde B_W^{\chi, 0} \subset \widetilde B_W^\chi \subset \widetilde B_W$.
The character $\chi$ extends uniquely to a character:
\beq\label{eqn-hat-chi}
\hat \chi : \widetilde B_W^\chi \to \bG_m \, ,
\eeq
such that $\hat \chi \circ \tilde r \, (b) = 1$ for every $b \in B_W^\chi$.
Let $\bC_\chi$ be a copy of $\bC$, viewed as an $I$-module via the character
$\chi$.  Simultaneously, we view $\bC_\chi$ as a $\widetilde B_W^\chi$-module
(and therefore a $\widetilde B_W^{\chi, 0}$-module) via the character $\hat \chi$
of equation \eqref{eqn-hat-chi}.  Similarly, let $\bC_\tau$ be a copy of $\bC$,
viewed as a $\widetilde B_W$-module via the character $\tau$ of equation
\eqref{eqn-char-tau}.  Also, let $\bC_\rho$ be a copy of $\bC$, viewed as a
$B_W^{\chi, 0}$-module via the character $\rho$ of equation \eqref{eqn-rho}.
Simultaneously, we view $\bC_\rho$ as a $\widetilde B_W^{\chi, 0}$-module
via the composition:
\beq\label{eqn-tilde-rho}
\tilde \rho \coloneqq \rho \circ \tilde q : \widetilde B_W^{\chi, 0} \to \bG_m \, .
\eeq

Recall the group homomorphism $\varphi : B_W^{\chi, 0} \to B_{W_\chi^0}$
of equation \eqref{eqn-varphi}, and let:
\beqn
\widetilde\varphi \coloneqq \varphi \circ \tilde q :
\widetilde B_W^{\chi, 0} \to B_{W_\chi^0} \, .
\eeqn
We can view the Hecke algebra $\cH_{W_\chi^0}$ as a module over
itself via the left multiplication.  Simultaneously, we can view
$\cH_{W_\chi^0}$ as $\bC [B_{W_\chi^0}]$-module via the map
$\eta_\chi$ of equation \eqref{eqn-eta-chi}, as a 
$\bC [B_W^{\chi, 0}]$-module via the composition:
\beqn
\eta_\chi \circ \varphi : \bC [B_W^{\chi, 0}] \to
\cH_{W_\chi^0} \, ,
\eeqn
and as a $\bC [\widetilde B_W^{\chi, 0}]$-module via the composition:
\beqn
\eta_\chi \circ \widetilde\varphi : \bC [\widetilde B_W^{\chi, 0}] \to
\cH_{W_\chi^0} \, .
\eeqn
Here, and in the rest of the paper, we use the following notational
convention.

\begin{convention}\label{conv-group-alg}
{\em
Given a map of discrete groups $\Phi : B_1 \to B_2$, we use the same
symbol for the corresponding map of group algebras $\Phi : \bC [B_1] \to
\bC [B_2]$.  Similarly, given a representation of a discrete group
$\Lambda : B \to \on{Aut} (M)$ on a complex vector space $M$, we use
the same symbol for the corresponding representation of the group
algebra $\Lambda : \bC [B] \to \on{End} (M)$.}
\end{convention}

Consider the tensor product $\bC_\chi \otimes \bC_\rho \otimes
\cH_{W_\chi^0}$, taken over $\bC$, and view it as a
$\bC [\widetilde B_W^{\chi, 0}]$-module by combining the
$\bC [\widetilde B_W^{\chi, 0}]$-module structures on
$\bC_\chi$, $\bC_\rho$, and $\cH_{W_\chi^0}$.
We now induce this module to $\bC [\widetilde B_W]$, and define:
\beq\label{eqn-M-chi}
\cM_\chi \ = \ \left( \bC [\widetilde B_W]
\otimes_{\bC [\widetilde B_W^{\chi, 0}]}
(\bC_\chi \otimes \bC_\rho \otimes \cH_{W_\chi^0}) \right)
\otimes \bC_\tau \, .
\eeq
Note that, by equation \eqref{eqn-dim-H}, we have $\dim \cM_\chi = |W|$.  We
interpret the $\bC [\widetilde B_W]$-module $\cM_\chi$ as a $G$-equivariant
local system on $(V^*)^{rs}$, whose fiber over the basepoint $l_0 \in (V^*)^{rs}$
is equal to $\cM_\chi$, and whose holonomy is given by the
$\bC [\widetilde B_W]$-module structure, via the identification
\eqref{eqn-identify-pi-one}.

\begin{thm}
\label{thm-main}
Let $G | V$ be as above: a stable polar representation, which is visible or of
rank one, satisfies the locality assumption \eqref{eqn-locality-assumption}, and
admits a regular splitting as in \eqref{eqn-tilde-r}.  Consider the nearby cycle
sheaf $P_\chi$ defined in \eqref{eqn-P-chi}.  Its Fourier transform is given by:
\beqn
\FT P_\chi \cong \on{IC} ((V^*)^{rs}, \cM_\chi),
\eeqn
where the RHS is the IC-extension of the local system $\cM_\chi$ defined
in \eqref{eqn-M-chi} above.
\end{thm}

\begin{remark}\label{rmk-GVX}{\em
The statement of Theorem \ref{thm-main} is formally very similar to the
statement of \cite[Theorem 3.6]{GVX1}, the main difference being that the
former applies much more broadly.  However, \cite[Theorem 3.6]{GVX1} is not
technically a special case of Theorem \ref{thm-main}.  There are four reasons
for this.  First, the paper \cite{GVX1} works with coefficients in a general integral
domain $\Bbbk$, while in this paper we work with coefficients in $\bC$.  This
distinction is not material, given \cite[Remark 7.2]{GVX1} and the freeness of
the $\Bbbk$-module $M_l (P_\chi)$ in \cite[Proposition~7.3]{GVX1}.  Second,
\cite[Theorem 3.6]{GVX1} computes the polynomials $\bar R_{\chi, \alpha}$
explicitly.  Third, the paper \cite{GVX1} deals with a possibly disconnected group
$K$ acting on a symmetric space $\Lp$.  And fourth, the group $W_{\La, \chi}^0$
defined in \cite{GVX1} is potentially smaller than the group $W_\chi^0$ defined for
the corresponding situation in this paper, even if $K$ is connected.  Having said
that, it is not difficult to verify that the claim of \cite[Theorem 3.6]{GVX1} for a
symmetric pair $(G, K)$, giving rise to a symmetric space $\Lp$, is equivalent to
the claim of Theorem \ref{thm-main} for the polar representation $K^0 | \Lp$
(cf. Remark \ref{rmk-E-alpha} below).}
\end{remark}

\begin{remark}\label{rmk-Gr1}{\em
The claim of Theorem \ref{thm-main} for a polar representation $G | V$ and 
the trivial character $\chi = 1$ is close, but not equivalent, to the claims of
\cite[Theorems 3.1 $\&$ 5.2]{Gr1} for $G | V$.  The latter provide a
simultaneous description of the Morse local system $M (P_1)$ on $(V^*)^{rs}$,
of the sheaf $P_1$ at the origin, and of the monodromy in the family action of
$B_W$ on $M (P_1)$.  However, the description of $M (P_1)$ in \cite{Gr1} is
not complete.  Theorem \ref{thm-main}, on the other hand, gives a complete
description of $M (P_1)$, but does not discuss the monodromy in the family;
see however Remark \ref{rmk-end-ring} below.}
\end{remark}

\begin{remark}\label{rmk-E-alpha}{\em
In some situations, the factorization of the polynomial $R_{\chi, \alpha}$ in
Proposition \ref{prop-R-alpha-vanishing} holds with $z^\degalpha$ replaced by
$z^{E_\alpha}$, where $E_\alpha \geq \degalpha$ for all $\alpha \in A$,
$E_\alpha > \degalpha$ for some $\alpha \in A$, and the assignment $\alpha
\mapsto E_\alpha$ is invariant under $W_\chi$.  When this is the case, we can
define $W_\chi^1 \subset W_\chi^0$ to be the proper subgroup generated by
the powers $\{ s_\alpha^{E_\alpha} \}_{\alpha \in A}$, and the group $W_\chi^1$
can be used in place of $W_\chi^0$ in the statement of Theorem \ref{thm-main}.
In the context of GIT stably graded Lie algebras, a natural candidate
$W_\chi^{en}$ for $W_\chi^1$ arises from the endoscopic point of view,
as explained in \cite[Section 5]{VX2}.}
\end{remark}

\begin{remark}\label{rmk-end-ring}{\em
A natural question that is not answered by Theorem \ref{thm-main} is the
computation of the endomorphism ring $\on{End} (P_\chi)$.  We expect
that this ring can be described as follows.  Write $\cH_\chi^\circ$ for the
opposite of the Hecke algebra $\cH_{W_\chi^0}$.  There is a uique
homomorphism $\kappa : \bC [B_W^{\chi, 0}] \to \cH_\chi^\circ$, such that:
\begin{align*}
\kappa (\sigma_\alpha^\degalpha) & = k_\alpha \cdot
\eta_\chi \circ \varphi \, (\sigma_\alpha^{-\degalpha})
\;\; \text{for all} \;\; \alpha \in A_\chi^0 \;\; \text{and} \;\;
\sigma_\alpha \in B_W [\alpha], \\
\kappa (\sigma_\alpha^{n_\alpha}) & = (-1)^{d_\alpha} \cdot
\chi (\sigma_\alpha^{-n_\alpha})
\;\; \text{for all} \;\; \alpha \in A_\chi^1 \;\; \text{and} \;\;
\sigma_\alpha \in B_W [\alpha],
\end{align*}
where $\{ k_\alpha \in \{ \pm 1 \} \}_{\alpha \in A}$ are the signs given by
Proposition \ref{prop-carousel}.  The definition of $\kappa$ is designed
to capture the action of $B_W^{\chi, 0} \subset B_W^\chi$ on $\cM_\chi^0
\subset \cM_\chi$ (see equation \eqref{eqn-M-chi-induced}), via the
monodromy in the family \eqref{eqn-introduce-monodromy} (cf. Proposition
\ref{prop-microlocal-generator} and equation \eqref{eqn-rho-mu-explicit}).
The existence and uniqueness of $\kappa$ follow from the proof of
Theorem \ref{thm-main} in Section \ref{sec-proof}.  Use $\kappa$ to view
$\cH_\chi^\circ$ as a $\bC [B_W^{\chi, 0}]$-module, and define:
\beq\label{eqn-E}
E \coloneqq \bC [B_W^\chi] \otimes_{\bC [B_W^{\chi, 0}]} \cH_\chi^\circ \, .
\eeq
Then the $\bC [B_W^\chi]$-module structure on $E$ descends to
a $\bC$-algebra structure, and the monodromy action
\eqref{eqn-introduce-monodromy} gives rise to an isomorphism:
\beq\label{eqn-End-E}
\on{End} (P_\chi) \cong E.
\eeq
In particular, the ring $\on{End} (P_\chi)$ is generated by the monodromy
action \eqref{eqn-introduce-monodromy}, and we have
$\dim \on{End} (P_\chi) = |W_\chi|$.  We expect that isomorphism
\eqref{eqn-End-E} can be established using the methods of this paper,
but we have not carried out a proof.  We note, however, that in the special
case where $W_\chi^0 = W_\chi$, definition \eqref{eqn-E} reduces to
$E = \cH_\chi^\circ$, and isomorphism \eqref{eqn-End-E} follows readily
from the statement and the proof of Theorem \ref{thm-main}.}
\end{remark}

\section{Preliminary results}
\label{sec-prelim}

\subsection{Geometric preliminaries}
\label{subsec-geom-prelim}

In this subsection, we prove Propositions \ref{prop-A-f-stratification},
\ref{prop-core}, and \ref{prop-char-tau}.

\begin{proof}[Proof of Proposition \ref{prop-A-f-stratification}]
A proof of part (i) is essentially contained in the first paragraph of
\cite[Section 3]{Gr1}.  To wit, if $G | V$ is visible, we can take $\cS_0$ to be
the $G$-orbit stratification.  If, instead (see assumption \eqref{eqn-visibility}),
we have $\on{rank} (G | V) = 1$, we can readily reduce to the case $\dim Q = 1$,
then use a general result on the existence of $A_f$ stratifications for functions;
see \cite[p. 248, Corollary 1]{Hi}.

Our proof of part (ii) is similar to the proof of \cite[Lemma 6.1]{GVX1}.
Since the stratification $\cS_0$ is $\bC^*$-conic, it suffices to show that
$0 \in X_0$ is the only stratified critical point of the restriction
$l |_{X_0}$, with respect to $\cS_0$.  By \cite[Proposition 2.13 (i)]{Gr1},
the representation $G | V^*$ is polar with Cartan subspace $\Cartan^* = 
(\Lg \cdot \Cartan)^\p \subset V^*$ (see \eqref{eqn-C-star}).  Therefore,
we can assume, without loss of generality, that $l \in (\Cartan^*)^{reg} =
\Cartan^* \cap (V^*)^{rs}$.  By \cite[Proposition 2.13 (ii)]{Gr1}, we have:
\beqn
\{ v \in V \; | \; l |_{\Lg \cdot v} = 0 \} = \Cartan.
\eeqn
Since the stratification $\cS_0$ is $G$-invariant, it remains to observe
that $X_0 \cap \Cartan = \{ 0 \}$.

Part (iii) follows from \cite[Theorem 5.5]{Gi} and the remark that follows
that theorem (cf. proof of \cite[Corollary 3.2]{GVX1}).
\end{proof}

\begin{proof}[Proof of Proposition \ref{prop-core}]
Let $\Lk = \on{Lie} (K)$.  By \cite[Proposition 1.3]{DK}, for every
$v \in \Cartan^{reg}$, we have:
\beq\label{eqn-stabilizer-core}
Z_G (v) = Z_G (\Cartan) = Z_K (\Cartan) \cdot \exp (\bi \, Z_\Lk (\Cartan)).
\eeq
Part (i) follows by considering $j^{}_V : V^{mrs} \to V^{rs}$ as a map of fiber
bundles over $Q^{reg}$.  Part (ii) follows from part (i) and the definition of the
equivariant $\pi_1$.  Part (iii) follows from equation \eqref{eqn-stabilizer-core}
and \cite[Lemma 2.7]{DK}.
\end{proof}

For every $v \in V$ and $l \in (\Lg \cdot v)^\p \subset V^*$, the restriction
$l |_{G \cdot v}$ has a critical point at $v$, and we write:
\beq\label{eqn-H-v-l}
\cH [v, l] \in Sym^2 ((\Lg \cdot v)^*),
\eeq
for the Hessian of $l |_{G \cdot v}$ at $v$.  By \cite[Corollary 2.16 (ii)]{Gr1}, if 
$v \in V^{rs}$ and $l \in (\Lg \cdot v)^\p \cap (V^*)^{rs}$, then $v$ is a Morse
critical point of $l |_{G \cdot v}$.  In other words, we have:
\beq\label{eqn-morse}
\forall \; v \in V^{rs}, \; l \in (\Lg \cdot v)^\p \cap (V^*)^{rs}, \;\; \cH [v, l]
\text{ is a non-degenerate quadratic form on } \Lg \cdot v.
\eeq

\begin{proof}[Proof of Proposition \ref{prop-char-tau}]
Each $g \in Z_G (\Cartan)$ preserves the direct sum decomposition
$V = \Cartan \oplus \Lg \cdot \Cartan$ of equation \eqref{eqn-stability}.
Pick a $v \in \Cartan^{reg}$ and an $l \in (\Cartan^*)^{reg}$.  By the
definition of a Cartan subspace (see \cite[p. 504]{DK}), we have
$\Lg \cdot \Cartan = \Lg \cdot v$.  The element $g$ acts on $\Lg \cdot v$
preserving the Hessian $\cH [v, l]$.  By \eqref{eqn-morse}, this implies
that $\on{det} (g |_{\Lg \cdot v}) = \pm 1$.  By assumption, we have
$\on{det} (g |_{\Cartan}) = 1$.  The proposition follows.
\end{proof}

\subsection{Hessians and the root space decomposition}
\label{subsec-hessians}

In Section \ref{subsec-geom-prelim}, we introduced the Hessian $\cH [v, l]$
for every $v \in V$ and $l \in (\Lg \cdot v)^\p \subset V^*$ (see equation
\eqref{eqn-H-v-l}).  In this subsection, we make a more detailed study of
$\cH [v, l]$ for $v \in \Cartan$ and $l \in \Cartan^*$, leading up to a proof
of Proposition \ref{prop-restrict-tau}.  We will need the analog of the root
space decomposition for the representation $G | V$, i.e., the following digest
of \cite[Theorem 2.12]{DK}.  Let $\Lm = Z_\Lg (\Cartan)$, and recall that we
write $\Lg_\alpha = Z_\Lg (\Cartan_\alpha)$, $\alpha \in A$.

\begin{thm}\label{thm-root-space-decomp}
$\;$
\begin{enumerate}[topsep=-1.5ex]
\item[(i)]
For all $\alpha \neq \beta$, $\alpha, \beta \in A$, we have:
\beqn
\Lg_\alpha \cap \Lg_\beta = \Lm.
\eeqn

\item[(ii)]
\beqn
\Lg = \sum_{\alpha \in A} \Lg_\alpha \;\;\;\; \text{and} \;\;\;\;
\Lg / \Lm = \bigoplus_{\alpha \in A} \Lg_\alpha / \Lm.
\eeqn

\item[(iii)]
\beqn
V = \Cartan \oplus \bigoplus_{\alpha \in A} \Lg_\alpha \cdot \Cartan.
\eeqn

\item[(iv)]
For every $v \in \Cartan$, we have:
\beqn
\Lg \cdot v = \bigoplus_{\alpha \in A \, : \, v \notin \Cartan_\alpha}
\Lg_\alpha \cdot \Cartan.
\eeqn

\item[(v)]
For every $v \in \Cartan$, we have:
\beqn
Z_\Lg (v) = \sum_{\alpha \in A \, : \, v \in \Cartan_\alpha} \Lg_\alpha \, .
\eeqn
\end{enumerate}
\end{thm}

\begin{proof}
The only assertion of the theorem not contained in \cite[Theorem 2.12]{DK}
is the second equation of part (ii).  It follows from the first equation of part (ii)
and the direct sum decomposition of part (iii).
\end{proof}

The following elementary lemma applies generally to representations of
algebraic groups over $\bC$.

\begin{lemma}\label{lemma-hess}
Let $G | V$ be a representation of an algebraic group over $\bC$, let $v \in V$,
let $l \in (\Lg \cdot v)^\p \subset V^*$, and let $\cH [v, l]$ be the Hessian of
$l |_{G \cdot v}$ at $v$.  Let $x \in \Lg$ and $w \in \Lg \cdot v \subset V$.
Then we have:
\beqn
\cH [v, l] \, (x \cdot v, w) = l (x \cdot w).
\eeqn 
\end{lemma}

\begin{proof}
Let $\Gamma_x : \bG_a \to G$ be the homomorphism of complex analytic
groups generated by $x \in \Lg$.  The assignment $t \mapsto (\Gamma_x (t)
\cdot v, \Gamma_x (t) \cdot w)$, $t \in \bG_a$, defines an analytic curve
$\bG_a \to TG \cdot v$.  We have:
\beqn
\left. \frac{\partial \, \Gamma_x (t) \cdot v}{\partial \, t} \right\rvert_{t = 0} =
x \cdot v
\;\;\;\; \text{and} \;\;\;\;
\left. \frac{\partial \, \Gamma_x (t) \cdot w}{\partial \, t} \right\rvert_{t = 0} =
x \cdot w.
\eeqn
The lemma follows by the definition of a Hessian.
\end{proof}

Recall the inner product $\langle \; , \, \rangle$ which was fixed in Section
\ref{subsec-dual-rep} and is subject to assumption \eqref{eqn-min-cartan}.

\begin{prop}\label{prop-decompose-hessian}
For each $v \in \Cartan$ and $l \in \Cartan^*$, the direct sum decomposition: 
\beqn
\Lg \cdot v =  \bigoplus_{\alpha \in A \, : \, v \notin \Cartan_\alpha}
\Lg_\alpha \cdot \Cartan, 
\eeqn
of Theorem \ref{thm-root-space-decomp} (iv) is orthogonal with respect to
both the Hessian $\cH [v, l]$ and the inner product $\langle \; , \, \rangle$.
\end{prop}

\begin{proof}
Begin with the assertion regarding the Hessian $\cH [v, l]$.  Pick a pair of
distinct elements $\alpha, \beta \in A$ with $v \notin \Cartan_\alpha \cup
\Cartan_\beta$.  Let $x_1 \in \Lg_\alpha$ and
$x_2 \in \Lg_\beta$.  We need to show that:
\beqn
\cH [v, l] \, (x_1 \cdot v, x_2 \cdot v) = 0.
\eeqn
Note that:
\beq\label{eqn-t-beta}
\text{for every} \;\; v_1 \in \Cartan - \Cartan_\beta, \;\; \text{we have} \;\; 
\Lg_\beta \cdot v_1 = \Lg_\beta \cdot v.
\eeq
Using \eqref{eqn-t-beta}, we can choose elements
$v_1 \in \Cartan_\alpha - \Cartan_\beta$ and $x_3 \in \Lg_\beta$, such
that $x_3 \cdot v_1 = x_2 \cdot v$.  We then have:
\beq\label{eqn-compute-Hess}
\begin{split}
\cH [v, l] \, (x_1 \cdot v, x_2 \cdot v) & =
\cH [v, l] \, (x_1 \cdot v, x_3 \cdot v_1) =
l (x_1 \cdot (x_3 \cdot v_1)) \\
& = l ([x_1, x_3] \cdot v_1) + l (x_3 \cdot (x_1 \cdot v_1)),
\end{split}
\eeq
where the second equality follows from Lemma \ref{lemma-hess}.  It remains
to note that each of the summands on the second line of
\eqref{eqn-compute-Hess} vanishes.  Indeed, we have:
\beqn
l ([x_1, x_3] \cdot v_1) = 0,
\eeqn
because $l \in \Cartan^* = (\Lg \cdot \Cartan)^\p \subset V^*$, and we have:
\beqn
l (x_3 \cdot (x_1 \cdot v_1)) = 0,
\eeqn
because $x_1 \in \Lg_\alpha$, $v_1 \in \Cartan_\alpha$, and so 
$x_1 \cdot v_1 = 0$.  This proves the orthogonality with respect to $\cH [v, l]$.

We now proceed to the assertion regarding the inner product
$\langle \; , \, \rangle$.  For each $\alpha \in A$, let $\Lk_\alpha = 
\Lg_\alpha \cap \Lk$.  By \cite[Proposition 1.3]{DK} and
Theorem \ref{thm-root-space-decomp} (v), we have:
\beq\label{eqn-Lg-alpha-Lk-alpha}
\Lg_\alpha = \Lk_\alpha \otimes_\bR \bC, \;\; \alpha \in A.
\eeq
Pick a pair of distinct elements $\alpha, \beta \in A$ with $v \notin
\Cartan_\alpha \cup \Cartan_\beta$.  Let $x_1 \in \Lk_\alpha$,
$x_2 \in \Lk_\beta$, and $v_1 \in \Cartan_\alpha - \Cartan_\beta$.  In view
of \eqref{eqn-t-beta} and \eqref{eqn-Lg-alpha-Lk-alpha}, it suffices to show that
$\langle x_1 \cdot v, x_2 \cdot v_1 \rangle = 0$.  Using the $K$-invariance of
the inner product $\langle \; , \, \rangle$, we compute:
\beqn
\langle x_1 \cdot v, x_2 \cdot v_1 \rangle =
- \langle v, x_1 \cdot (x_2 \cdot v_1) =
\langle v, x_2 \cdot (x_1 \cdot v_1) \rangle -
\langle v, [x_1, x_2] \cdot v_1 \rangle = 0.
\eeqn
Here, $x_1 \cdot v_1 = 0$ by construction, and $[x_1, x_2] \cdot v_1 \in
\Lg \cdot \Cartan$ is orthogonal to $v$, by \eqref{eqn-orthogonal}.
\end{proof}

For each $\alpha \in A$, $v \in \Cartan - \Cartan_\alpha$, and $l \in \Cartan^*$,
we note that $\Lg_\alpha \cdot v = \Lg_\alpha \cdot \Cartan$, and we write:
\beq\label{eqn-H-alpha}
\cH_\alpha [v, l] \coloneqq \cH [v, l] |_{\Lg_\alpha \cdot v} \in
Sym^2 ((\Lg_\alpha \cdot v)^*).
\eeq
Assertion \eqref{eqn-morse} and Propostion \ref{prop-decompose-hessian}
imply that the partial Hessian $\cH_\alpha [v, l]$ is a non-degenerate quadratic
form on $\Lg_\alpha \cdot v$, whenever  $v \in \Cartan^{reg}$ and
$l \in (\Cartan^*)^{reg}$.  The following proposition shows that, in fact, this
non-degeneracy of the partial Hessian holds more generally.  Recall that we
write $\Cartan^*_\alpha \subset \Cartan^*$ for the reflection hyperplane
corresponding to $\alpha$.

\begin{prop}\label{prop-morse-atom}
The partial Hessian $\cH_\alpha [v, l]$ is a non-degenerate quadratic form on
$\Lg_\alpha \cdot v$ for every $\alpha \in A$, $v \in \Cartan - \Cartan_\alpha$,
and $l \in \Cartan^* - \Cartan^*_\alpha$.
\end{prop}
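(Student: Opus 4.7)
The plan is to reduce the proposition to an application of \eqref{eqn-morse} for the rank-one polar representation $G_\alpha | V_\alpha$, which, by Section \ref{subsec-polar-rep}, shares $\Cartan$ as its Cartan subspace and has Weyl group $W_\alpha$. The point is that $\cH_\alpha[v, l]$ arises as the full Hessian in the rank-one setting, so the statement \eqref{eqn-morse} for $G_\alpha | V_\alpha$ provides exactly what we need, under a regularity hypothesis that is implied by $v \in \Cartan - \Cartan_\alpha$ and $l \in \Cartan^* - \Cartan^*_\alpha$.

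First I would identify $\cH_\alpha[v, l]$ with the Hessian coming from $G_\alpha | V_\alpha$. Since $G_\alpha$ preserves $V_\alpha$, we have $G_\alpha \cdot v \subset V_\alpha \subset V$, and $l \in \Cartan^* = (\Lg \cdot \Cartan)^\p$ vanishes on $\Lg_\alpha \cdot \Cartan = \Lg_\alpha \cdot v$ (the equality uses $v \in \Cartan - \Cartan_\alpha$, via Theorem \ref{thm-root-space-decomp} (iv)); in particular $v$ is a critical point of both $l|_{G \cdot v}$ and $(l|_{V_\alpha})|_{G_\alpha \cdot v}$. The formula in Lemma \ref{lemma-hess} then shows that the restriction of $\cH[v, l]$ to $\Lg_\alpha \cdot v$ coincides with the Hessian at $v$ of $(l|_{V_\alpha})|_{G_\alpha \cdot v}$, since both are given by $(x_1 \cdot v, x_2 \cdot v) \mapsto l(x_1 \cdot (x_2 \cdot v))$ for $x_1, x_2 \in \Lg_\alpha$. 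Hence $\cH_\alpha[v, l]$ is precisely the Hessian of the rank-one representation $G_\alpha | V_\alpha$ at the pair $(v, l|_{V_\alpha})$.

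Second I would verify the regularity hypotheses needed to invoke \eqref{eqn-morse} in the rank-one setting. The assumption $v \in \Cartan - \Cartan_\alpha$ gives $v \in V_\alpha^{rs} = G_\alpha \cdot (\Cartan - \Cartan_\alpha)$. For $l|_{V_\alpha}$, the vanishing noted above places it in the dual Cartan $(\Lg_\alpha \cdot \Cartan)^\p \subset V_\alpha^*$ for $G_\alpha | V_\alpha$. The direct sum decomposition $V_\alpha = \Cartan \oplus \Lg_\alpha \cdot \Cartan$ identifies both this dual Cartan and $\Cartan^* \subset V^*$ canonically with the linear dual of $\Cartan$; under these identifications, the restriction map $l \mapsto l|_{V_\alpha}$ is the identity, and the unique reflection hyperplane of $W_\alpha$ in the dual Cartan of $G_\alpha | V_\alpha$ corresponds to $\Cartan^*_\alpha \subset \Cartan^*$. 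Thus $l \in \Cartan^* - \Cartan^*_\alpha$ forces $l|_{V_\alpha}$ to lie in the regular locus of the dual Cartan of $G_\alpha | V_\alpha$, and hence in $(V_\alpha^*)^{rs}$.

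With both hypotheses verified, \eqref{eqn-morse} applied to $G_\alpha | V_\alpha$ yields the non-degeneracy of $\cH_\alpha[v, l]$ on $\Lg_\alpha \cdot v$. I anticipate no genuine mathematical obstacle; the only real work is the compatibility bookkeeping between the Cartan data for $G | V$ and for $G_\alpha | V_\alpha$, which is straightforward given Theorem \ref{thm-root-space-decomp} (iii) and the known polarity and stability of $G_\alpha | V_\alpha$.
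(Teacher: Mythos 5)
Your proposal is correct and follows exactly the paper's one-line argument (``This follows by applying assertion \eqref{eqn-morse} to the rank one representation $G_\alpha | V_\alpha$''), simply unwinding the two pieces of bookkeeping the paper leaves implicit: the identification of $\cH_\alpha[v,l]$ with the full Hessian of $G_\alpha | V_\alpha$ at $(v, l|_{V_\alpha})$ via Lemma \ref{lemma-hess}, and the check that $v \in \Cartan - \Cartan_\alpha$ and $l \in \Cartan^* - \Cartan^*_\alpha$ land the pair in the regular semisimple loci for $G_\alpha | V_\alpha$.
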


\begin{proof}
Apply assertion \eqref{eqn-morse} to the rank one representation
$G_\alpha | V_\alpha$.
\end{proof}

\begin{proof}[Proof of Proposition \ref{prop-restrict-tau}]
Consider the $G_\alpha$-invariant direct sum decomposition:
\beq\label{eqn-V-alpha-notalpha}
V = V_\alpha \oplus \Lg \cdot \Cartan_\alpha \, ,
\eeq
where $V_\alpha = \Cartan \oplus \Lg_\alpha \cdot \Cartan$ and
$\Lg \cdot \Cartan_\alpha = \bigoplus_{\beta \in A_\notalpha}
\Lg_\beta \cdot \Cartan$ (see \eqref{eqn-V-alpha}, \eqref{eqn-not-alpha}
and Theorem \ref{thm-root-space-decomp} (iii)).  For every $g \in G_\alpha$,
we will write:
\beq\label{eqn-det-alpha-notalpha}
\on{det}_\alpha (g) = \on{det} (g |_{V_\alpha}), \;\;\;\;
\on{det}_\notalpha (g) = \on{det} (g |_{\Lg \cdot \Cartan_\alpha}).
\eeq
We then have:
\beqn
\on{det} (g) = \on{det}_\alpha (g) \cdot \on{det}_\notalpha (g),
\;\; g \in G_\alpha \, .
\eeqn
We will show that:
\beq\label{eqn-notalpha-one}
\on{det}_\notalpha (g) = 1 \;\; \text{for every} \;\; g \in Z_{G_\alpha} (\Cartan),
\eeq
and the proposition will follow.  As in the proof of Proposition
\ref{prop-char-tau}, we will use considerations related to the Hessian
$\cH [v, l]$, for suitable $v \in \Cartan$ and $l \in \Cartan^*$.

Recall the notation of equation \eqref{eqn-not-alpha}, and define:
\beqn
\Cartan_\notalpha^* \; = \; \bigcup_{\beta \in A_\notalpha}
\Cartan_\beta^* \, , \;\;\;\;
(\Cartan_\alpha^*)^{reg} \; = \; \Cartan_\alpha^* - \Cartan_\notalpha^* \, .
\eeqn
Pick some $v \in \Cartan_\alpha^{reg}$ and $l \in (\Cartan_\alpha^*)^{reg}$,
note that $\Lg \cdot v = \Lg \cdot \Cartan_\alpha$, and consider the
Hessian $\cH [v, l]$, as in \eqref{eqn-H-v-l}.  By Theorem
\ref{thm-root-space-decomp} (iv) and Propositions
\ref{prop-decompose-hessian}, \ref{prop-morse-atom}, the Hessian
$\cH [v, l]$ is a non-degenerate quadratic form on $\Lg \cdot \Cartan_\alpha$.
By the choice of $v \in V$ and $l \in V^*$, the group $G_\alpha$ acts on
$\Lg \cdot \Cartan_\alpha$ preserving the Hessian $\cH [v, l]$.  Equation
\eqref{eqn-notalpha-one} follows, since $G_\alpha$ is connected.
\end{proof}

\subsection{Full equivariance in rank one}
\label{subsec-full}

In this subsection, we prove Proposition-Definition \ref{prop-def-R-chi-alpha}.
Instead of giving the shortest proof possible, we begin with a preliminary
discussion which will be useful later (see, for example, the proof of Proposition
\ref{prop-mu-alpha}).

Fix an $\alpha \in A$.  Let $\Gaf = Z_G (\Cartan_\alpha) \subset G$.  Note that
we have $G_\alpha = (\Gaf)^0$.  Let $\Kaf = K \cap \Gaf$.  By analogy with
\eqref{eqn-K-alpha}, one can show that:
\beqn
\text{$\Kaf$ is a compact form of $\Gaf$.}
\eeqn
By \eqref{eqn-orthogonal} and Proposition \ref{prop-decompose-hessian},
decomposition \eqref{eqn-V-alpha-notalpha} is orthogonal with respect to
$\langle \; , \, \rangle$.  Since the compact form $\Kaf \subset \Gaf$ preserves
both the subspace $\Lg \cdot \Cartan_\alpha \subset V$ and the inner product
$\langle \; , \, \rangle$, we can conclude that:
\beq\label{eqn-Gaf-V-alpha}
\text{the group $\Gaf$ acts on $V$ preserving the decomposition
\eqref{eqn-V-alpha-notalpha}.}
\eeq

By the locality assumption \eqref{eqn-locality-assumption}, we have:
\beqn
V_\alpha \inv \Gaf = \Cartan / W_\alpha = Q_\alpha \, .
\eeqn
It follows that both the local system $\cL_{\chi^{}_\alpha}$ on
$X_{\breve c_0, \alpha}$ and the nearby cycles sheaf $P_{\chi^{}_\alpha}$
can be viewed $\Gaf$-equivariantly (see Section \ref{subsec-min-poly}).
Let:
\beq\label{eqn-tilde-B-W-alpha-f}
\widetilde B_{W_\alpha}^{\rm f} \coloneqq
\pi_1^{\Gaf} (V_\alpha^{rs}, l_{0, \alpha}), \;\;\;\;
\widetilde W_\alpha^{\rm f} \coloneqq
N_{\Gaf} (\Cartan) / Z_{\Gaf} (\Cartan)^0.
\eeq
Evidently, we have $Z_{\Gaf} (\Cartan) = Z_G (\Cartan)$.  Therefore, we
can think of $\widetilde W_\alpha^{\rm f}$ as a subgroup of $\widetilde W$,
and we have $I \subset \widetilde W_\alpha^{\rm f}$.  Moreover,
by \eqref{eqn-locality-assumption}, we have:
\beq\label{eqn-tilde-W-alpha-f}
\widetilde W_\alpha^{\rm f} = q^{-1} (W_\alpha) \subset \widetilde W.
\eeq

By construction, we have:
\beqn
\widetilde W_\alpha \subset \widetilde W_\alpha^{\rm f} \, , \;\;\;\;
\widetilde B_{W_\alpha} \subset \widetilde B_{W_\alpha}^{\rm f} \, .
\eeqn
By analogy with diagrams \eqref{eqn-main-diagram} and 
\eqref{eqn-main-diagram-alpha}, we have a diagram:
\beq
\label{eqn-main-diagram-alpha-full}
\begin{CD}
1 @>>> I @>>> \widetilde B_{W_\alpha}^{\rm f} @>{\tilde q_\alpha^{\rm f}}>>
B_{W_\alpha} @>>> 1 \;\,
\\
@. @| @VV{\tilde p_\alpha^{\rm f}}V @VV{p_\alpha}V @.
\\
1 @>>> I @>>> \widetilde W_\alpha^{\rm f} @>{q_\alpha^{\rm f}}>>
W_\alpha @>>> 1 \, ,
\end{CD}
\eeq
the right square of which is Cartesian.  Moreover, the maps $\tilde q_\alpha,
\tilde p_\alpha, q_\alpha$ of diagram \eqref{eqn-main-diagram-alpha} are the
restrictions of the maps $\tilde q_\alpha^{\rm f}, \tilde p_\alpha^{\rm f},
q_\alpha^{\rm f}$ of diagram \eqref{eqn-main-diagram-alpha-full}.  Thus,
the entire diagram \eqref{eqn-main-diagram-alpha} embeds into diagram
\eqref{eqn-main-diagram-alpha-full}.  This enables us to formulate the 
following lemma.

\begin{lemma}\label{lemma-conjugate}
Let $\sigma_1, \sigma_2 \in B_W [\alpha]$.  Then the elements
$\tilde r [\sigma_1] \, (\sigma), \, \tilde r [\sigma_2] \, (\sigma) \in
\widetilde B_{W_\alpha}$ (cf. equation \eqref{eqn-R-chi-alpha})
are conjugate in $\widetilde B_{W_\alpha}^{\rm f}$ by an element
of $I \subset \widetilde B_{W_\alpha}^{\rm f}$.
\end{lemma}

\begin{proof}
By \eqref{eqn-conjugate}, there exists an element $b \in P\!B_W$, such that
$\sigma_2 = b \, \sigma_1 \, b^{-1}$.  Let $x = r (b) \in I$.  We then have
$r (\sigma_2) = x \; r (\sigma_1) \; x^{-1}$.  By equation \eqref{eqn-r-sigma-alpha}
and the Cartesian property of diagram \eqref{eqn-main-diagram-alpha-full},
we can conclude that $r [\sigma_2] (\sigma) = x \; r [\sigma_1] (\sigma) \; x^{-1}$,
as required.
\end{proof}

Isomorphism \eqref{eqn-identify-rank-one} naturally extends to an isomorphism:
\beqn
\pi_1^{\Gaf} ((V_\alpha^*)^{rs}, l_{0, \alpha}) \cong
\widetilde B_{W_\alpha}^{\rm f} \, .
\eeqn
Using this isomorphism, the $\Gaf$-equivariant structure on the sheaf
$P_{\chi^{}_\alpha}$ produces a microlocal monodromy action:
\beq\label{eqn-lambda-grp-rank-one-full}
\lambda_{l_0, \alpha} : \widetilde B_{W_\alpha}^{\rm f} \to
\on{Aut} (M_{l_{0, \alpha}} (P_{\chi^{}_\alpha})),
\eeq
which extends the action of equation \eqref{eqn-lambda-grp-rank-one}.

\begin{proof}[Proof of Proposition-Definition \ref{prop-def-R-chi-alpha}]
Part (i) follows readily from Lemma \ref{lemma-conjugate}.  Indeed, let
$\sigma_1, \sigma_2 \in B_W [\alpha]$ be a pair of braid generators.
Using Lemma \ref{lemma-conjugate} and the action
\eqref{eqn-lambda-grp-rank-one-full}, we can conclude that the 
linear transformations:
\beqn
\lambda_{l_0, \alpha} \circ \, \tilde r [\sigma_1] \, (\sigma), \;
\lambda_{l_0, \alpha} \circ \, \tilde r [\sigma_2] \, (\sigma)
\in \on{End} (M_{l_{0, \alpha}} (P_{\chi^{}_\alpha})),
\eeqn
are conjugate to each other by an element of
$\on{Aut} (M_{l_{0, \alpha}} (P_{\chi^{}_\alpha}))$.  Therefore, these
linear transformations have the same minimal polynomial, as required.

Turning to part (ii), note that, for every $\alpha \in A$ and $\sigma_\alpha 
\in B_W [\alpha]$, the polynomial $R_{\chi, \alpha}$ is fully determined by the
following data:
\begin{enumerate}[topsep=-1.5ex]
\item[$\bullet$]    the polar representaiton $G_\alpha | V_\alpha$;

\item[$\bullet$]    the character $\chi_\alpha \in \hat I_\alpha$;

\item[$\bullet$]    the element $r (\sigma_\alpha) \in \widetilde W_\alpha$.
\end{enumerate}
Pick $\sigma_1 \in B_W [\alpha_1]$ and $b \in B_W^\chi$ with $p (b) = w$.
Let $\sigma_2 = b \, \sigma_1 \, b^{-1} \in B_W [\alpha_2]$ (see assertion
\eqref{eqn-conjugate-two}).  Let $\widetilde w = r (b) \in \widetilde W =
N_G (\Cartan) / Z_G (\Cartan)^0$ and note that $q (\widetilde w) = w$.  Pick a
representative $g \in N_G (\Cartan)$ of $\widetilde w$.  The conjugation action
of $N_G (\Cartan)$ on $G$ induces actions on $I$, $\hat I$, and $\widetilde W$.
Moreover, by tracing the definitions, one can check that the action of $g$ takes
the data $(G_{\alpha_1} | V_{\alpha_1}, \chi_{\alpha_1}, r (\sigma_1))$ to the
data $(G_{\alpha_2} | V_{\alpha_2}, \chi_{\alpha_2}, r (\sigma_2))$.  Part (ii)
follows.
\end{proof}

\begin{remark}\label{rmk-prop-def-proof}{\em
Our proof of Proposition-Definition \ref{prop-def-R-chi-alpha} (ii) can be readily
adapted to prove part (i) as well.  However, we feel that highlighting the
$\Gaf$-equivariant structure on $P_{\chi^{}_\alpha}$ serves to clarify the
argument.}
\end{remark}

\subsection{Existence of regular splittings}
\label{subsec-reg-split-proof}

In this subsection, we give a proof of Proposition \ref{prop-regular-exists}.  We
begin by describing a homomorphism $\tilde r : B_W \to \widetilde B_W$ which
splits the top row of diagram \eqref{eqn-main-diagram}, then verify that 
$\tilde r$ is a regular splitting in the sense of Definition \ref{defn-reg-split}
(see Proposition \ref{prop-reg-split} below).

Our construction of $\tilde r$ is analogous to the definition of a Kostant-Rallis
splitting in \cite[Section 2.3]{GVX1}.  We begin with the regular element $x \in 
X_0$.  The variety $X_0$ is smooth at $x$.  Pick an affine normal slice
$Y \subset V$ to $X_0$ through $x$.  This means that $Y$ is an affine flat
through $x$ in $V$, and we have:
\beqn
T_x X_0 \oplus T_x Y = T_x V \cong V.
\eeqn
The restriction $f |_Y : Y \to Q$ is a local analytic isomorphism near $x$.
Recall the Hermitian inner product $\langle \; , \, \rangle$ on $V$.  Pick
a small $\epsilon > 0$, and let $\Cartan_\epsilon \subset \Cartan$ be the
open $\epsilon$-ball around $0$, with respect to $\langle \; , \, \rangle$.
Let $Q_\epsilon = f (\Cartan_\epsilon)$.  We assume that $\epsilon$ is
sufficiently small, so that there exists an open neighborhood $Y_\epsilon
\subset Y$ of $x$, such that $f (Y_\epsilon) = Q_\epsilon$ and:
\beqn
f_\epsilon \coloneqq f |_{Y_\epsilon} : Y_\epsilon \to Q_\epsilon \, ,
\eeqn
is a complex analytic isomorphism.  Let $Q_\epsilon^{reg} = 
Q_\epsilon \cap Q^{reg}$, and note that the inclusion
$Q_\epsilon^{reg} \hookrightarrow Q^{reg}$ is a homotopy equivalence.
Pick a $k \in (0, 1]$ such that:
\beq\label{eqn-c-one}
c_1 \coloneqq k \, c_0 \in \Cartan_\epsilon,
\eeq
and let $\bar c_1 = f (c_1)$.  We then have:
\begin{align}
\label{eqn-pi1-iso-1}
B_W & =
\pi_1 (Q^{reg}, \bar c_0) = \pi_1 (Q^{reg}, \bR_+ \cdot \bar c_0) =
\pi_1 (Q^{reg}, \bar c_1) \cong \pi_1 (Q_\epsilon^{reg}, \bar c_1). \\
\label{eqn-pi1-iso-2}
\widetilde B_W & =
\pi_1^G (V^{rs}, c_0) = \pi_1^G (V^{rs}, \bR_+ \cdot c_0)
= \pi_1^G (V^{rs}, c_1).
\end{align}

Let $X_{\bar c_1} = f^{-1} (\bar c_1)$ and $y_1 = f_\epsilon^{-1} (\bar c_1)$,
so that $X_{\bar c_1} \cap Y_\epsilon = \{ y_1 \}$.  Pick a continuous path
$\Gamma_1 : [0, 1] \to X_{\bar c_1}$, with $\Gamma_1 (0) = c_1$
and $\Gamma_1 (1) = y_1$.  We are now prepared to define the
homomorphism $\tilde r : B_W \to \widetilde B_W$.  Let $b \in B_W$ be an
element represented by a path $\Gamma_b : [0, 1] \to Q_\epsilon^{reg}$,
with $\Gamma_b (0) = \Gamma_b (1) = \bar c_1$, using the identification
\eqref{eqn-pi1-iso-1}.  We set:
\beqn
\tilde r (b) = \Gamma_1^{-1} \star (f_\epsilon^{-1} \circ \Gamma_g)
\star \Gamma_1 \in \widetilde B_W \, ,
\eeqn
where ``$\; \star \;$'' denotes the composition of paths, and we use the 
identification \eqref{eqn-pi1-iso-2}.  It is clear from the definition that
$\tilde r$ is a group homomorphism which splits the top row of diagram
\eqref{eqn-main-diagram}.

\begin{prop}\label{prop-reg-split}
The homomorphism $\tilde r$ is a regular splitting in the sense of
Definition \ref{defn-reg-split}.
\end{prop}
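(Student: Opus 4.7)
The plan is to reduce the verification of Definition \ref{defn-reg-split} to a local analysis near each reflection hyperplane. By Remark \ref{rmk-reg-split}, for every $\alpha \in A$ it suffices to exhibit \emph{one} braid generator $\sigma_\alpha \in B_W[\alpha]$ whose image $r(\sigma_\alpha) = \tilde p \, \tilde r(\sigma_\alpha)$ lies in $\widetilde W_\alpha$. I fix $\alpha$ and choose $c_\alpha \in \Cartan_\alpha^{reg}$ and a nearby $c_{\alpha,1} \in \Cartan^{reg}$ as in \eqref{eqn-c-alpha}. After rescaling $c_0$ by a small positive real number, which does not change $\widetilde B_W$ by \eqref{eqn-pi1-iso-2}, I would choose the connecting path $\Gamma$ from $c_0$ to $c_{\alpha,1}$ and the half-turn $\Gamma_\alpha[c_{\alpha,1}]$ of \eqref{eqn-gamma-alpha} so that the composition $f \circ \Gamma$ and the loop $f \circ \Gamma_\alpha[c_{\alpha,1}]$ both lie in $Q_\epsilon^{reg}$. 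The resulting braid generator $\sigma_\alpha$ of \eqref{eqn-sigma-alpha} is then represented by a loop in $Q_\epsilon^{reg}$, so the definition \eqref{eqn-r-tilde} of $\tilde r(\sigma_\alpha)$ applies directly.

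By the diagram \eqref{eqn-main-diagram} and equation \eqref{eqn-p-sigma-alpha}, one has $q(r(\sigma_\alpha)) = p(\sigma_\alpha) = s_\alpha^{-1} \in W_\alpha$, so $r(\sigma_\alpha)$ is automatically a lift of $s_\alpha^{-1}$ to $\widetilde W$. The content of the proposition is then the stronger assertion that this lift is represented by an element of $N_{G_\alpha}(\Cartan) \subset N_G(\Cartan)$ modulo $Z_G(\Cartan)^0$. My strategy is to homotope the $V^{rs}$-loop representing $\tilde r(\sigma_\alpha)$, within $V^{rs}$ and preserving its class in $\pi_1^G(V^{rs}, c_1) = \widetilde B_W$, into a loop contained in $V_\alpha^{rs} \subset V^{rs}$. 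Once this is achieved, the resulting class lies in the image of the map $\widetilde B_{W_\alpha}^{\rm f} \to \widetilde B_W$ induced by inclusion, so its image under $\tilde p$ lies in $\widetilde W_\alpha^{\rm f} = q^{-1}(W_\alpha) \subset \widetilde W$; combined with $q(r(\sigma_\alpha)) = s_\alpha^{-1} \in W_\alpha$ and the Cartesian property of diagram \eqref{eqn-main-diagram-alpha-full}, this forces $r(\sigma_\alpha) \in \widetilde W_\alpha$, as required.

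To construct the required homotopy, I would exploit the analytic isomorphism $f_\epsilon : Y_\epsilon \to Q_\epsilon$ and the local structure of $f$ near the regular nilpotent $x$. Since $\on{rank}\, d_x f = \dim Q$, the fiber $f^{-1}(f_\epsilon(y_\alpha)) \cap Y_\epsilon$ consists of the single point $y_\alpha \coloneqq f_\epsilon^{-1}(f(c_\alpha))$, and the lifted loop $f_\epsilon^{-1} \circ (f \circ \Gamma_\alpha[c_{\alpha,1}])$ in $Y_\epsilon$ is a small loop encircling $y_\alpha$. The objective is to show that, after conjugation by $\Gamma_{y_1}$ to return the basepoint to $c_1$, this loop can be deformed inside $V^{rs}$ into $V_\alpha^{rs}$.

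The main obstacle, and the technical heart of the proof, is this localization step. I expect to resolve it by a parallel-transport argument along a family of normal slices, analogous to the Kostant-Rallis construction in \cite[Section 2.3]{GVX}. The point is that, transverse to $\Cartan_\alpha$, the quotient map $f$ is controlled by the rank one subrepresentation $G_\alpha | V_\alpha$: the restriction $f|_{V_\alpha}$ is the quotient map of $G_\alpha | V_\alpha$ composed with the natural finite map $Q_\alpha \to Q$, and a small transverse slice to $\Cartan_\alpha$ in $V_\alpha$ provides a local model for the discriminant geometry that $Y_\epsilon$ sees near $y_\alpha$. Establishing that the slice $Y$ can be deformed to align with such a transverse slice inside $V_\alpha$, in a manner that preserves the homotopy class in $\pi_1^G(V^{rs}, c_1)$, is the step requiring genuine geometric input from the stable polar assumption and the existence of the regular nilpotent $x$.
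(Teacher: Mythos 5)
The deformation strategy is reasonable, but the final deduction contains a genuine gap.

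You claim that once the loop representing $\tilde r(\sigma_\alpha)$ is deformed into $V_\alpha^{rs}$, the resulting class lies in the image of $\widetilde B_{W_\alpha}^{\rm f} \to \widetilde B_W$, hence $r(\sigma_\alpha) \in \widetilde W_\alpha^{\rm f} = q^{-1}(W_\alpha)$, and then that this, together with $q(r(\sigma_\alpha)) = s_\alpha^{-1}$ and the Cartesian property of diagram \eqref{eqn-main-diagram-alpha-full}, forces $r(\sigma_\alpha) \in \widetilde W_\alpha$. This last step is false. Since $\widetilde W_\alpha^{\rm f} = q^{-1}(W_\alpha)$, the assertion $r(\sigma_\alpha) \in \widetilde W_\alpha^{\rm f}$ is \emph{equivalent} to $q(r(\sigma_\alpha)) \in W_\alpha$, which already holds tautologically by \eqref{eqn-p-sigma-alpha}. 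So your deformation argument, even if completed, would only reprove what is automatic. The genuine content of the proposition is passing from $\widetilde W_\alpha^{\rm f}$ to the (in general strictly smaller) subgroup $\widetilde W_\alpha$: the two short exact sequences $1 \to I \to \widetilde W_\alpha^{\rm f} \to W_\alpha \to 1$ and $1 \to I_\alpha \to \widetilde W_\alpha \to W_\alpha \to 1$ have different kernels whenever $I_\alpha \subsetneq I$, and the Cartesian square of \eqref{eqn-main-diagram-alpha-full} involves only $\widetilde W_\alpha^{\rm f}$, never $\widetilde W_\alpha$. Knowing $q(r(\sigma_\alpha)) = s_\alpha^{-1} \in W_\alpha$ leaves the $I$-ambiguity unresolved: every $x \widetilde w$ with $\widetilde w \in \widetilde W_\alpha$ mapping to $s_\alpha^{-1}$ and $x \in I$ is a candidate, and only those with $x \in I_\alpha$ actually lie in $\widetilde W_\alpha$.

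What the paper's argument supplies, and your proposal lacks, is twofold. First, one must show that the loop in $V^{rs} \cap V_\alpha$ can be chosen so that its holonomy in the equivariant fundamental group is traced by a path in $G_\alpha$ specifically, not merely in $\Gaf$; this is where the locality assumption \eqref{eqn-locality-assumption} enters, via the fact that $X_{\bar c_{\alpha,1}} \cap V_\alpha = f_\alpha^{-1}(f_\alpha(c_{\alpha,1}))$ is a single connected $G_\alpha$-orbit. That is what puts the class in the image of $\widetilde B_{W_\alpha}$, not just of $\widetilde B_{W_\alpha}^{\rm f}$. Second, the auxiliary choices in the construction of $\tilde r$ (the connecting path $\Gamma_{y_1}$, the choice of the pair $(z_\alpha, N)$) change $r(\sigma_\alpha)$ by $I$-conjugation. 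The paper therefore works with the image $\omega(r(\sigma_\alpha)) \in \widetilde W / I$, proves $\omega(r(\sigma_\alpha)) \in \widetilde W_\alpha / I$, and then invokes \eqref{eqn-I-invariance} — that $I$-conjugation preserves $\widetilde W_\alpha$ — to descend back to the assertion $r(\sigma_\alpha) \in \widetilde W_\alpha$. Without this modulo-$I$ device and the $G_\alpha$-connectedness input, the argument cannot close, and as written yours does not.
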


Our proof of Proposition \ref{prop-reg-split} will rely on the following lemma.

\begin{lemma}\label{lemma-path-in-V-alpha}
Let $\alpha \in A$, and let $\Gamma : [0, 1] \to V^{rs} \cap V_\alpha$ be a
path with $\Gamma (0) = \Gamma (1) = c_0$, representing an element
$\tilde b \in \widetilde B_W$.  Then we have $\tilde p (\tilde b) \in \widetilde
W_\alpha$.
\end{lemma}

\begin{proof}
For every $v \in V^{rs} \cap V_\alpha$, let $\Cartan_v \subset V$ be the Cartan
subspace containing $v$.  By the definition of $V_\alpha \subset V$, we have:
\beqn
\forall v \in V^{rs} \cap V_\alpha \;\; \exists g \in G_\alpha \, : \;\;
\Cartan_v = g \cdot \Cartan \subset V_\alpha.
\eeqn
The path $t \mapsto \Cartan_{\Gamma (t)}$ can be lifted to a continuous path
$t \mapsto g_t \in G_\alpha$, such that $g_0 = 1$ and $\Cartan_{\Gamma (t)} =
g_t \cdot \Cartan$ for every $t \in [0, 1]$.  The element $\tilde p (\tilde b) \in
\widetilde W$ is then represented by $g_1 \in G_\alpha$ (cf. equation
\eqref{eqn-concrete-loop}).  The lemma follows.
\end{proof}

\begin{proof}[Proof of Proposition \ref{prop-reg-split}]
This proof is a direct adaptation of the proof of \cite[Proposition 2.6]{GVX1},
and we focus on the aspects specific to the present setting.  Pick an
$\alpha \in A$ and a braid generator $\sigma_\alpha = \sigma_\alpha
[\Gamma] \in B_W [\alpha]$, as in equation \eqref{eqn-sigma-alpha}.
Recall that $\Gamma : [0, 1] \to \Cartan^{reg}$ is a continuous path,
with $\Gamma (0) = c_0$ and $\Gamma (1) = c_{\alpha, 1}$, where
the point $c_{\alpha, 1} \in \Cartan^{reg}$ lies near a point $c_\alpha \in
\Cartan_\alpha^{reg}$, which is the projection of $c_{\alpha, 1}$ onto
$\Cartan_\alpha$ (see equation \eqref{eqn-c-alpha}).  We need to show
that $r (\sigma_\alpha) \in \widetilde W_\alpha \subset \widetilde W$.

Without loss of generality, we can assume that $c_1 = c_0$ (see equation
\eqref{eqn-c-one}), and that the path of equation \eqref{eqn-sigma-alpha},
representing $\sigma_\alpha \in B_W$, is contained entirely within
$Q_\epsilon^{reg} \subset Q^{reg}$.  It follows that
$c_\alpha \in \Cartan_\epsilon$.  Let:
\beq\label{eqn-bar-c-alpha}
\bar c_\alpha = f (c_\alpha), \;\;
X_{\bar c_\alpha} = f^{-1} (\bar c_\alpha), \;\;
y_\alpha = f_\epsilon^{-1} (\bar c_\alpha).
\eeq
The main idea of the proof is to observe that, in order to specify the
element $r (\sigma_\alpha) \in \widetilde W$ up to conjugation by elements
of $I \subset \widetilde W$, we do not need the entirety of the normal slice
$Y_\epsilon$, but only the germ of it at $y_\alpha = Y_\epsilon \cap
X_{\bar c_\alpha}$.

More precisely, we will make use of the following construction.  Let
$V^{reg} \subset V$ be the set of regular points for $f$, and let
$X_{\bar c_\alpha}^{reg} = X_{\bar c_\alpha} \cap V^{reg}$.  Pick a point
$z_\alpha \in X_{\bar c_\alpha}^{reg}$ and an affine normal slice
$N \subset V$ through $z_\alpha$ to $X_{\bar c_\alpha}^{reg} \subset
V^{reg}$.  For $\delta > 0$, let $\Cartan [c_\alpha, \delta] \subset \Cartan$
be the open $\delta$-ball around $c_\alpha$.  We will say that $\delta > 0$
is sufficiently small for the normal slice $N$ if we have:
\beqn
\Cartan [c_\alpha, 2 \delta] \subset \Cartan_\epsilon \, ,
\eeqn
and there exits a neighborhood $N_{2 \delta} \subset N$ of $z_\alpha$,
such that:
\beqn
f (N_{2 \delta}) = Q [\bar c_\alpha, 2 \delta] \coloneqq
f (\Cartan [c_\alpha, 2 \delta]),
\eeqn
and the restriction:
\beqn
f_{N, 2 \delta} \coloneqq f |_{N_{2 \delta}} :  N_{2 \delta} \to
Q [\bar c_\alpha, 2 \delta],
\eeqn
is a complex analytic isomorphism.

Let $\delta_1 = \on{dist} (c_\alpha, c_{\alpha, 1})$, and fix a $\delta \in
(0, \delta_1]$ which is sufficiently small for $N$.  Define:
\beq\label{eqn-c-alpha-2}
c_{\alpha, 2} = c_\alpha + (\delta / \delta_1) \cdot (c_{\alpha, 1} - c_\alpha)
\in \Cartan^{reg}, \;\;
\bar c_{\alpha, 2} = f (c_{\alpha, 2}) \in Q^{reg},
\eeq
\beq\label{eqn-z-2}
X_{\bar c_{\alpha, 2}} = f^{-1} (\bar c_{\alpha, 2}) \subset V^{rs}, \;\;
z_2 = f_{N, 2 \delta}^{-1} (\bar c_{\alpha, 2}) =
X_{\bar c_{\alpha, 2}} \cap N_{2 \delta} \, .
\eeq
Pick a continuous path $\Gamma_2 : [0, 1] \to X_{\bar c_{\alpha, 2}}$,
with $\Gamma_2 (0) = c_{\alpha, 2}$ and $\Gamma_2 (1) = z_2$. 
The quadruple $[z_\alpha, N, \delta, \Gamma_2]$ defines an element
$\tilde b = \tilde b \, [z_\alpha, N, \delta, \Gamma_2] \in \widetilde B_W$
as follows:
\beq\label{eqn-tilde-b}
\tilde b =
\Gamma^{-1} \star [c_{\alpha, 2}, c_{\alpha, 1}] \star \Gamma_2^{-1}
\star (f_{N, 2 \delta}^{-1} \circ f \circ \Gamma_\alpha [c_{\alpha, 2}])
\star \Gamma_2 \star [c_{\alpha, 1}, c_{\alpha, 2}] \star \Gamma
\in \widetilde B_W \, ,
\eeq
where $[c_{\alpha, 1}, c_{\alpha, 2}]$ and $[c_{\alpha, 2}, c_{\alpha, 1}]$
are the straight line paths, and $\Gamma_\alpha [c_{\alpha, 2}]$ is
defined as in \eqref{eqn-gamma-alpha}.  In other words, the element
$\tilde b$ is represented by a small loop inside of $N$ which links the
hypersurface $f^{-1} (f (\Cartan_\alpha)) \subset V$ in the counter-clockwise
direction and is connected to the basepoint $c_0 \in V^{rs}$ via the paths
$\Gamma$, $[c_{\alpha, 1}, c_{\alpha, 2}]$, and $\Gamma_2$.  

We now make the following observations regarding the above construction.
First, for suitable $\delta$ and $\Gamma_2$, we have:
\beq\label{eqn-use-construction}
\tilde r (\sigma_\alpha) = \tilde b \, [y_\alpha, Y, \delta, \Gamma_2].
\eeq
Second, write $\widetilde B_W / I$ for the set of $I$-orbits in $\widetilde B_W
\supset I$, under the conjugation action (see diagram \eqref{eqn-main-diagram}).
Then the image of $\tilde b \, [z_\alpha, N, \delta, \Gamma_2]$ in
$\widetilde B_W / I$ is independent of the path $\Gamma_2$.  We denote this
image by:
\beqn
\tilde b \, [z_\alpha, N, \delta] \in \widetilde B_W / I.
\eeqn
Third, by continuity, the element $\tilde b \, [z_\alpha, N, \delta] \in
\widetilde B_W / I$ is independent of the normal slice $N$ and the number
$\delta \in (0, \delta_1]$.  Thus, we can omit $N$ and $\delta$ from the notation,
writing:
\beqn
\tilde b \, [z_\alpha, N, \delta] = \tilde b \, [z_\alpha] \in \widetilde B_W / I.
\eeqn
And fourth, again by continuity, we have:
\beq\label{eqn-con-component}
\text{
$\tilde b \, [z_\alpha] \in \widetilde B_W / I$ is determined by the connected
component of $z_\alpha \in X_{\bar c_\alpha}^{reg} \,$.}
\eeq

Write $\widetilde W / I$ for the set of $I$-orbits in $\widetilde W \supset I$,
under the conjugation action, and let:
\beqn
\omega : \widetilde W \to \widetilde W / I,
\eeqn
be the quotient map.  The map $\tilde p : \widetilde B_W \to \widetilde W$
of diagram \eqref{eqn-main-diagram} induces a map:
\beqn
\tilde p : \widetilde B_W / I \to \widetilde W / I.
\eeqn
Recall that the conjugation action of $I$ on $\widetilde W$ preserves the
subgroup $\widetilde W_\alpha \subset \widetilde W$ (see equation
\eqref{eqn-I-invariance}).  Thus, in view of \eqref{eqn-use-construction},
in order to prove the proposition, it suffices to show that:
\beq\label{eqn-in-tilde-W-alpha}
\tilde p \, (\tilde b \, [y_\alpha]) \in \widetilde W_\alpha / I =
\omega (\widetilde W_\alpha).
\eeq

We will establish \eqref{eqn-in-tilde-W-alpha} by showing that the construction
of equation \eqref{eqn-tilde-b} can be performed entirely within the subspace
$V_\alpha \subset V$; then applying Lemma \ref{lemma-path-in-V-alpha}.
More precisely, consider the intersection $X_{\bar c_\alpha} \cap V_\alpha$.
Let:
\beq\label{eqn-breve-c-alpha}
\breve c_\alpha = f_\alpha (c_\alpha), \;\;
X_{\breve c_\alpha, \alpha} = f_\alpha^{-1} (\breve c_\alpha) \, .
\eeq
Then $X_{\breve c_\alpha, \alpha}$ is the connected component of
$X_{\bar c_\alpha} \cap V_\alpha$ containing $c_\alpha$.  By Theorem 
\ref{thm-root-space-decomp} (iii)-(iv) (see also equation
\eqref{eqn-V-alpha-notalpha}), we have:
\beq\label{eqn-normal-slice}
\text{the subspace $V_\alpha \subset V$ is a normal slice to the closed orbit
$G \cdot c_\alpha$ through $c_\alpha \,$.}  
\eeq
Since $G \cdot c_\alpha$ is the unique closed $G$-orbit in $X_{\bar c_\alpha}$,
we can conclude that every $G$-orbit in $X_{\bar c_\alpha}$ meets the
connected component $X_{\breve c_\alpha, \alpha}$.  Pick a point
$z_\alpha \in G \cdot y_\alpha \cap X_{\breve c_\alpha, \alpha}$, and note that:
\beq\label{eqn-z-alpha-reg}
z_\alpha \in X_{\bar c_\alpha}^{reg} \, .
\eeq
Since $G$ is connected, and in view of \eqref{eqn-con-component}, we have:
\beq\label{eqn-y-eq-z}
\tilde b \, [y_\alpha] = \tilde b \, [z_\alpha] \in \widetilde B_W / I.
\eeq

Consider the orbits $G_\alpha \cdot z_\alpha \subset G \cdot z_\alpha \subset 
X_{\bar c_\alpha}^{reg}$.  Note that $\{ c_\alpha \}$ is the unique closed
$G_\alpha$-orbit in $X_{\breve c_\alpha, \alpha}$.  Therefore, the point
$c_\alpha \in V_\alpha$ is contained in the closure of $G_\alpha \cdot z_\alpha$.
In view of \eqref{eqn-normal-slice}, we can conclude that:
\beq\label{eqn-transverse}
\text{the intersection $G \cdot z_\alpha \cap V_\alpha$ is transverse along
$G_\alpha \cdot z_\alpha \,$.}
\eeq
Let $V_\alpha^{reg} \subset V_\alpha$ be the set of regular points for $f_\alpha$,
and let $X_{\breve c_\alpha, \alpha}^{reg} = X_{\breve c_\alpha, \alpha} \cap
V_\alpha^{reg}$.  Assertions \eqref{eqn-z-alpha-reg} and \eqref{eqn-transverse}
imply that:
\beq\label{eqn-z-alpha-reg-V-alpha}
\text{$z_\alpha$ is a regular point for the restriction map
$f |_{V_\alpha} : V_\alpha \to Q$.}
\eeq
Let $h_\alpha : Q_\alpha \to Q$ be the quotient map, and consider the
factorization:
\beq\label{eqn-factorization-h-alpha}
f |_{V_\alpha} = h_\alpha \circ f_\alpha : V_\alpha \to Q.
\eeq
Assertion \eqref{eqn-z-alpha-reg-V-alpha} and equation
\eqref{eqn-factorization-h-alpha} imply that $z_\alpha \in
X_{\breve c_\alpha, \alpha}^{reg}$ and that:
\beq\label{eqn-breve-c-alpha-reg}
\breve c_\alpha \in Q_\alpha \text{ is a regular point of } h_\alpha \, .
\eeq
Note that assertion \eqref{eqn-breve-c-alpha-reg} can also be inferred
directly from the locality assumption \eqref{eqn-locality-assumption}.  By
\eqref{eqn-factorization-h-alpha} and \eqref{eqn-breve-c-alpha-reg}, we have:
\beq\label{eqn-reg-reg}
X_{\breve c_\alpha, \alpha}^{reg} \subset X_{\bar c_\alpha}^{reg} \, .
\eeq
Let $N \subset V_\alpha$ be an affine normal slice through $z_\alpha$
to $X_{\breve c_\alpha, \alpha}^{reg} \subset V_\alpha^{reg}$.
By \eqref{eqn-transverse} and \eqref{eqn-reg-reg}, the subspace
$N \subset V_\alpha \subset V$ is also an affine normal slice through
$z_\alpha$ to $X_{\bar c_\alpha}^{reg} \subset V^{reg}$.

We can now use the normal slice $N$ to identify the element
$\tilde b \, [z_\alpha]$ of equation \eqref{eqn-y-eq-z}.  Pick a $\delta \in
(0, \delta_1]$ which is sufficiently small for $N$, and define $c_{\alpha, 2}
\in \Cartan^{reg}$ and $z_2 \in N_{2 \delta}$ as in equations 
\eqref{eqn-c-alpha-2}-\eqref{eqn-z-2}.  Let:
\beqn
\breve c_{\alpha, 2} = f_\alpha (c_{\alpha, 2}), \;\;
X_{\breve c_{\alpha, 2}, \alpha} = f_\alpha^{-1} (\breve c_{\alpha, 2}) \, .
\eeqn
Using the factorization \eqref{eqn-factorization-h-alpha}, it is not hard to check
that $z_2 \in X_{\breve c_{\alpha, 2}, \alpha} \subset X_{\bar c_{\alpha, 2}}$.  
Pick a continuous path $\Gamma_2 : [0, 1] \to X_{\breve c_{\alpha, 2}, \alpha}$,
with $\Gamma_2 (0) = c_{\alpha, 2}$ and $\Gamma_2 (1) = z_2$, and consider
the construction of the element $\tilde b = \tilde b \, [z_\alpha, N, \delta,
\Gamma_2] \in \widetilde B_W$ of equation \eqref{eqn-tilde-b}.  We see that
$\tilde b \in \widetilde B_W$ is represented by a closed loop in $V^{rs} \cap
V_\alpha$.  By Lemma \ref{lemma-path-in-V-alpha}, it follows that $\tilde p
(\tilde b) \in \widetilde W_\alpha$.  By \eqref{eqn-y-eq-z}, this proves the
containment \eqref{eqn-in-tilde-W-alpha}.
\end{proof}

\section{Fourier transform of the nearby cycles}
\label{sec-fourier}

As in Section \ref{subsec-min-poly} (see also \cite[Section 7.1]{GVX1}), we use
Proposition \ref{prop-A-f-stratification} to obtain a Morse local system
$M (P_\chi)$ on $V^{rs}$, of the sheaf $P_\chi$ at the origin.  For every
$l \in (V^*)^{rs}$ we write:
\beqn
M_l (P_\chi) = M_{(0, l)} (P_\chi),
\eeqn
for the corresponding Morse group (cf. equation \eqref{eqn-M-l-alpha}).

\begin{prop}\label{prop-fourier}
We have:
\beqn
\FT P_\chi \cong \on{IC} ((V^*)^{rs}, M (P_\chi)).
\eeqn
\end{prop}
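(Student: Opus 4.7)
The plan is to establish the proposition in two stages: first, identify $\FT P_\chi$ restricted to $(V^*)^{rs}$ with (a shift of) the Morse local system $M(P_\chi)$ via a microlocal argument at the origin; and second, upgrade this restriction statement to the full IC description by invoking the main theorems of \cite{Gr1} and \cite{Gr2}.

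For the first stage, the sheaf $P_\chi \in \on{Perv}_G (X_0)$ is $G$-equivariant, perverse, and $\bC^*$-conic by construction, so $\FT P_\chi$ is a $G$-equivariant perverse sheaf on $V^*$, also $\bC^*$-conic. By Proposition \ref{prop-A-f-stratification}(iii), $P_\chi$ is constructible with respect to the stratification $\cS_0$; and by Proposition \ref{prop-A-f-stratification}(ii), for every $l \in (V^*)^{rs}$ the covector $(0,l)$ is generic at the origin for $\cS_0$ in the sense of stratified Morse theory. Standard microlocal / Fourier computations for $\bC^*$-conic constructible sheaves then express the stalk $(\FT P_\chi)_l$ in terms of vanishing cycles of $l$ applied to $P_\chi$ at the origin, yielding a canonical isomorphism with the Morse group $M_l(P_\chi)$ up to an overall cohomological shift. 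As $l$ varies in $(V^*)^{rs}$, these stalks assemble into a $G$-equivariant local system, giving an isomorphism
\[
(\FT P_\chi)|_{(V^*)^{rs}} \ \cong \ M(P_\chi) \, [-]
\]
of (shifted) $G$-equivariant local systems.

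For the second stage, I would appeal to the main theorems of \cite{Gr1} and \cite{Gr2}, which show that for nearby cycle sheaves $P_\chi$ attached to a stable polar representation $G | V$ satisfying our hypotheses (visibility or rank one, plus the locality assumption \eqref{eqn-locality-assumption}), the Fourier transform $\FT P_\chi$ has full support on $V^*$ and is isomorphic to an IC sheaf on $V^*$, with no additional simple summands supported on $V^* - (V^*)^{rs}$. Combined with the identification of the generic local system in the first stage, this gives
\[
\FT P_\chi \ \cong \ \on{IC}((V^*)^{rs}, M(P_\chi)),
\]
as desired.

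The main obstacle is the full-support / clean-IC assertion, which cannot be extracted from local microlocal analysis at the origin alone and must be quoted from \cite{Gr1, Gr2}. The microlocal identification of $(\FT P_\chi)|_{(V^*)^{rs}}$ with the Morse local system is comparatively routine once Proposition \ref{prop-A-f-stratification} is in hand: it is essentially the definition of $M(P_\chi)$ combined with the standard dictionary between Fourier transforms of conic sheaves and vanishing cycles at the cone point.
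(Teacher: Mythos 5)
Your proof is correct and follows essentially the same two-step strategy as the paper: identify $\FT P_\chi$ over $(V^*)^{rs}$ with the Morse local system via the standard microlocal dictionary for conic sheaves (the paper cites \cite[Proposition 3.7.12~(ii)]{KS}), and then obtain the full IC statement from \cite{Gr1}. The only thing you leave slightly implicit is the mechanism by which \cite{Gr1} applies: the paper does not invoke a blanket ``full support theorem for polar representations'' but rather checks the hypothesis of \cite[Theorem 1.1 \& Remark 1.4]{Gr1} — namely that the general fiber $X_{\bar c_0}$ is transverse to infinity — by quoting \cite[Proposition 2.17]{Gr2}; aside from that bookkeeping your argument matches the paper's.
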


\begin{proof}
This is similar to the proof of \cite[Proposition 7.1]{GVX1}.
By \cite[Proposition 3.7.12 (ii)]{KS} (see also the proof of
\cite[Lemma 6.2]{GVX1}), we have:
\beqn
\FT P_\chi |_{V^{rs}} \cong M (P_\chi) \, [\dim V].
\eeqn
By \cite[Proposition 2.17]{Gr1}, the general fiber $X_{\bar c_0} \subset V$ of
the family $\cZ_{\bar c_0} \to \bC$ of equation \eqref{eqn-family-Z} is
transverse to infinity in the sense of \cite{Gr2}.  The proposition follows from
\cite[Theorem 1.1 $\&$ Remark 1.4]{Gr2}.
\end{proof}

Let $l \in (V^*)^{rs}$, and let $\Cartan_l = (\Lg \cdot l)^\p \subset V$
be the Cartan subspace corresponding to $l$.  Write $Z_l \subset 
X_{\bar c_0}$ for the critical locus of the restriction $l |_{X_{\bar c_0}}$.
We then have:
\beq\label{eqn-Z-l}
Z_l = X_{\bar c_0} \cap \Cartan_l \, , \;\; \text{and therefore} \;\; |Z_l| = |W|.
\eeq
Write $\xi_l = \on{Re} (l) : V \to \bR$ and $d = \dim X_{\bar c_0}$.  We have
the following analog of \cite[Lemma 7.4]{GVX1}.

\begin{lemma}\label{lemma-describe-morse-group}
The Morse group $M_l (P_\chi)$ can be identified as follows:
\beqn
M_l (P_\chi) \cong H_d (X_{\bar c_0}, \{ x \in X_{\bar c_0} \; | \;
\xi_l (x) \geq \xi_0 \}; \cL_\chi),
\eeqn
where $\xi_0$ is any real number with $\xi_0 > \xi_l (c)$ for every $c \in Z_l$.
\end{lemma}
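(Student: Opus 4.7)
The plan is to follow the three-step strategy of \cite[Section 7.2]{GVX}: rewrite the Morse group as cohomology of a ball-pair with $P_\chi$ coefficients, globalize the ball via conicity, transfer the computation to the nearby fiber using the $A_f$ property, and identify the resulting group with the stated relative homology on $X_{\bar c_0}$.

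First, by the definition of the Morse local system and the genericity of $(0,l)$ for $\cS_0$ provided by Proposition \ref{prop-A-f-stratification} (ii), we have
\beqn
M_l(P_\chi) \; \cong \; H^0 \bigl( B_\delta, \, B_\delta \cap \{ \xi_l \geq \epsilon \}; \, P_\chi \bigr),
\eeqn
for a small ball $B_\delta \subset V$ and $0 < \epsilon \ll \delta$.  Since $P_\chi$ is $\bC^*$-conic and the only stratified critical point of $\xi_l |_{X_0}$ lies at the origin (again by Proposition \ref{prop-A-f-stratification} (ii)), one can grow the ball and push the level set outward without changing the cohomology, obtaining
\beqn
M_l(P_\chi) \; \cong \; H^0 \bigl( V, \, \{ \xi_l \geq \xi_0 \}; \, P_\chi \bigr)
\eeqn
for any $\xi_0 > \max_{c \in Z_l} \xi_l(c)$.

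Next, write $P_\chi = \psi_{\bar c_0} \circ p_{\bar c_0}^* (\cL_\chi [-])$ as in Section \ref{subsec-char-chi}.  The $A_f$ condition of Proposition \ref{prop-A-f-stratification} (i) ensures that the nearby cycles functor $\psi_{\bar c_0}$ commutes with the restriction along a pair transverse to the Morse datum of $\xi_l$.  Combined with the $\bC^*$-action on the family $\cZ_{\bar c_0}$, which simultaneously rescales the parameter $k$ and the linear function $\xi_l$, this transports the computation to the nearby fiber $X_{\bar c_0}$ with its original coefficient local system.  Since $X_{\bar c_0} = G \cdot c_0$ is a smooth complex $d$-manifold and the shift $[-]$ equals $[d]$, the resulting expression is naturally the stated relative homology group; concretely, each Morse critical point $c \in Z_l$ contributes a Lefschetz thimble class in $H_d (X_{\bar c_0}, \{ \xi_l \geq \xi_0 \}; \cL_\chi)$, accounting for the $|W|$-dimensional Morse group via equation \eqref{eqn-Z-l}.

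The main technical obstacle is the transfer step, namely the commutation of $\psi_{\bar c_0}$ with the Morse slice in the presence of the twisted coefficient system $\cL_\chi$.  This rests on matching the $A_f$ condition to the $\bC^*$-conic structure of the family and keeping careful track of the $\bC^*$-rescaling that identifies the Milnor datum at the origin with the nearby fiber.  Since an entirely parallel argument is carried out in \cite{GVX}, we expect to refer to the corresponding lemma there rather than reproduce the details.
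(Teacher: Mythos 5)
Your proposal is correct and follows the same route the paper has in mind when it cites \cite[Lemmas 6.5, 6.8, 7.3]{GVX}: express the Morse group as a local (co)homology pair with $P_\chi$ coefficients, use the $\bC^*$-conic structure to globalize (no stratified critical points of $\xi_l|_{X_0}$ away from the origin), transfer through the nearby-cycles definition $P_\chi = \psi_{\bar c_0} \circ p_{\bar c_0}^*(\cL_\chi[-])$ by invoking the $A_f$ condition together with the transversality at infinity of the family $\cZ_{\bar c_0}$, and land on the stated relative homology of $X_{\bar c_0}$ with $\cL_\chi$ coefficients. The paper's proof is just the one-line reference to \cite{GVX}, so your sketch is an acceptable unpacking of the same argument rather than a different one.
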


\begin{proof}
This is a consequence of Proposition \ref{prop-A-f-stratification}.
The proof is similar to the proofs of \cite[Lemmas 6.6, 6.9, 7.4]{GVX1}.
\end{proof}

\begin{corollary}\label{cor-dim-M}
We have:
\beqn
\dim M_l (P_\chi) = |W|.
\eeqn
\end{corollary}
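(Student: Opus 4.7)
My plan is to deduce the corollary from Lemma \ref{lemma-describe-morse-group} by a standard Picard–Lefschetz/Morse-theoretic argument applied to the holomorphic function $l|_{X_{\bar c_0}}$.

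First, I would collect the local data at each critical point. By \eqref{eqn-Z-l}, the critical set $Z_l = X_{\bar c_0} \cap \Cartan_l$ consists of exactly $|W|$ points (the $W$-orbit of $c_0$ intersected with a single Cartan subspace gives $|W|$ points, one per Weyl group element). By \eqref{eqn-morse}, at each $c \in Z_l$ the restriction $l|_{X_{\bar c_0}} = l|_{G \cdot c_0}$ has a non-degenerate complex Hessian, i.e.\ $l|_{X_{\bar c_0}}$ is a holomorphic Morse function with exactly $|W|$ critical points. Consequently, $\xi_l = \on{Re}(l)$ is a real Morse function on the complex $d$-manifold $X_{\bar c_0}$ whose critical points all have Morse index $d$.

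Next, I would set up Picard–Lefschetz theory on $X_{\bar c_0}$. Since $X_{\bar c_0} = G \cdot c_0$ is a closed affine $G$-orbit and, by the reference to \cite{Gr1} in the proof of Proposition \ref{prop-fourier}, the fiber $X_{\bar c_0}$ is transverse to infinity, the function $l|_{X_{\bar c_0}}$ is tame at infinity; hence the negative gradient flow of $\xi_l$ in the region $\xi_l \leq \xi_0$ gives rise to Lefschetz thimbles. Explicitly, choose an admissible system of non-self-intersecting paths in $\bC$ from the half-plane $\{\on{Re}(z) \geq \xi_0\}$ to each critical value $l(c)$, $c \in Z_l$. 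Each thimble $T_c$ is a smooth real $d$-cell with boundary in $\{\xi_l \geq \xi_0\}$. Since $T_c$ is contractible, the rank one local system $\cL_\chi$ trivializes canonically (up to scalar) on $T_c$, so a choice of generator of the stalk $(\cL_\chi)_c$ determines a relative cycle $[T_c] \in H_d(X_{\bar c_0}, \{\xi_l \geq \xi_0\}; \cL_\chi)$.

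Finally, I would invoke the standard fact that the thimbles $\{[T_c]\}_{c \in Z_l}$ form a basis of the middle relative homology, and hence
\beqn
\dim H_d(X_{\bar c_0}, \{\xi_l \geq \xi_0\}; \cL_\chi) \ = \ |Z_l| \ = \ |W|.
\eeqn
Combined with Lemma \ref{lemma-describe-morse-group}, this yields $\dim M_l(P_\chi) = |W|$. The main potential obstacle is the non-compactness of $X_{\bar c_0}$: one must confirm that the deformation retract onto the union of $\{\xi_l \geq \xi_0\}$ and the thimbles terminates, i.e.\ that no further homology is contributed from infinity. This is precisely what is ensured by the transversality-to-infinity statement already used in Proposition \ref{prop-fourier}, so no additional work is required.
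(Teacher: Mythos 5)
Your proposal is correct and follows the same Morse/Picard--Lefschetz route that the paper compresses into a one-line citation list (Lemma \ref{lemma-describe-morse-group}, \eqref{eqn-morse}, \eqref{eqn-Z-l}, genericity of $l$, cf.\ \cite[Lemma 6.9]{GVX}). You correctly identify the two local ingredients---$|Z_l|=|W|$ critical points, each a non-degenerate complex critical point so of middle real index $d$---and the global ingredient---tameness at infinity, ensured by the transversality of $X_{\bar c_0}$ to infinity from \cite[Proposition 2.17]{Gr2}/\cite{Gr1}, so that the Lefschetz thimbles with $\cL_\chi$-coefficients form a basis of the relative homology from Lemma \ref{lemma-describe-morse-group}. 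One small wording caveat: where the paper points to "the generic property of $l$" (Proposition \ref{prop-A-f-stratification}(ii)-(iii)) it is mainly emphasizing that $l$ is a generic covector for $\cS_0$ at $0$, which guarantees that $l|_{X_{\bar c_0}}$ is a locally trivial fibration over $\bC - l(Z_l)$ and hence that Picard--Lefschetz theory applies; your appeal to transversality-to-infinity addresses the same concern (no homology contributed from infinity), so the argument goes through either way.
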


\begin{proof}
This follows form Lemma \ref{lemma-describe-morse-group}, equation
\eqref{eqn-Z-l}, assertion \eqref{eqn-morse}, and the generic property of $l$,
as expressed by Proposition \ref{prop-A-f-stratification}
(cf. \cite[Lemma 6.10]{GVX1}).
\end{proof}

Recall from Section \ref{subsec-dual-rep} that we identify the fundamental
group $\pi_1^G ((V^*)^{rs}, l_0)$ with $\widetilde B_W$ (see isomorphism
\eqref{eqn-identify-pi-one}).  Using this identification, we will write:
\beq\label{eqn-lambda-grp}
\lambda_{l_0} : \widetilde B_W \to \on{Aut} (M_{l_0} (P_\chi)),
\eeq
for the microlocal monodromy action arising from the structure of $P_\chi$
as a $G$-equivariant perverse sheaf, i.e., for the holonomy of the local
system $M (P_\chi)$.

We will analyze the action $\lambda_{l_0}$ by means of Picard-Lefschetz
theory, and we will use the same notation for Picard-Lefschetz classes in
the Morse groups $\{ M_l (P_\chi) \}$ as in \cite{GVX1}.  More precisely, let
$l \in (V^*)^{rs}$ and let $\xi_0 > 0$ be as in Lemma
\ref{lemma-describe-morse-group}.  Let $c \in Z_l$, and let
$\gamma : [0, 1] \to \bC$ be a smooth path such that:
\begin{flalign*}
& \;\;\;\;\;\;\;\; \text{(P1)} \;\;\;\;
\gamma (0) = l (c); & \\
& \;\;\;\;\;\;\;\; \text{(P2)} \;\;\;\;
\text{$\gamma' (t) \neq 0$ for all $t \in [0, 1]$;} & \\
& \;\;\;\;\;\;\;\; \text{(P3)} \;\;\;\;
\gamma (1) = \xi_0; & \\
& \;\;\;\;\;\;\;\; \text{(P4)} \;\;\;\;
\text{$\gamma (t) \notin l (Z_l)$ for all $t \in (0, 1)$;} & \\
& \;\;\;\;\;\;\;\; \text{(P5)} \;\;\;\;
\text{$\gamma (t_1) \neq \gamma (t_2)$ for all $t_1, t_2 \in [0, 1]$
with $t_1 \neq t_2$.} &
\end{flalign*}
Recall the Hessian $\cH [c, l]$ of $l |_{X_{\bar c_0}}$ at $c$ (see equation
\eqref{eqn-H-v-l}).  Let:
\beq\label{eqn-T-plus}
T_+ [c, \gamma] \subset \Lg \cdot c,
\eeq
be the positive eigenspace of $\gamma' (0)^{-1} \cdot \cH [c, l]$, relative to the
inner product $\langle \; , \, \rangle$, i.e., the direct sum of all the eigenspaces of
the real part $\on{Re} (\gamma' (0)^{-1} \cdot \cH [c, l])$, corresponding to
positive eigenvalues.  By assertion \eqref{eqn-morse}, we have:
\beqn
\dim_\bR T_+ [c, \gamma] = d.
\eeqn
Pick an orientation
$o$ of $T_+ [c, \gamma]$ and an element $a \in (\cL_\chi)_c$.  The quadruple
$[c, \gamma, o, a]$ defines a Picard-Lefschetz class:
\beq\label{eqn-PL}
\PL [c, \gamma, o, a] \in M_l (P_\chi),
\eeq
as in \cite[Sections 6.2, 7.2]{GVX1}.

We now focus on the Morse group $M_{l_0} (P_\chi)$.  Note that
$l_0 (c_0) > 0$, and we have:
\beqn
\xi_{l_0} (c) < l_0 (c_0) \;\; \text{for every} \;\; c \in Z_{l_0} - \{ c_0 \},
\eeqn
(see equation \eqref{eqn-l-zero}).  Fix a $\xi_0 > l_0 (c_0)$, and let
$\gamma_0 : [0, 1] \to \bC$ be the straight line path from $l_0 (c_0)$ to 
$\xi_0$.  Pick an orientation $o_0$ of $T_+ [c_0, \gamma_0]$ and a generator 
$0 \neq a_0 \in (\cL_\chi)_{c_0}$.  We define:
\beq\label{eqn-u-zero}
u_0 = \PL [c_0, \gamma_0, o_0, a_0] \in M_{l_0} (P_\chi).
\eeq

\begin{remark}\label{rmk-not-generic}{\em
Our choice of the basepoint $l_0 \in (V^*)^{rs}$, which is coordinated with
the choice of $c_0 \in \Cartan^{reg} \subset V$ via \eqref{eqn-l-zero}, will
give us a number of advantages.  In particular, it facilitates the definition
of the class $u_0$ in equation \eqref{eqn-u-zero}, and this class will play
a central role in our argument.  However, we point out that the pair 
$(c_0, l_0)$ is not generic in the following sense.  The critical values
$l_0 (Z_{l_0}) \subset \bC$ will not, in general, be distinct.  For example,
if $W = S_3$ is the symmetric group on three letters, generated by simple
reflections $(s_1, s_2)$, then we will always have $l_0 (s_1 s_2 \, c_0) =
l_0 (s_2 s_1 \, c_0)$.}
\end{remark}

\begin{prop}\label{prop-cyclic-vector}
The vector $u_0 \in M_{l_0} (P_\chi)$ is cyclic for the microlocal monodromy
action $\lambda_{l_0}$ of equation \eqref{eqn-lambda-grp}.  More precisely,
we have:
\beqn
\lambda_{l_0} (\bC [\widetilde B_W]) \cdot u_0 = M_{l_0} (P_\chi).
\eeqn
\end{prop}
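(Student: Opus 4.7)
The plan is as follows. By Corollary \ref{cor-dim-M} together with $|Z_{l_0}| = |W|$ from \eqref{eqn-Z-l}, the space $M_{l_0}(P_\chi)$ has dimension equal to the number of critical points of $l_0|_{X_{\bar c_0}}$. Standard stratified Picard-Lefschetz theory then implies that any collection $\{PL[c, \gamma_c, o_c, a_c]\}_{c \in Z_{l_0}}$ of Picard-Lefschetz classes — one per critical point, with paths $\{\gamma_c\}$ pairwise non-intersecting off their endpoints and with each $0 \neq a_c \in (\cL_\chi)_c$ — is a basis of $M_{l_0}(P_\chi)$. It therefore suffices to exhibit such a basis inside $\lambda_{l_0}(\bC[\widetilde B_W]) \cdot u_0$.

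The critical set decomposes as $Z_{l_0} = W \cdot c_0$, since $l_0 \in (\Cartan^*)^{reg}$ forces $\Cartan_{l_0} = \Cartan$ and hence $Z_{l_0} = X_{\bar c_0} \cap \Cartan = G \cdot c_0 \cap \Cartan = W \cdot c_0$. For each $c \in Z_{l_0}$, I would pick $w_c \in W$ with $w_c \cdot c_0 = c$, lift $w_c$ first to $\widetilde W$ and then, via the construction of \eqref{eqn-concrete-loop} and the identification \eqref{eqn-identify-pi-one}, to an element $\tilde b_c \in \widetilde B_W$, viewed as a loop in $(V^*)^{rs}$ based at $l_0$. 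Geometrically, as $l$ traverses this loop, the critical point initially at $c_0$ slides along a path in the Cartan to $c$, while the cut $\gamma_0$ is dragged continuously with it; this transports $u_0$ to a Picard-Lefschetz class of the form $PL[c, \gamma_c, o_c, a_c]$, where $o_c$ is the transported orientation and $a_c \in (\cL_\chi)_c$ is nonzero, being the image of $a_0$ under the holonomy of $\cL_\chi$ along the lifted path (and $\chi$ is valued in $\bG_m$).

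The main obstacle will be the final coherence step: choosing the $\tilde b_c$ so that the resulting cuts $\{\gamma_c\}_{c \in Z_{l_0}}$ satisfy the pairwise non-crossing condition needed in the first paragraph. A naive choice of loops may produce paths that sweep across other critical values, in which case the Picard-Lefschetz transport picks up contributions from several critical points simultaneously, and the resulting vectors can fail to be linearly independent. This is exactly the geometric bookkeeping carried out in \cite[Section 3]{Gr3}, where the analogous construction is done for the untwisted case $\chi = 1$ (thereby fixing the gap in the proof of \cite[Theorem 3.1]{Gr2} alluded to in Section \ref{subsec-motivation}). The twisting by $\cL_\chi$ does not affect the underlying geometric construction; its only effect is to insert invertible scalars into the Picard-Lefschetz identifications. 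I therefore expect the proof to proceed by appealing directly to \cite[Section 3]{Gr3}, observing that the coherent system of loops constructed there is insensitive to the rank-one twist, and tracking the nonzero scalars from $\cL_\chi$ through the argument.
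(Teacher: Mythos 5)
Your proposal is correct and lands on precisely the same argument the paper uses: the proof in the paper is exactly the terse observation that the claim for $\chi = 1$ is \cite[Lemma 3.2]{Gr3}, and that the argument there carries over verbatim in the presence of the rank-one twist $\cL_\chi$ since that only introduces invertible scalars. Your surrounding discussion of the geometric picture and the coherence obstruction for the cuts is a fair account of what necessitated the fix in \cite[Section 3]{Gr3}, but the paper itself gives none of that detail and simply cites the reference.
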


\begin{proof}
In the case $\chi = 1$, the claim of the proposition is equivalent to the claim
of \cite[Lemma 3.2]{Gr3}.  Moreover, the proof of \cite[Lemma 3.2]{Gr3}
goes through without any changes in the presence of the local system
$\cL_\chi$.
\end{proof}

It is not difficult to describe the microlocal monodromy action 
\eqref{eqn-lambda-grp} of the subgroup $I \subset \widetilde B_W$ on 
Picard-Lefschetz classes (see diagram \eqref{eqn-main-diagram}).

\begin{prop}\label{prop-microlocal-inertia}
Let $u = \PL [c, \gamma, o, a] \in M_{l_0} (P_\chi)$ be a Picard-Lefschetz
class, with $c = w \, c_0 \in Z_{l_0}$, $w \in W$, and let $x \in I$.  We have:
\beqn
\lambda_{l_0} (x) \, u = (w \cdot \chi) (x) \cdot \tau (x) \cdot u \, ,
\eeqn
where $\tau$ is the character of equation \eqref{eqn-char-tau-I}.
\end{prop}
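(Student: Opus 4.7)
The plan is to realize $M_{l_0}(P_\chi)$ via Lemma \ref{lemma-describe-morse-group} as the relative homology $H_d(X_{\bar c_0},\{\xi_{l_0}\geq\xi_0\};\cL_\chi)$, to identify $\lambda_{l_0}(x)$ with an explicit automorphism of this relative homology induced by a lift $g\in Z_K(\Cartan)$ of $x$, and to compute the effect of $g$ on the Picard-Lefschetz thimble representing $u$. The sign $\tau(x)$ will come from the action of $g$ on the orientation $o$, and the factor $(w\cdot\chi)(x)$ will come from the action of $g$ on the coefficient $a$.

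First, I would lift $x\in I$ to an element $g\in Z_K(\Cartan)$, which is possible by Proposition \ref{prop-core}(iii) together with \eqref{eqn-stabilizer-core}. Under the isomorphism \eqref{eqn-identify-pi-one}, the element $x$, viewed in $\pi_1^G((V^*)^{rs},l_0)$, is represented by the equivariant pair $(g,\nu\circ\gamma)$, where $\gamma$ is any path in $X_{\bar c_0}$ from $c_0$ to $g\cdot c_0=c_0$; the $K$-equivariance of $\nu$ and the fact that $g$ fixes $c_0$ show that $\nu\circ\gamma$ is a loop at $l_0$ and that $g$ fixes $l_0$. Consequently $\lambda_{l_0}(x)$ is the action of $g$ on $M_{l_0}(P_\chi)$ arising from the $G$-equivariance of $P_\chi$; under the realization of Lemma \ref{lemma-describe-morse-group}, this is the map on relative homology induced by the action of $g$ on $(X_{\bar c_0},\{\xi_{l_0}\geq\xi_0\})$ together with the equivariant isomorphism $g^*\cL_\chi\cong\cL_\chi$. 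Since $g\in Z_G(\Cartan)$, it fixes each critical point $c\in Z_{l_0}=W\cdot c_0\subset\Cartan$ pointwise; since $g\in K$ fixes $l_0$, the function $\xi_{l_0}$ is $g$-invariant; and $g$ preserves the Hessian $\cH[c,l_0]$ on $\Lg\cdot c$ together with the inner product $\langle\,,\rangle$.

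Therefore, applied to the thimble $PL[c,\gamma,o,a]$, the action of $g$ yields $PL[c,\gamma,g_* o,g_* a]$, and I would treat the two factors separately. For the coefficient, write $c=w c_0$ and pick a lift $\widetilde w\in N_G(\Cartan)$ of $w$. The $G$-equivariant isomorphism $\widetilde w_*:(\cL_\chi)_{c_0}\to(\cL_\chi)_c$ intertwines the action of $g\in Z_G(c)=Z_G(\Cartan)$ on $(\cL_\chi)_c$ with that of $\widetilde w^{-1}g\widetilde w$ on $(\cL_\chi)_{c_0}$, which is multiplication by $\chi(\widetilde w^{-1}g\widetilde w)=(w\cdot\chi)(x)$ by the definition of the $W$-action on $\hat I$; so $g_* a=(w\cdot\chi)(x)\cdot a$. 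For the orientation, the key linear-algebra observation is that $T_+[c,\gamma]$ is a real form of $\Lg\cdot c$: multiplication by $\bi$ is an isometry of $\langle\,,\rangle$ that anticommutes with the self-adjoint operator defining the eigenspace decomposition (since $\bi$ negates any complex symmetric bilinear form), hence interchanges the positive and negative eigenspaces, and by \eqref{eqn-morse} we get $\Lg\cdot c=T_+[c,\gamma]\oplus\bi\,T_+[c,\gamma]$. Since $g$ is $\bC$-linear, preserves the form $\gamma'(0)^{-1}\cH[c,l_0]$ and preserves $\langle\,,\rangle$, it preserves $T_+[c,\gamma]$; and for a $\bC$-linear endomorphism preserving a real form, the real determinant on the real form equals the complex determinant on the ambient space. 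Using the direct sum $V=\Cartan\oplus\Lg\cdot\Cartan$ and the fact that $g$ fixes $\Cartan$ pointwise (as in the proof of Proposition \ref{prop-char-tau}), we get $\det_\bR(g|_{T_+[c,\gamma]})=\det_\bC(g|_{\Lg\cdot c})=\det(g|_V)=\tau(x)$. Hence $g_* o=\tau(x)\cdot o$, and combining the two factors gives $\lambda_{l_0}(x)\cdot u=\tau(x)\cdot(w\cdot\chi)(x)\cdot u$.

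The main obstacle is the orientation calculation, i.e., identifying the sign by which $g$ transforms the chosen orientation of $T_+[c,\gamma]$ and relating it cleanly to $\tau(x)$; the rest of the argument is essentially bookkeeping with the $G$-equivariant structure, the definition of $I$, and the $W$-action on $\hat I$. I would record the real-form observation as a brief self-contained linear-algebra lemma, which has the added merit of showing that the resulting sign depends only on $x\in I$ (and not on the auxiliary data $\gamma$, $o$ used to construct the thimble).
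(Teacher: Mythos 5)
Your proof is correct and follows essentially the same route as the paper's: lift $x$ to an element $g$ centralizing $\Cartan$, observe that $g$ fixes $c$ and $l_0$ and hence preserves the positive eigenspace $T_+[c,\gamma]$, and then compute the effect of $g$ separately on the orientation $o$ (giving $\tau(x)$ via the decomposition $V = \Cartan \oplus \Lg\cdot\Cartan$) and on the coefficient $a$ (giving $(w\cdot\chi)(x)$ by transporting along a lift $\widetilde w$ of $w$). Your choice of lift $g\in Z_K(\Cartan)$ together with the explicit real-form observation $\Lg\cdot c = T_+[c,\gamma]\oplus\bi\, T_+[c,\gamma]$ spells out the orientation step that the paper states tersely, and is a worthwhile clarification since it is precisely the $K$-invariance of $\langle\,,\,\rangle$ that guarantees $g$ preserves $T_+[c,\gamma]$ and makes the determinant comparison $\det_\bR(g|_{T_+[c,\gamma]}) = \det_\bC(g|_{\Lg\cdot c}) = \tau(x)$ immediate.
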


\begin{proof}
This is similar to the proofs of \cite[Lemma 6.19 $\&$ equation (7.13)]{GVX1}.
More precisely, let $g \in Z_G (\Cartan)$ be an element representing $x \in I$.
Note that we have $g \, c = c$ and $g \, l_0 = l_0$.  It follows that $g$
preserves the positive eigenspace $T_+ [c, \gamma] \subset \Lg \cdot c =
\Lg \cdot \Cartan$.  In view of the direct sum decomposition \eqref{eqn-stability},
the effect of $g$ on the orientation of $T_+ [c, \gamma]$ is given by $\tau (x)$.
By chasing the definitions of the $G$-equivariant local system $\cL_\chi$
and of the $W$-action on the characters $\hat I$, one can check that
$g \, a = (w \cdot \chi) (x) \cdot a$.  The proposition follows by the linearity of
the Picard-Lefschetz class $\PL [c, \gamma, o, a]$ in the last two arguments.
\end{proof}

\section{Construction of the monodromy in the family}
\label{sec-monodromy-construct}

Recall the groups $W_\chi = \on{Stab}_W (\chi)$ and  $B_W^\chi = 
\on{Stab}_{B_W} (\chi)$ of equations \eqref{eqn-W-chi} and
\eqref{eqn-tilde-B-chi}.  As in \cite[Section 3.2]{GVX1}, the sheaf $P_\chi$
naturally carries a representation:
\beq\label{eqn-mu}
\mu : B_W^\chi \to \on{Aut} (P_\chi),
\eeq
which we call the monodromy in the family, and which is constructed as follows.
Consider the projection $\Cartan / W_\chi \to \Cartan / W$, and form a 
Cartesian commutative diagram as follows:
\beq\label{diagram-V-tilde-chi}
\begin{CD}
\widetilde V_\chi @>{g}>> V
\\
@V{f_\chi}VV @VV{f}V
\\
\Cartan / W_\chi @>>> \Cartan / W \, .
\end{CD}
\eeq
Let $\widetilde V_\chi^{rs} = f_\chi^{-1} (\Cartan^{reg} / W_\chi) \subset
\widetilde V_\chi$.  Let $\hat c_0 \in \Cartan^{reg} / W_\chi$ be the
image of $c_0 \in \Cartan^{reg}$, and let:
\beq\label{eqn-tilde-c-zero}
\tilde c_0 = (\hat c_0, c_0) \in \widetilde V_\chi^{rs} \, .
\eeq
Then the restriction $g: \widetilde V_\chi^{rs} \to V^{rs}$ is a $G$-equivariant
covering map, and we have:
\beqn
\pi_1^G (\widetilde V_\chi^{rs}, \tilde c_0) = \widetilde B_W^\chi =
\tilde q^{-1} (B_W^\chi) \subset \widetilde B_W \, ,
\eeqn
(see equation \eqref{eqn-tilde-B-chi}).

Recall that the splitting homomorphism $\tilde r$ determines a canonical
extension $\hat \chi : \widetilde B_W^\chi \to \bG_m$ of the character
$\chi : I \to \bG_m$ (see equation \eqref{eqn-hat-chi}).  The character
$\hat \chi$ gives rise to a rank one $G$-equivariant local system $\hat \cL_\chi$
on $\widetilde V_\chi^{rs}$, with $(\hat \cL_\chi)_{\tilde c_0} = \bC$.  Note that
$\hat c_0 = f_\chi (\tilde c_0) \in \Cartan^{reg} / W_\chi$, and let $X_{\hat c_0} =
f_\chi^{-1} (\hat c_0)$.  We have natural identifications:
\beq\label{eqn-hat-no-hat}
X_{\hat c_0} \cong X_{\bar c_0} \;\;\;\; \text{and} \;\;\;\;
\hat \cL_\chi |_{X_{\hat c_0}} \cong \cL_\chi \, .
\eeq
Next, we consider a parametrized version of the family $\cZ_{\bar c_0} \to \bC$
of equation \eqref{eqn-family-Z}:
\beqn
\xymatrix{{
\cZ_\chi = \{ (\tilde v, \hat c, k) \in \widetilde V_\chi \times
(\Cartan^{reg} / W_\chi) \times \bC \; | \; f_\chi (\tilde v) = k \, \hat c \}}
\ar[rr]^-{F} \ar[rd] && \Cartan^{reg} / W_\chi \times \bC \ar[dl] \\& \Cartan^{reg} /
W_\chi \, ,}
\eeqn
where $F (\tilde v, \hat c, k) = (\hat c, k)$.  Let $F_2 : \cZ_\chi \to \bC$ be the
second component of $F$, and let $\cZ_\chi^{rs} = F_2^{-1} (\bC^*)$.  
By abuse of notation, we denote the pull-back of $\hat \cL_\chi$ from
$\widetilde V_\chi^{rs}$ to $\cZ_\chi^{rs}$ by the same symbol $\hat \cL_\chi$.
Let $\cZ_{\chi, 0} = F_2^{-1} (0) = X_0 \times (\Cartan^{reg} / W_\chi)$, and
consider the nearby cycle sheaf:
\beqn
\cP_\chi = \psi_{F_2} (\hat \cL_\chi [-]) \in \on{Perv}_G (\cZ_{\chi, 0}).
\eeqn
The stratification $\cS_0$ of $X_0$, which was fixed in Section
\ref{subsec-dual-rep}, following Proposition \ref{prop-A-f-stratification},
induces a stratification $\cS_{\chi, 0}$ of $\cZ_{\chi, 0}$, given by:
\beqn
\cZ_{\chi, 0} = \bigcup_{S \in \cS_0} S \times (\Cartan^{reg} / W_\chi).
\eeqn

\begin{prop}\label{prop-A-F2}
For every $S \in \cS_0$, the pair of manifolds:
\beqn
(\cZ^{rs}, S \times (\Cartan^{reg} / W_\chi)),
\eeqn
satisfies Thom's $A_{F_2}$ condition.
\end{prop}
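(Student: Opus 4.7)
The claim is local near a point $z_0 = (v_0, \hat c_0, 0) \in \cZ_{\chi, 0}$ with $v_0 \in S$ and $\hat c_0 \in \Cartan^{reg}/W_\chi$. The plan is to reduce Thom's $A_{F_2}$ condition directly to the $A_f$ condition provided by Proposition \ref{prop-A-f-stratification}(i), via an explicit tangent-space computation. Since $\pi : \Cartan^{reg}/W_\chi \to Q^{reg}$ is étale near $\hat c_0$, I would identify $\cZ_\chi$ locally with the subvariety $\{(v, \hat c, k) \in V \times U \times \bC : f(v) = k\hat c\}$, where $U$ is a neighborhood of $\hat c_0$ in $\Cartan^{reg}/W_\chi$ (identified with its image in $Q^{reg}$); in these coordinates, $F_2 = k$.

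At a point $z = (v, \hat c, k) \in \cZ^{rs}$, differentiating $f(v) = k\hat c$ produces a short exact sequence
\begin{equation*}
0 \to \ker d_v f \xrightarrow{\iota} T_z F_2^{-1}(k) \xrightarrow{\pi_\Cartan} T_{\hat c} U \to 0,
\end{equation*}
where $\iota(\delta v) = (\delta v, 0, 0)$ and $\pi_\Cartan$ is the projection to the $\delta \hat c$-component; surjectivity of $\pi_\Cartan$ uses that $d_v f$ is surjective on $V^{rs}$. For a convergent sequence $z_n \to z_0$ in $\cZ^{rs}$ along which $T_{z_n} F_2^{-1}(k_n) \to \tau$ in the Grassmannian, the inclusion $\iota(\ker d_{v_n} f) \subset T_{z_n} F_2^{-1}(k_n)$ combined with the $A_f$ condition gives $\lim_n \ker d_{v_n} f \supset T_{v_0} S$, hence $\tau \supset T_{v_0} S \oplus 0 \oplus 0$.

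The main obstacle is then the complementary inclusion $\tau \supset 0 \oplus T_{\hat c_0} U \oplus 0$. For each $\delta \hat c$, I need to construct a lift $(\delta v_n, \delta \hat c, 0) \in T_{z_n} F_2^{-1}(k_n)$ with $\delta v_n \to 0$. Taking $\delta v_n = (d_{v_n} f)^\dagger (k_n \delta \hat c)$ (the minimal-norm solution of $d_{v_n} f(\delta v_n) = k_n \delta \hat c$) reduces the problem to showing $\|(d_{v_n} f)^\dagger\| \cdot |k_n| \to 0$. The pseudoinverse may blow up as $v_n$ approaches the critical set of $f$, but the constraint $|f(v_n)| = |k_n| \cdot |\hat c_n|$ with $\hat c_n$ bounded ties this blow-up rate to $k_n$: a Łojasiewicz-type estimate for the real-analytic function $|f|$ near its critical set yields $\|(d_{v_n} f)^\dagger\| \cdot |k_n| = O(|k_n|^\alpha)$ for some $\alpha > 0$, which tends to zero. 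Combining the two inclusions gives $\tau \supset T_{v_0} S \oplus T_{\hat c_0}(\Cartan^{reg}/W_\chi) \oplus 0 = T_{z_0}(S \times (\Cartan^{reg}/W_\chi))$, establishing the $A_{F_2}$ condition.
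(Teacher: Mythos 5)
The first inclusion, $\tau \supset T_{v_0} S \oplus 0 \oplus 0$, is a correct application of the $A_f$ hypothesis from Proposition \ref{prop-A-f-stratification}(i). The gap is in the second inclusion. You reduce it to the estimate $\|(d_{v_n} f)^\dagger\| \cdot |k_n| \to 0$ and assert that this follows from ``a Łojasiewicz-type estimate for $|f|$.'' But the Łojasiewicz gradient inequality controls $|\nabla f|$ for a \emph{scalar} function; for a map $f : V \to Q$ with $\dim Q \geq 2$ there is no analogous general lower bound $\sigma_{\min}(d_v f) \geq c\, |f(v)|^\theta$ with $\theta < 1$. For example, $f(x,y,z) = (xz, yz)$ satisfies $\sigma_{\min}(d_v f) \asymp |f(v)|$ along $v = (1, 1, t)$, $t \to 0$, so the exponent $\theta = 1$ is sharp and $\|(d_v f)^\dagger\| \cdot |f(v)|$ stays bounded away from zero. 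That particular $f$ is not a stable polar quotient map, but it shows that the estimate you need is not a general analytic fact: if it holds in the present setting it must be extracted from the hypotheses on $G | V$, which your argument for the second inclusion never invokes. Indeed the hard half of your proof uses neither the $A_f$ condition, nor visibility, nor stability, nor polarity --- a strong sign that something essential is missing. (When $\operatorname{rank}(G|V) = 1$, so $\dim Q = 1$, the classical Łojasiewicz gradient inequality does deliver $\alpha > 0$; the gap concerns the visible case of rank $\geq 2$.)

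A secondary imprecision: the symbol $k\hat c$ in the definition of $\cZ_\chi$ denotes the $\bC$-action on $Q = \Cartan / W$ induced from scaling on $\Cartan$, which is quasi-homogeneous with weights equal to the degrees of the basic $W$-invariants. Linearizing $f(v) = k\hat c$ at fixed $k$ therefore gives $d_v f(\delta v) = (\text{derivative of } \hat c \mapsto k \cdot \hat c)(\delta\hat c)$, not $k\, \delta\hat c$; this alters the quantitative bookkeeping, though not the essential difficulty. The paper's proof adapts \cite[Proposition 3.1]{GVX} rather than arguing via a Łojasiewicz inequality.
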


\begin{proof}
This is similar to the proof of \cite[Proposition 3.1]{GVX1}, which can be readily
adapted to the present setting.
\end{proof}

Let $\on{Perv}_G (\cZ_{\chi, 0}, \cS_{\chi, 0})$ denote the category of
$G$-equivariant perverse sheaves on $\cZ_{\chi, 0}$, constructible with
respect to $\cS_{\chi, 0}$.  Proposition \ref{prop-A-F2} and
\cite[Theorem 5.5]{Gi} imply that:
\beq\label{eqn-constructible-2}
\cP_\chi \in \on{Perv}_G (\cZ_{\chi, 0}, \cS_{\chi, 0}), 
\eeq
(this is similar to the proof of \cite[Corollary 3.2]{GVX1}).  For every $\hat c \in
\Cartan^{reg} / W_\chi$, let $j_{\hat c} : X_0 \to \cZ_{\chi, 0}$ be the inclusion
$x \mapsto (x, \hat c, 0)$.  In view of \eqref{eqn-constructible-2}, this
inclusion gives rise to a perverse (i.e., properly shifted) restriction functor:
\beqn
j^*_{\hat c} : \on{Perv}_G (\cZ_{\chi, 0}, \cS_{\chi, 0}) \to
\on{Perv}_G (X_0, \cS_0).
\eeqn
For each $\hat c \in \Cartan^{reg} / W_\chi$, we will write:
\beqn
\cP_{\hat c} = j^*_{\hat c} (\cP) \in \on{Perv}_G (X_0, \cS_0).
\eeqn
In view of equation \eqref{eqn-hat-no-hat} and Proposition \ref{prop-A-F2}, 
we have:
\beqn
\cP_{\hat c_0} \cong P_\chi \, .
\eeqn 
The sheaves $\{ \cP_{\hat c} \in \on{Perv}_G (X_0, \cS_0) \}$ form a local
system over $\Cartan^{reg} / W_\chi \ni \hat c$, giving rise to the monodromy
representation $\mu$ of equation \eqref{eqn-mu}.  Note that the construction
of $\mu$ depends on the choice of the regular splitting $\tilde r : B_W \to
\widetilde B_W$ in \eqref{eqn-tilde-r}.

\section{Monodromy in the family for a braid generator}
\label{sec-monodromy}

Fix an $\alpha \in A$ and a $\sigma_\alpha \in B_W [\alpha]$.  In this section,
we discuss the monodromy in the family operator
$\mu (\sigma_\alpha^\degalpha) \in \on{Aut} (P_\chi)$ (see equations
\eqref{eqn-W-alpha-chi} and \eqref{eqn-mu}), and relate it to the corresponding
construction for the rank one representation $G_\alpha | V_\alpha$.

Recall the splitting homomorphism $\tilde r [\sigma_\alpha] : B_{W_\alpha} \to 
\widetilde B_{W_\alpha}$ of equation \eqref{eqn-tilde-r-sigma-alpha}, for the 
rank one representation $G_\alpha | V_\alpha$.  Recall that $\chi_\alpha =
\chi |_{I_\alpha} \in \hat I_\alpha$, and $p_\alpha$, $q_\alpha$, $\tilde p_\alpha$,
$\tilde q_\alpha$ are the maps of diagram \eqref{eqn-main-diagram-alpha}.  Let:
\beqn
W_{\alpha, \chi^{}_\alpha} \coloneqq \on{Stab}_{W_\alpha} (\chi_\alpha), \;\;
B_{W_\alpha}^{\chi_\alpha} \coloneqq p_\alpha^{-1} (W_{\alpha, \chi^{}_\alpha}) 
\subset B_{W_\alpha} \, , \;\;
\widetilde B_{W_\alpha}^{\chi_\alpha} \coloneqq \tilde q_\alpha^{-1}
(B_{W_\alpha}^{\chi_\alpha}) \subset \widetilde B_{W_\alpha} \, ,
\eeqn
(cf. \eqref{eqn-W-chi} and \eqref{eqn-tilde-B-chi}).  By analogy with \eqref{eqn-mu},
the data $(G_\alpha | V_\alpha, \chi_\alpha, \tilde r [\sigma_\alpha])$ gives rise to
a monodromy in the family representation:
\beqn
\mu [\sigma_\alpha] : B_{W_\alpha}^{\chi_\alpha} \to \on{Aut}
(P_{\chi^{}_\alpha}).
\eeqn
Here, we include $\sigma_\alpha$ in the notation, because the construction
of $\mu [\sigma_\alpha]$ uses the splitting homomorphism
$\tilde r [\sigma_\alpha]$.  However, we have the following proposition.
Recall the subgroup $W_{\alpha, \chi} = W_\alpha \cap W_\chi \subset
W_{\alpha, \chi^{}_\alpha}$ (see equation \eqref{eqn-W-alpha-chi} and
Remark \ref{rmk-W-alpha-chi-one}), and let:
\beqn
B_{W_\alpha}^\chi \coloneqq p_\alpha^{-1} (W_{\alpha, \chi}) 
\subset B_{W_\alpha}^{\chi_\alpha} \, , \;\;
\widetilde B_{W_\alpha}^\chi \coloneqq \tilde q_\alpha^{-1}
(B_{W_\alpha}^\chi) \subset \widetilde B_{W_\alpha}^{\chi_\alpha} \, .
\eeqn

\begin{prop}\label{prop-mu-alpha}
Write:
\beq\label{eqn-mu-alpha}
\mu_\alpha : B_{W_\alpha}^\chi \to \on{Aut} (P_{\chi^{}_\alpha}),
\eeq
for the restriction of the homomorphism $\mu [\sigma_\alpha]$ to the subgroup
$B_{W_\alpha}^\chi \subset B_{W_\alpha}^{\chi_\alpha}$.  The homomorphism
$\mu_\alpha$ is independent of the choice of $\sigma_\alpha \in B_W [\alpha]$.
\end{prop}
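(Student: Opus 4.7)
The plan is to pin down the precise dependence of the action $\mu[\sigma_\alpha]$ on the choice of $\sigma_\alpha$, show that it enters only through the extension $\hat\chi_\alpha$ of $\chi_\alpha$ to a character of $\widetilde B_{W_\alpha}^{\chi_\alpha}$, and verify that this extension is already canonical when restricted to the smaller subgroup $\widetilde B_{W_\alpha}^\chi$.

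First, invoking Lemma \ref{lemma-conjugate}, for any two choices $\sigma_1, \sigma_2 \in B_W[\alpha]$ there exists $x \in I$ with $\tilde r[\sigma_2](\sigma) = x\, \tilde r[\sigma_1](\sigma)\, x^{-1}$ inside $\widetilde B_{W_\alpha}^{\mathrm f}$. Since $B_{W_\alpha} = \langle \sigma \rangle$ is cyclic, this upgrades to $\tilde r[\sigma_2] = \on{Ad}_x \circ \tilde r[\sigma_1]$ on all of $B_{W_\alpha}$. For $b \in B_{W_\alpha}^{\chi_\alpha}$ with $w = p_\alpha(b)$, a short calculation using the $W$-action on $I$ arising from the conjugation action of $\widetilde W$ on $I$ (and the invariance statement \eqref{eqn-I-invariance}) then yields
\beqn
\tilde r[\sigma_2](b)^{-1}\, \tilde r[\sigma_1](b) \;=\; x \cdot (w^{-1} \cdot x^{-1}) \;\in\; I_\alpha.
\eeqn

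Second, the characters $\hat\chi_\alpha^{(i)}$ of $\widetilde B_{W_\alpha}^{\chi_\alpha}$ corresponding to the two splittings are uniquely characterized by $\hat\chi_\alpha^{(i)}|_{I_\alpha} = \chi_\alpha$ and $\hat\chi_\alpha^{(i)} \circ \tilde r[\sigma_i] \equiv 1$. Applying $\hat\chi_\alpha^{(2)}$ to the displayed identity and using these two properties gives
\beqn
\hat\chi_\alpha^{(2)}\bigl(\tilde r[\sigma_1](b)\bigr) \;=\; \chi(x) \cdot (w \cdot \chi)(x)^{-1}.
\eeqn
When $b \in B_{W_\alpha}^\chi$, we have $w \in W_{\alpha,\chi} \subset W_\chi$, so $w \cdot \chi = \chi$ and the right-hand side equals $1$. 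Since $\widetilde B_{W_\alpha}^\chi$ is generated by $I_\alpha$ together with $\tilde r[\sigma_1](B_{W_\alpha}^\chi)$, this yields $\hat\chi_\alpha^{(1)} = \hat\chi_\alpha^{(2)}$ on all of $\widetilde B_{W_\alpha}^\chi$.

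Finally, the action $\mu[\sigma_i]$ is built from the parametric nearby-cycle sheaf attached to the rank-one local system $\hat\cL_{\chi_\alpha}^{(i)}$ on $\widetilde V_{\alpha, \chi_\alpha}^{rs}$ determined by $\hat\chi_\alpha^{(i)}$. For $b \in B_{W_\alpha}^\chi$ the automorphism $\mu[\sigma_i](b)$ can be computed using only the pullback of $\hat\cL_{\chi_\alpha}^{(i)}$ to the intermediate cover of $V_\alpha^{rs}$ associated to $W_{\alpha, \chi} \subset W_{\alpha, \chi_\alpha}$, i.e., using only the values of $\hat\chi_\alpha^{(i)}$ on $\widetilde B_{W_\alpha}^\chi$. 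By the previous step these values coincide for $i = 1, 2$, so $\mu[\sigma_1]|_{B_{W_\alpha}^\chi} = \mu[\sigma_2]|_{B_{W_\alpha}^\chi}$, as required. The main obstacle I anticipate is not the algebra above but justifying this last reduction from the full parametric family over $\Cartan^{reg}/W_{\alpha, \chi_\alpha}$ to the intermediate cover; the cleanest way is to re-run the construction of Section \ref{sec-monodromy-construct} verbatim with $W_{\alpha, \chi}$ in place of $W_{\alpha, \chi_\alpha}$ and check that the resulting parametric nearby cycles compute $\mu_\alpha$ directly, without ever invoking a splitting of the top row of \eqref{eqn-main-diagram-alpha}.
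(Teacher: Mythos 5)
Your proof is correct and follows essentially the same approach as the paper's: both rest on Lemma \ref{lemma-conjugate} to show $\hat\chi_\alpha|_{\widetilde B_{W_\alpha}^\chi}$ is independent of $\sigma_\alpha$, and both conclude by observing that $\mu_\alpha$ is computable from the intermediate cover $\widetilde V_{\alpha,\chi}^{rs}$ alone (the left square of diagram \eqref{diagram-V-tilde-alpha}), which is precisely the closing remark of the paper's argument. The only cosmetic difference is in the middle step: the paper introduces the auxiliary character $\hat\chi_\alpha^{\rm f}$ on the larger group $\widetilde B_{W_\alpha}^{\chi,\rm f} \supset I$ and invokes the conjugation-invariance of one-dimensional characters directly, whereas you compute the cocycle $\tilde r[\sigma_2](b)^{-1}\tilde r[\sigma_1](b)$ by hand and check that $\hat\chi$ kills it when $p_\alpha(b) \in W_{\alpha,\chi}$ — the two calculations are equivalent.
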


\begin{proof}
Consider the following Cartesian commutative diagram:
\beq\label{diagram-V-tilde-alpha}
\begin{CD}
\widetilde V_{\alpha, \chi} @>>>
\widetilde V_{\alpha, \chi^{}_\alpha} @>>> V_\alpha
\\
@V{f_{\alpha, \chi}}VV @VV{f_{\alpha, \chi^{}_\alpha}}V @VV{f_\alpha}V
\\
\Cartan / W_{\alpha, \chi} @>>> \Cartan / W_{\alpha, \chi^{}_\alpha}
@>>> \Cartan / W_\alpha \, .
\end{CD}
\eeq
Here, the right square is the analog of diagram \eqref{diagram-V-tilde-chi}
for the data $(G_\alpha | V_\alpha, \chi_\alpha)$, and the left square is
obtained by further pulling back the family $f_{\alpha, \chi^{}_\alpha}$ to
$\Cartan / W_{\alpha, \chi}$.  Let:
\beqn
\widetilde V_{\alpha, \chi^{}_\alpha}^{rs} = f_{\alpha, \chi^{}_\alpha}^{-1}
((\Cartan - \Cartan_\alpha) / W_{\alpha, \chi^{}_\alpha}), \;\;\;\;
\widetilde V_{\alpha, \chi}^{rs} = (f_{\alpha, \chi})^{-1}
((\Cartan - \Cartan_\alpha) / W_{\alpha, \chi}).
\eeqn
We have:
\beq\label{eqn-pi-one-rs}
\pi_1^{G_\alpha} (\widetilde V_{\alpha, \chi^{}_\alpha}^{rs} , c_0) =
\widetilde B_{W_\alpha}^{\chi_\alpha} \, , \;\;\;\;
\pi_1^{G_\alpha} (\widetilde V_{\alpha, \chi}^{rs}, c_0) =
\widetilde B_{W_\alpha}^\chi \subset
\widetilde B_{W_\alpha}^{\chi_\alpha} \, ,
\eeq
where the basepoint $c_0 \in \Cartan^{reg}$ naturally determines basepoints
in $\widetilde V_{\alpha, \chi^{}_\alpha}^{rs}$ and
$\widetilde V_{\alpha, \chi}^{rs}$, as in \eqref{eqn-tilde-c-zero}.

By analogy with the character $\hat \chi$ of equation \eqref{eqn-hat-chi},
we obtain a character $\hat \chi_\alpha : \widetilde B_{W_\alpha}^{\chi_\alpha} 
\to \bG_m$.  More precisely, the character $\hat \chi_\alpha$ is determined by 
requiring that $\hat \chi_\alpha |_{I_\alpha} = \chi_\alpha$ and:
\beq\label{eqn-hat-chi-alpha}
\hat \chi_\alpha \circ \tilde r [\sigma_\alpha] : B_{W_\alpha}^{\chi_\alpha}
\to \bG_m \;\; \text{is the trivial character}.
\eeq
Recall the group $\widetilde B_{W_\alpha}^{\rm f} \supset
\widetilde B_{W_\alpha}$ of equations
\eqref{eqn-tilde-B-W-alpha-f}-\eqref{eqn-tilde-W-alpha-f}
and diagram \eqref{eqn-main-diagram-alpha-full}.  Define:
\beq\label{eqn-pi-one-rs-full}
\widetilde B_{W_\alpha}^{\chi_\alpha, {\rm f}} \coloneqq
(\tilde q_\alpha^{\rm f})^{-1}
(B_{W_\alpha}^{\chi_\alpha}) \subset
\widetilde B_{W_\alpha}^{\rm f} \, , \;\;\;\;
\widetilde B_{W_\alpha}^{\chi, {\rm f}} \coloneqq
(\tilde q_\alpha^{\rm f})^{-1}
(B_{W_\alpha}^\chi) \subset
\widetilde B_{W_\alpha}^{\chi_\alpha, {\rm f}} \, .
\eeq
The groups \eqref{eqn-pi-one-rs} and \eqref{eqn-pi-one-rs-full} can be
organized into a diagram of inclusions as follows:
\beq\label{diagram-pi-one-rs}
\begin{CD}
\widetilde B_{W_\alpha}^\chi @>>>
\widetilde B_{W_\alpha}^{\chi_\alpha}
\\
@VVV @VVV
\\
\widetilde B_{W_\alpha}^{\chi, {\rm f}} @>>>
\widetilde B_{W_\alpha}^{\chi_\alpha, {\rm f}} \, .
\end{CD}
\eeq

The character $\hat \chi_\alpha : \widetilde B_{W_\alpha}^{\chi_\alpha} 
\to \bG_m$ does not, in general, extend to a 
character of $\widetilde B_{W_\alpha}^{\chi_\alpha, {\rm f}} \supset
\widetilde B_{W_\alpha}^{\chi_\alpha}$.  However, there is a unique
character $\hat \chi_\alpha^{\rm f} : \widetilde B_{W_\alpha}^{\chi, {\rm f}}
\to \bG_m$, such that $\hat \chi_\alpha^{\rm f} |_I = \chi$ and:
\beq\label{eqn-hat-chi-alpha-f}
\hat \chi_\alpha^{\rm f} \circ \tilde r [\sigma_\alpha] : B_{W_\alpha}^\chi
\to \bG_m \;\; \text{is the trivial character}.
\eeq
By Lemma \ref{lemma-conjugate}, condition \eqref{eqn-hat-chi-alpha-f}
is independent of the choice of $\sigma_\alpha \in B_W [\alpha]$.
Therefore, the character $\hat \chi_\alpha^{\rm f}$ is independent of
$\sigma_\alpha$.  By construction, the characters $\hat \chi_\alpha$
and $\hat \chi_\alpha^{\rm f}$ match on the upper left corner of
diagram \eqref{diagram-pi-one-rs}, i.e., we have:
\beq\label{eqn-restrict-characters}
\hat \chi_\alpha |_{\widetilde B_{W_\alpha}^\chi} =
\hat \chi_\alpha^{\rm f} |_{\widetilde B_{W_\alpha}^\chi} \, .
\eeq
Therefore, the LHS of \eqref{eqn-restrict-characters} is independent of
$\sigma_\alpha$.  It remains to note that the monodromy action
$\mu_\alpha$ can be defined using the family $f_{\alpha, \chi}$ and
the character $\hat \chi_\alpha |_{\widetilde B_{W_\alpha}^\chi}$,
in place of the family $f_{\alpha, \chi^{}_\alpha}$ and the full character
$\hat \chi_\alpha$ (see diagram \eqref{diagram-V-tilde-alpha} and
equation \eqref{eqn-pi-one-rs}).
\end{proof}

The following proposition relates the monodromy actions $\mu$ and
$\mu_\alpha$ of equations \eqref{eqn-mu} and \eqref{eqn-mu-alpha}.
Recall that $s_\alpha \in W_\alpha \cong \bZ / n_\alpha$ and
$\sigma \in B_{W_\alpha} \cong \bZ$ are the counter-clockwise generators
(see Sections \ref{subsec-generators}, \ref{subsec-min-poly}).
Recall also that $W_{\alpha, \chi} = \langle s_\alpha^\degalpha \rangle$
(see \eqref{eqn-W-alpha-chi}) and therefore $B_{W_\alpha}^\chi =
\langle \sigma^\degalpha \rangle \subset B_{W_\alpha} = 
\langle \sigma \rangle$.

\begin{definition}\label{defn-bar-R-chi-alpha-mu}
Let $\bar R_{\chi, \alpha}^\mu \in \cR$ be the minimal polynomial of 
$\mu_\alpha (\sigma^{\degalpha}) \in \on{End} (P_{\chi^{}_\alpha})$.
\end{definition}

\begin{prop}\label{prop-family-min-poly}
The minimal polynomial of $\mu (\sigma_\alpha^{\degalpha}) \in
\on{End} (P_\chi)$ is equal to $\bar R_{\chi, \alpha}^\mu$.
\end{prop}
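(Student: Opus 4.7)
The strategy is to localize the computation of $\mu(\sigma_\alpha^\degalpha)$ to a neighborhood of a generic point $c_\alpha$ of the reflection hyperplane $\Cartan_\alpha$, and to identify this local picture with the rank-one situation $G_\alpha | V_\alpha$. First, I would choose a braid generator $\sigma_\alpha = \sigma_\alpha[\Gamma] \in B_W[\alpha]$ so that $\sigma_\alpha^\degalpha$ is represented, as a loop in $\Cartan^{reg}/W_\chi$, by the conjugating path $\Gamma$ followed by a small loop encircling the image of $c_\alpha$ exactly once; this is possible because $W_{\alpha,\chi} = \langle s_\alpha^\degalpha \rangle$ has order $n_\alpha / \degalpha$, so the $\degalpha$-fold iterate of $\Gamma_\alpha[c_{\alpha,1}]$ becomes a closed loop after quotienting by $W_\chi$.

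Next, I would use the root-space decomposition of Theorem \ref{thm-root-space-decomp}, $V = V_\alpha \oplus \bigoplus_{\beta \neq \alpha} \Lg_\beta \cdot \Cartan$, to set up a local product structure for the parametrized family $\cZ_\chi \to \bC$ (Section \ref{sec-monodromy-construct}) near a point lying over $c_\alpha$. The transverse summands $\Lg_\beta \cdot \Cartan$ with $\beta \neq \alpha$ have non-degenerate partial Hessians at $c_\alpha$ by Proposition \ref{prop-morse-atom}, so Morse-theoretically they contribute no vanishing cycles near $c_\alpha$; thus the local family should identify, up to a constant-sheaf factor coming from the transverse directions, with the analogous rank-one family $\cZ_{\alpha,\chi}$ for $G_\alpha | V_\alpha$. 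Concretely, I would compare the Cartesian diagram \eqref{diagram-V-tilde-chi} defining $\cZ_\chi$ with the rank-one diagram \eqref{diagram-V-tilde-alpha} via restriction along $V_\alpha \hookrightarrow V$, using the compatibility of characters $\hat\chi$ and $\hat\chi_\alpha^{\mathrm f}$ established in the proof of Proposition \ref{prop-mu-alpha}.

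Under this local identification, the restriction of $P_\chi$ to a small neighborhood near $c_\alpha$ is isomorphic to $P_{\chi_\alpha}$ boxed (with a suitable shift) with a constant sheaf in the transverse directions, and the operator $\mu(\sigma_\alpha^\degalpha)$ acts as $\mu_\alpha(\sigma^\degalpha) \boxtimes \on{id}$. Since boxing with the identity preserves minimal polynomials, this yields $\bar R_{\chi,\alpha}^\mu$ as the minimal polynomial of $\mu(\sigma_\alpha^\degalpha)$, proving the proposition. The main obstacle is making the local product structure rigorous at the level of nearby-cycle sheaves rather than just underlying varieties: one must verify that the $A_{F_2}$-condition of Proposition \ref{prop-A-F2} together with proper base change allows the nearby-cycle functor to commute with restriction to a transverse slice, and that the local identification is equivariant for both the $G_\alpha$-action and the monodromy of the family. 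As flagged in the introduction, I would carry this out following the pattern of the analogous argument in \cite{GVX}, rather than developing it from scratch.
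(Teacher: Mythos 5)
Your broad strategy — reduce the computation to the rank-one representation $G_\alpha | V_\alpha$ by localizing at a generic point $c_\alpha \in \Cartan_\alpha$ and using the root-space decomposition — is the right one and matches the paper's intent. But the execution differs from the paper in a way that matters, and as written, there are two concrete gaps.

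First, you do not clearly articulate how to pass from $P_\chi$, which lives on $X_0$, to anything happening near $c_\alpha$. Since $f(c_\alpha) = \bar c_\alpha \neq 0$, the point $c_\alpha$ is not in $X_0$, so the phrase ``the restriction of $P_\chi$ to a small neighborhood near $c_\alpha$'' doesn't type-check. The paper's first main idea is exactly to bridge this gap: it decomposes the specialization $\hat c_0 \to 0$ as $\hat c_0 \to \hat c_\alpha \to 0$, builds an intermediate nearby cycle sheaf $P_{\bar\gamma}$ on $X_{\bar c_\alpha}$ via a one-parameter arc through $c_\alpha$, and observes that the outer nearby cycle functor $\psi_{\bar c_\alpha}$ is exact and faithful. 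This exactness and faithfulness is what lets one transfer the minimal polynomial of $\mu(\sigma_\alpha^{\degalpha})$ on $P_\chi$ down to the minimal polynomial of the monodromy $\mu_{\bar\gamma}$ on $P_{\bar\gamma}$. Your ``small loop encircling the image of $c_\alpha$'' is the geometric ingredient, but you never state the functorial statement that makes the reduction to $X_{\bar c_\alpha}$ legitimate.

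Second, your claim that the local family factors as a box product $P_{\chi_\alpha} \boxtimes \mathrm{(constant)}$, justified via Morse non-degeneracy of the transverse Hessians (Proposition \ref{prop-morse-atom}), is stronger than what the paper establishes and stronger than what is needed. The paper does not prove a product decomposition. Instead, it uses that $V_\alpha \subset V$ is a normal slice to the $G$-orbit through $c_\alpha$, that the induced inclusion $J_1 : \widetilde V_{\bar\gamma_1} \to \widetilde V_{\bar\gamma}$ is transverse to $G$-orbits, and that every $G$-orbit meets the image of $J_1$ — so the properly shifted restriction functor $J_1^*$ is \emph{faithful}, which suffices to preserve minimal polynomials. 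This is a $G$-equivariant slice argument, not a Thom--Sebastiani splitting, and it requires no Hessian non-degeneracy at all. Moreover, Proposition \ref{prop-morse-atom} concerns the Hessian of a linear functional $l|_{G \cdot v}$ — it belongs to the Fourier-transform side of the story (it is used in Section \ref{sec-microlocal-generator} for Picard--Lefschetz classes and orientations), not to the nearby-cycle / monodromy-in-family computation. Invoking it here conflates the two kinds of vanishing cycles (those of the family $F_2$ versus those of the linear functional $l_0$). If you want to keep the box-tensor formulation, you would need to justify it separately; the cleaner route is to drop it and argue faithfulness of the slice restriction, as the paper does.
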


\begin{proof}
This is similar to the proofs of \cite[Propositions 6.14 $\&$ 7.7]{GVX1} and
\cite[Theorem 5.2]{Gr1}.  We briefly outline the argument.  Let $\sigma_\alpha =
\sigma_\alpha [\Gamma]$, with $\Gamma (0) = c_0$ and $\Gamma (1) =
c_{\alpha, 1}$, as in equation \eqref{eqn-sigma-alpha}.  Without loss of
generality, we can assume that $c_0 = c_{\alpha, 1}$ and the path $\Gamma$
is trivial.  Recall the point $c_\alpha \in \Cartan_\alpha^{reg}$ defined by
equation \eqref{eqn-c-alpha}.  The first main idea of the proof is to decompose
the process of specializing the regular value $\hat c_0 \in \Cartan^{reg} / W_\chi$
of $f_\chi$ to $0 \in \Cartan / W_\chi$ into two steps as follows:
\beq\label{eqn-schematic}
\hat c_0 \longrightarrow 0 \;\; \iff \;\;
\hat c_0 \longrightarrow \hat c_\alpha \longrightarrow 0,
\eeq
where $\hat c_\alpha$ is the image of $c_\alpha$ in $\Cartan / W_\chi$.

Equation \eqref{eqn-schematic} is meant to be schematic.  To make
the idea of this equation precise, let $\rD_\gamma = \{ z \in \bC \; | \;
|z| < 2 \}$, and let $\gamma : \rD_\gamma \to \Cartan$ be the analytic
arc defined by:
\beqn
\gamma (z) = c_\alpha + z \cdot (c_0 - c_\alpha).
\eeqn
Next, let $\rD_{\bar\gamma} = \{ z \in \bC \; | \; |z| < 2^{n_\alpha / \degalpha} \}$,
and let $\bar\gamma : \rD_{\bar\gamma} \to \Cartan / W_\chi$ be the unique
analytic arc such that:
\beqn
f_\chi \circ \gamma \, (z) = \bar\gamma (z^{n_\alpha / \degalpha})
\;\; \text{for every} \;\;
z \in \rD_\gamma \, .
\eeqn
We base change the family $f_\chi: \widetilde V_\chi \to \Cartan / W_\chi$ to
$\rD_{\bar\gamma}$, to obtain a family:
\beqn
f_{\bar\gamma} : \widetilde V_{\bar\gamma} =
\rD_{\bar\gamma} \times_{\Cartan / W_\chi} \widetilde V_\chi
\to \rD_{\bar\gamma} \, .
\eeqn
Let $\rD_{\bar\gamma}^* = \rD_{\bar\gamma} \cap \bC^*$, and note
that $\bar\gamma (\rD_{\bar\gamma}^*) \subset \Cartan^{reg} / W_\chi$
(this is a consequence of choosing $c_0 = c_{\alpha, 1}$ to be near
$c_\alpha \in \Cartan_\alpha$).
Let $\widetilde V_{\bar\gamma}^{rs} = f_{\bar\gamma}^{-1}
(\rD_{\bar\gamma}^*)$, note that we have a projection
$\widetilde V_{\bar\gamma}^{rs} \to \widetilde V_\chi^{rs}$, and let
$\hat \cL_{\chi, \bar\gamma}$ be the pull-back of $\hat \cL_\chi$ from
$\widetilde V_\chi^{rs}$ to $\widetilde V_{\bar\gamma}^{rs}$.  Let:
\beqn
\bar c_\alpha = f (c_\alpha), \;\;
X_{\bar c_\alpha} = f^{-1} (\bar c_\alpha) = f_\chi^{-1} (\hat c_\alpha) =
f_{\bar\gamma}^{-1} (0),
\eeqn
(cf. equation \eqref{eqn-bar-c-alpha}).  Form the nearby cycle sheaf:
\beqn
P_{\bar\gamma} = \psi_{f_{\bar\gamma}} (\hat \cL_{\chi, \bar\gamma} [-])
\in \on{Perv}_G (X_{\bar c_\alpha}),
\eeqn
which, as usual, we make perverse by an appropriate shift, and let
$\mu_{\bar\gamma} : P_{\bar\gamma} \to P_{\bar\gamma}$ be the
associated (counter-clockwise) monodromy.  The pair $(P_{\bar\gamma},
\mu_{\bar\gamma})$ encapsulates the first step of the specialization
process in the RHS of \eqref{eqn-schematic}.

For the second step, consider the functor:
\beq\label{eqn-psi-bar-c-s}
\psi_f [\bar c_\alpha] : \on{Perv}_G (X_{\bar c_\alpha}) \to \on{Perv}_G (X_0),
\eeq
defined in the same manner as the functor $\psi_f [\bar c_0]$ of
equation \eqref{eqn-P-chi}, now using the family:
\beqn
\cZ_{\bar c_\alpha} = \{ (x, k) \in V \times \bC \; | \; f (x) = k \, \bar c_\alpha \} 
\to \bC,
\eeqn
in place of the family \eqref{eqn-family-Z}.  We claim that:
\beq
\label{eqn-factorization}
\psi_f [\bar c_\alpha] (P_{\bar\gamma}) \cong P_\chi
\;\;\;\; \text{and} \;\;\;\;
\psi_f [\bar c_\alpha] (\mu_{\bar\gamma}) = \mu (\sigma_\alpha^\degalpha).
\eeq
A proof of this claim is analogous to the proof of \cite[Equation (6.4)]{GVX1}
(see proof of \cite[Proposition 6.14]{GVX1}).

Note that the functor $\psi_f [\bar c_\alpha]$ of equation \eqref{eqn-psi-bar-c-s}
is exact and faithful.  Therefore, in view of \eqref{eqn-factorization}, it suffices to
show that the minimal polynomial of $\mu_{\bar\gamma} \in \on{End}
(P_{\bar\gamma})$ is equal to $\bar R_{\chi, \alpha}^\mu$.  The second
main idea of the proof is to recall (see \eqref{eqn-normal-slice}) that
$V_\alpha \subset V$ is a normal slice to the orbit $G \cdot c_\alpha$
through $c_\alpha$, and therefore, the minimal polynomial of
$\mu_{\bar\gamma}$ can be computed by focusing on the intersection
$X_{\bar c_\alpha} \cap V_\alpha$ near the point $c_\alpha$.

To make the above precise, we reuse some of the construction of the proof of
Proposition \ref{prop-reg-split}.  Recall the fiber $X_{\breve c_\alpha, \alpha} =
f_\alpha^{-1} (f_\alpha (c_\alpha))$ of equation \eqref{eqn-breve-c-alpha},
which is the connected component of $X_{\bar c_\alpha} \cap V_\alpha$
containing $c_\alpha$.  By analogy with \eqref{eqn-transverse}, we have:
\beq\label{eqn-transverse-again}
\text{for every $x \in X_{\breve c_\alpha, \alpha}$, the intersection
$G \cdot x \cap V_\alpha$ is transverse at $x$.}
\eeq
Write $j : X_{\breve c_\alpha, \alpha} \to X_{\bar c_\alpha}$ for the inclusion
map.  By \eqref{eqn-transverse-again}, there is a well-defined perverse
(i.e., properly shifted) restriction functor:
\beqn
j^* : \on{Perv}_G (X_{\bar c_\alpha}) \to
\on{Perv}_{G_\alpha} (X_{\breve c_\alpha, \alpha}).
\eeqn
Since $G \cdot c_\alpha$ is the unique closed $G$-orbit in $X_{\bar c_\alpha}$,
the functor $j^*$ is faithful.

Recall the zero-fiber $X_{0, \alpha} = f_\alpha^{-1} (0)$ and note that parallel
translation by $c_\alpha$ takes $X_{0,\alpha}$ into $X_{\breve c_\alpha, \alpha}$,
establishing an equivalence of categories:
\beqn
T_{c_\alpha}^* :
\on{Perv}_{G_\alpha} (X_{\breve c_\alpha, \alpha})
\stackrel\simeq\longrightarrow
\on{Perv}_{G_\alpha} (X_{0, \alpha}).
\eeqn
By chasing the definitions, and using the transversality assertion
\eqref{eqn-transverse-again}, one can check that:
\beqn
T_{c_\alpha}^* \circ j^* \, (P_{\bar\gamma}) \cong P_{\chi^{}_\alpha}
\;\;\;\; \text{and} \;\;\;\;
T_{c_\alpha}^* \circ j^* \, (\mu_{\bar\gamma}) = \mu_\alpha (\sigma^\degalpha).
\eeqn
Since the composition $T_{c_\alpha}^* \circ j^*$ is faithful, the monodromy
transformations $\mu_{\bar\gamma}$ and $\mu_\alpha (\sigma^\degalpha)$
have the same minimal polynomial.
\end{proof}

By analogy with Proposition-Definition \ref{prop-def-R-chi-alpha} (ii) and
assertion \eqref{eqn-invariance-bar}, we have the following corollary of
Proposition \ref{prop-family-min-poly}.

\begin{corollary}\label{cor-invariance-mu}
For every $\alpha_1, \alpha_2 \in A$ and $w \in W_\chi$, with
$\alpha_2 = w \cdot \alpha_1$, we have $\bar R_{\chi, \alpha_1}^\mu =
\bar R_{\chi, \alpha_2}^\mu$.
\end{corollary}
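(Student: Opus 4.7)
The plan is to deduce the corollary directly from Proposition \ref{prop-family-min-poly} by exhibiting an inner-automorphism of the group $B_W^\chi$, given by conjugation by a suitable lift of $w$, that sends $\sigma_{\alpha_1}^{\degalpha[\alpha_1]}$ to a valid representative of the class that computes $\bar R^\mu_{\chi, \alpha_2}$. Applying the group homomorphism $\mu$ then realizes the two endomorphisms $\mu(\sigma_{\alpha_i}^{\degalpha[\alpha_i]}) \in \on{End}(P_\chi)$ as conjugate elements of $\on{Aut}(P_\chi)$, which forces their minimal polynomials to coincide.

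In detail, I would first check that $\degalpha[\alpha_1] = \degalpha[\alpha_2]$. Since $w \in W_\chi$, conjugation by $w$ carries $W_{\alpha_1}$ to $W_{\alpha_2}$ and preserves $W_\chi$, so it sends $W_{\alpha_1, \chi} = W_{\alpha_1} \cap W_\chi$ to $W_{\alpha_2, \chi} = W_{\alpha_2} \cap W_\chi$; hence $|W_{\alpha_1}| = |W_{\alpha_2}|$ and $|W_{\alpha_1, \chi}| = |W_{\alpha_2, \chi}|$, so the common value $\degalpha$ is well defined. Next, I would pick any $\sigma_1 \in B_W[\alpha_1]$ and any $b \in B_W$ with $p(b) = w$. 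Because $w \in W_\chi$, we have $b \in B_W^\chi$, and by assertion \eqref{eqn-conjugate-two} the element $\sigma_2 \coloneqq b \, \sigma_1 \, b^{-1}$ lies in $B_W[\alpha_2]$. Since $p(\sigma_i^{\degalpha}) = s_{\alpha_i}^{-\degalpha} \in W_{\alpha_i, \chi} \subset W_\chi$, both $\sigma_1^{\degalpha}$ and $\sigma_2^{\degalpha}$ belong to $B_W^\chi$, so the identity
\beqn
\sigma_2^{\degalpha} \; = \; b \, \sigma_1^{\degalpha} \, b^{-1}
\eeqn
is a genuine relation in $B_W^\chi$.

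Applying the group homomorphism $\mu : B_W^\chi \to \on{Aut}(P_\chi)$ of \eqref{eqn-mu}, we obtain
\beqn
\mu(\sigma_2^{\degalpha}) \; = \; \mu(b) \; \mu(\sigma_1^{\degalpha}) \; \mu(b)^{-1}
\eeqn
in $\on{Aut}(P_\chi)$. Consequently $\mu(\sigma_1^{\degalpha})$ and $\mu(\sigma_2^{\degalpha})$ have the same minimal polynomial in $\on{End}(P_\chi)$. By Proposition \ref{prop-family-min-poly} applied to $\alpha_1, \sigma_1$ and to $\alpha_2, \sigma_2$ respectively, these minimal polynomials are $\bar R^\mu_{\chi, \alpha_1}$ and $\bar R^\mu_{\chi, \alpha_2}$, giving the claimed equality.

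I do not anticipate a genuine obstacle; the only point requiring a little care is book-keeping of the relevant subgroups, namely confirming that every element to which $\mu$ is applied actually lies in the domain $B_W^\chi$. In particular, I am implicitly using that Proposition \ref{prop-family-min-poly} yields the same polynomial $\bar R^\mu_{\chi, \alpha}$ for every choice of $\sigma_\alpha \in B_W[\alpha]$, which follows from \eqref{eqn-conjugate} together with the observation $PB_W \subset B_W^\chi$, so that any two such choices differ by an inner automorphism of $B_W^\chi$.
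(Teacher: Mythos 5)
Your proof is correct and follows essentially the same route as the paper's: conjugate $\sigma_1^{e_{\alpha_1}}$ by a lift $b \in B_W^\chi$ of $w$, apply the homomorphism $\mu$ of \eqref{eqn-mu}, and invoke Proposition \ref{prop-family-min-poly}. The only cosmetic difference is that you take $\sigma_2 = b\,\sigma_1\,b^{-1}$ outright rather than an arbitrary element of $B_W[\alpha_2]$, thereby not needing \eqref{eqn-conjugate} directly.
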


\begin{proof}
Pick a pair of braid generators $\sigma_1 \in B_W [\alpha_1]$,
$\sigma_2 \in B_W [\alpha_2]$, and let $e = e_{\alpha_1} = e_{\alpha_2}$
(see assertion \eqref{eqn-assignment-degalpha}).  By
\eqref{eqn-conjugate} and \eqref{eqn-conjugate-two}, the elements
$\sigma_1^e, \sigma_2^e \in B_W^\chi$ are conjugate to each other.
The Corollary follows by Proposition \ref{prop-family-min-poly}.
\end{proof}

Write:
\beq\label{eqn-mu-morse}
\mu_{l_0} : B_W^\chi \to \on{Aut} (M_{l_0} (P_\chi)) \, ,
\eeq
for the action induced by the monodromy in the family \eqref{eqn-mu}.
Note that:
\beq\label{eqn-commute}
\text{the actions $\lambda_{l_0}$ and $\mu_{l_0}$ of equations
\eqref{eqn-lambda-grp} and \eqref{eqn-mu-morse} commute with
each other.}
\eeq
Let $W / W_\chi$ be the set of left cosets, and write $\barw = w \, W_\chi \in
W / W_\chi$ for $w \in W$.  For every $\barw \in W / W_\chi$, let:
\beq\label{eqn-M-l-zero-barw}
M_{l_0} (P_\chi) [\barw] \subset M_{l_0} (P_\chi),
\eeq
be the linear span of all Picard-Lefschetz classes $u = \PL [c, \gamma, o, a]
\in M_{l_0} (P_\chi)$, with $c = w_1 \, c_0 \in Z_{l_0}$ and $w_1 \in \barw$.

\begin{prop}\label{prop-decomp}
We have a direct sum decomposition:
\beq\label{eqn-decomp}
M_{l_0} (P_\chi) = \bigoplus_{\barw \in W / W_\chi} M_{l_0} (P_\chi) [\barw],
\eeq
which is invariant under the monodromy action $\mu_{l_0}$.  For every 
$\barw \in W / W_\chi$, we have:
\beqn
\dim M_{l_0} (P_\chi) [\barw] = |W_\chi|.
\eeqn
\end{prop}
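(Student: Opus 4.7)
The plan is to identify $M_{l_0}(P_\chi)[\barw]$ with the $I$-isotypic component of $M_{l_0}(P_\chi)$ on which $\lambda_{l_0}|_I$ acts by the character $(w \cdot \chi) \cdot \tau$, for any (equivalently, every) $w \in \barw$.  Once this identification is made, the direct sum decomposition is automatic (distinct $I$-isotypic components intersect trivially), the dimension count reduces to a matching with Corollary \ref{cor-dim-M}, and the $\mu_{l_0}$-invariance is immediate from the commutation recorded in \eqref{eqn-commute}.

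I would begin with the containment $M_{l_0}(P_\chi)[\barw] \subseteq M_{l_0}(P_\chi)_{(w \cdot \chi) \tau}$:  by Proposition \ref{prop-microlocal-inertia}, every Picard-Lefschetz class $PL[c, \gamma, o, a]$ with $c = w_1 c_0$ is a joint eigenvector for $\lambda_{l_0}|_I$ with eigencharacter $(w_1 \cdot \chi) \cdot \tau$.  Since $W_\chi = \on{Stab}_W(\chi)$, this character depends on $w_1$ only through the coset $\barw = w_1 W_\chi$, and distinct cosets yield distinct characters of $I$.  Consequently the subspaces $\{ M_{l_0}(P_\chi)[\barw] \}_{\barw \in W/W_\chi}$ lie in pairwise non-overlapping $I$-isotypic components, so their sum inside $M_{l_0}(P_\chi)$ is automatically an internal direct sum.

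For the dimension estimate, I would pick, for each $c \in Z_{l_0}$, a single Picard-Lefschetz class $e_c = PL[c, \gamma_c, o_c, a_c]$.  By \eqref{eqn-Z-l} and Corollary \ref{cor-dim-M}, both $|Z_{l_0}|$ and $\dim M_{l_0}(P_\chi)$ equal $|W|$, so it suffices to show that $\{ e_c \}_{c \in Z_{l_0}}$ is linearly independent, whence it is a basis.  Combining Lemma \ref{lemma-describe-morse-group} with the filtration of $H_d(X_{\bar c_0}, \{ \xi_{l_0} \geq \xi_0 \}; \cL_\chi)$ by the sublevel sets $\{ \xi_{l_0} \leq \lambda \}$, the Morse property \eqref{eqn-morse} gives a one-dimensional contribution to the associated graded at each critical value; since the total dimensions already agree, the filtration is strict and each $e_c$ has a non-zero component in the graded piece at level $\xi_{l_0}(c)$, yielding linear independence by the standard upper-triangular argument.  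Grouping the resulting basis by $W/W_\chi$-coset places $|W_\chi|$ basis vectors in each $M_{l_0}(P_\chi)[\barw]$, which, together with the inclusion of the previous paragraph, forces $\dim M_{l_0}(P_\chi)[\barw] = |W_\chi|$ and establishes \eqref{eqn-decomp}.

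The $\mu_{l_0}$-invariance follows at once from \eqref{eqn-commute}:  for every $b \in B_W^\chi$ the operator $\mu_{l_0}(b)$ commutes with $\lambda_{l_0}(x)$ for all $x \in I$, hence preserves each joint $I$-eigenspace, and in particular each $M_{l_0}(P_\chi)[\barw]$.  The main point requiring care is the linear-independence step in the previous paragraph, which depends on knowing that the sublevel filtration has no hidden cancellation.  This is a routine consequence of the generic covector property of $l_0$ (Proposition \ref{prop-A-f-stratification} (ii)-(iii)) together with the Morse property \eqref{eqn-morse}:  these together force the local contribution at each $c$ to be exactly one-dimensional, and matching the total with Corollary \ref{cor-dim-M} then rules out any vanishing in passing from the associated graded to $M_{l_0}(P_\chi)$.
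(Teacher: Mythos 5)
Your proof is correct and follows the same strategy as the paper's (very terse) proof: identify each $M_{l_0}(P_\chi)[\barw]$ with the $I$-isotypic component of $M_{l_0}(P_\chi)$ for the character $(w\cdot\chi)\cdot\tau$ via Proposition \ref{prop-microlocal-inertia}, and derive $\mu_{l_0}$-invariance from \eqref{eqn-commute}. The paper states the decomposition "is just the decomposition by the characters of $I$" and leaves the dimension count implicit; you have filled in the spanning/linear-independence step for the Picard--Lefschetz basis $\{e_c\}_{c\in Z_{l_0}}$, which is the same Morse-theoretic argument already used in the paper's proof of Corollary \ref{cor-dim-M}.

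Two small refinements would tighten your write-up. First, the filtration argument requires the paths $\gamma_c$ to be chosen compatibly with the sublevel filtration (for instance, running monotonically rightward in $\xi_{l_0}$), and a small perturbation of $l_0$ to ensure the real parts of the critical values are distinct; for an arbitrary system of paths the "upper-triangular" structure is not immediate. Second, an alternative and arguably cleaner route to the equal dimension count — which the paper may have had in mind — is to observe that the $\lambda_{l_0}$-action of $\widetilde B_W$ permutes the $I$-isotypic components transitively, since the induced action on characters factors through $W$ and is transitive on $\{(w\cdot\chi)\tau\}\cong W/W_\chi$; then all isotypic pieces have the same dimension, and $|W| = [W:W_\chi]\cdot\dim M_{l_0}(P_\chi)[\barw]$ forces $\dim = |W_\chi|$ without invoking the basis. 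Either way, the essential content of your argument matches the paper.
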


\begin{proof}
This follows from Corollary \ref{cor-dim-M}, Proposition
\ref{prop-microlocal-inertia}, and assertion \eqref{eqn-commute}.
Decomposition \eqref{eqn-decomp} is just the decomposition by the
characters of $I$, under the microlocal monodromy action $\lambda_{l_0}$
of equation \eqref{eqn-lambda-grp}.
\end{proof}

\begin{prop}\label{prop-degree-bound}
We have:
\beqn
\deg \bar R_{\chi, \alpha}^\mu \leq n_\alpha / \degalpha.
\eeqn
\end{prop}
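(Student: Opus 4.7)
The plan is to pass to the Morse group via the Fourier transform, then use the commuting actions $\lambda_{l_0}$ and $\mu_{l_0}$ together with a local analysis of critical points near $\Cartan_\alpha$.

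First, by Proposition \ref{prop-family-min-poly}, $\bar R^\mu_{\chi, \alpha}$ equals the minimal polynomial of $\mu(\sigma_\alpha^{e_\alpha}) \in \on{End}(P_\chi)$. The Fourier transform is a $G$-equivariant autoequivalence of perverse sheaves, and by Proposition \ref{prop-fourier} we have $\FT P_\chi \cong \IC((V^*)^{rs}, M(P_\chi))$. An endomorphism of such an intersection cohomology sheaf is rigidly determined by its restriction to the open stratum, and the minimal polynomial of an endomorphism of a local system on a connected base equals the minimal polynomial at any stalk. Hence $\bar R^\mu_{\chi, \alpha}$ coincides with the minimal polynomial of $T \coloneqq \mu_{l_0}(\sigma_\alpha^{e_\alpha}) \in \on{End}(M_{l_0}(P_\chi))$.

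Second, I would combine the cyclic vector of Proposition \ref{prop-cyclic-vector} with the commutativity \eqref{eqn-commute}. Since $u_0$ generates $M_{l_0}(P_\chi)$ as a $\lambda_{l_0}$-module and $T$ commutes with $\lambda_{l_0}$, for any polynomial $R$ and any $v = \lambda_{l_0}(x) u_0$ one has $R(T) v = \lambda_{l_0}(x) R(T) u_0$; therefore the minimal polynomial of $T$ on $M_{l_0}(P_\chi)$ equals its minimal polynomial on the $T$-cyclic subspace $\bC[T] u_0$, whose degree is $\dim \bC[T] u_0$. So it suffices to produce a $T$-invariant subspace of $M_{l_0}(P_\chi)$ of dimension at most $n_\alpha / e_\alpha$ containing $u_0$.

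Third, for each $w \in W$ pick a Picard-Lefschetz class $u_w = PL[w c_0, \gamma_w, o_w, a_w] \in M_{l_0}(P_\chi)$, normalized so that $u_1 = u_0$, and set
\[
U_\alpha \coloneqq \on{Span}_\bC \{ u_w : w \in W_{\alpha, \chi} \}.
\]
Then $u_0 \in U_\alpha$ and $\dim U_\alpha \leq |W_{\alpha, \chi}| = n_\alpha / e_\alpha$. I would show $U_\alpha$ is $T$-invariant by exhibiting it as the intersection of two $T$-stable subspaces: the $\chi \cdot \tau$-eigenspace of $\lambda_{l_0}(I)$, which by Proposition \ref{prop-microlocal-inertia} equals $\on{Span}\{u_w : w \in W_\chi\}$ and is $T$-stable since $T$ commutes with $\lambda_{l_0}|_I$; and the subspace $\on{Span}\{u_w : w \in W_\alpha\}$, whose intersection with the previous one is precisely $U_\alpha$.

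The main obstacle is to justify the $T$-invariance of $\on{Span}\{u_w : w \in W_\alpha\}$. This rests on the following geometric observation. Since $c_\alpha \in \Cartan_\alpha^{reg}$ lies on no other reflection hyperplane, for $\bar c$ near $\bar c_\alpha$ the $|W|$ critical points of $l_0|_{X_{\bar c}}$ partition into $|W|/|W_\alpha|$ well-separated clusters of $|W_\alpha|$ points each, indexed by the left cosets of $W_\alpha$ in $W$, with the cluster of $1$ being $W_\alpha$ itself. The nontrivial monodromy of $\sigma_\alpha^{e_\alpha}$ is confined to a small loop around $\bar c_\alpha$, conjugated by the transport along a path $\Gamma$; by choosing the Picard-Lefschetz paths $\gamma_w$ to be parallel-transported along $\Gamma$, one can ensure that no critical value of $l_0|_{X_{\bar c}}$ crosses any $\gamma_w$ during the $\Gamma$-portion, so all Picard-Lefschetz mixing occurs during the local loop. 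Since the clusters remain well-separated throughout that local loop, the mixing stays within each cluster, establishing the coset-preservation. Granting this, $\bC[T] u_0 \subseteq U_\alpha$, whence $\deg \bar R^\mu_{\chi, \alpha} = \dim \bC[T] u_0 \leq n_\alpha / e_\alpha$.
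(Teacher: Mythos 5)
Your argument follows the same route as the paper's: pass to the Morse group $M_{l_0}(P_\chi)$ via Propositions \ref{prop-family-min-poly} and \ref{prop-fourier}, use the cyclic vector $u_0$ and the commutativity \eqref{eqn-commute} to reduce the minimal-polynomial bound to a dimension bound on a $T$-invariant subspace containing $u_0$, and produce that subspace as the intersection of the $W_\chi$-coset piece from the inertia eigendecomposition (Propositions \ref{prop-microlocal-inertia}, \ref{prop-decomp}) with a ``$W_\alpha$-cluster'' piece. That is exactly the paper's $M_{l_0}(P_\chi)[\alpha,\barone\,]$.

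One small imprecision worth flagging: you assert that \emph{all} $|W|/|W_\alpha|$ clusters of critical values remain pairwise well-separated near $c_\alpha$. This is not guaranteed in general — two distinct cosets $w_1 W_\alpha$, $w_2 W_\alpha$ may well satisfy $\langle w_1 c_\alpha, c_\alpha\rangle = \langle w_2 c_\alpha, c_\alpha\rangle$ (for instance, when $W$ preserves a real structure, $\langle w c, c\rangle = \langle w^{-1} c, c\rangle$ for real $c$), so those two clusters collide. What you actually need, and what the paper proves precisely, is the weaker statement that the $W_\alpha$-cluster $Z_\alpha$ is separated from everything else: for $c_0$ near $c_\alpha$ and $l_0 = \nu(c_0)$, one has $\on{Re}\,l_0(c_1) > \on{Re}\,l_0(c_2)$ for all $c_1 \in Z_\alpha$, $c_2 \in Z_{l_0} - Z_\alpha$ (equation \eqref{eqn-horizonal-separation}), a consequence of the Kempf--Ness minimality of $c_\alpha$ and $c_\alpha \in \Cartan_\alpha^{reg}$. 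This half-plane separation, together with its parametrized version along $\Gamma_\alpha[c_{\alpha,1}]$, is what controls the mixing; it is encoded in the path condition \eqref{eqn-M-alpha} and Proposition \ref{prop-decomp} supplies the dimension count $\dim M_{l_0}(P_\chi)[\alpha,\barone\,] = |W_{\alpha,\chi}|$. With this correction your argument is sound.
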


\begin{proof}
Using Proposition \ref{prop-family-min-poly}, we interpret
$\bar R_{\chi, \alpha}^\mu$ as the minimal polynomial of
$\mu (\sigma_\alpha^\degalpha) \in \on{End} (P_\chi)$.
Next, using Proposition \ref{prop-fourier}, we interpret
$\bar R_{\chi, \alpha}^\mu$ as the minimal polynomial of
$\mu_{l_0} (\sigma_\alpha^\degalpha) \in \on{End} (M_{l_0} (P_\chi))$.
By Proposition \ref{prop-cyclic-vector} and assertion \eqref{eqn-commute},
it suffices to show that:
\beq\label{eqn-dim-bound}
\dim \bC [\mu_{l_0} (\sigma_\alpha^\degalpha)] \cdot u_0 \leq
n_\alpha / \degalpha.
\eeq

Let $\sigma_\alpha = \sigma_\alpha [\Gamma]$, with $\Gamma (0) = c_0$
and $\Gamma (1) = c_{\alpha, 1}$, as in equation \eqref{eqn-sigma-alpha}.
By moving the point $c_0$ along the path $\Gamma$, while maintaining
the relation $l_0 = \nu (c_0) \in (\Cartan^*)^{reg}$ (see equations
\eqref{eqn-l-zero}-\eqref{eqn-dual-cartan-reg}), we can reduce inequality
\eqref{eqn-dim-bound} to the case where $c_0 = c_{\alpha, 1}$ and the
path $\Gamma$ is trivial.  Proceeding with this assumption, write:
\beq\label{eqn-Z-alpha}
Z_\alpha = W_\alpha \cdot c_0 \subset Z_{l_0} = W \cdot c_0 \, ,
\eeq
(cf. equation \eqref{eqn-Z-l}).  By the choice of the basepoints
$c_0 \in \Cartan^{reg}$ and $l_0 \in (\Cartan^*)^{reg}$,
we have:
\beq\label{eqn-horizontal-separation}
\xi_{l_0} (\cOne) > \xi_{l_0} (\cTwo) \;\; \text{for every} \;\;
\cOne \in Z_\alpha \;\; \text{and} \;\; \cTwo \in Z_{l_0} - Z_\alpha \, ,
\eeq
where $\xi_{l_0} = \on{Re} (l_0) : V \to \bR$, as in Section \ref{sec-fourier}.
Let:
\beq\label{eqn-M-alpha-declare}
M_{l_0} (P_\chi) [\alpha] \subset M_{l_0} (P_\chi),
\eeq
be the linear span of all Picard-Lefschtez classes $u = \PL [\cOne, \gamma, o, a]$,
with $\cOne \in Z_\alpha$ and the path $\gamma$ satisfying:
\beq\label{eqn-M-alpha}
\xi_{l_0} \circ \gamma \, (t) > \xi_{l_0} (\cTwo) \;\; \text{for every} \;\;
t \in [0, 1] \;\; \text{and} \;\; \cTwo \in Z_{l_0} - Z_\alpha.
\eeq
Note that we have:
\beq\label{eqn-dim-M-P-chi-alpha}
\dim M_{l_0} (P_\chi) [\alpha] = |Z_\alpha| = |W_\alpha| = n_\alpha \, .
\eeq

Inequality \eqref{eqn-horizontal-separation} admits a parametrized version,
as the point $c_0$ traces out the path $\Gamma_\alpha [c_{\alpha, 1}] :
[0, 1] \to \Cartan^{reg}$ of equation \eqref{eqn-gamma-alpha}.  Namely,
for every $t \in [0, 1]$, let:
\beqn
Z_{l_0, t} = W \cdot \Gamma_\alpha [c_{\alpha, 1}] \, (t)
\;\;\;\; \text{and} \;\;\;\;
Z_{\alpha, t} = W_\alpha \cdot \Gamma_\alpha [c_{\alpha, 1}] \, (t).
\eeqn
Then we have:
\beqn
\xi_{l_0} (\cOne) > \xi_{l_0} (\cTwo) \;\; \text{for every} \;\;
\cOne \in Z_{\alpha, t} \;\; \text{and} \;\; \cTwo \in Z_{l_0, t} - Z_{\alpha, t} \, .
\eeqn
By a standard Picard-Lefschetz theory analysis, it follows that:
\beq\label{eqn-preserve-M-alpha}
\mu_{l_0} (\sigma_\alpha^\degalpha) \, (M_{l_0} (P_\chi) [\alpha]) =
M_{l_0} (P_\chi) [\alpha].
\eeq

For each $i \in \{ 0, \dots, \degalpha - 1 \}$, let $w_i = s_\alpha^i \in W_\alpha$,
consider the coset $\overline {w_i} = w_i \, W_\chi \in W / W_\chi$,
and form the intersection:
\beqn
M_{l_0} (P_\chi) [\alpha, i] = M_{l_0} (P_\chi) [\alpha] \cap
M_{l_0} (P_\chi) [\, \overline {w_i} \,].
\eeqn
Note that $W_\alpha \cap \, \overline {w_i} \, = |W_{\alpha, \chi}| =
n_\alpha / \degalpha$.  It follows that:
\beq\label{eqn-dim-M-P-chi-alpha-i}
\dim M_{l_0} (P_\chi) [\alpha, i] \geq n_\alpha / \degalpha \, .
\eeq
Proposition \ref{prop-decomp}, equations
\eqref{eqn-dim-M-P-chi-alpha}-\eqref{eqn-preserve-M-alpha},
and inequality \eqref{eqn-dim-M-P-chi-alpha-i} imply that:
\beqn
\dim M_{l_0} (P_\chi) [\alpha, i] = n_\alpha / \degalpha
\;\;\;\; \text{and} \;\;\;\;
\mu_{l_0} (\sigma_\alpha^\degalpha) \, (M_{l_0} (P_\chi) [\alpha, i]) =
M_{l_0} (P_\chi) [\alpha, i],
\eeqn
for every $i \in \{ 0, \dots, \degalpha - 1 \}$.  Since we have $u_0 \in
M_{l_0} (P_\chi) [\alpha, 0]$, inequality \eqref{eqn-dim-bound} follows.
\end{proof}

In the next section, we will prove the following sharpening of Proposition
\ref{prop-degree-bound}.

\begin{prop}\label{prop-R-mu-degree}
We have:
\beqn
\deg \bar R_{\chi, \alpha}^\mu = n_\alpha / \degalpha.
\eeqn
\end{prop}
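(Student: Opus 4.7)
The plan is to reduce to the rank one representation $G_\alpha | V_\alpha$ and then invoke the carousel analysis of Section \ref{sec-carousel}. Since Proposition \ref{prop-degree-bound} already supplies $\deg \bar R_{\chi,\alpha}^\mu \leq n_\alpha / \degalpha$, only the matching lower bound is at stake. By Proposition \ref{prop-family-min-poly}, $\bar R_{\chi,\alpha}^\mu$ is the minimal polynomial of $\mu_\alpha(\sigma^{\degalpha}) \in \on{End}(P_{\chi_\alpha})$, and applying Proposition \ref{prop-fourier} to the rank one representation, the induced operator $\mu_{l_{0,\alpha}}(\sigma^{\degalpha})$ on the Morse group $M_{l_{0,\alpha}}(P_{\chi_\alpha})$ has a minimal polynomial dividing $\bar R_{\chi,\alpha}^\mu$. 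So it suffices to produce a vector in the rank one Morse group whose $\mu_{l_{0,\alpha}}(\sigma^{\degalpha})$-orbit spans a subspace of dimension at least $n_\alpha / \degalpha$.

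The main step is the carousel identification. Geometrically, in rank one the map $f_\alpha : V_\alpha \to Q_\alpha \cong \bC$ has a single critical value at $0$, and as $\bar c$ rotates once around $0$ the $n_\alpha$ critical points of $l_{0,\alpha}|_{X_{\bar c,\alpha}}$, forming a single $W_\alpha$-orbit, permute cyclically -- this is the carousel. Tracking Picard-Lefschetz classes through this rotation and combining with the sign and character twists of Proposition \ref{prop-microlocal-inertia}, the operator $\mu_{l_{0,\alpha}}(\sigma)$ should be conjugate, via the cyclic identification of critical points together with a twist by $\chi_\alpha \cdot \tau$, to the microlocal monodromy $\lambda_{l_0,\alpha} \circ \tilde r [\sigma_\alpha] \, (\sigma)$. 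Raising to the $\degalpha$-th power and invoking Proposition \ref{prop-R-alpha-vanishing}, the minimal polynomial of $\mu_{l_{0,\alpha}}(\sigma^{\degalpha})$ restricted to a suitable $W_{\alpha,\chi}$-isotypic summand in Proposition \ref{prop-decomp} then has degree exactly $n_\alpha / \degalpha$, yielding the required lower bound.

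The main obstacle is making the carousel identification rigorous: one must compare two deformations of the same configuration of critical values of $l_{0,\alpha}|_{X_{\bar c,\alpha}}$, one with $l$ fixed and $\bar c$ rotating, the other with $\bar c$ fixed and $l$ varying along the braid generator $\sigma_\alpha$, and show that after composing with the natural cyclic identification of critical points these produce conjugate operators on the Picard-Lefschetz basis, up to the expected sign and character corrections coming from $\tau$ and $\hat\chi_\alpha$. This comparison is essentially the content of the rank one carousel construction of Section \ref{sec-carousel}, and once it is established the inequality $\deg \bar R_{\chi,\alpha}^\mu \geq n_\alpha/\degalpha$ follows at once.
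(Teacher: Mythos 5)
Your overall plan --- reduce to the rank one representation $G_\alpha\,|\,V_\alpha$, compare the two group actions on the Morse group via the carousel, and combine a lower bound with the upper bound from Proposition \ref{prop-degree-bound} --- matches the structure of the paper's argument. But the details of the lower bound step are off, in two ways that matter.

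First, the relation coming from the carousel (Proposition \ref{prop-carousel}) is not a conjugacy. It is the pointwise equality $\mu_{l_{0,\alpha}}(\sigma^{\degalpha}) = k_\alpha \cdot \lambda_{l_{0,\alpha}} \circ \tilde r[\sigma_\alpha](\sigma^{-\degalpha})$, with an \emph{inverse} power of $\sigma$ and a scalar $k_\alpha \in \{\pm 1\}$. This is why the paper needs the involution $\InvTheta$ on $\cR$ in equation \eqref{eqn-apply-theta}: the minimal polynomial of an inverse is $\InvTheta R$, not $R$. Degrees are preserved, so the conclusion $\deg \bar R^\mu_{\chi,\alpha} = \deg \bar R_{\chi,\alpha}$ still holds, but the precise form of the relation matters later in the argument, and phrasing it as a conjugacy obscures why $\InvTheta$ is needed.

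Second, and more substantively, you invoke Proposition \ref{prop-R-alpha-vanishing} to supply the lower bound, but in the paper Propositions \ref{prop-R-alpha-vanishing} and \ref{prop-R-mu-degree} are proved \emph{simultaneously}, with the upper bound from Proposition \ref{prop-degree-bound} feeding into \emph{both}. So citing \ref{prop-R-alpha-vanishing} here is circular unless you give an independent proof of it, which you do not. The genuine source of the lower bound is Proposition \ref{prop-monic}, which says $\deg R_{\chi,\alpha} = n_\alpha = \dim M_{l_{0,\alpha}}(P_{\chi_\alpha})$ (a consequence of the cyclic vector $u_{0,\alpha}$ produced by the carousel construction, Lemma \ref{lemma-carousel}). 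From $\deg R_{\chi,\alpha} = n_\alpha$ and the divisibility $R_{\chi,\alpha}(z) \mid \bar R_{\chi,\alpha}(z^{\degalpha})$ one immediately gets $\deg \bar R_{\chi,\alpha} \geq n_\alpha/\degalpha$, and then the carousel degree-equality converts this into $\deg \bar R^\mu_{\chi,\alpha} \geq n_\alpha/\degalpha$. Your first paragraph already gestures toward the right idea (``produce a vector whose orbit spans a subspace of dimension at least $n_\alpha/\degalpha$''), which is exactly what the cyclic vector in Lemma \ref{lemma-carousel} does, but your second paragraph then swaps it out for \ref{prop-R-alpha-vanishing}. Also, the reference to ``a suitable $W_{\alpha,\chi}$-isotypic summand in Proposition \ref{prop-decomp}'' is a red herring: that decomposition lives on the Morse group $M_{l_0}(P_\chi)$ for the \emph{full} representation $G\,|\,V$, not the rank one one, and plays no role in this step.
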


\section{Rank one representations and the simple carousel}
\label{sec-carousel}

In this section, we prove Propositions \ref{prop-monic},
\ref{prop-R-chi-alpha-at-zero}, \ref{prop-R-alpha-vanishing}, and
\ref{prop-R-mu-degree}.  We also state and prove Proposition
\ref{prop-carousel}, relating the actions of the microlocal monodromy
and the monodromy in the family for the rank one representation
$G_\alpha | V_\alpha$, $\alpha \in A$.

Fix an $\alpha \in A$.  Recall the microlocal monodromy action:
\beqn
\lambda_{l_{0, \alpha}} : \widetilde B_{W_\alpha} \to 
\on{Aut} (M_{l_{0, \alpha}} (P_{\chi^{}_\alpha})),
\eeqn
of equation \eqref{eqn-lambda-grp-rank-one}.  Write:
\beq\label{eqn-mu-l-zero-alpha}
\mu_{l_{0, \alpha}} : B_{W_\alpha}^\chi \to \on{Aut} (M_{l_{0, \alpha}}
(P_{\chi^{}_\alpha})),
\eeq
for the action induced by the monodromy in the family $\mu_\alpha$ of
Proposition \ref{prop-mu-alpha}.  Also, recall that a braid generator
$\sigma_\alpha \in B_W [\alpha]$ gives rise to a splitting homomorphism
$\tilde r [\sigma_\alpha] : B_{W_\alpha} \to \widetilde B_{W_\alpha}$,
as in \eqref{eqn-tilde-r-sigma-alpha}-\eqref{eqn-r-sigma-alpha}.

\begin{prop}\label{prop-carousel}
There exists a sign $k_\alpha \in \{ \pm 1 \}$, such that for every
$\sigma_\alpha \in B_W [\alpha]$, we have:
\beq\label{eqn-prop-carousel}
\mu_{l_{0, \alpha}} (\sigma^{\degalpha}) = k_\alpha \, \cdot \,
\lambda_{l_{0, \alpha}} \circ \, \tilde r [\sigma_\alpha] \, (\sigma^{-\degalpha})
\in \on{Aut} (M_{l_{0, \alpha}} (P_{\chi^{}_\alpha})).
\eeq
\end{prop}

The appearance of the inverse in the RHS of \eqref{eqn-prop-carousel}
is related to Remark \ref{rmk-reversal}.  The proofs of Propositions
\ref{prop-monic}, \ref{prop-R-chi-alpha-at-zero}, \ref{prop-R-alpha-vanishing},
\ref{prop-R-mu-degree}, \ref{prop-carousel} use a variant of the carousel
technique of L\^e from singularity theory (see Remark \ref{rmk-carousel}).
For all five propositions, we can assume, without loss of generality, that the
basepoint $c_0 \in \Cartan^{reg}$ is located near the hyperplane
$\Cartan_\alpha \subset \Cartan$.  More precisely, we pick points
$c_\alpha \in \Cartan_\alpha^{reg}$ and $c_{\alpha, 1} \in \Cartan^{reg}$
as in equation \eqref{eqn-c-alpha}, and we assume that:
\beq\label{eqn-assume-c-zero}
c_0 = c_{\alpha, 1} \, .
\eeq
Recall that we write $\breve c_0 = f_\alpha (c_0) \in Q_\alpha^{reg}$ and
$X_{\breve c_0, \alpha} = f_\alpha^{-1} (\breve c_0)$ (see equation
\eqref{eqn-X-Q-alpha}).  Let $Z_\alpha = W_\alpha \cdot c_0$ be the set of
critical points of the restriction $l_{0, \alpha} |_{X_{\breve c_0, \alpha}}$ (as
in equation \eqref{eqn-Z-alpha}).  The critical values $l_{0, \alpha} (Z_\alpha)
\subset \bC$ appear in the complex plane as the vertices of a regular
$n_\alpha$-gon, centered on $l_{0, \alpha} (c_\alpha) \in \bC$ (see equation
\eqref{eqn-c-alpha}).  The term ``carousel'', in the present instance, refers to
the collective movement of these critical values, as we vary the basepoints
$\breve c_0 \in Q_\alpha^{reg}$ or $l_{0, \alpha} \in (V_\alpha^*)^{rs}$.

\begin{remark}\label{rmk-carousel}{\em
The carousel technique was introduced in \cite{Le} and further developed in
\cite{Ti}.  See also \cite{Mas} and its references.  We point out one distinction
between the arguments in \cite{Le} and in this subsection.  In \cite{Le},
the focus is on the homology of the Milnor fiber $F_{f, 0}$ of a polynomial $f$
at the origin, relative to the intersection of $F_{f, 0}$ with a generic hyperplane
passing through the origin.  In this subsection, the focus is on the homology of
a Milnor fiber at the origin, relative to the intersection with a generic hyperplane
passing near, but not through, the origin.  Under suitable conditions, these two
relative homology groups are isomorphic to each other.  However, monodromy
in the family acts very differently on the two, and only the latter is functorial in
the nearby cycles of $f$.  A special case of the argument of this section can be
found in \cite[Section 7]{BG}.}
\end{remark}

Recall that we write $d_\alpha = \dim X_{\breve c_0, \alpha}$
(see \eqref{eqn-d-alpha}), and recall the rank one $G_\alpha$-equivariant local
system $\cL_{\chi^{}_\alpha}$ on $X_{\breve c_0, \alpha}$, corresponding to the
character $\chi_\alpha \in \hat I_\alpha$ (see Section \ref{subsec-min-poly}).
By Lemma \ref{lemma-describe-morse-group}, applied to the representation
$G_\alpha | V_\alpha$, we have:
\beq\label{eqn-describe-morse-group-rank-one}
M_{l_{0, \alpha}} (P_{\chi^{}_\alpha}) \cong
H_{d_\alpha} (X_{\breve c_0, \alpha}, \{ x \in X_{\breve c_0, \alpha} \; | \;
\xi_{l_0} (x) \geq \xi_0 \}; \cL_{\chi^{}_\alpha}),
\eeq
where $\xi_0$ is any real number with $\xi_0 > l_{0, \alpha} (c_0) =
\langle c_0, c_0 \rangle$ (see equation \eqref{eqn-l-zero}).  
Pick a braid generator:
\beq\label{eqn-fix-sigma-alpha}
\sigma_\alpha \in B_W [\alpha].
\eeq
The generator $\sigma_\alpha$ need not be the obvious
one, given by the path $\Gamma_\alpha [c_{\alpha, 1}]$ of equation
\eqref{eqn-gamma-alpha}.
We will now use isomorphism \eqref{eqn-describe-morse-group-rank-one} to
construct a collection of Picard-Lefschetz classes:
\beq\label{eqn-classes-u-j-alpha}
\{ u_{j, \alpha} \}_{j \in J} \subset M_{l_{0, \alpha}} (P_{\chi^{}_\alpha})
\;\;\;\; \text{for} \;\;\;\; J = \{ 0, \dots n_\alpha \}.
\eeq
The collection $\{ u_{j, \alpha} \}$ will depend on the braid generator
$\sigma_\alpha$, but only through the image $r (\sigma_\alpha) \in 
\widetilde W$.

To begin, recall the data $[c_0, \xi_0, \gamma_0, o_0, a_0]$ used to define
the class $u_0 \in M_{l_0} (P_\chi)$ of equation \eqref{eqn-u-zero}.  Note that
we have $\gamma'_0 (0) > 0$ and $T_{c_0} X_{\breve c_0, \alpha} =
\Lg_\alpha \cdot c_0 \subset V_\alpha$.  Define a real subspace:
\beq\label{eqn-T-plus-alpha-0}
T_{+, \alpha} [c_0, \gamma_0] \subset \Lg_\alpha \cdot c_0 \, ,
\eeq
by analogy with \eqref{eqn-T-plus}, and note that it is just the positive eigenspace
of the partial Hessian $\cH_\alpha [c_0, l_0]$ of equation \eqref{eqn-H-alpha}.
Pick an orientation $o_{0, \alpha}$ of $T_{+, \alpha} [c_0, \gamma_0]$.
We define:
\beq\label{eqn-u-zero-alpha}
u_{0, \alpha} = \PL [c_0, \gamma_0, o_{0, \alpha}, a_0] \in
M_{l_{0, \alpha}} (P_{\chi^{}_\alpha}),
\eeq
by analogy with equation \eqref{eqn-u-zero}.

Next, for $j \in J - \{ 0 \}$, define:
\beq\label{eqn-c-j}
c_j = s_\alpha^j \, c_0 \in Z_\alpha \, .
\eeq
Note that $c_{n_\alpha} = c_0$.  For each $j \in J - \{ 0 \}$, let $\gamma_j :
[0, 1] \to \bC$ be a smooth path with:
\begin{flalign*}
& \;\;\;\;\;\;\;\; \text{(Q1)} \;\;\;\;
\gamma_j (0) = l_{0, \alpha} (c_j); & \\
& \;\;\;\;\;\;\;\; \text{(Q2)} \;\;\;\;
\gamma_j (1) = \xi_0; & \\
& \;\;\;\;\;\;\;\; \text{(Q3)} \;\;\;\;
\gamma'_j (0) = \gamma_j (0) - l_{0, \alpha} (c_\alpha) =
l_{0, \alpha} (c_j - c_\alpha); & \\
& \;\;\;\;\;\;\;\; \text{(Q4)} \;\;\;\;
\on{Re} (\gamma'_j (t) / (\gamma_j (t) - l_{0, \alpha} (c_\alpha))) > 0
\text{ for all } t \in (0, 1]; & \\
& \;\;\;\;\;\;\;\; \text{(Q5)} \;\;\;\;
\on{Im} (\gamma'_j (t) / (\gamma_j (t) - l_{0, \alpha} (c_\alpha))) < 0
\text{ for all } t \in (0, 1], & \\
& \;\;\;\;\;\;\;\;\;\;\;\;\;\;\;\;\;\;\;\;
\text {i.e., $\gamma_j (t)$ moves clockwise around the center
$l_{0, \alpha} (c_\alpha) \in \bC$}; & \\
& \;\;\;\;\;\;\;\; \text{(Q6)} \;\;\;\;
\int_0^1 \on{Im} (\gamma'_j (t) /
(\gamma_j (t) - l_{0, \alpha} (c_\alpha))) \, d t = - 2 \pi \, j / n_\alpha \, . &
\end{flalign*}

\begin{figure}
\begin{tikzpicture}
\filldraw[black] (0, 0) circle (2pt) node[anchor=north] {$l_{0, \alpha} (c_\alpha)$};
\filldraw[black] (3, 0) circle (2pt) node[anchor=south] {$l_{0, \alpha} (c_0)$};
\filldraw[black] (3*0.3090, 3*0.9510) circle (2pt) node[anchor=north]
{$l_{0, \alpha} (c_1)$};
\filldraw[black] (-3*0.8090, 3*0.5877) circle (2pt) node[anchor=north]
{$l_{0, \alpha} (c_2)$};
\filldraw[black] (-3*0.8090, -3*0.5877) circle (2pt) node[anchor=south]
{$l_{0, \alpha} (c_3)$};
\filldraw[black] (3*0.3090, -3*0.9510) circle (2pt) node[anchor=south]
{$l_{0, \alpha} (c_4)$};
\filldraw[black] (10, 0) circle (2pt) node[anchor=north] {$\xi_0$};

\draw (3, 0) to (10, 0);
\node at (5, -0.25) {$\gamma_0$};

\draw (3*0.3090, 3*0.9510) to [out = 72,
in = 140] (4, 1.5) to [out = -40,
in = 180] (10, 0);
\node at (3.2, 1.8) {$\gamma_1$};

\draw (-3*0.8090, 3*0.5877) to [out = 144,
in = -156] (-4*0.3090, 4*0.9510) to [out = 24,
in = -192] (4.2*0.3090, 4.2*0.9510) to [out = -12,
in = 140] (1.1*4, 1.1*1.5) to [out = -40,
in = 180] (10, 0);
\node at (-1, 3.5) {$\gamma_2$};

\draw (-3*0.8090, -3*0.5877) to [out = -144,
in = -84] (-4, 0) to [out = 96,
in = -120] (-4.2*0.8090, 4.2*0.5877) to [out = 60,
in = -192] (4.6*0.3090, 4.6*0.9510) to [out = -12,
in = 140] (1.2*4, 1.2*1.5) to [out = -40,
in = 180] (10, 0);
\node at (-3.6, 0) {$\gamma_3$};

\draw (3*0.3090, -3*0.9510) to [out = -72,
in = -12] (-4*0.3090, -4*0.9510) to [out = 168,
in = -48] (-4.2*0.8090, -4.2*0.5877) to [out = 132,
in = -120] (-4.6*0.8090, 4.6*0.5877) to [out = 60,
in = -192] (5.0*0.3090, 5.0*0.9510) to [out = -12,
in = 140] (1.3*4, 1.3*1.5) to [out = -40,
in = 180] (10, 0);
\node at (-1.4, -3.4) {$\gamma_4$};

\draw (3, 0) to [out = 0,
in = 60] (4*0.8090, -4*0.5877) to [out = -120,
in = 24] (4.2*0.3090, -4.2*0.9510) to [out = -156,
in = -48] (-4.6*0.8090, -4.6*0.5877) to [out = 132,
in = -120] (-5*0.8090, 5*0.5877) to [out = 60,
in = -192] (5.4*0.3090, 5.4*0.9510) to [out = -12,
in = 140] (1.4*4, 1.4*1.5) to [out = -40,
in = 180] (10, 0);
\node at (3, -2.1) {$\gamma_5$};

\draw (3, 0) to [out = 0,
in = 80] (3.5, -0.5) to [out = -100,
in = 10] (3, -1) to [out = 190,
in = -90] (1.8, 0) to [out = 90,
in = 180] (3, 1.2) to [out = 0,
in = 162] (5, 0.5) to [out = -18,
in = 180] (10, 0);
\node at (1.9, 1) {$\widetilde\gamma_5$};
\end{tikzpicture}
\vspace{.1in}
\begin{center}
Figure 1: The paths $\gamma_0, \dots, \gamma_5, \widetilde\gamma_5$ for
$n_\alpha = 5$.
\end{center}
\end{figure}

Figure 1 illustrates conditions (Q1)-(Q6) in the case $n_\alpha = 5$.  The path
$\widetilde\gamma_{n_\alpha}$ (shown here as $\widetilde\gamma_5$)
will be introduced in the proof of Proposition \ref{prop-R-chi-alpha-at-zero}.
Note that conditions (Q1)-(Q6) ensure that:
\beq\label{eqn-Q1-Q6-P1-P5}
\begin{gathered}
\text{the path $\gamma_j$ satisfies conditions (P1)-(P5) of
Section \ref{sec-fourier} for the representation} \\
\text{$G_\alpha | V_\alpha$,
the covector $l_{0, \alpha} \in (V_\alpha^*)^{rs}$,
and the critical point $c_j \in Z_\alpha$.}
\end{gathered}
\eeq
Moreover, conditions (Q1)-(Q6) determine the path $\gamma_j$ uniquely up
to homotopy within the class of all smooth paths satisfying conditions (P1)-(P5).

For each $j \in J$, we consider the tangent space:
\beqn
T_{c_j} X_{\breve c_0, \alpha} = \Lg_\alpha \cdot c_j =
\Lg_\alpha \cdot \Cartan,
\eeqn
the partial Hessian:
\beqn
\cH_\alpha [c_j, l_0] \in Sym^2 ((\Lg_\alpha \cdot \Cartan)^*),
\eeqn
(see equation \eqref{eqn-H-alpha}), and the positive eigenspace:
\beq\label{eqn-T-plus-alpha}
T_{+, \alpha} [c_j] = T_+ [c_j, \gamma_j] \subset \Lg_\alpha \cdot \Cartan,
\eeq
of $\gamma'_j (0)^{-1} \cdot \cH_\alpha [c_j, l_0]$ (cf. equation
\eqref{eqn-T-plus}).  Note that, for $j = 0$, the notation of
\eqref{eqn-T-plus-alpha} is consistent with the notation of
\eqref{eqn-T-plus-alpha-0}.

\begin{lemma}\label{lemma-carousel-hessians}
$\;$
\begin{enumerate}[topsep=-1.5ex]
\item[(i)]    For every $j \in J -\{ 0 \}$, we have:
\beqn
\cH_\alpha [c_j, l_0] = \exp (-2 \pi \bi \, j / n_\alpha) \cdot \cH_\alpha [c_0, l_0]
\;\;\;\; \text{and} \;\;\;\;
T_{+, \alpha} [c_j] = \exp (2 \pi \bi \, j / n_\alpha) \cdot T_{+, \alpha} [c_0].
\eeqn

\item[(ii)]   For every $j \in J$ and every $g \in N_K (\Cartan)$, representing
the element $s_\alpha^j \in W$, we have:
\beqn
g \cdot T_{+, \alpha} [c_0] = T_{+, \alpha} [c_j].
\eeqn
\end{enumerate}
\end{lemma}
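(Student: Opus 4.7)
The plan is to prove both parts by direct computation, built on the key identity $c_j = c_\alpha + \lambda_j (c_0 - c_\alpha)$ with $\lambda_j = \exp(2\pi\bi j/n_\alpha)$.  This holds because $s_\alpha$ fixes $\Cartan_\alpha \ni c_\alpha$ pointwise and acts on the transverse line $\bC \cdot (c_0 - c_\alpha) \subset \Cartan$ by the primitive $n_\alpha$-th root of unity $\lambda_1$.  Since $\Lg_\alpha = Z_\Lg (\Cartan_\alpha)$ annihilates $c_\alpha$, we get $x \cdot c_j = \lambda_j (x \cdot c_0)$ for every $x \in \Lg_\alpha$; in particular $\Lg_\alpha \cdot c_j$ and $\Lg_\alpha \cdot c_0$ coincide as a common subspace $T \subset V$, with the two natural surjections $\Lg_\alpha \twoheadrightarrow T$ differing by the scalar $\lambda_j$.

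For part (i), I apply Lemma \ref{lemma-hess}: writing $v \in T$ as $v = x \cdot c_0 = \lambda_j^{-1}(x \cdot c_j)$, we get $\cH_\alpha[c_j, l_0](v, v) = l_0\bigl((\lambda_j^{-1} x) \cdot v\bigr) = \lambda_j^{-1} \cH_\alpha[c_0, l_0](v, v)$, which is the first formula.  For the $T_+$ assertion, I compute $\gamma_j'(0) = l_0(c_j - c_\alpha) = \lambda_j \cdot l_0(c_0 - c_\alpha) = \lambda_j k$, and check that $k$ is a positive real number.  Indeed $k = \langle c_0, c_0\rangle - \langle c_\alpha, c_0\rangle$; using $c_\alpha = \frac{1}{n_\alpha}\sum_{w \in W_\alpha} w \cdot c_0$ together with the $N_K(\Cartan)$-invariance of $\langle\cdot,\cdot\rangle$, one computes $\langle c_\alpha, c_0\rangle = \langle c_\alpha, c_\alpha\rangle$, so $k = \langle c_0, c_0\rangle - \langle c_\alpha, c_\alpha\rangle > 0$ (strict by Jensen, since $c_0 \notin \Cartan_\alpha$).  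Hence $\gamma_j'(0)^{-1} \cH_\alpha[c_j, l_0] = \lambda_j^{-2} k^{-1} \cH_\alpha[c_0, l_0]$, and the identity $\on{Re}\bigl(\lambda_j^{-2} \cH_\alpha[c_0, l_0](\lambda_j v, \lambda_j v)\bigr) = \on{Re}\, \cH_\alpha[c_0, l_0](v, v)$ exhibits multiplication by $\lambda_j$ on $T$ as a bijection $T_{+,\alpha}[c_0] \to T_{+,\alpha}[c_j]$.

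For part (ii), $K$-equivariance of the Hessian gives $\cH_\alpha[c_j, \nu(c_j)](gv, gw) = \cH_\alpha[c_0, l_0](v, w)$, since $g \cdot l_0 = \nu(g \cdot c_0) = \nu(c_j)$.  Hence $g \cdot T_{+,\alpha}[c_0]$ is the positive eigenspace of $\on{Re}\, \cH_\alpha[c_j, \nu(c_j)]$, whereas part (i) identifies $T_{+,\alpha}[c_j]$ as the positive eigenspace of $\on{Re}(\lambda_j^{-2} \cH_\alpha[c_0, l_0])$ on $T$.  Expanding $l_0 - \nu(c_j) = (1 - \lambda_j^{-1}) \nu(c_0 - c_\alpha)$ and applying Lemma \ref{lemma-hess}, one finds that $\cH_\alpha[c_j, l_0]$ and $\cH_\alpha[c_j, \nu(c_j)]$ differ by a scalar multiple of the auxiliary bilinear form $P(x_1, x_2) \coloneqq \langle x_1 x_2 \cdot c_0, c_\alpha\rangle$ on $T$.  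The technical heart of the argument — and the main obstacle — is to show $P \equiv 0$; the strategy is to pick a lift $h \in N_K(\Cartan)$ of $s_\alpha$, decompose $\Lg_\alpha / \Lm$ into $\on{Ad}_h$-eigenspaces, use the $h$-invariance of $\langle\cdot,\cdot\rangle$ to confine $P$ to the graded piece indexed by pairs of eigenvalues $(\lambda^k, \lambda^l)$ with $k + l \equiv -1 \pmod{n_\alpha}$, and then exploit the structural fact — verified directly in the rank-one representation $G_\alpha \mid V_\alpha$ — that $\Lg_\alpha^2 \cdot c_0$ meets this graded piece only in elements whose $\Cartan_\alpha$-component vanishes.
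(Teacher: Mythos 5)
Part (i) of your proposal is correct and complete. Your direct computation via Lemma \ref{lemma-hess}, built on the identity $c_j - c_\alpha = \exp(2\pi\bi\, j/n_\alpha)(c_0 - c_\alpha)$, is equivalent to the paper's appeal to the degree-$n_\alpha$ homogeneity of the quotient map on $\bar V_\alpha = \Cartan_\alpha^\perp \oplus \Lg_\alpha\cdot\Cartan$, and your verification that $k = \langle c_0, c_0\rangle - \langle c_\alpha, c_\alpha\rangle > 0$ correctly normalizes $\gamma_j'(0)$.

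Part (ii), however, has a genuine gap. You correctly reduce the claim to showing $P(x_1, x_2) = \langle x_1 x_2\cdot c_0, c_\alpha\rangle = 0$ on $\Lg_\alpha$, but you do not prove this, offering only a sketch via $\on{Ad}_h$-grading whose crucial last step (the ``structural fact'' about the graded piece of degree $-1$ modulo $n_\alpha$) is left unverified. In fact the vanishing is a one-liner requiring none of that machinery: for $x_1 \in \Lk_\alpha = \Lk\cap\Lg_\alpha$, the $K$-invariance of $\langle\;,\,\rangle$ and the relation $x_1\cdot c_\alpha = 0$ (valid since $\Lg_\alpha = Z_\Lg(\Cartan_\alpha)$) give $\langle x_1(x_2\cdot c_0), c_\alpha\rangle = -\langle x_2\cdot c_0, x_1\cdot c_\alpha\rangle = 0$, and the general case follows from $\Lg_\alpha = \Lk_\alpha\otimes_\bR\bC$ (equation \eqref{eqn-Lg-alpha-Lk-alpha}). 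Conceptually, $\Lg_\alpha\cdot(\Lg_\alpha\cdot c_0)$ lies in the $\Lg_\alpha$-stable subspace $\bar V_\alpha$, which is orthogonal to $\Cartan_\alpha$. The paper sidesteps $P$ altogether by setting up $\bar V_\alpha$ at the outset and computing there, where one has the exact identity $g\cdot(l_0|_{\bar V_\alpha}) = \exp(-2\pi\bi\, j/n_\alpha)\cdot l_0|_{\bar V_\alpha}$ with no error term to control. Once $\Lg_\alpha\cdot(\Lg_\alpha\cdot c_0) \perp \Cartan_\alpha$ is in hand, your grading decomposition is superfluous: every graded component of $P$ vanishes, not only those eliminated by the eigenvalue constraint.
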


\begin{proof}
Let $\Cartan_\alpha^\p \subset \Cartan$ be the orthogonal complement
to $\Cartan_\alpha$ with respect to $\langle \; , \, \rangle$; it is the
eigenspace of $s_\alpha$ with eigenvalue $2 \pi \bi / n_\alpha$.  Let:
\beqn
\barValpha = \Cartan_\alpha^\p \oplus \Lg_\alpha \cdot \Cartan \subset
V_\alpha \, ,
\eeqn
and let:
\beqn
\bar f_\alpha : \barValpha \to \barValpha \inv G_\alpha = 
\Cartan_\alpha^\p / W_\alpha \cong \bC,
\eeqn
be the quotient map.  Part (i) follows from the fact that the map $\bar f_\alpha$
is given by a homogenous polynomial of degree $n_\alpha$.  Part (ii) follows
from the fact that $g \, l = \exp (-2 \pi \bi \, j / n_\alpha) \cdot l$, while the space
$X_{\breve c_0, \alpha}$ and the inner product $\langle \; , \, \rangle$ are
preserved by the action of $g$.
\end{proof}

For every $j \in J - \{ 0 \}$, let:
\beq\label{eqn-o-j-alpha}
o_{j, \alpha} = (\exp (2 \pi \bi \, j / n_\alpha))_* \,o_{0, \alpha} \, ,
\eeq
be the orientation of $T_{+, \alpha} [c_j]$ obtained as the push-forward of
$o_{0, \alpha}$ via the scalar multiplication by $\exp (2 \pi \bi \, j / n_\alpha)
\in \bC$; see Lemma \ref{lemma-carousel-hessians} (i).  Note that we have
$o_{n_\alpha, \alpha} = o_{0, \alpha}$.

Note that the $G_\alpha$-equivariant structure on $\cL_{\chi^{}_\alpha}$
gives rise to a $\widetilde W_\alpha$-equivariant structure on the restriction
of $\cL_{\chi^{}_\alpha}$ to the critical set $Z_\alpha = W_\alpha \cdot c_0$.
Thus, for every $\widetilde w \in \widetilde W_\alpha$ and $c_j \in Z_\alpha$,
we obtain an action map $(\cL_{\chi^{}_\alpha})_{c_j} \to
(\cL_{\chi^{}_\alpha})_{\widetilde w \, c_j}$, which we denote
by $a \mapsto \widetilde w \cdot a$.

In view of diagram \eqref{eqn-main-diagram-alpha} and equations
\eqref{eqn-p-alpha-sigma}-\eqref{eqn-r-sigma-alpha}, for each $j \in J - \{ 0 \}$,
we have:
\beq\label{eqn-c-zero-c-j}
r (\sigma_\alpha^{-j}) \, c_0 = p_\alpha (\sigma^{-j}) \, c_0 =
s_\alpha^j \, c_0 = c_j \, .
\eeq
Using \eqref{eqn-c-zero-c-j}, for each $j \in J - \{ 0 \}$, we define:
\beq\label{eqn-define-a-j}
a_j = r (\sigma_\alpha^{-j}) \cdot a_0 \in (\cL_{\chi^{}_\alpha})_{c_j} \, ,
\eeq
\beq\label{eqn-u-j-alpha}
u_{j, \alpha} = \PL [c_j, \gamma_j, o_{j, \alpha}, a_j] \in M_{l_{0, \alpha}}
(P_{\chi^{}_\alpha}),
\eeq
where the notation of \eqref{eqn-PL} is applied to the representation
$G_\alpha | V_\alpha$.  This completes the construction of the classes
\eqref{eqn-classes-u-j-alpha}.

\begin{remark}\label{rmk-basis-dependence}{\em
As noted following equation \eqref{eqn-classes-u-j-alpha}, the collection
$\{ u_{j, \alpha} \}_{j \in J}$ depends on the choice of the braid generator
$\sigma_\alpha \in B_W [\alpha]$ in \eqref{eqn-fix-sigma-alpha}.
However, one can use Lemma \ref{lemma-conjugate} and equation
\eqref{eqn-restrict-characters} to show that the class $u_{j, \alpha}$ is
independent of $\sigma_\alpha$ for every $j \in J$ which is divisible by
$\degalpha$.}
\end{remark}

By Lemma \ref{lemma-carousel-hessians}, for every $j \in J$ and every
$g \in N_K (\Cartan)$, representing $s_\alpha^j \in W_\alpha$, we have
an $\bR$-linear self-map:
\beq\label{eqn-self-map}
\exp (-2 \pi \bi \, j / n_\alpha) \circ g : T_{+, \alpha} [c_0] \to
T_{+, \alpha} [c_0].
\eeq
The map \eqref{eqn-self-map} preserves the inner product
$\langle \; , \, \rangle$, and therefore has determinant $\pm 1$.
We write $Sgn_\alpha (g) \in \{ \pm 1 \}$ for this determinant, or
equivalently, the effect of the map \eqref{eqn-self-map} on the orientation
of $T_{+, \alpha} [c_0]$.  Using Proposition \ref{prop-core} (iii), it is not
hard to check that the assignment $g \mapsto Sgn_\alpha (g)$ descends
to a character:
\beq\label{eqn-sgn-alpha}
Sgn_\alpha : \widetilde W_\alpha^{\rm f} \to \{ \pm 1\},
\eeq
of the subgroup $\widetilde W_\alpha^{\rm f} = q^{-1} (W_\alpha) \subset
\widetilde W$ of equations
\eqref{eqn-tilde-B-W-alpha-f}-\eqref{eqn-tilde-W-alpha-f}
and diagram \eqref{eqn-main-diagram-alpha-full}.

The character $Sgn_\alpha$ of equation \eqref{eqn-sgn-alpha} can be described
more concretely.  Recall that the group $\Gaf = Z_G (\Cartan_\alpha)$ acts on the
subspace $V_\alpha \subset V$ (see assertion \eqref{eqn-Gaf-V-alpha}).  Write:
\beqn
\on{det}_\alpha : \Gaf \to \bG_m \, ,
\eeqn
for the determinant of this action (cf. \eqref{eqn-det-alpha-notalpha}).
By Proposition \ref{prop-char-tau}, applied to the representation
$G_\alpha | V_\alpha$, the character $\on{det}_\alpha$ descends
to a character:
\beqn
\hat \tau_\alpha : \widetilde W_\alpha^{\rm f} \to \bG_m \, ,
\eeqn
satisfying $\hat \tau_\alpha |_{I_\alpha} = \tau_\alpha$.  Let $\zeta_\alpha :
W_\alpha \to \bG_m$ be the character defined by:
\beqn
\zeta_\alpha (s_\alpha) = \exp (2 \pi \bi / n_\alpha), 
\eeqn
and let:
\beqn
\tilde \zeta_\alpha = \zeta_\alpha \circ q_\alpha^{\rm f}:
\widetilde W_\alpha^{\rm f} \to \bG_m \, ,
\eeqn
(see diagram \eqref{eqn-main-diagram-alpha-full}).  By reviewing the definition
of the  character $Sgn_\alpha$, and observing that $T_{+, \alpha} [c_0] \subset
\Lg_\alpha \cdot \Cartan$ is a real form, one can see that:
\beq\label{eqn-sgn-alpha-hat-tau}
Sgn_\alpha = \hat \tau_\alpha \cdot \tilde \zeta_\alpha^{-d_\alpha - 1}
: \widetilde W_\alpha^{\rm f} \to \bG_m \, ,
\eeq
where $d_\alpha = \dim X_{\breve c_0, \alpha} = \dim \Lg_\alpha \cdot \Cartan$.
Note, however, that the description \eqref{eqn-sgn-alpha-hat-tau} does
not make it clear that $Sgn_\alpha$ takes values in $\{ \pm 1\}$.
By restricting \eqref{eqn-sgn-alpha-hat-tau} to
$I_\alpha \subset \widetilde W_\alpha^{\rm f}$, we obtain:
\beq\label{eqn-sgn-alpha-tau}
Sgn_\alpha |_{I_\alpha} = \tau_\alpha \, .
\eeq

\begin{remark}\label{rmk-hat-tau-alpha}{\em
By an argument similar to the proof of Proposition \ref{prop-char-tau},
we have $\hat \tau_\alpha (x) \in \{ \pm 1 \}$ for every $x \in I$.  By Theorem
\ref{thm-root-space-decomp} (iii), we further have:
\beqn
\tau (x) = \prod_{\alpha \in A} \hat \tau_\alpha (x)
\;\; \text{for every} \;\; x \in I.
\eeqn}
\end{remark}

By Lemma \ref{lemma-conjugate}, we have:
\beq\label{eqn-sgn-alpha-invariant}
\forall \; \sigma_1, \sigma_2 \in B_W [\alpha] \; : \;
Sgn_\alpha (r (\sigma_1)) = Sgn_\alpha (r (\sigma_2)). 
\eeq
The following lemma encapsulates the carousel technique and forms the
basis of all the main arguments in this section.  Let $J^0 = J - \{ n_\alpha \}$.

\begin{lemma}\label{lemma-carousel}
$\;$
\begin{enumerate}[topsep=-1.5ex]
\item[(i)]    The elements $\{ u_{j, \alpha} \}_{j \in J^0}$ form a basis of
$M_{l_{0, \alpha}} (P_{\chi^{}_\alpha})$.

\item[(ii)]   We have:
\beqn
\mu_{l_{0, \alpha}} (\sigma^\degalpha) \, u_{0, \alpha} =
u_{\degalpha, \alpha} \, .
\eeqn

\item[(iii)]  For every $j \in J^0$, we have:
\beqn
\lambda_{l_{0, \alpha}} \circ \; \tilde r [\sigma_\alpha] \; (\sigma^{-1}) \;
u_{j, \alpha} = Sgn_\alpha (r (\sigma_\alpha)) \cdot u_{j+1, \alpha} \, .
\eeqn
\end{enumerate}
\end{lemma}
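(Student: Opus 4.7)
The plan is to prove the three parts in sequence, with (i) settled by a dimension count combined with the standard Picard-Lefschetz spanning principle, (ii) by direct analysis of the rotating carousel of critical values, and (iii), the main content, requiring careful tracking of Picard-Lefschetz data along the concrete loop $\Gamma$ representing $\tilde r[\sigma_\alpha](\sigma^{-1})$.

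For (i), Corollary \ref{cor-dim-M} applied to the rank one representation $G_\alpha | V_\alpha$ gives $\dim M_{l_{0,\alpha}}(P_{\chi^{}_\alpha}) = |W_\alpha| = n_\alpha = |J^0|$. Under the identification \eqref{eqn-describe-morse-group-rank-one}, the classes $u_{j,\alpha}$ for $j \in J^0$ are vanishing cycles attached to paths $\gamma_j$ that are pairwise non-homotopic in $\bC \setminus l_{0,\alpha}(Z_\alpha)$ by the distinct winding integrals in (Q6). Standard Picard-Lefschetz theory for Morse functions with distinct critical values gives their linear independence, which together with the dimension count promotes them to a basis.

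For (ii), I would unwind the construction of Section \ref{sec-monodromy-construct} applied to $G_\alpha | V_\alpha$: as $\breve c_0$ traverses the loop in $Q_\alpha^{reg}$ representing $\sigma^\degalpha$, the critical value configuration $l_{0,\alpha}(Z_\alpha)$ rotates rigidly counter-clockwise by $2\pi\degalpha/n_\alpha$ around the fixed center $l_{0,\alpha}(c_\alpha)$ while $\xi_0$ stays fixed; this is the rotational symmetry behind Lemma \ref{lemma-carousel-hessians}(i). The critical point $c_0$ transports to $c_\degalpha$; the straight-line path $\gamma_0$ deforms into a path with winding integral $-2\pi\degalpha/n_\alpha$ around $l_{0,\alpha}(c_\alpha)$, which by the uniqueness clause after (Q1)-(Q6) is $\gamma_\degalpha$; the positive eigenspace transports via the scalar $\exp(2\pi\bi\,\degalpha/n_\alpha)$ on $\Cartan_\alpha^\p$, matching \eqref{eqn-o-j-alpha}; and the value $a_0$ transports to $a_\degalpha$ by the defining property of the splitting $\tilde r[\sigma_\alpha]$ together with the extension of $\chi_\alpha$ used in \eqref{eqn-define-a-j}.

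For (iii), the concrete loop $\Gamma$ decomposes (following \eqref{eqn-concrete-loop}) into part (I), a lift in $\Cartan^{reg}$ of the clockwise generator $\sigma^{-1}$, running from $c_0$ to $c_{n_\alpha - 1} = s_\alpha^{-1} c_0$, and part (II), a path $t \mapsto g(t) \cdot c_{n_\alpha - 1}$ in $X_{\breve c_0, \alpha}$ from $c_{n_\alpha - 1}$ back to $c_0$, where $g$ is a path in $G_\alpha$ ending at a representative of $r(\sigma_\alpha)^{-1} \in \widetilde W_\alpha$, which by the rank one analogue of Proposition \ref{prop-core}(iii) may be chosen inside $N_{K_\alpha}(\Cartan)$. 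Transporting the datum of $u_{j,\alpha}$ along $\Gamma$: during (I), a direct calculation (dual to the one in (ii), now with $l = \nu(c(s))$ moving within $(\Cartan^*)^{reg}$) shows that each critical value $l(c_j)$ rotates counter-clockwise by $2\pi/n_\alpha$ around $l_{0,\alpha}(c_\alpha)$, deforming $\gamma_j$ into the path with winding index $j+1$; during (II), the $K_\alpha$-equivariant structure of $P_{\chi^{}_\alpha}$ implements the action of $g(1)$, relocating the critical point to $c_{j+1}$, multiplying the orientation by $Sgn_\alpha(r(\sigma_\alpha))$ by definition \eqref{eqn-sgn-alpha} and Lemma \ref{lemma-carousel-hessians}(ii), and transforming the local system value by $r(\sigma_\alpha^{-1})$, which sends $a_j = r(\sigma_\alpha^{-j}) \cdot a_0$ to $r(\sigma_\alpha^{-(j+1)}) \cdot a_0 = a_{j+1}$. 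Combining these ingredients yields the claimed identity.

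The hard part will be making the path-deformation bookkeeping in (ii) and (iii) fully rigorous, in particular verifying that the continuously deformed paths in $\bC$ satisfy the winding conditions (Q1)-(Q6) uniquely characterizing $\gamma_{j+1}$. This ultimately follows from the rigid-rotation picture of the carousel and the induced braid-group action on paths avoiding the $n_\alpha$-gon $l_{0,\alpha}(Z_\alpha)$, but requires careful handling of the interaction between path winding numbers and the rotating configuration.
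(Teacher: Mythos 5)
Your proposal is correct and takes essentially the same approach as the paper: a dimension count combined with one Picard-Lefschetz class per critical point along mutually disjoint paths for (i); a rigid rotation of the carousel of critical values for (ii); and a decomposition of the loop representing $\tilde r[\sigma_\alpha](\sigma^{-1})$ into a $\Cartan^{reg}$-piece and an in-fiber piece, tracking the critical point, the path, the orientation, and the local-system datum through each, for (iii). One small remark in your favor: your choice of $g_1$ as a representative of $r(\sigma_\alpha)^{-1}$, rather than $r(\sigma_\alpha)$, is the one consistent with the recipe of \eqref{eqn-concrete-loop} applied to $\tilde r[\sigma_\alpha](\sigma^{-1})$ (the paper's text has a harmless sign slip here, absorbed in the end by $Sgn_\alpha$ being $\{\pm 1\}$-valued). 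A minor imprecision in your (i): the linear independence really comes from the paths $\gamma_j$, $j \in J^0$, being mutually disjoint (not merely pairwise non-homotopic), but conditions (Q1)--(Q6) do produce disjoint paths, so the conclusion is unaffected.
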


\begin{proof}
Part (i) follows form the fact that the set $\{ u_{j, \alpha} \}_{j \in J^0}$
contains exactly one Picard-Lefschez class for every critical point in
$Z_\alpha$, and the paths $\{ \gamma_j \}_{j \in J_0}$ are mutually disjoint;
see Figure 1.

Part (ii) is a standard carousel argument (cf. proof of \cite[Lemma 7.7 (ii)]{BG}).
For each $\pTau \in [0, 1]$, we can emulate equation \eqref{eqn-gamma-alpha}
to define:
\beq\label{eqn-c-tau}
c [\pTau] = c_\alpha + \exp (2 \pi \bi \, \pTau \, \degalpha / n_\alpha) 
(c_0 - c_\alpha) \in \Cartan^{reg},
\eeq
(see assumption \eqref{eqn-assume-c-zero}).
Note that we have $c [0] = c_0$, $c [1] = c_\degalpha$, and the path:
\beqn
\pTau \mapsto f_\alpha (c [\pTau]) \in Q_\alpha^{reg},
\eeqn
represents the element $\sigma^\degalpha \in B_{W_\alpha}$.
Let $X_{\pTau, \alpha} = f_\alpha^{-1} (f_\alpha (c [\pTau]))$, and let
$Z_{\pTau, \alpha} \subset X_{\pTau, \alpha}$ be the critical locus of
the restriction $l_{0, \alpha} |_{X_{\pTau, \alpha}}$.  We then have
$Z_{\pTau, \alpha} = W_\alpha \cdot c [\pTau]$.  We can extend the path
$\gamma_0 : [0, 1] \to \bC$ to a smoothly varying family of smooth paths:
\beqn
\{ \gamma [\pTau] : [0, 1] \to \bC\}_{\pTau \in [0, 1]} \, ,
\eeqn
such that $\gamma [0] = \gamma_0$, $\gamma [1] = \gamma_\degalpha$,
and each $\gamma [\pTau]$ satisfies conditions (Q1)-(Q5), where we replace
$c_j$ by $c [\pTau]$ and $\gamma_j$ by $\gamma [\pTau]$.  It follows that:
\beqn
\mu_{l_{0, \alpha}} (\sigma^\degalpha) \, u_0 =
\PL [c_\degalpha, \gamma_\degalpha, o, a],
\eeqn
for a certain orientation $o$ of $T_{+, \alpha} [c_\degalpha]$ and a certain
$a \in (\cL_{\chi^{}_\alpha})_{c_\degalpha}$, which we now proceed to
identify.

To identify the orientation $o$, we consider the family of positive eigenspaces:
\beq\label{eqn-T-plus-alpha-tau}
T_{+, \alpha} [\pTau] = T_{+, \alpha} [c [\pTau], \gamma [\pTau]] \subset
\Lg_\alpha \cdot c [\pTau] = \Lg_\alpha \cdot \Cartan, \;\; \pTau \in [0, 1], 
\eeq
defined by analogy with the subspaces $T_{+, \alpha}  [c_j]$ of equation
\eqref{eqn-T-plus-alpha}.  The orientation $o$ is then the parallel translate
of the orientation $o_{0, \alpha}$ of $T_{+, \alpha} [0] = T_{+, \alpha}  [c_0]$
(see equation \eqref{eqn-u-zero-alpha}) to an orientation of $T_{+, \alpha} [1] =
T_{+, \alpha}  [c_\degalpha]$, in the family \eqref{eqn-T-plus-alpha-tau}.  By
analogy with Lemma \ref{lemma-carousel-hessians} (i), we have:
\beqn
T_{+, \alpha} [\pTau] = \exp (2 \pi \bi \, \pTau \, \degalpha / n_\alpha) \cdot
T_{+, \alpha} [0],
\eeqn
and therefore, we have $o = o_{\degalpha, \alpha}$ (see equation
\eqref{eqn-o-j-alpha}).

To identify the element $a \in (\cL_{\chi^{}_\alpha})_{c_\degalpha}$, we
refer to diagram \eqref{diagram-V-tilde-alpha} from the proof of Proposition
\ref{prop-mu-alpha}.  As in that proof, we can use the family $f_{\alpha, \chi}$
to define the monodromy action $\mu_\alpha$.  We write $\hat c_{0, \alpha} \in
\Cartan / W_{\alpha, \chi}$ for the image of $c_0$.  By analogy with
\eqref{eqn-hat-no-hat}, we identify the fibers:
\beq\label{eqn-hat-no-hat-alpha-1}
(f_{\alpha, \chi})^{-1} (\hat c_{0, \alpha}) \cong X_{\breve c_0, \alpha},
\eeq
and we use \eqref{eqn-hat-no-hat-alpha-1} to regard $X_{\breve c_0, \alpha}$
as a subset of $V_{\alpha, \chi}^{rs}$. The path:
\beqn
\pTau \mapsto c [\pTau], \;\; [0, 1] \to \Cartan^{reg} \subset V_\alpha^{rs},
\eeqn
determines a path:
\beq\label{eqn-tilde-c-tau}
\pTau \mapsto \tilde c [\pTau], \;\; [0, 1] \to \widetilde V_{\alpha, \chi}^{rs} \, ,
\eeq
in the obvious way.  Note that $\tilde c [0] = c_0$ and
$\tilde c [1] = c_\degalpha$.
Recall that the choice of the braid generator $\sigma_\alpha \in B_W [\alpha]$
in \eqref{eqn-fix-sigma-alpha} defines an extension $\hat \chi_\alpha :
\widetilde B_{W_\alpha}^{\chi_\alpha} \to \bG_m$ of the character
$\chi_\alpha$, as in \eqref{eqn-hat-chi-alpha}.  Recall also that
the restriction $\hat \chi_\alpha |_{\widetilde B_{W_\alpha}^\chi}$
is independent of the choice of $\sigma_\alpha$, as noted following
\eqref{eqn-restrict-characters}.  The character $\hat \chi_\alpha
|_{\widetilde B_{W_\alpha}^\chi}$ gives rise to a rank one $G$-equivariant
local system $\hat \cL_{\alpha, \chi}$ on $\widetilde V_{\alpha, \chi}^{rs}$, with:
\beq\label{eqn-hat-no-hat-alpha-2}
\hat \cL_{\alpha, \chi} |_{X_{\breve c_0, \alpha}} \cong \cL_{\chi^{}_\alpha} \, ,
\eeq
as in \eqref{eqn-hat-no-hat}.  The element
$a \in (\cL_{\chi^{}_\alpha})_{c_\degalpha} \cong
(\hat \cL_{\alpha, \chi})_{c_\degalpha}$ is obtained by parallel transporting
$a_0 \in (\cL_{\chi^{}_\alpha})_{c_0} \cong (\hat \cL_{\alpha, \chi})_{c_0}$
along the path \eqref{eqn-tilde-c-tau}, where the isomorphisms of stalks are
given by \eqref{eqn-hat-no-hat-alpha-2}.

To compute the element $a \in (\cL_{\chi^{}_\alpha})_{c_\degalpha}$,
consider the element:
\beqn
\tilde b \coloneqq \tilde r [\sigma_\alpha] \, (\sigma^\degalpha) \in
\widetilde B_{W_\alpha}^\chi \subset \widetilde B_{W_\alpha} \, ,
\eeqn
and let $h_{\tilde b} : (\cL_{\chi^{}_\alpha})_{c_0} \to 
(\cL_{\chi^{}_\alpha})_{c_0}$ be the holonomy of the local system
$\hat \cL_{\alpha, \chi}$ along $\tilde b$.  By condition \eqref{eqn-hat-chi-alpha},
we have $h_{\tilde b} (a_0) = a_0$.  On the other hand, if we represent $\tilde b$
by a path $\Gamma : [0, 1] \to V_\alpha^{rs}$, as in equation
\eqref{eqn-concrete-loop}, we can compute:
\beqn
h_{\tilde b} (a_0) = r (\sigma_\alpha^\degalpha) \cdot a.
\eeqn
It follows that $a = a_\degalpha$, as defined in \eqref{eqn-define-a-j}.
This completes the proof of part (ii).

For part (iii), we apply the construction of equation \eqref{eqn-concrete-loop}
to the element:
\beqn
\tilde r [\sigma_\alpha] \, (\sigma^{-1}) \in \widetilde B_{W_\alpha} \, ,
\eeqn
(cf. proofs of \cite[Proposition 6.12 (i) $\&$ Lemma 7.5]{GVX1}).
Namely, recall the compact form $K_\alpha \subset G_\alpha$ of equation
\eqref{eqn-K-alpha}, and use Proposition \ref{prop-core} (iii) to pick a
representative $g_1 \in N_{K_\alpha} (\Cartan)$ of $r (\sigma_\alpha^{-1})
\in \widetilde W_\alpha$.  Choose a path $g : [0, 1] \to K_\alpha$, with
$g (0) = 1$ and $g (1) = g_1$.  By analogy with equation
\eqref{eqn-gamma-alpha} and using assumption \eqref{eqn-assume-c-zero}
(see also equation \eqref{eqn-c-tau}), define a path $\Gamma_1 : [0, 1] \to
\Cartan^{reg}$ by:
\beq\label{eqn-Gamma-one}
\Gamma_1 (t) = c_\alpha + \exp (-2 \pi \bi \, t / n_\alpha) (c_0 - c_\alpha).
\eeq
Note that $\Gamma_1 (0) = c_0$ and $\Gamma_1 (1) = c_{n_\alpha - 1}$.
Next, define a path $\Gamma_2 : [0, 1] \to X_{\breve c_0, \alpha}$ by:
\beqn
\Gamma_2 (t) = g (t) \, c_{n_\alpha - 1} \, .
\eeqn
Note that $\Gamma_2 (0) = c_{n_\alpha - 1}$ and $\Gamma_2 (1) = c_0$.
As in equation \eqref{eqn-concrete-loop}, define a path $\Gamma : [0, 1]
\to V_\alpha^{rs}$ as a composition:
\beqn
\Gamma = \Gamma_2 \star \Gamma_1 \, .
\eeqn
The path $\Gamma$ represents the element $\tilde r [\sigma_\alpha] \,
(\sigma^{-1}) \in \widetilde B_{W_\alpha}$.

Recall the identification of fundamental groups \eqref{eqn-identify-rank-one},
which is analogous to \eqref{eqn-identify-pi-one}.  Write:
\beqn
V_\alpha^{mrs} = K_\alpha \cdot (\Cartan - \Cartan_\alpha) \subset
V_\alpha^{rs}
\;\;\;\; \text{and} \;\;\;\;
(V_\alpha^*)^{mrs} = K_\alpha \cdot (\Cartan^* - \Cartan^*_\alpha) \subset
(V_\alpha^*)^{rs} \, ,
\eeqn
for the minimal semisimple loci (cf. equation \eqref{eqn-core}).  Also, write:
\beqn
\nu_\alpha : V_\alpha \to V_\alpha^* \, , \;\;\;\;
\nu_\alpha^{mrs} : V_\alpha^{mrs} \to (V_\alpha^*)^{mrs} \, ,
\eeqn
for the analogs of the maps $\nu$, $\nu^{mrs}$ of equations
\eqref{eqn-nu}, \eqref{eqn-nu-mrs}.  Note that we have $\Gamma ([0, 1])
\subset V^{mrs}_\alpha$.  Therefore, we can use holonomy along the path:
\beqn
\Gamma^* = \nu_\alpha^{mrs} \circ \Gamma : [0, 1] \to (V_\alpha^*)^{mrs} \, ,
\eeqn
to define the operator $\lambda_{l_{0, \alpha}} \circ \tilde r [\sigma_\alpha] \,
(\sigma^{-1}) \in \on{Aut} (M_{l_{0, \alpha}} (P_{\chi^{}_\alpha}))$.  Let:
\beq\label{eqn-factor-Gamma-star}
\Gamma_1^* = \nu_\alpha^{mrs} \circ \Gamma_1 \, , \;\;
\Gamma_2^* = \nu_\alpha^{mrs} \circ \Gamma_2 \, ,
\;\; \text{so that} \;\;
\Gamma^* = \Gamma_2^* \star \Gamma_1^* \, .
\eeq
Note that we have $l_{0, \alpha} = \nu_\alpha^{mrs} (c_0)$, and define
$l_{\alpha, \alpha} = \nu_\alpha (c_\alpha)$, $l_{1, \alpha} = \nu_\alpha^{mrs} 
(c_{n_\alpha - 1})$.  By equation \eqref{eqn-c-alpha} and assumption
\eqref{eqn-assume-c-zero}, we have:
\beq\label{eqn-l-alpha-alpha}
l_{\alpha, \alpha} |_{X_{\breve c_0, \alpha}} \equiv
\langle c_\alpha, c_\alpha \rangle.
\eeq
By equation \eqref{eqn-Gamma-one} for $t = 1$ and the antilinear property
of the map $\nu_\alpha$, we have:
\beq\label{eqn-l-one-alpha}
l_{1, \alpha} = l_{\alpha, \alpha} + \exp (2 \pi \bi \, t / n_\alpha)
(l_{0, \alpha} - l_{\alpha, \alpha}).
\eeq
Equation \eqref{eqn-factor-Gamma-star} enables us to write:
\beq\label{eqn-factor-lambda}
\lambda_{l_{0, \alpha}} \circ \tilde r [\sigma_\alpha] \, (\sigma^{-1}) =
\lambda_2 \circ \lambda_1 \, ,
\eeq
where $\lambda_1 : M_{l_{0, \alpha}} (P_{\chi^{}_\alpha}) \to M_{l_{1, \alpha}}
(P_{\chi^{}_\alpha})$ and $\lambda_2 : M_{l_{1, \alpha}} (P_{\chi^{}_\alpha})
\to M_{l_{0, \alpha}} (P_{\chi^{}_\alpha})$ are the holonomy operators for the
local system $M (P_{\chi^{}_\alpha})$, corresponding to the paths
$\Gamma_1^*$ and $\Gamma_2^*$.

Consider the case $j = 0$ of the claim of part (iii).  By an argument similar
to the proof of part (ii), one can verify that:
\beqn
\lambda_1 (u_{0, \alpha}) = \PL [c_0, \gamma_1, o_{0, \alpha}, a_0] \in
M_{l_{1, \alpha}} (P_{\chi^{}_\alpha}),
\eeqn
(cf. equation \eqref{eqn-u-zero-alpha}).
To see that the Picard-Lefschetz parameters $[c_0, \gamma_1, o_{0, \alpha},
a_0]$ specify an element of $M_{l_{1, \alpha}} (P_{\chi^{}_\alpha})$, note that:
\beqn
l_{1, \alpha} (c_0) = l_{0, \alpha} (c_1) = \gamma_1 (0).
\eeqn
Further, let $l_1 = \nu (c_{n_\alpha - 1}) \in V^*$, so that $l_{1, \alpha} =
l_1 |_{V_\alpha}$.  By equations
\eqref{eqn-l-alpha-alpha}-\eqref{eqn-l-one-alpha}, we have:
\beqn
\cH_\alpha [c_0, l_1] = \exp (2 \pi \bi \, t / n_\alpha) \cdot
\cH_\alpha [c_0, l_0].
\eeqn
It follows that the positive eigenspace of $\gamma'_1 (0)^{-1} \cdot
\cH_\alpha [c_0, l_1]$ is equal to the subspace
$T_{+, \alpha} [c_0] = T_{+, \alpha} [c_0, \gamma_0] \subset 
\Lg_\alpha \cdot \Cartan$ of equations \eqref{eqn-T-plus-alpha-0}
and  \eqref{eqn-T-plus-alpha}, as required.

For the second step of equation \eqref{eqn-factor-lambda}, we observe that:
\beqn
\lambda_2 (\PL [c_0, \gamma_1, o_{0, \alpha}, a_0]) =
g_1 \, \PL [c_0, \gamma_1, o_{0, \alpha}, a_0] =
\PL [c_1, \gamma_1, (g_1)_* \, o_{0, \alpha}, a_1],
\eeqn
(cf. equation \eqref{eqn-define-a-j}).
It remains to note that, by equation \eqref{eqn-o-j-alpha} and the definition
of the character $Sgn_\alpha$ of equation \eqref{eqn-sgn-alpha}, we have:
\beqn
(g_1)_* \, o_{0, \alpha} = Sgn_\alpha (r (\sigma_\alpha^{-1})) \cdot
o_{1, \alpha} \, .
\eeqn
Since $Sgn_\alpha (r (\sigma_\alpha^{-1})) = Sgn_\alpha (r (\sigma_\alpha))$,
we obtain:
\beqn
\lambda (l_{0, \alpha}) \circ \tilde r [\sigma_\alpha] \, (\sigma^{-1}) \, u_0 =
Sgn_\alpha (r (\sigma_\alpha)) \cdot u_1 \, ,
\eeqn
as required.

The case $j > 0$ is completely analogous to the case $j = 0$.  We omit the 
rest of the details.
\end{proof}

\begin{proof}[Proof of Proposition \ref{prop-monic}]
We use the braid generator $\sigma_\alpha \in B_W [\alpha]$, which was
fixed in \eqref{eqn-fix-sigma-alpha} and utilized in the construction of the
Picard-Lefschetz classes \eqref{eqn-classes-u-j-alpha}, to define the
minimal polynomial $R_{\chi, \alpha}$.  Parts (i) and (iii) of Lemma
\ref{lemma-carousel} imply that:
\beq
\label{eqn-cyclic-rank-one}
u_{0, \alpha}
\;\; \text{is a cyclic vector for the action} \;\;
\lambda_{l_{0, \alpha}} \circ \; \tilde r [\sigma_\alpha] : B_{W_\alpha} \to
\on{Aut} (M_{l_{0, \alpha}} (P_{\chi^{}_\alpha})).
\eeq
The proposition follows from Lemma \ref{lemma-carousel} (i) and assertion
\eqref{eqn-cyclic-rank-one}.
\end{proof}

\begin{remark}\label{rmk-cyclic-rank-one}{\em
Note that assertion \eqref{eqn-cyclic-rank-one} implies the claim of Proposition
\ref{prop-cyclic-vector} for the representation $G_\alpha | V_\alpha$.}
\end{remark}

\begin{proof}[Proof of Proposition \ref{prop-carousel}]
Note that the group $B_{W_\alpha} \cong \bZ$ is abelian, and the action:
\beqn
\lambda_{l_{0, \alpha}} \circ \; \tilde r [\sigma_\alpha] :
B_{W_\alpha} \to \on{Aut} (M_{l_{0, \alpha}} (P_{\chi^{}_\alpha})),
\eeqn
commutes with the action $\mu_{l_{0, \alpha}}$ of equation
\eqref{eqn-mu-l-zero-alpha}, as in \eqref{eqn-commute}.  Therefore,
in view of assertion \eqref{eqn-cyclic-rank-one}, it suffices to check that:
\beq\label{eqn-apply-to-cyclic}
\mu_{l_{0, \alpha}} (\sigma^{\degalpha}) \, u_{0, \alpha} =
k_\alpha \cdot \lambda_{l_{0, \alpha}} \circ \, \tilde r [\sigma_\alpha] \,
(\sigma^{-\degalpha}) \, u_{0, \alpha} \, ,
\eeq
for some $k_\alpha \in \{ \pm 1 \}$, which is independent of $\sigma_\alpha
\in B_W [\alpha]$.  By Lemma \ref{lemma-carousel} (ii)-(iii), equation 
\eqref{eqn-apply-to-cyclic} holds with:
\beq\label{eqn-k-sigma-alpha}
k_\alpha = Sgn_\alpha (r (\sigma_\alpha))^\degalpha.
\eeq
The sign $k_\alpha$ is independent of $\sigma_\alpha$ by assertion
\eqref{eqn-sgn-alpha-invariant}.
\end{proof}

\begin{remark}\label{rmk-k-alpha-properties}{\em
Equations \eqref{eqn-k-sigma-alpha} and \eqref{eqn-sgn-alpha-hat-tau}
imply the following explicit formula for $k_\alpha$:
\beq\label{eqn-k-alpha-explicit}
k_\alpha = \hat \tau_\alpha (r (\sigma_\alpha^\degalpha)) \cdot
\exp (- 2 \pi \bi \cdot (d_\alpha+1) \cdot \degalpha / n_\alpha),
\;\;\;\;
\sigma_\alpha \in B_W [\alpha].
\eeq
Note that, for $\alpha \in A_\chi^1$ (where $\degalpha = n_\alpha$),
equation \eqref{eqn-k-alpha-explicit} simplifies as follows:
\beq\label{eqn-k-alpha-explicit-1}
k_\alpha = \tau_\alpha (r (\sigma_\alpha^{n_\alpha})),
\;\;\;\;
\sigma_\alpha \in B_W [\alpha],
\;\;
\alpha \in A_\chi^1 \, .
\eeq
Equation \eqref{eqn-k-alpha-explicit} plus assertion \eqref{eqn-conjugate-two}
imply that the assignment $\alpha \mapsto k_\alpha$ is invariant under the action
of $W_\chi$ on $A$.  Also  by \eqref{eqn-k-alpha-explicit}, the sign $k_\alpha$
depends on the character $\chi \in \hat I$ only through the integer $\degalpha$.
Finally, we note that, in some examples, such as Example \ref{ex-tau-not-one}
with $\chi = 1$, the sign $k_\alpha$ will depend on the choice of the regular
splitting $\tilde r$ in \eqref{eqn-tilde-r}.}
\end{remark}

For the proof of Propositions \ref{prop-R-alpha-vanishing} and
\ref{prop-R-mu-degree}, recall the space $\cR$ of all possible minimal
polynomials introduced in \eqref{eqn-space-R}.  This space is equipped
with a natural involution:
\beqn
\InvTheta : \cR \to \cR, \,\; \text{defined by} \;\;
\InvTheta R (z) = z^{\deg R} \cdot R (z^{-1}) / R (0).
\eeqn
The significance of the involution $\InvTheta$ is that, if $\cA$ is an associative
$\bC$-algebra with unit, $a \in \cA$ is an invertible element, and $R \in \cR$ is
the minimal polynomial of $a$, then $\InvTheta R \in \cR$ is the minimal
polynomial of $a^{-1} \in \cA$.  Note that we have:
\beq\label{eqn-theta-preserves-degree}
\deg (\InvTheta R) = \deg (R), \; R \in \cR.
\eeq

\begin{proof}[Proof of Propositions \ref{prop-R-alpha-vanishing} and
\ref{prop-R-mu-degree}]
We use the braid generator $\sigma_\alpha \in B_W [\alpha]$, which was
fixed in \eqref{eqn-fix-sigma-alpha}, to define the minimal polynomials
$R_{\chi, \alpha}$ and $\bar R_{\chi, \alpha}^\mu$.  Define
$\bar R_{\chi, \alpha} \in \cR$ as the minimal polynomial of the
holonomy operator $\lambda_{l_0, \alpha} \circ \, \tilde r [\sigma_\alpha] \,
(\sigma^\degalpha)$ of equation \eqref{eqn-interpret-bar-R-chi-alpha}.
For Proposition \ref{prop-R-alpha-vanishing}, it suffices to establish that:
\beqn
\deg \bar R_{\chi, \alpha} = n_\alpha / \degalpha \, .
\eeqn
By Proposition \ref{prop-carousel} and equation
\eqref{eqn-theta-preserves-degree}, we have:
\beq\label{eqn-apply-theta}
\bar R_{\chi, \alpha}^\mu \, (z) =  k_\alpha^{\deg \bar R_{\chi, \alpha}} \cdot
(\InvTheta \bar R_{\chi, \alpha}) \, (k_\alpha \cdot z),
\eeq
where the overall sign ensures that the RHS is a monic polynomial.
Combining this with equation \eqref{eqn-theta-preserves-degree}, we obtain:
\beqn
\deg \bar R_{\chi, \alpha}^\mu = \deg \bar R_{\chi, \alpha} \, .
\eeqn
By Proposition \ref{prop-monic}, we have $\deg R_{\chi, \alpha} = n_\alpha$,
and therefore:
\beqn
\deg \bar R_{\chi, \alpha} \geq n_\alpha / \degalpha \, .
\eeqn
By Proposition \ref{prop-degree-bound}, we have:
\beqn
\deg \bar R_{\chi, \alpha}^\mu \leq n_\alpha / \degalpha \, .
\eeqn
Propositions \ref{prop-R-alpha-vanishing} and \ref{prop-R-mu-degree}
follow.
\end{proof}

\begin{remark}\label{rmk-apply-theta}{\em
Having established Proposition \ref{prop-R-alpha-vanishing}, and using
equation \eqref{eqn-k-alpha-explicit}, we can rewrite the relationship
\eqref{eqn-apply-theta} between minimal polynomials as follows:
\beqn
\bar R_{\chi, \alpha}^\mu \, (z) = 
\tau_\alpha (r (\sigma_\alpha^{n_\alpha})) \cdot
(\InvTheta \bar R_{\chi, \alpha}) \, (k_\alpha \cdot z),
\;\;\;\;
\sigma_\alpha \in B_W [\alpha].
\eeqn
Another way to express this relationship is in terms of roots:
\beqn
\text{if $\{ z_i \}_{i = 1}^{n_\alpha / \degalpha}$ are the roots of
$\bar R_{\chi, \alpha}$, then
$\{ k_\alpha / z_i \}_{i = 1}^{n_\alpha / \degalpha}$ are the roots of
$\bar R_{\chi, \alpha}^\mu$.}
\eeqn
Since the group $I$ is finite, and all our characters take values in
finite subgroups of $\bG_m$, general results on the quasi-unipotence
of the monodromy in the family apply, and we can conclude that all
roots of the polynomials $\bar R_{\chi, \alpha}^\mu$,
$\bar R_{\chi, \alpha}$, and  $R_{\chi, \alpha}$ are roots of unity.}
\end{remark}

\begin{proof}[Proof of Proposition \ref{prop-R-chi-alpha-at-zero}]
Let $\tilde\gamma_{n_\alpha} : [0, 1] \to \bC$ be a smooth path, as shown in
Figure 1 for $n_\alpha = 5$.  More precisely, the path $\tilde\gamma_{n_\alpha}$
satisfies conditions (P1)-(P5) of Section \ref{sec-fourier} for the representation
$G_\alpha | V_\alpha$, the covector $l_{0, \alpha} \in (V_\alpha^*)^{rs}$,
and the critical point $c_{n_\alpha} = c_0 \in Z_\alpha$, as well as the following
conditions:
\begin{flalign*}
& \;\;\;\;\;\;\;\; \bullet \;\;\;\;
\tilde\gamma'_{n_\alpha} (0) = \tilde\gamma'_0 (0); & \\
& \;\;\;\;\;\;\;\; \bullet \;\;\;\;
\text{$\xi_{l_0} (\tilde\gamma_{n_\alpha} (t)) > \xi_{l_0} (c_j)$
for all $t \in [0, 1]$ and $j \in \{ 1, \dots, n_\alpha - 1 \}$;} & \\
& \;\;\;\;\;\;\;\; \bullet \;\;\;\;
\text{the path $\tilde\gamma_{n_\alpha}$ makes one clockwise
turn around $l_{0, \alpha} (c_0)$ before reaching $\xi_0$.} & 
\end{flalign*}

Write:
\beqn
z = \lambda_{l_{0, \alpha}} \circ \; \tilde r [\sigma_\alpha] \;
(\sigma^{-1}) \in \on{End} (M_{l_{0, \alpha}} (P_{\chi^{}_\alpha})),
\eeqn
and let $R_z \in \cR$ be the minimal polynomial of $z$.
By Lemma \ref{lemma-carousel} (iii) and equation \eqref{eqn-sgn-alpha-tau},
we have:
\beq\label{eqn-z-n-alpha}
z^{n_\alpha} (u_{0, \alpha}) = Sgn_\alpha (r (\sigma_\alpha^{n_\alpha}))
\cdot u_{n_\alpha, \alpha} = \tau_\alpha (x) \cdot u_{n_\alpha, \alpha} \, .
\eeq
By the definition of the classes $\{ u_{j, \alpha} \}$ for $j \neq 0$ in equation
\eqref{eqn-u-j-alpha}, plus a standard Picard-Lefschetz  theory argument,
we have:
\beq\label{eqn-change-path}
u_{n_\alpha, \alpha} = \PL [c_0, \gamma_{n_\alpha}, o_{0, \alpha}, a_{n_\alpha}] 
= \PL [c_0, \tilde\gamma_{n_\alpha}, o_{0, \alpha}, a_{n_\alpha}] +
L_1(u_{1, \alpha}, \dots, u_{j-1, \alpha}),
\eeq
where $L_1 (u_{1, \alpha}, \dots, u_{j-1, \alpha})$ is some linear combination of
the $u_{1, \alpha}, \dots, u_{j-1, \alpha}$.  By the definition of $a_{n_\alpha} \in
(\cL_{\chi^{}_\alpha})_{c_0}$ in \eqref{eqn-define-a-j}, plus Lemma
\ref{lemma-carousel} (iii), we can rewrite equation \eqref{eqn-change-path}
as follows:
\beq\label{eqn-use-z}
u_{n_\alpha, \alpha} = \chi (x^{-1}) \cdot
\PL [c_0, \tilde\gamma_{n_\alpha}, o_{0, \alpha}, a_0] +
L_2 (z (u_{0, \alpha}), \dots, z^{j-1} (u_{0, \alpha})),
\eeq
where $L_2 (z (u_{0, \alpha}), \dots, z^{j-1} (u_{0, \alpha}))$ is again a
linear combination.

By considering the obvious homotopy between the paths
$\tilde \gamma_{n_\alpha}$ and $\gamma_0$, within the class
of all smooth paths satisfying (P1)-(P5) (see observation
\eqref{eqn-Q1-Q6-P1-P5}), we obtain:
\beq\label{eqn-obvious-homotopy}
\PL [c_0, \tilde\gamma_{n_\alpha}, o_{0, \alpha}, a_0] =
(-1)^{d_\alpha} \cdot \PL [c_0, \gamma_0, o_{0, \alpha}, a_0] =
(-1)^{d_\alpha} \cdot u_{0, \alpha} \, ,
\eeq
(see \eqref{eqn-u-zero-alpha}).  By combining equations
\eqref{eqn-z-n-alpha}, \eqref{eqn-use-z}, \eqref{eqn-obvious-homotopy},
we obtain:
\beqn
z^{n_\alpha} (u_{0, \alpha}) =
(-1)^{d_\alpha} \cdot \chi (x^{-1}) \cdot \tau_\alpha (x) \cdot u_{0, \alpha} +
\tau_\alpha (x) \cdot L_2 (z (u_{0, \alpha}), \dots, z^{j-1} (u_{0, \alpha})).
\eeqn
In view of Proposition \ref{prop-monic} and assertion
\eqref{eqn-cyclic-rank-one}, we can conclude that:
\beqn
R_z (0) = (-1)^{d_\alpha + 1} \cdot \chi (x^{-1}) \cdot \tau_\alpha (x).
\eeqn
It remains to note that $R_{\chi, \alpha} = \InvTheta R_z$, so
$R_{\chi, \alpha} (0) = R_z (0)^{-1}$.
\end{proof}

\section{Microlocal monodromy for a braid generator}
\label{sec-microlocal-generator}

The following proposition is a partial analog of Proposition \ref{prop-carousel}
in the context of the full representation $G | V$.

\begin{prop}\label{prop-microlocal-generator}
For each $\alpha \in A$ and $\sigma_\alpha \in B_{W} [\alpha]$, we have:
\begin{align}
\label{eqn-microlocal-generator-one}
\mu_{l_0} (\sigma_\alpha^{\degalpha}) \, u_0 & = k_\alpha \cdot
\lambda_{l_0} \circ \, \tilde r \, (\sigma_\alpha^{-\degalpha}) \, u_0
\in M_{l_0} (P_\chi), \\
\label{eqn-microlocal-generator-two}
\mu_{l_0} (\sigma_\alpha^{-\degalpha}) \, u_0 & = k_\alpha \cdot
\lambda_{l_0} \circ \, \tilde r \, (\sigma_\alpha^{\degalpha}) \, u_0
\in M_{l_0} (P_\chi),
\end{align}
where $k_\alpha \in \{ \pm 1 \}$ is the sign provided by Proposition
\ref{prop-carousel}.
\end{prop}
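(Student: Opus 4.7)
The strategy is to reduce \eqref{eqn-microlocal-generator-one} to the rank one case established in Proposition \ref{prop-carousel} via a normal-slice argument in the spirit of the proof of Proposition \ref{prop-family-min-poly}, and then derive \eqref{eqn-microlocal-generator-two} formally using the commutativity of the microlocal monodromy and the monodromy in the family. For the latter: apply $\mu_{l_0}(\sigma_\alpha^{-\degalpha})$ to both sides of \eqref{eqn-microlocal-generator-one} and use assertion \eqref{eqn-commute} to rewrite this as $u_0 = k_\alpha \, \lambda_{l_0} \circ \tilde r(\sigma_\alpha^{-\degalpha}) \, \mu_{l_0}(\sigma_\alpha^{-\degalpha}) \, u_0$; then apply $\lambda_{l_0} \circ \tilde r(\sigma_\alpha^{\degalpha})$ and use that $k_\alpha^2 = 1$. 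Thus it remains only to prove \eqref{eqn-microlocal-generator-one}.

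To prove \eqref{eqn-microlocal-generator-one}, I would first reduce to the setup where $c_0 = c_{\alpha,1}$ lies close to a chosen $c_\alpha \in \Cartan_\alpha^{reg}$ and $\sigma_\alpha = \sigma_\alpha[\Gamma]$ with $\Gamma$ trivial (this reduction is permissible, as in the proof of Proposition \ref{prop-degree-bound}, by moving the basepoint while maintaining $l_0 = \nu(c_0)$). The key geometric input is then Proposition \ref{prop-decompose-hessian}, which provides orthogonal decompositions of both the Hessian $\cH[c_0, l_0]$ and the inner product $\langle \, , \rangle$ with respect to
\begin{equation*}
\Lg \cdot c_0 \;=\; \bigoplus_{\beta \in A} \, \Lg_\beta \cdot \Cartan.
\end{equation*}
Consequently the positive eigenspace decomposes as $T_+[c_0, \gamma_0] = \bigoplus_\beta T_{+,\beta}[c_0]$ orthogonally, and the orientation $o_0$ splits as a product $o_0 = \bigotimes_\beta o_{0,\beta}$.

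With this setup, I would analyze the two sides of \eqref{eqn-microlocal-generator-one} separately and show that each acts nontrivially only on the $\alpha$-factor. For the LHS, a carousel argument parallel to the proof of Lemma \ref{lemma-carousel}(ii) moves $c_0$ along $\tau \mapsto c_\alpha + \exp(2 \pi \bi \, \tau \degalpha / n_\alpha)(c_0 - c_\alpha)$: the critical points in $W \cdot c_0$ split into the $W_\alpha$-orbit (which rotates around $c_\alpha$) and its complement, which, since $c_0$ is chosen close to $\Cartan_\alpha$ while all other critical values remain separated from $c_\alpha$, undergoes no carousel effect. Hence the carousel rotates only $T_{+,\alpha}[c_0]$ by $\exp(2 \pi \bi \degalpha / n_\alpha)$ and fixes each $T_{+,\beta}[c_0]$ for $\beta \neq \alpha$. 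For the RHS, by the regularity of $\tilde r$ (so $r(\sigma_\alpha) \in \widetilde W_\alpha$) and the orthogonality of the decomposition under $\langle \, , \rangle$, the loop in $(V^*)^{rs}$ representing $\tilde r(\sigma_\alpha^{-\degalpha})$ can be chosen to land, under the identification \eqref{eqn-identify-pi-one}, in the image of the rank-one identification for $V_\alpha$. A homotopy within $K \cdot \Cartan^{reg}$ then shows that its holonomy on the full Picard-Lefschetz class $u_0$ coincides with the rank-one microlocal monodromy on $u_{0,\alpha}$, composed with the identity on the orthogonal factors.

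Combining these two localizations, both sides of \eqref{eqn-microlocal-generator-one} factor through the $\alpha$-component, and the proposition reduces to the corresponding identity for $u_{0,\alpha} \in M_{l_{0,\alpha}}(P_{\chi^{}_\alpha})$, which is precisely Proposition \ref{prop-carousel} (applied to $u_{0,\alpha}$ via equation \eqref{eqn-k-sigma-alpha}). The sign $k_\alpha = Sgn_\alpha(r(\sigma_\alpha))^\degalpha$ is thus the same as in the rank-one case. \emph{The main obstacle} is making the two localization steps precise: verifying that neither the carousel nor the microlocal monodromy mixes the orthogonal summands $\Lg_\beta \cdot \Cartan$ of $\Lg \cdot c_0$, and carrying out the normal-slice identification of the full Picard-Lefschetz class $u_0$ with its $\alpha$-component rigorously enough to transfer the sign computation from Proposition \ref{prop-carousel}.
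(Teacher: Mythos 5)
Your overall strategy matches the paper's: reduce to the case where $c_0=c_{\alpha,1}$ is near a point $c_\alpha\in\Cartan_\alpha^{reg}$, decompose the tangent space via Theorem \ref{thm-root-space-decomp} and Proposition \ref{prop-decompose-hessian}, and transfer the sign computation from the rank-one carousel argument of Proposition \ref{prop-carousel}. Your formal derivation of \eqref{eqn-microlocal-generator-two} from \eqref{eqn-microlocal-generator-one} via the commutativity \eqref{eqn-commute} and $k_\alpha^2=1$ is a genuine simplification: the paper simply says the two proofs are ``similar'' and would presumably repeat the carousel argument, whereas your algebraic shortcut is cleaner and avoids duplication.

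For \eqref{eqn-microlocal-generator-one}, however, the piece you explicitly flag as ``the main obstacle'' -- showing that the carousel does not twist the orientation on the complementary factor $\Lg\cdot\Cartan_\alpha$ in an uncontrolled way -- is exactly the piece that carries the weight of the paper's proof, and your proposal leaves it unresolved. Your assertion that the carousel ``fixes each $T_{+,\beta}[c_0]$ for $\beta\neq\alpha$'' is not quite the right statement: as the basepoint traces the loop $\tau\mapsto c[\tau]$ around $c_\alpha$, the positive eigenspaces $T_{+,\beta}[c[\tau]]$ of the partial Hessians do move inside the fixed subspace $\Lg_\beta\cdot\Cartan$; the issue is whether the orientations can be tracked coherently and whether the endpoint orientation matches a prescribed reference. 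The paper's resolution is to assemble these partial positive eigenspaces into a real vector bundle $T_{+,\notalpha}\to P_{\alpha,\epsilon}^\circ$ over a $K_\alpha$-invariant contractible parameter space around $(c_\alpha,\nu(c_\alpha))$, using Proposition \ref{prop-morse-atom} and compactness to guarantee nondegeneracy of $\cH_\notalpha[v,l]$ on that neighborhood; triviality (hence orientability) of that bundle then furnishes the coherent system of orientations $\{o_{j,\notalpha}\}$, and the sign $k_\alpha$ emerges as in equation \eqref{eqn-k-sigma-alpha} from the $\alpha$-factor alone. You use the finer decomposition $\bigoplus_\beta\Lg_\beta\cdot\Cartan$ where the paper uses the two-part split $\Lg_\alpha\cdot\Cartan\oplus\Lg\cdot\Cartan_\alpha$; either works, but the two-part split is what lets a single orientable bundle handle the entire complementary factor at once. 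Without some version of this bundle argument (or an equivalent continuity-of-orientation lemma), the sign in \eqref{eqn-microlocal-generator-one} is not pinned down.

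Likewise, your treatment of the RHS, ``a homotopy within $K\cdot\Cartan^{reg}$ then shows that its holonomy on the full Picard--Lefschetz class $u_0$ coincides with the rank-one microlocal monodromy on $u_{0,\alpha}$,'' is the right idea but again defers the orientation bookkeeping on the $\notalpha$ factor. So the proposal is a correct outline with one genuine improvement, but it stops at the point where the substantive work of the proof begins.
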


Our proof of Proposition \ref{prop-microlocal-generator} will be an adaptation
of the proof of Proposition \ref{prop-carousel} in Section \ref{sec-carousel}.
We begin with a preliminary discussion geared towards the analysis of the
Hessians at the critical points of $l_0 |_{X_{\bar c_0}}$.

Fix an $\alpha \in A$.  Recall the notation of equation \eqref{eqn-not-alpha},
and define:
\beqn
\Cartan_\notalpha^* \; = \; \bigcup_{\beta \in A_\notalpha}
\Cartan_\beta^* \, , \;\;\;\;
(\Cartan_\alpha^*)^{reg} \; = \; \Cartan_\alpha^* - \Cartan_\notalpha^* \, .
\eeqn
By Theorem \ref{thm-root-space-decomp} (iv), we have:
\beq\label{eqn-t-notalpha-cartan}
\Lg \cdot v \supset \Lg \cdot \Cartan_\alpha =
\bigoplus_{\beta \in A_\notalpha} \Lg_\beta \cdot \Cartan
\;\; \text{for every} \;\; v \in \Cartan - \Cartan_\notalpha \, .
\eeq
Consider the image:
\beqn
\nu (V_\alpha) = \Cartan^* \oplus \Lg_\alpha \cdot \Cartan^* =
(\Lg \cdot \Cartan_\alpha)^\perp \subset V^*,
\eeqn
(see \eqref{eqn-V-alpha} and \eqref{eqn-nu}).  Here, the first equality follows
from \eqref{eqn-orthogonal} and \eqref{eqn-Lg-alpha-Lk-alpha}, and the
second from Proposition \ref{prop-decompose-hessian}.  Note that restriction
to $V_\alpha$ produces an isomorphism: $\nu (V_\alpha) \cong V_\alpha^*$.
Let $V_\alpha^{ms} = V_\alpha \cap V^{ms}$ (see Section
\ref{subsec-dual-rep}), and consider the set of pairs:
\beqn
P_\alpha = \{ (v, l) \in V_\alpha^{ms} \times \nu (V_\alpha^{ms})
\; | \; l |_{\Lg \cdot v} = 0 \} \subset V \times V^*.
\eeqn
Note that $P_\alpha \subset V \times V^*$ is a real algebraic subset
(see \eqref{eqn-kempf-ness}), and we have:
\beqn
P_\alpha = K_\alpha \cdot (\Cartan \times \Cartan^*),
\eeqn
(cf. \eqref{eqn-core}).  Let:
\beqn
P_\alpha^\circ [\Cartan] = (\Cartan - \Cartan_\notalpha)
\times (\Cartan^* - \Cartan_\notalpha^*)
\subset \Cartan \times \Cartan^*
\;\;\;\; \text{and} \;\;\;\;
P_\alpha^\circ = K_\alpha \cdot P_\alpha^\circ [\Cartan]
\subset P_\alpha \, .
\eeqn
Since the group $K_\alpha$ is compact, we have:
\beq\label{eqn-P-alpha-not-open}
\text{the subset $P_\alpha^\circ \subset P_\alpha$ is open
in the classical topology.}
\eeq

Using equation \eqref{eqn-t-notalpha-cartan} and the $K_\alpha$-invariance
of the subspace $\Lg \cdot \Cartan_\alpha \subset V$, we obtain:
\beqn
\Lg \cdot v \supset \Lg \cdot \Cartan_\alpha \;\; \text{for every} \;\; (v, l) \in 
P_\alpha^\circ \, .
\eeqn
Let $H_\notalpha = Sym^2 ((\Lg \cdot \Cartan_\alpha)^*)$ and let
$H_\notalpha^\circ \subset H_\notalpha$ be the open subset consisting
of all non-degenerate quadratic forms.  Note that $H_\notalpha$ inherits
a Hermitian metric from $\langle \; , \, \rangle$, and $K_\alpha$ acts on
$H_\notalpha$ by isometries, preserving the subset
$H_\notalpha^\circ \subset H_\notalpha$.  We have a map:
\beq\label{eqn-H-notalpha}
\cH_\notalpha : P_\alpha^\circ \to H_\notalpha \, , \;\;
(v, l) \mapsto \cH_\notalpha [v, l] \coloneqq 
\cH [v, l] |_{\Lg \cdot \Cartan_\alpha} \, ,
\eeq
(see \eqref{eqn-H-v-l}).

\begin{lemma}\label{lemma-H-notalpha-cont}
$\;$
\begin{enumerate}[topsep=-1.5ex]
\item[(i)]
The map $\cH_\notalpha$ of equation \eqref{eqn-H-notalpha} is
$K_\alpha$-equivariant.

\item[(ii)]
The map $\cH_\notalpha$ is continuous in the classical topology.

\item[(iii)]
We have $\cH_\notalpha [v, l] \in H_\notalpha^\circ$ for every
$(v, l) \in P_\alpha^\circ$.
\end{enumerate}
\end{lemma}

\begin{proof}
Part (i) follows from the definitions.  For part (ii), note that:
\beq\label{eqn-part-cont}
\text{the restriction $\cH_\notalpha |_{P_\alpha^\circ [\Cartan]}$
is complex algebraic, and therefore continuous.}
\eeq
The continuity of $\cH_\notalpha$ follows from \eqref{eqn-part-cont} plus
the $K_\alpha$-equivariance of part (i) and the compactness of $K_\alpha$.
Part (iii) follows from Propositions \ref{prop-decompose-hessian} and
\ref{prop-morse-atom}, plus the $K_\alpha$-equivariance of part (i).
\end{proof}

For every $(v, l) \in P_\alpha^\circ$, let:
\beq\label{eqn-T-plus-notalpha-v-l}
T_{+, \notalpha} [v, l] \subset \Lg \cdot \Cartan_\alpha \, ,
\eeq
be the positive eigenspace of $\cH_\notalpha [v, l]$, relative to the inner
product $\langle \; , \, \rangle$.  Here, as usual, by ``the positive eigenspace''
we mean the direct sum of all the eigenspaces of the real part
$\on{Re} (\cH_\notalpha [v, l])$, corresponding to positive eigenvalues.
By Lemma \ref{lemma-H-notalpha-cont}, the vector spaces
$\{ T_{+, \notalpha} [v, l] \}$ form a $K_\alpha$-equivariant real vector
bundle $T_{+, \notalpha} [P_\alpha^\circ]$ over $P_\alpha^\circ$, of rank
$d - d_\alpha = \dim_\bC (\Lg \cdot \Cartan_\alpha)$.  Moreover, each
$T_{+, \notalpha} [v, l]$ is a real form of $\Lg \cdot \Cartan_\alpha$, i.e.,
we have $\Lg \cdot \Cartan_\alpha = T_{+, \notalpha} [v, l] \otimes_\bR \bC$.  
Of particular interest to us is the orientation bundle of $T_{+, \notalpha}
[P_\alpha^\circ]$, which we denote by $\cO [P_\alpha^\circ]$.  It is a
$K_\alpha$-equivariant $\bZ / 2$-torsor over $P_\alpha^\circ$.

Pick a $c_\alpha \in \Cartan_\alpha^{reg}$ and let $l_\alpha = \nu (c_\alpha)
\in (\Cartan_\alpha^*)^{reg}$.  We then have $(c_\alpha, l_\alpha) \in
P_\alpha^\circ$.  Use \eqref{eqn-P-alpha-not-open} to pick an $\epsilon > 0$
such that:
\beq\label{eqn-P-alpha-epsilon}
P_{\alpha, \epsilon} \coloneqq \{ (v, l) \in P_\alpha \; | \;
\on{dist} (c_\alpha, v) < \epsilon, \on{dist} (l_\alpha, l) < \epsilon \}
\subset P_\alpha^\circ \, ,
\eeq
where the distances are taken with respect to $\langle \; , \, \rangle$.
Note that $P_{\alpha, \epsilon} \subset P_\alpha^\circ$ is a contractible,
$K_\alpha$-invariant open subset.  Since the group $K_\alpha$ is
connected, we have:
\beq\label{eqn-trivial-torsor}
\text{the restriction $\cO [P_{\alpha, \epsilon}]$ of $\cO [P_\alpha^\circ]$ to
$P_{\alpha, \epsilon}$ is a trivial $K_\alpha$-equivariant $\bZ / 2$-torsor.}
\eeq

\begin{proof}[Proof of Proposition \ref{prop-microlocal-generator}]
The proofs of \eqref{eqn-microlocal-generator-one} and
\eqref{eqn-microlocal-generator-two} are similar, and we will only
give the former.  Let $\sigma_\alpha = \sigma_\alpha [\Gamma]$, with
$\Gamma (0) = c_0$ and $\Gamma (1) = c_{\alpha, 1}$, as in equation
\eqref{eqn-sigma-alpha}.  As in the proof of Proposition \ref{prop-degree-bound},
we can reduce to the case where $c_0 = c_{\alpha, 1}$ and the path $\Gamma$
is trivial.  Moreover, we can assume that:
\beq\label{eqn-assume-near}
\on{dist} (c_\alpha, c_{\alpha, 1}) < \epsilon,
\eeq
where $c_\alpha \in \Cartan_\alpha^{reg}$ is given by equation
\eqref{eqn-c-alpha}, and $\epsilon > 0$ is chosen to satisfy the
containment $P_{\alpha, \epsilon} \subset P_\alpha^\circ$ of equation
\eqref{eqn-P-alpha-epsilon}.  Proceeding with these assumptions, the proof
of \eqref{eqn-microlocal-generator-one} is essentially a paraphrase of the
proof of equation \eqref{eqn-apply-to-cyclic} for $k_\alpha$ as in equation
\eqref{eqn-k-sigma-alpha} (see the proof of Proposition \ref{prop-carousel}).
The only non-trivial issue in adapting the argument from the setting of the
rank one representation $G_\alpha | V_\alpha$ to the setting of the full
representation $G | V$ is the treatment of the orientations of the positive
eigenspaces of the Hessians.  This issue is essentially handled by
invoking Proposition \ref{prop-decompose-hessian} and assertion
\eqref{eqn-trivial-torsor}.

Recall the subset $Z_\alpha = W_\alpha \cdot c_0 \subset Z_{l_0}$ and the
subspace $M_{l_0} (P_\chi) [\alpha] \subset M_{l_0} (P_\chi)$, introduced in
the proof of Proposition \ref{prop-degree-bound} (see equations
\eqref{eqn-Z-alpha}, \eqref{eqn-M-alpha-declare}).  In the present argument,
the subspace $M_{l_0} (P_\chi) [\alpha]$ will play the same role as the Morse
group $M_{l_{0, \alpha}} (P_{\chi^{}_\alpha})$ did in Section \ref{sec-carousel}.
As in that section, we let $J = \{ 0, \dots, n_\alpha \}$, and we construct a
collection of Picard-Lefschetz classes:
\beq\label{eqn-classes-u-j}
\{ u_j \}_{j \in J} \subset M_{l_0} (P_\chi) [\alpha],
\eeq
by analogy with the classes $\{ u_{j, \alpha} \}_{j \in J}$ of equation
\eqref{eqn-classes-u-j-alpha}.  The class $u_0$ has already been defined
by equation \eqref{eqn-u-zero}.

Recall the critical points $\{ c_j \}_{j \in J - \{ 0 \}} \subset Z_\alpha$ of
equation \eqref{eqn-c-j}.  For each $j \in J - \{ 0 \}$, let $\gamma_j : [0, 1]
\to \bC$ be a smooth path, satisfying conditions (Q1)-(Q6) of Section
\ref{sec-carousel}, as well as condition \eqref{eqn-M-alpha} from the proof
of Proposition \ref{prop-degree-bound}, which we paraphrase as follows:
\beq\label{eqn-M-alpha-paraphrase}
\xi_{l_0} \circ \gamma_j \, (t) > \xi_{l_0} (c) \;\; \text{for every} \;\;
t \in [0, 1] \;\; \text{and} \;\; c \in Z_{l_0} - Z_\alpha.
\eeq
Note that these conditions ensure that $\gamma_j$ satisfies conditions
(P1)-(P5) of Section \ref{sec-fourier}, and furthermore, determine $\gamma_j$
uniquely up to homotopy within the class of all paths satisfying (P1)-(P5).

Next, recall the elements $a_j \in (\cL_{\chi^{}_\alpha})_{c_i}$, $j \in J - \{ 0 \}$,
of equation \eqref{eqn-define-a-j}.  Note that we have $\cL_{\chi^{}_\alpha}
\cong \cL_\chi |_{X_{\breve c_0, \alpha}}$.  Therefore, we can view each $a_j$
as an element of $(\cL_\chi)_{c_j}$.

We now discuss orientations.  For each $j \in J$, let:
\beq\label{eqn-T-plus-c-j}
T_+ [c_j] \subset \Lg \cdot c_j = \Lg \cdot \Cartan,
\eeq
be the positive eigenspace of $\gamma'_j (0)^{-1} \cdot \cH [c_j, l_0]$.
By Theorem \ref{thm-root-space-decomp} (iv), we have:
\beq\label{eqn-decompose-tangent}
\Lg \cdot \Cartan = \Lg_\alpha \cdot \Cartan \oplus \Lg \cdot \Cartan_\alpha \, .
\eeq
By Proposition \ref{prop-decompose-hessian}, decomposition
\eqref{eqn-decompose-tangent} gives rise to a decomposition:
\beq\label{eqn-decompose-T-plus}
T_+ [c_j] = T_{+, \alpha} [c_j] \oplus T_{+, \notalpha} [c_j],
\eeq
where $T_{+, \alpha} [c_j] = T_+ [c_j] \cap \Lg_\alpha \cdot \Cartan$
is the positive eigenspace of equation \eqref{eqn-T-plus-alpha}, and
$T_{+, \notalpha} [c_j] = T_+ [c_j] \cap \Lg \cdot \Cartan_\alpha$.  

We have already chosen an orientation $o_{j, \alpha}$ of $T_{+, \alpha} [c_j]$
for every $j \in J$ (see equations \eqref{eqn-u-zero-alpha},
\eqref{eqn-o-j-alpha}).  We also have the orientation $o_0$ of $T_+ [c_0]$,
used to define the class $u_0$ of equation \eqref{eqn-u-zero}.  Let
$o_{0, \notalpha}$ be the orientation of $T_{+, \notalpha} [c_0]$ such that:
\beq\label{eqn-o-zero-notalpha}
o_0 = (o_{0, \alpha}, o_{0, \notalpha}),
\eeq
in terms for equation \eqref{eqn-decompose-T-plus} for $j = 0$.  We now
proceed to define an orientation $o_{j, \notalpha}$ of $T_{+, \notalpha} [c_j]$
for every $j \in J - \{ 0 \}$.

By assumption \eqref{eqn-assume-near}, we have $(c_j, l_0) \in
P_{\alpha, \epsilon} \subset V \times V^*$ for every $j \in J$.  Recall the
trivial orientation torsor $\cO [P_{\alpha, \epsilon}]$ on $P_{\alpha, \epsilon}$
of assertion \eqref{eqn-trivial-torsor}.  Because of the multiple
$\gamma'_j (0)^{-1}$ appearing the definition of the subspace
$T_+ [c_j]$ of equation \eqref{eqn-T-plus-c-j}, in the notation of
\eqref{eqn-T-plus-notalpha-v-l}, we have:
\beq\label{eqn-T-plus-notalpha-j}
T_{+, \notalpha} [c_j] = \exp (\pi \bi \, j / n_\alpha) \cdot
T_{+, \notalpha} [c_j, l_0], \;\; j \in J,
\eeq
(see Figure 1 and condition (Q3) of Section \ref{sec-carousel}).
In particular, we have $T_{+, \notalpha} [c_0] = T_{+, \notalpha} [c_0, l_0]$.
Thus, the orientation $o_{0, \notalpha}$ of $T_{+, \notalpha} [c_0]$
determines a global section $o_\notalpha [P_{\alpha, \epsilon}]$ of
$\cO [P_{\alpha, \epsilon}]$.  We denote the value of this section at
$(v, l) \in P_{\alpha, \epsilon}$ by $o_\notalpha [v, l]$, so that
$o_\notalpha [c_0, l_0] = o_{0, \notalpha}$.  For each $j \in J - \{ 0 \}$,
we now use \eqref{eqn-T-plus-notalpha-j} to define an orientation:
\beq\label{eqn-o-j-notalpha}
o_{j, \notalpha} = (\exp (\pi \bi \, j / n_\alpha))_* \, o_\notalpha [c_j , l_0],
\eeq
of $T_{+, \notalpha} [c_j]$.  Note that we have:
\beqn
c_{n_\alpha} = c_0, \;\;\;\;
T_{+, \notalpha} [c_{n_\alpha}] = T_{+, \notalpha} [c_0], \;\;\;\;
\text{and} \;\;\;\;
o_{n_\alpha, \notalpha} = (-1)^{d - d_\alpha} \cdot o_{0, \notalpha} \, ,
\eeqn
where $d - d_\alpha = \dim_\bC (\Lg \cdot \Cartan_\alpha)$.

Next, for each $j \in J - \{ 0 \}$, we use decomposition
\eqref{eqn-decompose-T-plus} to define an orientation:
\beqn
o_j = (o_{j, \alpha}, o_{j, \notalpha}),
\eeqn
of $T_+ [c_j]$
and a Picard-Lefshetz class:
\beqn
u_j = \PL [c_j, \gamma_j, o_j, a_j] \in M_{l_0} (P_\chi) [\alpha],
\eeqn
(cf. equation \eqref{eqn-u-j-alpha}).  This completes the construction
of the classes \eqref{eqn-classes-u-j}.  With these definitions, we can
proceed as in the proof of Lemma \ref{lemma-carousel}, to verify that:
\beqn
\mu_{l_0} (\sigma_\alpha^\degalpha) \, u_0 = u_\degalpha
\;\;\;\; \text{and} \;\;\;\;
\lambda_{l_0} \circ \, \tilde r \, (\sigma_\alpha^{-\degalpha}) \, u_0 =
k_\alpha \cdot u_\degalpha \, ,
\eeqn
where $k_\alpha \in \{ \pm 1 \}$ is given by equation \eqref{eqn-k-sigma-alpha}.
There are only two new features of the argument in this setting.  First, one
needs to check that critical points in $Z_{l_0} - Z_\alpha$ do not interfere
with the paths $\{ \gamma_j \}_{j \in J}$.  This is guaranteed by the choice of
$c_0 = c_{\alpha, 1}$ to be near the regular part $\Cartan_\alpha^{reg} \subset
\Cartan_\alpha$, and by assumption \eqref{eqn-M-alpha-paraphrase}
(cf. proof of assertion \eqref{eqn-preserve-M-alpha}).  Second, in view of
\eqref{eqn-decompose-T-plus}, one needs to match the orientations of the
positive eigenspace $T_{+, \notalpha} [c_\degalpha]$.  This matching follows
form assertion \eqref{eqn-trivial-torsor} and the choice of the orientations
$\{ o_{j, \notalpha} \}_{j \in J}$ in \eqref{eqn-o-zero-notalpha} and
\eqref{eqn-o-j-notalpha}.  We omit the rest of the details.
\end{proof}

\section{Proof of the main theorem}
\label{sec-proof}

In this section, we assemble the results of Sections
\ref{sec-fourier}-\ref{sec-microlocal-generator} to prove Theorem
\ref{thm-main}.  Recall that we have $B_W^{\chi, 0} \subset B_W^\chi$,
and define:
\beq\label{eqn-M-zero}
M_0 = \mu_{l_0} (\bC [B_W^{\chi, 0}]) \cdot u_0 \subset M_{l_0} (P_\chi),
\eeq
where $u_0$ is the class of equation \eqref{eqn-u-zero} and $\mu_{l_0}$
is the monodromy in the family of equation \eqref{eqn-mu-morse}.
In the argument that follows, we will show that the subspace
$M_0 \subset M_{l_0} (P_\chi)$ plays the role of the
$\widetilde B_W^{\chi, 0}$-representation $M_\chi^0$ of equation
\eqref{eqn-M-chi-induced}.  We begin with the following corollary of
Proposition \ref{prop-microlocal-inertia}.

\begin{corollary}\label{cor-inertia-M-zero}
For every $u \in M_0$ and every $x \in I$, we have:
\beqn
\lambda_{l_0} (x) \, u = \chi (x) \cdot \tau (x) \cdot u \, .
\eeqn
\end{corollary}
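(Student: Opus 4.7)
The plan is to observe that this corollary is an immediate consequence of Proposition \ref{prop-microlocal-inertia} applied to $u_0$, combined with the commutation assertion \eqref{eqn-commute} between the microlocal monodromy $\lambda_{l_0}$ and the monodromy in the family $\mu_{l_0}$.

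First, I would apply Proposition \ref{prop-microlocal-inertia} to the distinguished class $u_0 = PL[c_0, \gamma_0, o_0, a_0]$ defined in \eqref{eqn-u-zero}. Since the underlying critical point is $c_0 = 1 \cdot c_0$, the associated Weyl group element is $w = 1$, so the proposition yields
\[
\lambda_{l_0}(x) \, u_0 \;=\; (1 \cdot \chi)(x) \cdot \tau(x) \cdot u_0 \;=\; \chi(x) \, \tau(x) \, u_0
\]
for every $x \in I$.

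Next, recalling the definition \eqref{eqn-M-zero} of $M_0 = \mu_{l_0}(\bC[B_W^{\chi,0}]) \cdot u_0$, it suffices to verify the identity on vectors of the form $\mu_{l_0}(b) \, u_0$ with $b \in B_W^{\chi,0}$. Since $B_W^{\chi,0} \subset B_W^\chi$, the commutation property \eqref{eqn-commute} gives $\lambda_{l_0}(x) \, \mu_{l_0}(b) = \mu_{l_0}(b) \, \lambda_{l_0}(x)$ for every $x \in I \subset \widetilde B_W$. Combining the two observations,
\[
\lambda_{l_0}(x) \, \mu_{l_0}(b) \, u_0 \;=\; \mu_{l_0}(b) \, \lambda_{l_0}(x) \, u_0 \;=\; \chi(x) \, \tau(x) \, \mu_{l_0}(b) \, u_0,
\]
and extending $\bC$-linearly over the span yields the claim for every $u \in M_0$.

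There is essentially no obstacle here: the content is entirely contained in Proposition \ref{prop-microlocal-inertia} (which gives the inertia action on the specific class $u_0$) and in the commutation \eqref{eqn-commute} (which propagates the scalar action across the $\mu_{l_0}$-orbit). The fact that we restrict to $B_W^{\chi,0}$ rather than a larger subgroup plays no role in this corollary; it only matters that the vectors considered arise from $u_0$ via $\mu_{l_0}$-action by elements that commute with $\lambda_{l_0}(I)$, which is automatic.
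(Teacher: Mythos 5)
Your proof is correct and follows exactly the paper's own two-step argument: apply Proposition \ref{prop-microlocal-inertia} to $u_0$ (with $w = 1$), then use the commutation assertion \eqref{eqn-commute} to propagate the scalar across $M_0 = \mu_{l_0}(\bC[B_W^{\chi,0}]) \cdot u_0$.
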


\begin{proof}
For $u = u_0$, this is a special case of Proposition \ref{prop-microlocal-inertia},
and for general $u$, this follows from assertion \eqref{eqn-commute}.
\end{proof}

\begin{lemma}\label{lemma-generate}
The subgroup $B_W^{\chi, 0} \subset B_W$ is generated by all elements
of the form $\sigma_\alpha^{\degalpha} \in B_W$ for $\alpha \in A$ and
$\sigma_\alpha \in B_W [\alpha]$.
\end{lemma}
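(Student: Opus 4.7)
Let $H \subset B_W$ denote the subgroup generated by all elements $\sigma_\alpha^\degalpha$ with $\alpha \in A$ and $\sigma_\alpha \in B_W[\alpha]$. The inclusion $H \subset B_W^{\chi,0}$ is immediate from $p(\sigma_\alpha^\degalpha) = s_\alpha^{-\degalpha} \in W_{\alpha,\chi} \subset W_\chi^0$, so the content of the lemma is the reverse inclusion. The plan is to analyze $B_W^{\chi,0}$ via the map $\varphi : B_W^{\chi,0} \to B_{W_\chi^0}$ of \eqref{eqn-varphi}, which is induced by the open inclusion
\[
\Cartan^{reg}/W_\chi^0 \;\hookrightarrow\; \Cartan_\chi^{reg}/W_\chi^0,
\]
whose complement is the image of $\bigcup_{\alpha \in A_\chi^1} \Cartan_\alpha$. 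By Zariski--van Kampen, $\varphi$ is surjective and fits into a short exact sequence $1 \to N \to B_W^{\chi,0} \to B_{W_\chi^0} \to 1$, where $N = \ker\varphi$ is the normal closure in $B_W^{\chi,0}$ of meridians around the removed hypersurface.

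The argument then has two steps. First, I would show $\varphi(H) = B_{W_\chi^0}$. By the discussion preceding \eqref{eqn-dim-H}, for each $\alpha \in A_\chi^0$ and $\sigma_\alpha \in B_W[\alpha]$ the element $\varphi(\sigma_\alpha^\degalpha) \in B_{W_\chi^0}$ is a braid generator; since $A_\chi^0$ indexes exactly the reflection hyperplanes of $W_\chi^0$, every braid generator of $B_{W_\chi^0}$ arises this way, and such braid generators generate $B_{W_\chi^0}$.

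Second, I would show $N \subset H$. For $\alpha \in A_\chi^1$ we have $\degalpha = n_\alpha$ and $W_\chi^0 \cap W_\alpha = W_{\alpha,\chi} = \{1\}$, so $W_\chi^0$ acts freely near a generic point of $\Cartan_\alpha$. Hence a small meridian around the image of $\Cartan_\alpha$ in $\Cartan_\chi^{reg}/W_\chi^0$ lifts to a full loop around $\Cartan_\alpha$ in $\Cartan^{reg}$, which is realized by $\sigma_\alpha^{n_\alpha} = \sigma_\alpha^\degalpha$. Thus $N$ is the normal closure in $B_W^{\chi,0}$ of $\{\sigma_\alpha^\degalpha : \alpha \in A_\chi^1,\ \sigma_\alpha \in B_W[\alpha]\}$. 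For any $b \in B_W^{\chi,0}$, writing $\alpha' = p(b)\cdot\alpha$, we have $b\sigma_\alpha^\degalpha b^{-1} = (b\sigma_\alpha b^{-1})^\degalpha$; by \eqref{eqn-conjugate-two}, $b\sigma_\alpha b^{-1} \in B_W[\alpha']$. Since $p(b) \in W_\chi^0 \subset W_\chi$ and $W_\chi$ preserves both $A_\chi^1$ and the function $\alpha \mapsto \degalpha$, one has $\alpha' \in A_\chi^1$ and $e_{\alpha'} = \degalpha$, so this conjugate is again of the form $\sigma_{\alpha'}^{e_{\alpha'}}$ and lies in $H$. Therefore $N \subset H$. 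Combining the two steps: given $b \in B_W^{\chi,0}$, choose $h \in H$ with $\varphi(h) = \varphi(b)$; then $bh^{-1} \in N \subset H$, so $b \in H$.

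The main obstacle is justifying the short exact sequence, i.e., identifying $\ker\varphi$ as the normal closure of the meridians. This is a standard Zariski--van Kampen-type assertion for $\pi_1$ of complements of analytic hypersurfaces, and the only subtlety is making sure the irreducible components of the image of $\bigcup_{\alpha \in A_\chi^1} \Cartan_\alpha$ in $\Cartan_\chi^{reg}/W_\chi^0$ are correctly parametrized by the $W_\chi^0$-orbits in $A_\chi^1$; once this is granted, the meridian identification $\sigma_\alpha^\degalpha$ (for $\alpha \in A_\chi^1$) follows from the freeness of the $W_\chi^0$-action near $\Cartan_\alpha$, and the rest of the argument is formal.
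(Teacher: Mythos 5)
Your proof is correct, but it takes a more roundabout route than the paper's, which is a one-step application of the same underlying topology. The paper observes directly that $B_W^{\chi,0} = \pi_1(\Cartan^{reg}/W_\chi^0, c_0)$ is the fundamental group of the complement of the divisor $\Cartan^{sing}/W_\chi^0$ in the affine space $\Cartan/W_\chi^0 \cong \Cartan$, and that, by \eqref{eqn-W-zero-alpha}, the local stabilizer of a generic point of $\Cartan_\alpha$ in $W_\chi^0$ is $W_{\alpha,\chi}$ of order $n_\alpha/\degalpha$, so for \emph{every} $\alpha \in A$ the element $\sigma_\alpha^\degalpha$ is represented by a loop linking the corresponding component of the divisor exactly once. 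The lemma then follows at one stroke from \cite[Proposition A1]{BMR}, by induction on the number of irreducible components. You instead factor through $\varphi : B_W^{\chi,0} \to B_{W_\chi^0}$, proving $\varphi(H) = B_{W_\chi^0}$ and $\ker\varphi \subset H$ separately. This is valid, and your handling of the normal closure (conjugating $\sigma_\alpha^\degalpha$ by $b \in B_W^{\chi,0}$ and using that $W_\chi$ preserves $A_\chi^1$ and $\alpha \mapsto \degalpha$) is sound. But it does not save any technology: the surjectivity-onto-$B_{W_\chi^0}$ step already invokes the same meridian-generation fact for $B_{W_\chi^0}$ as a hyperplane-complement quotient, and the kernel analysis is a second application of essentially the same Zariski--van Kampen-type result. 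The paper's argument unifies the cases $\alpha \in A_\chi^0$ and $\alpha \in A_\chi^1$ by treating all components of $\Cartan^{sing}/W_\chi^0$ symmetrically, which makes the separate kernel step unnecessary.
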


\begin{proof}
By definition, we have $B_W^{\chi, 0} = \pi_1 (\Cartan^{reg} / W_\chi^0, c_0)$.
The space $\Cartan^{reg} / W_\chi^0$ is the complement of the divisor
$\Cartan^{sing} / W_\chi^0$ in the affine space $\Cartan / W_\chi^0$.
By equation \eqref{eqn-W-zero-alpha}, each element
$\sigma_\alpha^\degalpha \in B_W^{\chi, 0}$ is represented by a loop
which links this divisor once.  The lemma follows, for example, from
\cite[Proposition A1, p$. \; 181$]{BMR}, by induction on the number of
irreducible components of the divisor $\Cartan^{sing} / W_\chi^0$.
\end{proof}

\begin{lemma}\label{lemma-microlocal-M-zero}
We have:
\beqn
\lambda_{l_0} (\bC [\widetilde B_W^{\chi, 0}]) \cdot M_0 = 
\lambda_{l_0} (\bC [\widetilde B_W^{\chi, 0}]) \cdot u_0 =
\lambda_{l_0} \circ \, \tilde r \, (\bC [B_W^{\chi, 0}]) \cdot u_0 =
M_0 \, ,
\eeqn
where $\lambda_{l_0}$ is the microlocal monodromy of equation
\eqref{eqn-lambda-grp}.
\end{lemma}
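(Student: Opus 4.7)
The plan is to prove both claimed equalities by exploiting the fact that $\widetilde B_W^{\chi,0}$ splits as a semidirect product $I \cdot \tilde r(B_W^{\chi,0})$, together with the commutativity of the microlocal and monodromy-in-family actions and the explicit exchange between them on $u_0$ provided by Proposition~\ref{prop-microlocal-generator}.

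First I would observe that the outer equality $\lambda_{l_0}(\bC[\widetilde B_W^{\chi,0}]) \cdot M_0 = M_0$ follows formally from the inner equality $\lambda_{l_0}(\bC[\widetilde B_W^{\chi,0}]) \cdot u_0 = M_0$: indeed, $u_0 \in M_0$ trivially yields the inclusion ``$\supseteq$'' for the outer equality, while associativity and the inner equality give $\lambda_{l_0}(\bC[\widetilde B_W^{\chi,0}]) \cdot M_0 = \lambda_{l_0}(\bC[\widetilde B_W^{\chi,0}])\lambda_{l_0}(\bC[\widetilde B_W^{\chi,0}])\cdot u_0 = M_0$. So everything reduces to establishing both inclusions between $M_0$ and $\lambda_{l_0}(\bC[\widetilde B_W^{\chi,0}]) \cdot u_0$.

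For the inclusion $M_0 \subseteq \lambda_{l_0}(\bC[\widetilde B_W^{\chi,0}]) \cdot u_0$, I would use Lemma~\ref{lemma-generate} to write an arbitrary $b \in B_W^{\chi,0}$ as a word $b = g_1 \cdots g_n$ in generators $g_i = \sigma_{\alpha_i}^{\pm \degalpha[i]}$, and proceed by induction on $n$. The base case is Proposition~\ref{prop-microlocal-generator}, which gives $\mu_{l_0}(g_i) \, u_0 = k_{\alpha_i} \lambda_{l_0}(\tilde r(g_i^{-1})) \, u_0$, an element of $\lambda_{l_0}(\bC[\widetilde B_W^{\chi,0}]) \cdot u_0$ since $\tilde r(g_i^{-1}) \in \widetilde B_W^{\chi,0}$. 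For the inductive step, write $\mu_{l_0}(b) u_0 = \mu_{l_0}(g_1 \cdots g_{n-1}) \mu_{l_0}(g_n) u_0$, substitute the base case for $\mu_{l_0}(g_n) u_0$, and then use \eqref{eqn-commute} to commute $\mu_{l_0}(g_1 \cdots g_{n-1})$ past $\lambda_{l_0}(\tilde r(g_n^{-1}))$; the inductive hypothesis handles the remaining factor. Conversely, for $\lambda_{l_0}(\bC[\widetilde B_W^{\chi,0}]) \cdot u_0 \subseteq M_0$, I would write an arbitrary element of $\widetilde B_W^{\chi,0}$ as $x \cdot \tilde r(b)$ with $x \in I$ and $b \in B_W^{\chi,0}$ using the splitting $\tilde r$. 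By Corollary~\ref{cor-inertia-M-zero}, the $\lambda_{l_0}(x)$-action preserves $M_0$ (it acts by the scalar $\chi(x) \tau(x)$), so it suffices to show $\lambda_{l_0}(\tilde r(b)) u_0 \in M_0$. Using the second identity of Proposition~\ref{prop-microlocal-generator}, the generators $\tilde r(\sigma_\alpha^{\pm \degalpha})$ satisfy $\lambda_{l_0}(\tilde r(\sigma_\alpha^{\pm \degalpha})) u_0 = k_\alpha \mu_{l_0}(\sigma_\alpha^{\mp \degalpha}) u_0 \in M_0$; induction on word length, again using \eqref{eqn-commute} to commute $\lambda$ and $\mu$ factors, completes the argument.

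The only mildly delicate step is the bookkeeping in the inductive arguments: one must keep track that the inductive hypothesis produces an element expressible through $\lambda_{l_0}(\bC[\widetilde B_W^{\chi,0}])$ (respectively $\mu_{l_0}(\bC[B_W^{\chi,0}])$), and then genuinely use \eqref{eqn-commute} to move the outer operator past it so as to land back inside the relevant subspace. There is no real obstacle because the two actions commute and Proposition~\ref{prop-microlocal-generator} exchanges generators of the two actions applied to $u_0$ up to a sign; no further geometric input is needed.
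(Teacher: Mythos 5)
Your proof is correct and takes essentially the same approach as the paper: the published proof simply cites Corollary~\ref{cor-inertia-M-zero}, assertion~\eqref{eqn-commute}, Lemma~\ref{lemma-generate}, and Proposition~\ref{prop-microlocal-generator}, which is exactly the combination of ingredients you unpack. The two inductions on word length, together with the semidirect-product splitting by $\tilde r$ to absorb the $I$-factor via the scalar action of Corollary~\ref{cor-inertia-M-zero}, are the intended argument.
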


\begin{proof}
The last equality follows from Lemma \ref{lemma-generate}, Proposition
\ref{prop-microlocal-generator}, and assertion \eqref{eqn-commute}.
The middle equality follows from Corollary \ref{cor-inertia-M-zero}.
And the first equality, once again, follows from assertion
\eqref{eqn-commute}.
\end{proof}

\begin{lemma}\label{lemma-dim-M-zero}
We have:
\beqn
\dim M_0 = | W_\chi^0 |.
\eeqn
\end{lemma}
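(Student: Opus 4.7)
The plan is to establish matching upper and lower bounds $\dim M_0 \le |W_\chi^0| \le \dim M_0$.

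\emph{Upper bound.} The surjection $\bC[B_W^{\chi,0}] \twoheadrightarrow M_0$, $b \mapsto \mu_{l_0}(b) \cdot u_0$, exhibits $M_0$ as a cyclic module.  By Propositions~\ref{prop-family-min-poly} and~\ref{prop-R-mu-degree}, the operator $\mu_{l_0}(\sigma_\alpha^{\degalpha})$ satisfies the polynomial equation $\bar R_{\chi,\alpha}^\mu = 0$ of degree $|W_{\alpha,\chi}|$, for every $\alpha \in A$ and $\sigma_\alpha \in B_W[\alpha]$.  For $\alpha \in A_\chi^1$ this forces $\mu(\sigma_\alpha^{n_\alpha})$ to act on $P_\chi$ as a single scalar $\kappa_\alpha$.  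I would introduce a character $\rho^\mu : B_W^{\chi,0} \to \bG_m$, defined in analogy with \eqref{eqn-rho} by $\rho^\mu(\sigma_\alpha^{\degalpha}) = 1$ for $\alpha \in A_\chi^0$ and $\rho^\mu(\sigma_\alpha^{n_\alpha}) = \kappa_\alpha$ for $\alpha \in A_\chi^1$ (well-defined thanks to Corollary~\ref{cor-invariance-mu}).  The twisted action $(\rho^\mu)^{-1} \mu_{l_0}$ is then trivial on $\ker\varphi$ (equation~\eqref{eqn-varphi}), hence factors through $\bC[B_{W_\chi^0}]$, and further through the Hecke algebra of type $W_\chi^0$ defined by the relations $\bar R_{\chi,\alpha}^\mu(\varphi(\sigma_\alpha^{\degalpha})) = 0$ for $\alpha \in A_\chi^0$.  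By \cite[Theorem 1.3]{Et}, this Hecke algebra has dimension $|W_\chi^0|$, giving $\dim M_0 \le |W_\chi^0|$.

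\emph{Lower bound.} For each $w \in W_\chi^0$, pick $b_w \in B_W^{\chi,0}$ with $p(b_w) = w^{-1}$ (possible as $p$ maps $B_W^{\chi,0}$ onto $W_\chi^0$), and set $v_w := \lambda_{l_0}(\tilde r(b_w)) \cdot u_0$.  By Lemma~\ref{lemma-microlocal-M-zero}, each $v_w$ lies in $M_0$.  Writing $w$ as a product of the generators $s_\alpha^{\degalpha}$ of $W_\chi^0$ (for $\alpha \in A_\chi^0$) and applying the Picard-Lefschetz analysis of Lemma~\ref{lemma-carousel}(iii) iteratively, extended to the full representation as in the proof of Proposition~\ref{prop-microlocal-generator}, I would show that $v_w$ is a nonzero scalar multiple of a Picard-Lefschetz class supported at the single critical point $w \cdot c_0 \in Z_{l_0}$.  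Since Picard-Lefschetz classes at distinct critical points of $l_0 |_{X_{\bar c_0}}$ are linearly independent in $M_{l_0}(P_\chi)$, and since the $|W_\chi^0|$ points $\{w \cdot c_0 : w \in W_\chi^0\}$ are pairwise distinct, the family $\{v_w\}_{w \in W_\chi^0}$ is linearly independent, yielding $\dim M_0 \ge |W_\chi^0|$.

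\emph{Main obstacle.} The principal subtlety lies in the Picard-Lefschetz analysis for the lower bound: one must show that the iterated microlocal monodromy moves $u_0$ to a single Picard-Lefschetz class at $w \cdot c_0$, rather than to a linear combination of classes at several points of $Z_{l_0}$.  This requires a careful choice of representative paths for the $b_w$, staying close to explicit analytic arcs akin to those in \eqref{eqn-Gamma-one} and the carousel construction of Section~\ref{sec-carousel}, so that the monodromy traces through well-controlled critical-point configurations at each step.  A secondary check is that $\rho^\mu$ is well-defined as a character (using the $W_\chi$-invariance of $\bar R_{\chi,\alpha}^\mu$ from Corollary~\ref{cor-invariance-mu}) and that its inverse indeed trivialises the scalar action of $\ker\varphi$.
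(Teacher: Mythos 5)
Your upper bound argument --- introducing the character $\rho^\mu$, observing that $\ker\varphi \subset B_W^{\chi,0}$ acts by the scalar character $\rho^\mu$ because of Proposition~\ref{prop-R-mu-degree}, twisting so that the action factors through $\bC[B_{W_\chi^0}]$ and then through a Hecke algebra of $W_\chi^0$, and invoking \cite[Theorem 1.3]{Et} --- coincides with the paper's proof of the inequality $\dim M_0 \le |W_\chi^0|$.

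Your lower bound, however, takes a genuinely different route and contains a gap. The claim that $v_w := \lambda_{l_0}(\tilde r(b_w)) \cdot u_0$ is a nonzero scalar multiple of a single Picard--Lefschetz class at the single critical point $w\cdot c_0$ is essentially the content of Conjecture~\ref{conj-vanishing}. By the Picard--Lefschetz formula \eqref{eqn-PL-formula}, when the path representing $b_w$ sweeps past critical points lying in $W_\chi^0$-cosets of $Z_{l_0}$ other than $W_\chi^0 \cdot c_0$, the transported class picks up contributions weighted by intersection numbers $\partial u_1 \pitchfork \partial u_2^*$; the vanishing of these cross-terms across distinct $W_\chi^0$-cosets is exactly Conjecture~\ref{conj-vanishing}, and the authors state explicitly that they cannot prove it outside the special case $W_\chi^0 = W_\chi$. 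Proposition~\ref{prop-microlocal-generator} handles a single power $\sigma_\alpha^{\degalpha}$ applied to $u_0$ by first moving the basepoint near $\Cartan_\alpha$ so as to obtain the separation estimate \eqref{eqn-horizonal-separation}; iterating this over a word in the generators of $W_\chi^0$ requires interleaving several such basepoint-moving pure braids, and it is the effect of these pure braids on Picard--Lefschetz classes --- potentially creating linear combinations supported on several critical points --- that your argument does not control.

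The paper sidesteps this difficulty with a short counting argument that uses no further Picard--Lefschetz analysis. By Proposition~\ref{prop-cyclic-vector} one has $\lambda_{l_0}(\bC[\widetilde B_W]) \cdot u_0 = M_{l_0}(P_\chi)$, while by Lemma~\ref{lemma-microlocal-M-zero} one has $\lambda_{l_0}(\bC[\widetilde B_W^{\chi,0}]) \cdot u_0 = M_0$. Choosing coset representatives $\tilde b_1, \dots, \tilde b_N$ for $\widetilde B_W / \widetilde B_W^{\chi,0}$, where $N = [W : W_\chi^0]$, yields $M_{l_0}(P_\chi) = \sum_{i=1}^N \lambda_{l_0}(\tilde b_i) \, M_0$; since each $\lambda_{l_0}(\tilde b_i)$ is an automorphism and $\dim M_{l_0}(P_\chi) = |W|$ by Corollary~\ref{cor-dim-M}, this gives $|W| \le N \cdot \dim M_0$, i.e.\ $\dim M_0 \ge |W_\chi^0|$. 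You may wish to replace the Picard--Lefschetz portion of your lower bound with this counting argument.
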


\begin{proof}
Recall the decomposition $A = A_\chi^0 \cup A_\chi^1$ of Section
\ref{subsec-W-zero} (see equation \eqref{eqn-A-zero-one}).  By Propositions
\eqref{prop-family-min-poly} and \ref{prop-R-mu-degree}, we have:
\beq\label{eqn-scalar}
\text{for every $\alpha \in A_\chi^1$ and $\sigma_\alpha \in B_W [\alpha]$, the
operator $\mu_{l_0} (\sigma_\alpha^{n_\alpha}) \in \on{Aut} (M_{l_0} (P_\chi))$
is a scalar.}
\eeq
To capture these scalars, we define a character:
\beqn
\rho^\mu = \rho_\chi^\mu : B_W^{\chi, 0} \to \bG_m \, ,
\eeqn
by analogy with the character $\rho$ of equation \eqref{eqn-rho}.  Namely,
we require that:
\begin{align*}
\rho^\mu (\sigma_\alpha^\degalpha) & = 1 \;\;
\text{for all} \;\; \alpha \in A_\chi^0 \;\; \text{and} \;\;
\sigma_\alpha \in B_W [\alpha], \\
\bar R_{\chi, \alpha}^\mu (\rho^\mu (\sigma_\alpha^{n_\alpha})) & = 0 \;\;
\text{for all} \;\; \alpha \in A_\chi^1 \;\; \text{and} \;\;
\sigma_\alpha \in B_W [\alpha].
\end{align*}
Recall that, for $\alpha \in A_\chi^1$, we have $\degalpha = n_\alpha$ and 
$\deg \bar R_{\chi, \alpha}^\mu = 1$.  The character $\rho^\mu$ is well-defined
by Corollary \eqref{cor-invariance-mu}.  Note that the characters $\rho$ and
$\rho^{\mu}$ determine each other by Remark \ref{rmk-apply-theta}.  I.e., for
every $\alpha \in A_\chi^1$ and $\sigma_\alpha \in B_W [\alpha]$, we have:
\beqn
\rho (\sigma_\alpha^{n_\alpha}) \cdot \rho^{\mu} (\sigma_\alpha^{n_\alpha}) =
k_\alpha \, ,
\eeqn
where $k_\alpha \in \{ \pm 1 \}$ is provided by Proposition \ref{prop-carousel}.
Using equations \eqref{eqn-rho-explicit} and \eqref{eqn-k-alpha-explicit-1},
this yields an explicit expression:
\beq\label{eqn-rho-mu-explicit}
\rho^{\mu} (\sigma_\alpha^{n_\alpha}) = (-1)^{d_\alpha} \cdot
\chi (r (\sigma_\alpha^{-n_\alpha})),
\eeq
for all $\alpha \in A_\chi^1$ and $\sigma_\alpha \in B_W [\alpha]$.

Consider the diagram:
\beq\label{eqn-diagram-rho-mu}
B_{W_\chi^0}
\xleftarrow{\;\;\;\varphi\;\;\;}
B_W^{\chi, 0}
\xrightarrow{\;\;\rho^\mu\;\;} 
\bG_m \, ,
\eeq
(see equation \eqref{eqn-varphi}).  By an argument as in the proof of Lemma
\ref{lemma-generate}, the kernel $\ker (\varphi) \subset B_W^{\chi, 0}$ is
generated by all elements of the form $\sigma_\alpha^{n_\alpha}$ for
$\alpha \in A_\chi^1$ and $\sigma_\alpha \in B_W [\alpha]$.  It follows that
the restriction of the monodromy action $\mu_{l_0}$ to $\ker (\varphi) \subset
B_W^{\chi, 0}$ is given by the character $\rho^\mu$.  Therefore, we have:
\beqn
\mu_{l_0} |_{\ker (\varphi) \cap \ker (\rho^\mu)} = 1,
\eeqn
and the restriction of $\mu_{l_0}$ to $\ker (\rho^\mu) \subset B_W^{\chi, 0}$
factors through the homomorphism $\varphi$.  Since the restriction
$\varphi |_{\ker (\rho^\mu)}$ is surjective, we obtain an action:
\beqn
\breve\mu_{l_0} : B_{W_\chi^0} \to \on{Aut} (M_{l_0} (P_\chi)),
\eeqn
such that $\mu_{l_0} |_{\ker (\rho^\mu)} = \breve\mu_{l_0} \circ \varphi$.
By Lemma \ref{lemma-generate} and assertion \eqref{eqn-scalar}, we have:
\beqn
M_0 = \mu_{l_0} (\bC [B_W^{\chi, 0}]) \cdot u_0 =
\breve\mu_{l_0} (\bC [B_{W_\chi^0}]) \cdot u_0 \, .
\eeqn
The inequality:
\beqn
\dim M_0 \leq | W_\chi^0 |,
\eeqn
now follows from Proposition \ref{prop-R-mu-degree} and
\cite[Theorem 1.3]{Et}, while the inequality:
\beqn
\dim M_0 \geq | W_\chi^0 |,
\eeqn
follows from Corollary \ref{cor-dim-M}, Proposition \ref{prop-cyclic-vector},
and Lemma \ref{lemma-microlocal-M-zero}.
\end{proof}

For every $\alpha \in A$, let $\bar R_{\chi, \alpha}^0 \in \cR$ be the minimal
polynomial of the restriction:
\beqn
\lambda_{l_0} \circ \, \tilde r \, (\sigma_\alpha^{\degalpha}) |_{M_0} \in
\on{End} (M_0),
\eeqn
for some $\sigma_\alpha \in B_W [\alpha]$.  Note that $\bar R_{\chi, \alpha}^0$
is independent of the choice of $\sigma_\alpha$ by assertion
\eqref{eqn-conjugate}.

\begin{lemma}\label{lemma-microlocal-min-poly}
For every $\alpha \in A$, we have:
$\bar R_{\chi, \alpha}^0 = \bar R_{\chi, \alpha}$.
\end{lemma}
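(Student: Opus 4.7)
The plan is to express both $\bar R_{\chi,\alpha}^0$ and $\bar R_{\chi,\alpha}$ as images of $\bar R_{\chi,\alpha}^\mu$ under the same involutive operation on $\cR$, and then conclude by the involution property. Define $\Phi : \cR \to \cR$ by $\Phi(R)(z) = k_\alpha^{\deg R} (\InvTheta R)(k_\alpha z)$, where $k_\alpha \in \{\pm 1\}$ is the sign supplied by Proposition \ref{prop-carousel}. A direct check on roots shows $\Phi^2 = \on{id}$, since $k_\alpha^2 = 1$ and $\InvTheta^2 = \on{id}$. Equation \eqref{eqn-apply-theta} is precisely the identity $\bar R_{\chi,\alpha}^\mu = \Phi(\bar R_{\chi,\alpha})$, so it will suffice to establish $\bar R_{\chi,\alpha}^0 = \Phi(\bar R_{\chi,\alpha}^\mu)$; the desired equality then follows from $\Phi^2 = \on{id}$.

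Write $T_\alpha = \lambda_{l_0}(\tilde r(\sigma_\alpha^\degalpha))$. Two preliminary reductions are needed, both exploiting the commutativity \eqref{eqn-commute} of the $\lambda_{l_0}$- and $\mu_{l_0}$-actions. First, since $M_0 = \mu_{l_0}(\bC[B_W^{\chi,0}]) \cdot u_0$ and each $\mu_{l_0}(b)$ is invertible, for any polynomial $P$ one has $P(T_\alpha) \mu_{l_0}(b) u_0 = \mu_{l_0}(b) P(T_\alpha) u_0$, so $P(T_\alpha)|_{M_0} = 0$ iff $P(T_\alpha) u_0 = 0$; equivalently, $\bar R_{\chi,\alpha}^0$ is the minimal polynomial of $T_\alpha$ at the vector $u_0$ viewed inside $M_{l_0}(P_\chi)$. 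Second, by the same reasoning applied to the cyclic vector $u_0$ from Proposition \ref{prop-cyclic-vector}, the minimal polynomial at $u_0$ of any operator commuting with the $\lambda_{l_0}$-action coincides with its minimal polynomial on all of $M_{l_0}(P_\chi)$.

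The bridge between the two minimal polynomials is furnished by Proposition \ref{prop-microlocal-generator}. Applying \eqref{eqn-microlocal-generator-two} and inducting via \eqref{eqn-commute} yields $T_\alpha^n u_0 = k_\alpha^n \mu_{l_0}(\sigma_\alpha^{-n\degalpha}) u_0$ for every $n \geq 0$, so that for any polynomial $P$:
\beqn
P(T_\alpha) u_0 = \tilde P(\mu_{l_0}(\sigma_\alpha^{-\degalpha})) u_0, \;\; \text{where} \;\; \tilde P(z) = P(k_\alpha z).
\eeqn
Combined with the two reductions above, $\bar R_{\chi,\alpha}^0$ is the unique monic $P$ for which $\tilde P$ is a scalar multiple of the minimal polynomial of $\mu_{l_0}(\sigma_\alpha^{-\degalpha})$ on all of $M_{l_0}(P_\chi)$. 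By Proposition \ref{prop-family-min-poly} and Proposition \ref{prop-fourier}, the minimal polynomial of $\mu_{l_0}(\sigma_\alpha^\degalpha)$ on $M_{l_0}(P_\chi)$ is $\bar R_{\chi,\alpha}^\mu$, hence that of its inverse is $\InvTheta \bar R_{\chi,\alpha}^\mu$. Solving $\tilde P = (\text{scalar}) \cdot \InvTheta \bar R_{\chi,\alpha}^\mu$ with $P$ monic produces $\bar R_{\chi,\alpha}^0 = \Phi(\bar R_{\chi,\alpha}^\mu) = \Phi^2(\bar R_{\chi,\alpha}) = \bar R_{\chi,\alpha}$, as required. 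No substantial obstacle is expected in carrying this out; the main care required is to correctly track the sign $k_\alpha$ and the scalar normalization, so that $\Phi$ is well-defined and involutive on $\cR$, and to verify that both preliminary reductions apply with no hidden dependence on the precise group-theoretic structure of $B_W^{\chi,0}$.
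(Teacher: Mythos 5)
Your proof is correct and follows essentially the same route as the paper's. The paper introduces the auxiliary polynomial $\bar R_{\chi,\alpha}^{\mu,0}$ (the minimal polynomial of $\mu_{l_0}(\sigma_\alpha^{\degalpha})|_{M_0}$) and shows $\bar R_{\chi,\alpha}^{\mu,0}=\bar R_{\chi,\alpha}^\mu$ and $\bar R_{\chi,\alpha}^{\mu,0}=\Phi(\bar R_{\chi,\alpha}^0)$, then concludes via equation \eqref{eqn-apply-theta} and the fact that $\Phi$ is injective; you compress this into the single identity $\bar R_{\chi,\alpha}^0=\Phi(\bar R_{\chi,\alpha}^\mu)$ and invoke $\Phi^2=\on{id}$, which is the same argument in slightly different packaging, with your explicit induction $T_\alpha^n u_0=k_\alpha^n\,\mu_{l_0}(\sigma_\alpha^{-n\degalpha})u_0$ supplying the content behind the paper's citation of Proposition \ref{prop-microlocal-generator} and Lemma \ref{lemma-microlocal-M-zero}.
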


\begin{proof}
Pick a $\sigma_\alpha \in B_W [\alpha]$, and let $\bar R_{\chi, \alpha}^{\mu, 0}
\in \cR$ be the minimal polynomial of $\mu_{l_0} (\sigma_\alpha^{\degalpha})
|_{M_0}$.  By Propositions \ref{prop-fourier}, \ref{prop-cyclic-vector},
\ref{prop-family-min-poly} and assertion \eqref{eqn-commute}, we have:
\beq\label{eqn-R-mu-zero}
\bar R_{\chi, \alpha}^{\mu, 0} = \bar R_{\chi, \alpha}^\mu \, .
\eeq
By Proposition \ref{prop-microlocal-generator}, Lemma
\ref{lemma-microlocal-M-zero}, assertion \eqref{eqn-commute},
and equation \eqref{eqn-theta-preserves-degree}, we have:
\beq\label{eqn-apply-theta-two}
\bar R_{\chi, \alpha}^{\mu, 0} \, (z) = k_\alpha^{\deg \bar R_{\chi, \alpha}^0}
\cdot (\InvTheta \bar R_{\chi, \alpha}^0) \, (k_\alpha \cdot z),
\eeq
(cf. equation \eqref{eqn-apply-theta}).  Equation \eqref{eqn-apply-theta-two}
uniquely determines the polynomial $\bar R_{\chi, \alpha}^0 \in \cR$.
Therefore, the lemma follows from equations \eqref{eqn-apply-theta}
and \eqref{eqn-R-mu-zero}-\eqref{eqn-apply-theta-two}.
\end{proof}

\begin{proof}[Proof of Theorem \ref{thm-main}]
By Lemma \ref{lemma-microlocal-M-zero}, the action $\lambda_{l_0}
\circ \, \tilde r : B_W \to \on{Aut} (M_{l_0} (P_\chi))$ restricts to an action:
\beqn
\lambda_{l_0}^0 : B_W^{\chi, 0} \to \on{Aut} (M_0).
\eeqn
By analogy with diagram \eqref{eqn-diagram-rho-mu}, consider the diagram:
\beqn
B_{W_\chi^0}
\xleftarrow{\;\;\;\varphi\;\;\;}
B_W^{\chi, 0}
\xrightarrow{\;\;\;\rho\;\;\;} 
\bG_m \, .
\eeqn
Arguing as in the proof of Lemma \ref{lemma-dim-M-zero}, and using Lemma
\ref{lemma-microlocal-min-poly} for $\alpha \in A_\chi^1$, we observe that the
restriction of $\lambda_{l_0}^0$ to $\ker (\rho) \subset B_W^{\chi, 0}$ factors
through the homomorphism $\varphi$.  Since the restriction
$\varphi |_{\ker (\rho)}$ is surjective, we obtain an action:
\beqn
\breve\lambda_{l_0}^0 : B_{W_\chi^0} \to \on{Aut} (M_0),
\eeqn
such that $\lambda_{l_0}^0 |_{\ker (\rho)} = \breve\lambda_{l_0} \circ \varphi$.

Lemmas \ref{lemma-microlocal-M-zero}, \ref{lemma-dim-M-zero},
Lemma \ref{lemma-microlocal-min-poly} for $\alpha \in A_\chi^0$,
and equation \eqref{eqn-dim-H} enable us to conclude that:
\beqn
M_0 \cong \cH_{W_\chi^0} \;\; \text{as} \;\;
\bC [B_{W_\chi^0}]\text{-modules},
\eeqn
where the $\bC [B_{W_\chi^0}]$-module structure on $M_0$ is given
by the action $\breve\lambda_{l_0}$.  Combining this with Lemma
\ref{lemma-generate} and Lemma \ref{lemma-microlocal-min-poly}
for $\alpha \in A_\chi^1$, we obtain:
\beqn
M_0 \cong \bC_\rho \otimes \cH_{W_\chi^0}
\;\; \text{as} \;\; \bC [B_W^{\chi, 0}]\text{-modules},
\eeqn
where the $\bC [B_W^{\chi, 0}]$-module structure on $M_0$ is given
by the action $\lambda_{l_0}^0$.  Combining this further with Corollary
\ref{cor-inertia-M-zero}, we obtain:
\beq\label{eqn-identify-M-zero}
M_0 \cong \bC_\chi \otimes \bC_\tau \otimes \bC_\rho \otimes \cH_{W_\chi^0}
\;\; \text{as} \;\; \bC [\widetilde B_W^{\chi, 0}]\text{-modules},
\eeq
where the $\bC [\widetilde B_W^{\chi, 0}]$-module structure on $M_0$ is
given by the microlocal monodromy action $\lambda_{l_0}$.  By Proposition
\ref{prop-cyclic-vector}, plus a dimension count using Corollary
\ref{cor-dim-M} and Lemma \ref{lemma-dim-M-zero}, we obtain:
\beq\label{eqn-induce}
M_{l_0} (P_\chi) \cong \bC [\widetilde B_W]
\otimes_{\bC [\widetilde B_W^{\chi, 0}]} M_0
\;\; \text{as} \;\; \bC [\widetilde B_W]\text{-modules},
\eeq
where the $\bC [\widetilde B_W]$-module structure on $M_{l_0} (P_\chi)$ is
again given by $\lambda_{l_0}$.  It remains to note that the factor $\bC_\tau$
of equation \eqref{eqn-identify-M-zero} can be taken outside the tensor product
of equation \eqref{eqn-induce}, because the character $\tau$ of equation
\eqref{eqn-char-tau-I} is preserved by the action of $W$ on $\hat I$.
\end{proof}

\section{A vanishing conjecture for intersection numbers}
\label{sec-conj}

The proof of Theorem \ref{thm-main} in Sections \ref{sec-fourier}-\ref{sec-proof}
is an elaborate application of Picard-Lefschetz theory.  However, it avoids any
direct use of the central result of this theory: the Picard-Lefschetz formula.
In this section, we remind the reader the statement of this formula in the 
context of our paper, and formulate a related conjecture (Conjecture
\ref{conj-vanishing}) which, if true, would elucidate the structure behind
Theorem \ref{thm-main}.

In order to state Conjecture \ref{conj-vanishing} in a suitable generality,
pick an arbitrary basepoint:
\beqn
l \in (\Cartan^*)^{reg} \subset (V^*)^{rs}.
\eeqn
We do not wish to restrict to the case $l = l_0$, partly in view of Remark
\ref{rmk-not-generic}.  Recall the identification of the Morse group
$M_l (P_\chi)$ in Lemma \ref{lemma-describe-morse-group}.  Note that we
have $Z_l = \{ w \, c_0 \}_{w \in W}$, independent of $l$.  The generic property
of $l \in (V^*)^{rs}$, given by Proposition \ref{prop-A-f-stratification}, ensures
that the restriction $l |_{X_{\bar c_0}}$ is a locally trivial fibration over the
non-critical locus $\bC - l (Z_l)$.  It follows that we have:
\beq\label{eqn-M-l}
M_l (P_\chi) \cong H_d (X_{\bar c_0}, X_{\bar c_0, \xi_0}; \cL_\chi),
\eeq
where $X_{\bar c_0, \xi_0} = \{ x \in X_{\bar c_0} \; | \; l (x) = \xi_0 \}$.
Note that each Picard-Lefschetz class $\PL [c, \gamma, o, a] \in M_l
(P_\chi)$, as in equation \eqref{eqn-PL}, is constructed as an element
of the RHS of equation \eqref{eqn-M-l}.

The space $X_{\bar c_0, \xi_0}$ is a smooth manifold of real dimension
$2d-2$, which we orient by the complex orientation.  Thus, there is a
well-defined intersection pairing:
\beqn
H_{d-1} (X_{\bar c_0, \xi_0}; \cL_\chi) \otimes
H_{d-1} (X_{\bar c_0, \xi_0}; \cL_\chi^*) \to \bC,
\eeqn
where $\cL_\chi^* \cong \cL_{\chi^{-1}}$ is the dual of the local system
$\cL_\chi$.  We denote this pairing by $a \otimes b \mapsto a \pitchfork b$.

Pick a pair of critical points $c_1, c_2 \in Z_l$ and a triple of paths
$\gamma_1, \gamma_2, \gamma_3 : [0, 1] \to \bC$, such that
$\gamma_1 (0) = \gamma_3 (0) = l (c_1)$, $\gamma_2 (0) = l (c_2)$,
$\gamma'_1 (0) = \gamma'_3 (0)$, and all three paths satisfy conditions
(P2)-(P5) of Section \ref{sec-fourier}.  Assume that the paths $\gamma_1,
\gamma_2, \gamma_3$ do not meet each other, except at the end-points.
Also, assume that $l (c_2)$ is the unique element of $l (Z_l)$, contained
inside the closed curve formed by the paths $\gamma_1$ and $\gamma_3$,
and that $l (c) \neq l (c_2)$ for every $c \in Z_l - \{ c_2 \}$.  Pick orientations
$o_1$ of $T_+ [c_1, \gamma_1]$ and $o_2$ of $T_+ [c_2, \gamma_2]$
(see \eqref{eqn-T-plus}).  Also, pick generators $a_1 \in (\cL_\chi)_{c_1}$,
$a_2 \in (\cL_\chi)_{c_2}$.  Let $a_2^* \in (\cL_\chi^*)_{c_2}$ be the unique
element with $\langle a_2, a_2^* \rangle = 1$.  Define Picard-Lefschetz
classes $u_1, u_2, u_3 \in H_d (X_{\bar c_0}, X_{\bar c_0, \xi_0}; \cL_\chi)$
and $u_2^* \in H_d (X_{\bar c_0}, X_{\bar c_0, \xi_0}; \cL_\chi^*)$ as follows:
\begin{align}
\notag
u_1 = \PL [c_1, \gamma_1, o_1, a_1], \;\;\;\;
u_3 = \PL [c_1, \gamma_3, o_1, a_1], \\
\label{eqn-u-two}
u_2 = \PL [c_2, \gamma_2, o_2, a_2], \;\;\;\;
u_2^* = \PL [c_2, \gamma_2, o_2, a_2^*].
\end{align}
The Picard-Lefschetz formula, in this setting, takes the following form:
\beq\label{eqn-PL-formula}
u_1 - u_3 = \pm (\partial \, u_1 \pitchfork \partial \, u_2^*) \cdot u_2 \, ,
\eeq
where the symbol $\partial$ denotes the boundary maps:
\beqn
H_d (X_{\bar c_0}, X_{\bar c_0, \xi_0}; \cL_\chi) \xrightarrow{\;\;\;\partial\;\;\;}
H_{d-1} (X_{\bar c_0, \xi_0}; \cL_\chi) \;\; \text{and} \;\;
H_d (X_{\bar c_0}, X_{\bar c_0, \xi_0}; \cL_\chi^*) \xrightarrow{\;\;\;\partial\;\;\;}
H_{d-1} (X_{\bar c_0, \xi_0}; \cL_\chi^*).
\eeqn
Note that we specify the RHS of equation \eqref{eqn-PL-formula} only up to
sign, since we will only be interested in the question of when the intersection
number $\partial \, u_1 \pitchfork \partial \, u_2^*$ is zero.  The
Picard-Lefschetz formula in this form can be derived from the discussion
in \cite[Chapter 1.3]{AGV}.

The following is an immediate corollary of Proposition \ref{prop-decomp} and
its proof.

\begin{corollary}\label{cor-vanishing}
In the situation of equation \eqref{eqn-PL-formula}, assume that $c_1 =
w_1 \, c_0$, $c_2 = w_2 \, c_0$, $w_1, w_2 \in W$, and the cosets
$\barw_1 = w_1 \, W_\chi$ and $\barw_2 = w_2 \, W_\chi$ are distinct.
Then we have:
\beqn
\partial \, u_1 \pitchfork \partial \, u_2^* = 0.
\eeqn
\end{corollary}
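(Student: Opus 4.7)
The plan is to read off the conclusion almost immediately from the Picard-Lefschetz formula \eqref{eqn-PL-formula}, by tracking where each term lives in the direct sum decomposition \eqref{eqn-decomp} of Proposition \ref{prop-decomp}. There is essentially nothing to compute; the whole content is a bookkeeping argument about which summand each Picard-Lefschetz class belongs to.

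First I would observe that, under the identification $M_{l_0}(P_\chi) \cong H_d(X_{\bar c_0}, X_{\bar c_0, \xi_0}; \cL_\chi)$ of \eqref{eqn-M-l-zero} (which is compatible with Lemma \ref{lemma-describe-morse-group}), the classes $u_1 = PL[c_1, \gamma_1, o_1, a_1]$ and $u_3 = PL[c_1, \gamma_3, o_1, a_1]$ are associated with the critical point $c_1 = w_1 c_0$ and therefore lie in the summand $M_{l_0}(P_\chi)[\barw_1]$, by the very definition of that subspace. Consequently $u_1 - u_3 \in M_{l_0}(P_\chi)[\barw_1]$. On the other hand, $u_2 = PL[c_2, \gamma_2, o_2, a_2]$ is a Picard-Lefschetz class at $c_2 = w_2 c_0$ and so lies in $M_{l_0}(P_\chi)[\barw_2]$.

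Next I would invoke the Picard-Lefschetz formula \eqref{eqn-PL-formula}, which gives
\beqn
u_1 - u_3 = \pm (\partial u_1 \pitchfork \partial u_2^*) \cdot u_2.
\eeqn
Under the hypothesis $\barw_1 \neq \barw_2$, Proposition \ref{prop-decomp} asserts that $M_{l_0}(P_\chi)[\barw_1] \cap M_{l_0}(P_\chi)[\barw_2] = 0$. The left-hand side lies in the first summand and the right-hand side in the second, so both must vanish. Since $a_2 \in (\cL_\chi)_{c_2}$ is a generator and the other Picard-Lefschetz data $(c_2, \gamma_2, o_2)$ are nondegenerate (by the Morse property \eqref{eqn-morse} together with (P1)-(P5)), the class $u_2$ is nonzero in $M_{l_0}(P_\chi)$. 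Hence the scalar $\partial u_1 \pitchfork \partial u_2^*$ must itself be zero, which is the desired conclusion.

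There is no real obstacle here: the argument is a one-line application of the direct sum decomposition together with the Picard-Lefschetz formula. The only small point to verify carefully is that $u_2 \neq 0$, which follows from the standard fact that a Picard-Lefschetz (vanishing cycle) class built from a Morse critical point with nonzero coefficient and a cut satisfying (P1)-(P5) is nonzero; alternatively, this is implicit in the proof of Corollary \ref{cor-dim-M}, where such classes are shown to span $M_{l_0}(P_\chi)$ and there are exactly $|W| = \dim M_{l_0}(P_\chi)$ of them, one for each critical point.
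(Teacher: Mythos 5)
Your proof is correct and is essentially the same argument as in the paper: both use the direct sum decomposition of Proposition \ref{prop-decomp} to observe that $u_1 - u_3 \in M_{l_0}(P_\chi)[\barw_1]$ while the right-hand side of \eqref{eqn-PL-formula} lies in $M_{l_0}(P_\chi)[\barw_2]$, conclude that both sides vanish, and then use $u_2 \neq 0$ to deduce that the intersection number is zero. Your extra remark justifying $u_2 \neq 0$ via Corollary \ref{cor-dim-M} is a reasonable expansion of what the paper states more tersely.
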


\begin{proof}
First, consider the case $l = l_0$.  Using the notation of
\eqref{eqn-M-l-zero-barw}, and with reference to equation
\eqref{eqn-PL-formula}, we have:
\beqn
\on{LHS} \in M_{l_0} (P_\chi [\barw_1]) \;\; \text{while} \;\;
\on{RHS} \in M_{l_0} (P_\chi [\barw_2]).
\eeqn
Therefore, by Proposition \ref{prop-decomp}, we have
$\on{LHS} = \on{RHS} = 0$.  But $u_2 \neq 0$, as a Picard-Lefschetz class
with $a_2 \neq 0$ (see equation \eqref{eqn-u-two}).  Therefore, we must
have $\partial \, u_1 \pitchfork \partial \, u_2^* = 0$.

For general $l \in (\Cartan^*)^{reg}$, we note that the proof of Proposition
\ref{prop-decomp} goes through unchanged for the group $M_l (P_\chi)$
in place of $M_{l_0} (P_\chi)$.  Therefore, the same argument as in the case
$l = l_0$ applies.
\end{proof}

The following conjecture is a slight modification of Corollary \ref{cor-vanishing}.

\begin{conj}\label{conj-vanishing}
In the situation of equation \eqref{eqn-PL-formula}, assume that
$c_1 = w_1 \, c_0$, $c_2 = w_2 \, c_0$, $w_1, w_2 \in W$, and
the cosets $w_1 \, W_\chi^0$ and $w_2 \, W_\chi^0$ are distinct,
as elements of $W / W_\chi^0$.  Then we have:
\beqn
\partial \, u_1 \pitchfork \partial \, u_2^* = 0.
\eeqn
\end{conj}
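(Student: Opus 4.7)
The plan is to refine the direct-sum decomposition of Proposition \ref{prop-decomp} from one indexed by $W/W_\chi$ to a finer decomposition indexed by $W/W_\chi^0$, and then to invoke the Picard-Lefschetz formula \eqref{eqn-PL-formula} exactly as in the proof of Corollary \ref{cor-vanishing}.

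For each $\bar w \in W/W_\chi^0$, I would define
\[
M_{l_0}(P_\chi)\langle\bar w\rangle \;:=\; \lambda_{l_0}(\tilde r(b)) \cdot M_0,
\]
for any $b \in B_W$ with $p(b) \, W_\chi^0 = \bar w$, where $M_0$ is the subspace of equation \eqref{eqn-M-zero}. By Lemma \ref{lemma-microlocal-M-zero} this is independent of the choice of $b$ within its coset. The induced-module structure on $\cM_\chi$ from Theorem \ref{thm-main}, combined with Corollary \ref{cor-dim-M} and Lemma \ref{lemma-dim-M-zero}, produces the direct-sum decomposition
\[
M_{l_0}(P_\chi) \;=\; \bigoplus_{\bar w \in W/W_\chi^0} M_{l_0}(P_\chi)\langle\bar w\rangle,
\]
with each summand of dimension $|W_\chi^0|$.

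Granted this, Conjecture \ref{conj-vanishing} follows verbatim from the argument for Corollary \ref{cor-vanishing}, provided we know the geometric key fact that every Picard-Lefschetz class $PL[c,\gamma,o,a]$ with $c = wc_0$ lies in $M_{l_0}(P_\chi)\langle wW_\chi^0\rangle$. Indeed, $u_1$ and $u_3$ would both lie in $M_{l_0}(P_\chi)\langle\bar w_1\rangle$, hence so would their difference, while the right-hand side of \eqref{eqn-PL-formula} is a scalar multiple of $u_2 \in M_{l_0}(P_\chi)\langle\bar w_2\rangle$. Since $u_2 \neq 0$, $\bar w_1 \neq \bar w_2$, and the sum is direct, the intersection number $\partial u_1 \pitchfork \partial u_2^*$ must vanish.

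The hard part is this geometric key fact. Using parallel transport by $\lambda_{l_0}(\tilde r(b))$ along a path in $(V^*)^{mrs}$ whose image in $Q^{reg}$ represents a braid with $p(b) = w$, one expects to reduce to the case $w = 1$: the microlocal monodromy should take a distinguished PL class at $c_0$ to a PL class at $wc_0$ (up to sign), extending Lemma \ref{lemma-carousel} (iii) from the rank-one setting to the full representation along the lines of Proposition \ref{prop-microlocal-generator}. One is then left to show that the span of \emph{all} Picard-Lefschetz classes $PL[c_0,\gamma,o,a]$ is contained in $M_0$. The carousel-type monodromy in the family of the braids $\sigma_\alpha^{\degalpha} \in B_W^{\chi,0}$, $\alpha \in A_\chi^0$, already shows that $M_0$ contains PL classes at every critical point in $W_\chi^0 c_0$; but pinning down all path variations at the single critical point $c_0$ demands a delicate Picard-Lefschetz deformation argument beyond what appears in Sections \ref{sec-monodromy}--\ref{sec-microlocal-generator}, and is the genuinely new technical input needed.
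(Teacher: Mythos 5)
The statement you are attempting to prove is a \emph{conjecture}, and the paper contains no proof of it: the final sentence of Section~\ref{sec-conj} states explicitly that the authors ``are unable to prove Conjecture~\ref{conj-vanishing} outside of the cases where $W_\chi^0 = W_\chi$,'' in which case it collapses to Corollary~\ref{cor-vanishing}. So there is no proof in the paper to compare against; a complete argument would be a genuinely new result.

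Your proposal does not close the gap, and you acknowledge this yourself in your last paragraph. The ``geometric key fact'' your argument needs --- that every Picard-Lefschetz class $PL[c,\gamma,o,a]$ with $c = wc_0$ lies in the translate $\lambda_{l_0}(\tilde r(b)) \cdot M_0$ with $p(b)\,W_\chi^0 = w\,W_\chi^0$ --- is not an auxiliary lemma one can hope to prove independently and then feed into \eqref{eqn-PL-formula}; it is essentially equivalent to the conjecture itself. In one direction you give the argument: the key fact plus \eqref{eqn-PL-formula} plus the directness of the sum force the intersection number to vanish. In the other direction, the paper's own discussion following Conjecture~\ref{conj-vanishing} observes that the conjecture plus \eqref{eqn-PL-formula} would produce a direct-sum decomposition $M_{l_0}(P_\chi) = \bigoplus_{\barw \in W/W_\chi^0} M_{l_0}(P_\chi)[\barw]$ by spans of Picard-Lefschetz classes and would identify $M_0$ with $M_{l_0}(P_\chi)[\barone\,]$ --- which, after a dimension count, is exactly your key fact. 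Your reduction therefore returns to where it started. The parts of your sketch that do work --- well-definedness of the decomposition into translates of $M_0$ via Lemma~\ref{lemma-microlocal-M-zero}, the dimension count $\dim M_0 = |W_\chi^0|$, the carousel monodromy $\mu_{l_0}(\sigma_\alpha^{\degalpha})$ carrying $u_0$ to Picard-Lefschetz classes at other points of $W_\chi^0 c_0$ --- are already present in Sections~\ref{sec-fourier}--\ref{sec-proof}. What remains missing is precisely control over the \emph{path-dependence} of Picard-Lefschetz classes at a single critical point, and that is the genuinely open difficulty, on which no new purchase has been gained.
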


Conjecture \ref{conj-vanishing} plus equation \eqref{eqn-PL-formula} readily
imply a version of Proposition \ref{prop-decomp} with the subgroup $W_\chi
\subset W$ replaced by $W_\chi^0$, yielding a decomposition:
\beqn
M_{l_0} (P_\chi) = \bigoplus_{\barw \in W / W_\chi^0} M_{l_0} (P_\chi) [\barw].
\eeqn
Such a decomposition, once established, would enable us to define:
\beqn
M_0 = M_{l_0} (P_\chi) [\,\barone\,],
\eeqn
where $\barone \in W / W_\chi^0$ is the coset of the identity in $W$, thus
replacing the equivalent, but rather less direct, definition \eqref{eqn-M-zero}.
With this simpler definition, it would be essentially immediate that
$\dim M_0 = |W_\chi^0|$ (cf. Lemma \ref{lemma-dim-M-zero}) and
that $M_0 \subset M_{l_0} (P_\chi)$ is preserved by the microlocal
monodromy action $\lambda_{l_0} |_{\widetilde B_W^{\chi, 0}} \,$
(cf. Lemma \ref{lemma-microlocal-M-zero}).  Moreover, for every
$\tilde b \in \widetilde B_W$, we would have:
\beqn
\lambda_{l_0} (\tilde b) \, (M_0) = M_{l_0} (P_\chi) [\barw],
\eeqn
where $w = p \circ \tilde q \, (\tilde b)$ and $\barw = w \, W_\chi^0 \in
W / W_\chi^0$.

Thus, in our view, Conjecture \ref{conj-vanishing} clarifies the nature of the
subspace $M_0 \subset M_{l_0} (P_\chi)$, and therefore, provides some
context for the statement of Theorem \ref{thm-main}.  An independent
proof of this conjecture would be likely to provide a simplified proof of
Theorem \ref{thm-main}.  At present, we are unable to prove Conjecture
\ref{conj-vanishing} outside of the cases where $W_\chi^0 = W_\chi$.


\begin{thebibliography}{GVX2}
%
\bibitem[AGV]{AGV}
V. Arnold, S. Gusein-Zade, A. Varchenko,
{\it Singularities of differentiable maps, Vol. 2,}
Monographs in Math., 83, 
Birkh\"auser, Boston, 1988.
%
\bibitem[BG]{BG}
T. Braden and M. Grinberg,
Perverse sheaves on rank stratifications,
Duke Math. J. {\bf 96} (1999), no.~2, 317--362.
%
\bibitem[Br]{Br}
M. Brou\'e,
{\it Introduction to Complex Reflection Groups and Their Braid Groups},
LNM, 1988, Springer-Verlag, Berlin, 2010.
%
\bibitem[BCM]{BCM}
M. Brou\'e, R. Corran, and J. Michel,
Cyclotomic root systems and bad primes,
Adv. Math. {\bf 325} (2018), 375--458.
%
\bibitem[BMR]{BMR} 
M. Brou\'e, G. Malle, and R. Rouquier,
Complex reflection groups, braid groups, Hecke algebras,
J. Reine Angew. Math. {\bf 500} (1998), 127--190.
%
\bibitem[CVX]{CVX}
T.-H. Chen, K. Vilonen, and T. Xue,
Springer correspondence for the split symmetric pair in type $A$,
Compos. Math. {\bf 154} (2018), no.~11, 2403--2425.
%
\bibitem[DK]{DK}
J. Dadok and V. Kac,
Polar representations, Journal of Algebra {\bf 92} (1985), no.~2,
504--524.
%
\bibitem[Et]{Et}
P. Etingof,
Proof of the Brou\'e-Malle-Rouquier conjecture in
characteristic zero (after I. Losev and I. Marin -- G. Pfeiffer),
Arnold Math J. {\bf 3} (2017), 445--449.
%
\bibitem[Gi]{Gi}
V. Ginzburg,
Characteristic Varieties and Vanishing Cycles,
Invent. Math. {\bf 84} (1986), no.~2, 327--402.
%
\bibitem[Gr1]{Gr1}
M. Grinberg,
A generalization of Springer theory using nearby
cycles, Represent. Theory {\bf 2} (1998), 410--431.
%
\bibitem[Gr2]{Gr2}
M. Grinberg,
On the specialization to the asymptotic cone,
J. Algebraic Geom. {\bf 10} (2001), no.~1, 1--17.
%
\bibitem[Gr3]{Gr3}
M. Grinberg,
Errata and notes on the paper ``A generalization
of Springer theory using nearby cycles",
preprint, arXiv:2002.11568 (2020).
%
\bibitem[GVX1]{GVX1}
M. Grinberg, K. Vilonen, and T. Xue,
Nearby cycle sheaves for symmetric pairs,
Amer. J. Math. {\bf 145} (2023), no.~1, 1--63.
%
\bibitem[GVX2]{GVX2}
M. Grinberg, K. Vilonen, and T. Xue,
Fourier transforms of nearby cycle sheaves for graded Lie algebras,
in preparation. 
%
\bibitem[Hi]{Hi}
H. Hironaka, 
Stratification and flatness,
{\it Real and complex singularities},
Nordic Summer School (Oslo, 1976), Sijthoff-Noordhoff,
Groningen, 1977, 199--267.
%
\bibitem[K]{K}
A. W. Knapp,
{\it Lie groups beyond an introduction},
second edition, Progress in Math., 140,
Birkh\"auser, Boston, 2002.
%
\bibitem[KN]{KN}
G. Kempf and L. Ness,
The length of vectors in representation spaces,
{\it Algebraic geometry},
LNM, 732, Springer-Verlag, Berlin, 1979, 233--243.
%
\bibitem[KS]{KS}
M. Kashiwara and P. Schapira, 
{\it Sheaves on manifolds},
second reprint of the 1990 original,
Grundlehren der Mathematischen Wissenschaften, 292,
Springer-Verlag, Berlin, 2002.
%
\bibitem[Leh]{Leh}
G. Lehrer,
A new proof of Steinberg's fixed-point theorem,
International Math. Res. Not. {\bf 28} (2004), 1407--1411.
%
\bibitem[L\^e]{Le}
L\^e D. T.,
The geometry of the monodromy theorem,
Tata Inst. of Fund. Res. Studies in Math., 8,
Springer-Verlag, Berlin, 1978, 157--173.
%
\bibitem[LTVX]{LTVX}
W. Liu, C-C. Tsai, K. Vilonen and T. Xue,
Cuspidal character sheaves on graded Lie algebras,
preprint, arXiv:2409.04030 (2024). 
%
\bibitem[Mas]{Mas}
D. Massey,
Semi-simple carrousels and the monodromy,
Annales de l'institut Fourier {\bf 56} (2006), no.~1, 85--100.
%
\bibitem[Mat]{Mat}
J. Mather,
Notes on topological stability,
Bull. Amer. Math. Soc. {\bf 49} (2012), no.~4, 475--506.
%
\bibitem[MV]{MV}
I. Mirkovi\'c and K Vilonen,
Characteristic Varieties of character sheaves,
Invent. Math. {\bf 93} (1988), no.~2, 405--418.
%
\bibitem[St]{St}
R. Steinberg,
Differential equations invariant under finite reflection groups,
Trans. Amer. Math. Soc. {\bf 112} (1964), no.~3, 392--400.
%
\bibitem[Ti]{Ti}
M. Tibar,
Carrousel monodromy and Lefschetz number of singularities,
Enseign. Math. (2) {\bf 39} (1993), 233--247.
%
\bibitem[Vi]{Vi}
E. Vinberg,
The Weyl group of a graded Lie algebra,
Math. USSR -- Izvestija {\bf 10} (1976), no.~3, 463--495.
%
\bibitem[VX1]{VX1}
K. Vilonen and T. Xue,
Character sheaves for classical symmetric pairs.
With an appendix by Dennis Stanton.
Represent. Theory {\bf 26} (2022), 1097--1144.
%
\bibitem[VX2]{VX2}
K. Vilonen and T. Xue, 
Character sheaves for graded Lie algebras: Stable gradings,
Advances in Math. {\bf 417} (2023), Paper No. 108935, 59 pp.
%
\bibitem[VX3]{VX3}
K. Vilonen and T. Xue,
Invariant systems and character sheaves for graded Lie algebras,
preprint, arXiv:2111.08403 (2024).
%
\end{thebibliography}
\end{document}